\newcommand*{\LatexDef}{.}
\numberwithin{equation}{section}
\setlist[enumerate]{leftmargin=*,font=\upshape,align=parleft,label=(\alph*)}
\setlist[itemize]{leftmargin=*,labelwidth=*}
\DeclareSymbolFont{defaultmathcal}{OMS}{zplm}{m}{n}
\DeclareSymbolFontAlphabet{\mathcal}{defaultmathcal}
\DeclareSymbolFont{handwritten}{OMS}{rsfs}{m}{n}
\DeclareSymbolFontAlphabet{\handcal}{handwritten}
\definecolor{darkgreen}{RGB}{54,124,50}
\NewDocumentCommand{\RN}{m}
{
	\textup{ \int_to_Roman:n { #1 } }
}
\NewDocumentCommand{\rn}{m}
{
	\textup{ \int_to_roman:n { #1 } }
}
\newcommand{\N}{\mathbb{N}}
\newcommand{\Z}{\mathbb{Z}}
\newcommand{\Q}{\mathbb{Q}}
\newcommand{\R}{\mathbb{R}}
\newcommand{\0}{\mathbb{\emptyset}}
\newcommand{\w}{\infty}
\newcommand{\F}{\mathbb{F}}
\newcommand{\1}{\mathds{1}} 
\newcommand\al{\alpha}
\newcommand\be{\beta}
\newcommand{\ga}{\gamma}
\newcommand\de{\delta}
\newcommand{\e}{\varepsilon}
\newcommand\la{\lambda}
\newcommand\om{\omega}
\newcommand\si{\sigma}
\newcommand{\De}{\Delta}
\newcommand{\Si}{\Sigma}
\renewcommand{\phi}{\varphi}
\DeclareRobustCommand{\rchi}{{\mathpalette\irchi\relax}}
\newcommand{\irchi}[2]{\raisebox{\depth}{$#1\cchi$}}
\let\cchi\chi
\let\chi\rchi
\newcommand{\DC}{\mathcal{D}}
\newcommand{\IC}{\mathcal{I}}
\newcommand{\KC}{\mathcal{K}}
\newcommand{\MC}{\mathcal{M}}
\newcommand{\Diag}{\operatorname{Diag}}
\newcommand{\dom}{\operatorname{dom}}
\newcommand{\id}{\operatorname{id}}
\newcommand{\Id}{\operatorname{Id}}
\newcommand{\Int}{\operatorname{Int}}
\newcommand{\osc}{\operatorname{osc}}
\newcommand{\Pow}{\handcal{P}}
\newcommand{\proj}{\operatorname{proj}}
\newcommand{\cl}{\overline}
\newcommand{\Fin}[1][\N]{{#1^{<\N}}}
\def\moverlay{\mathpalette\mov@rlay}
\def\mov@rlay#1#2{\leavevmode\vtop{%
		\baselineskip\z@skip \lineskiplimit-\maxdimen
		\ialign{\hfil$\m@th#1##$\hfil\cr#2\crcr}}}
\newcommand{\charfusion}[3][\mathord]{
	#1{\ifx#1\mathop\vphantom{#2}\fi
		\mathpalette\mov@rlay{#2\cr#3}
	}
	\ifx#1\mathop\expandafter\displaylimits\fi}
\renewcommand{\le}{\leqslant}
\renewcommand{\ge}{\geqslant}
\newcommand{\symdiff}{\mathrel{\triangle}}
\newcommand{\conc}{{^\smallfrown}}
\newcommand{\partialto}{\rightharpoonup}
\newcommand{\bij}{\xrightarrow{\,\smash{\raisebox{-0.65ex}{\ensuremath{\scriptstyle\sim}}}\,}}
\newcommand{\imp}{\Rightarrow}
\newcommand{\shortiff}{\Leftrightarrow}
\newcommand*{\defeq}{\mathrel{\vcenter{\baselineskip0.5ex \lineskiplimit0pt \hbox{\scriptsize.}\hbox{\scriptsize.}}}=}
\newcommand*{\defequiv}{\mathrel{\vcenter{\baselineskip0.5ex \lineskiplimit0pt
			\hbox{\scriptsize.}\hbox{\scriptsize.}}}\shortiff}
\newcommand{\bigfreeprod}{\mathop{\hspace{-1pt}\scalebox{1.5}{$\ast$}}}
\newcommand{\disjU}{\sqcup}
\newcommand{\bigdisjU}{\bigsqcup}
\newcommand{\floor}[1]{{\lfloor #1 \rfloor}}
\DeclareMathSymbol{\lqm}{\mathord}{operators}{``}
\DeclareMathSymbol{\rqm}{\mathord}{operators}{`'}
\newcommand{\Godel}{G\"{o}del\xspace}
\newcommand{\set}[1]{\left\{ #1 \right\}}
\newcommand{\gen}[1]{\left\langle #1 \right\rangle}
\newcommand{\rest}[1]{\mathord{|_{#1}}}
\newcommand{\qts}[1]{``#1''}
\theoremstyle{plain}
\newtheorem{theorem}[equation]{Theorem}
\newtheorem*{theorem*}{Theorem}
\def\@empty{}
\def\ifemptycredit#1{%
	\def\tmp{#1}%
	\ifx\tmp\@empty%
	\else%
	{~(#1)}%
	\fi%
}
\newenvironment{namedthm*}[2][]{
	\par\medskip\noindent \textbf{#2}\ifemptycredit{#1}\textbf{.}\itshape\xspace
}{}
\crefname{prop}{Proposition}{Propositions}
\newtheorem{prop}[equation]{Proposition}
\newtheorem*{propo*}{Proposition}
\crefname{property}{Property}{Properties}
\newtheorem*{property*}{Property}
\newtheorem{lemma}[equation]{Lemma}
\newtheorem*{lemma*}{Lemma}
\crefname{claimlemma}{Claim}{Claims}
\crefname{cor}{Corollary}{Corollaries}
\newtheorem{cor}[equation]{Corollary}
\newtheorem*{cor*}{Corollary}
\crefname{obs}{Observation}{Observations}
\newtheorem{obs}[equation]{Observation}
\newtheorem*{obs*}{Observation}
\crefname{obss}{Observations}{Observations}
\newtheorem{obss}[equation]{Observations}
\newtheorem{obss*}{Observations}
\crefname{fact}{Fact}{Facts}
\newtheorem*{fact*}{Fact}
\theoremstyle{definition}
\crefname{defn}{Definition}{Definitions}
\newtheorem{defn}[equation]{Definition}
\newtheorem*{defn*}{Definition}
\newenvironment{defn**}[1][]{\par\medskip\noindent \textbf{Definition\xspace#1.}\xspace}{}
\crefname{question}{Question}{Questions}
\newtheorem{question}[equation]{Question}
\newtheorem*{question*}{Question}
\crefname{conj}{Conjecture}{Conjectures}
\newtheorem*{conj*}{Conjecture}
\crefname{example}{Example}{Examples}
\newtheorem{example}[equation]{Example}
\newtheorem*{example*}{Example}
\crefname{examples.plain}{Examples}{Examples}
\newtheorem{examples.plain}[equation]{Examples}
\newtheorem*{examples.plain*}{Examples}
\theoremstyle{remark}
\crefname{remark}{Remark}{Remarks}
\newtheorem{remark}[equation]{Remark}
\newtheorem*{remark*}{Remark}
\newenvironment{remarklike*}[2][]{\par\medskip\noindent \textit{#2}#1\textbf{.}\rmfamily\xspace}{\smallskip}
\crefname{claim+}{Claim}{Claims}
\newtheorem{claim+}[equation]{Claim}
\crefname{claim}{Claim}{Claims}
\newtheorem*{claim*}{Claim}
\crefname{subclaim}{Subclaim}{Subclaims}
\newtheorem*{subclaim*}{Subclaim}
\newenvironment{case*}[1]{\smallskip\par\noindent \textit{Case}:~#1.\rmfamily}{}
\crefname{notation}{Notation}{Notations}
\newtheorem{notation}[equation]{Notation}
\newtheorem*{notation*}{Notation}
\newtheorem*{terminology*}{Terminology}
\crefname{convention}{Convention}{Conventions}
\newtheorem*{convention*}{Convention}
\newtheorem*{conventions*}{Conventions}
\crefname{spec}{Speculation}{Speculations}
\newtheorem*{spec*}{Speculation}
\crefname{caution}{Caution}{Cautions}
\newtheorem{caution}[equation]{Caution}
\newtheorem*{caution*}{Caution}
\crefname{hypothesis}{Hypothesis}{Hypotheses}
\newtheorem*{hypothesis*}{Hypothesis}
\crefname{assumption}{Assumption}{Assumptions}
\newtheorem*{assumption*}{Assumption}
\newcommand{\fntsz}[1][11]{\fontsize{#1}{#1}\selectfont}
\newenvironment{acknowledgements}[1][11]{\medskip \fntsz[#1]\begin{trivlist}
		\item[\hskip \labelsep {\textit{Acknowledgements}.}]}{\end{trivlist}\smallskip}
\newenvironment{enumenv}[1][i]
{
	\refstepcounter{equation}
	\begin{enumerate}[\upshape(\theequation.#1)]
}
{
	\end{enumerate}\smallskip
}
\newenvironment{enumref}[2]
{
	\begin{enumerate}[\upshape(\ref*{#2}.#1), series=#2]
}
{
\end{enumerate}
}
\crefname{examples}{Examples}{Examples}
\newenvironment{examples*}[1][\alph*]
{
	\refstepcounter{equation}
	\medskip
	\noindent\textbf{Examples.}
	\medskip
	\begin{enumerate}[\bfseries(\theequation.#1),ref=(\theequation.#1),itemsep=5pt]
}
{
	\end{enumerate}
	\smallskip
}
\theoremstyle{remark}
\declaretheoremstyle[
spaceabove=\topsep, 
spacebelow=6pt,
headfont=\normalfont\itshape,
notefont=\normalfont, notebraces={(}{)},
bodyfont=\normalfont,
postheadspace=4pt,
qed=\mbox{\smaller[4]$\boxtimes$}
]{claimproofstyle}
\declaretheorem[name={Proof of Claim}, style=claimproofstyle, unnumbered]{pf}
\crefname{subsection}{Subsection}{Subsections}
\theoremstyle{plain}
\newmdenv[
leftmargin = 1cm,
rightmargin = 0pt,
skipabove = 8pt,
skipbelow = 3pt,
innerleftmargin = 8pt,
innertopmargin = 0pt,
innerbottommargin = 0pt,
innerrightmargin = 0pt,
linewidth = 3pt,
topline = false,
rightline = false,
bottomline = false
]{leftbar}
\definecolor{gris}{RGB}{90,90,90}
\definecolor{vert}{RGB}{7,126,26}
\definecolor{rougefonce}{RGB}{136,0,21}
\definecolor{purple}{RGB}{116,0,159}
\def\@settitle{\begin{center}%
		\baselineskip14\p@\relax
		\bfseries
		\uppercasenonmath\@title
		\@title
		\ifx\@subtitle\@empty\else
		\\[1ex]\uppercasenonmath\@subtitle
		\footnotesize\mdseries\@subtitle
		\fi
	\end{center}%
}
\def\subtitle#1{\gdef\@subtitle{#1}}
\def\@subtitle{}
\newcommand{\changehidden}[1]{}
\setlist[itemize]{itemsep=5pt,leftmargin=1\parindent}
\newcommand{\edge}[1]{(#1)}
\newcommand{\bsi}{\bar{\si}}
\newcommand{\tG}{{\tilde{G}}}
\newcommand{\bG}{{\bar{G}}}
\newcommand{\tH}{{\tilde{H}}}
\newcommand{\bF}{{\bar{F}}}
\newcommand{\bR}{{\bar{R}}}
\newcommand{\tf}{{\tilde{f}}}
\newcommand{\tX}{{\tilde{X}}}
\newcommand{\tF}{{\tilde{F}}}
\newcommand{\Finw}[1]{[#1]^{<\w}}
\newcommand{\FinX}[1][X]{\Finw{#1}}
\newcommand{\FinE}[1][X]{\Finw{#1}_E}
\newcommand{\FinG}[1][X]{\Finw{#1}_G}
\newcommand{\Fx}{\operatorname{Fx}}
\newcommand{\Mv}{\operatorname{Mv}}
\newcommand{\IMv}{\operatorname{ImMv}}
\newcommand{\submod}[2]{#1_{/ #2}}
\newcommand{\Xmod}[2][X]{\submod{#1}{#2}}
\newcommand{\Gmod}[2][G]{\submod{#1}{#2}}
\newcommand{\Emod}[2][E]{\submod{#1}{#2}}
\newcommand{\wmod}[2][w]{\submod{#1}{#2}}
\newcommand{\mean}[2][]{M^{#1}_{#2}}
\newcommand{\meanw}[1]{\mean[w]{#1}}
\newcommand{\meanf}[2][]{\mean[#1]{#2}(f)}
\newcommand{\meanwf}[1]{\meanf[w]{#1}}
\newcommand{\underf}{\underline{f}}
\newcommand{\overf}{\overline{f}}
\newcommand{\MS}{\MC}
\newcommand{\MSwGf}{\MS^w_G(f)}
\newcommand{\oscmu}[1][\mu]{\osc_{#1}}
\newcommand{\oscE}[1][E]{\osc_{#1}}
\newcommand{\infmu}[1][\mu]{\operatorname{inf}_{#1}}
\newcommand{\supmu}[1][\mu]{\operatorname{sup}_{#1}}
\newcommand{\dir}[1]{\vec{#1}}
\newcommand{\Linf}[1]{\|#1\|_{_\w}}
\newcommand{\Lone}[1]{\|#1\|_{_1}}
\newcommand{\Lonew}[1]{\|#1\|_{L^1(\mu_w)}}
\newcommand{\fsr}{fsr\xspace}
\newcommand{\fsrs}{fsrs\xspace}
\newcommand{\Classes}[2][]{\operatorname{Clss}_{#1}(#2)}
\newcommand{\SatG}{\operatorname{SatG}}
\newcommand{\PackG}{\operatorname{PackG}}
\newcommand{\Jue}{
	\renewcommand{\arraystretch}{1.5}
	\displaystyle \begin{array}{l|lllllllll}
		\text{\textnormal{Player 1}} & F_0 & & F_1 & & F_2 & & ... \\
		\hline
		\text{\textnormal{Player 2}} & & \Phi_0 & & \Phi_1 & & \Phi_2 & & ...
	\end{array}
	\renewcommand{\arraystretch}{1}	
}
\newcommand{\XE}{X \disjU E}
\newcommand{\XG}{X \disjU G}
\newcommand{\Vrts}{\operatorname{Vrt}}
\newcommand{\Edges}{\operatorname{Edg}}
\newcommand{\ext}[1]{{\hat{#1}}}
\newcommand{\extD}{{\hat{D}}}
\newcommand{\extE}{{\ext{E}}}
\newcommand{\extF}{{\ext{F}}}
\newcommand{\fvp}[1][\mu]{\operatorname{fvp}_{#1}}
\newcommand{\fep}[1][\mu]{\operatorname{fep}_{#1}}
\newcommand{\EdgsBtw}[3][G]{[#2,#3]_{#1}}
\newcommand{\GpS}[2][S]{#2^{\mathord{\wedge}#1}}
\newcommand{\GS}[1][S]{\GpS[#1]{G}}
\newcommand{\HS}[1][S]{\GpS[#1]{H}}
\newcommand{\bc}{{\mathbf{c}}}
\newcommand{\br}{{\mathbf{r}}}
\newcommand{\trans}{\pitchfork}
\newcommand{\transvee}{\mathop{\boxast}}
\newcommand{\bigtransvee}{\mathop{\hspace{-1pt}\scalebox{1.5}{$\boxast$}}}
\renewcommand{\theequation}{\thesection.\arabic{equation}}
\renewcommand\subsection{\@startsection{subsection}{2}%
	\z@{-1.5em}{.7em}%
	{\noindent\bfseries}}
\def\l@section{\@tocline{1}{5pt}{0pc}{}{}}
\renewcommand{\tocsection}[3]{%
	\indentlabel{\@ifnotempty{#2}{\makebox[20pt][l]{%
				\ignorespaces#1 #2.\hfill}}}\sc #3\dotfill}
\newdimen{\tocsubsecmarg}
\def\l@subsection{\@tocline{2}{3pt}{0pc}{\tocsubsecmarg}{}}
\renewcommand{\tocsubsection}[3]{%
	\indentlabel{\@ifnotempty{#2}{\makebox[30pt][l]{%
				\ignorespaces#1 #2.\hfill}}}#3\dotfill}
\let\oldtocsubsection=\tocsubsection
\renewcommand{\tocsubsection}[2]{\hspace{3em} \oldtocsubsection{#1}{#2}}
\newcommand{\stoptocwriting}{%
	\addtocontents{toc}{\protect\setcounter{tocdepth}{-5}}}
\newcommand{\resumetocwriting}{%
	\addtocontents{toc}{\protect\setcounter{tocdepth}{\arabic{tocdepth}}}}
\title[]{Edge sliding and ergodic hyperfinite decomposition}
\author[]{Benjamin D. Miller}
\address[Benjamin D. Miller]{Kurt \Godel Research Center for Mathematical Logic, W\"{a}hringer Stra{\ss}e 25, 1090 Wien, Austria}
\email{benjamin.miller@univie.ac.at}
\thanks{The first author was supported in part by FWF Grants P28153 and P29999.}
\author[]{Anush Tserunyan}
\address[Anush Tserunyan]{Department of Mathematics, University of Illinois at Urbana-Champaign, IL, 61801, USA}
\email{anush@illinois.edu}
\thanks{The second author's research was partially supported by NSF Grant DMS-1501036.}
\begin{document}

\begin{abstract}
	We use edge slidings and saturated disjoint Borel families to give a streamlined proof of Hjorth's theorem on cost attained: if a countable p.m.p. ergodic equivalence relation $E$ is treeable and has cost $n \in \N \cup \set{\w}$ then it is induced by an a.e. free p.m.p. action of the free group $\F_n$ on $n$ generators. More importantly, our techniques give a significant strengthening of this theorem: the action of $\F_n$ can be arranged so that each of the $n$ generators alone acts ergodically.
	
	The existence of an ergodic action for the first generator immediately follows from a powerful theorem of Tucker-Drob, whose proof however uses a recent substantial result in probability theory as a black box. We give a constructive and purely descriptive set theoretic proof of a weaker version of Tucker-Drob's theorem, which is enough for many of its applications, including our strengthening of Hjorth's theorem. Our proof uses new tools, such as asymptotic means on graphs, packed disjoint Borel families, and a cost threshold for finitizing the connected components of nonhyperfinite graphs.
\end{abstract}

\maketitle

\tableofcontents


\section{Introduction}

Let $(X,\mu)$ be a standard probability space. For a measure-preserving locally countable graph $G$ on $(X,\mu)$, define its \emph{$\mu$-cost} $C_\mu(G)$ analogous to counting the number of edges of a finite graph by halving the sum of the degrees:
\[
C_\mu(G) \defeq \frac{1}{2} \int_X \deg_G(x) d\mu(x).
\]
For a measure-preserving countable Borel equivalence relation $E$ on $(X,\mu)$, define its \emph{$\mu$-cost}
\[
c_\mu(E) \defeq \inf \set{C_\mu(G) : \text{$G$ is a Borel graphing of $E$}}.
\]
Introduced by Levitt \cite{Levitt} and extensively developed by Gaboriau \cites{Gaboriau:mercuriale,Gab2,Gab3}, $\mu$-cost is a powerful isomorphism invariant for p.m.p. countable Borel equivalence relations; see also \cite{Kechris-Miller}*{Section 18}. Analogous to the fact that the free group $\F_n$ on $n$ generators has rank $n$, Gaboriau's fundamental theorem of the theory of cost \cite{Gaboriau:mercuriale} states that when $c_\mu(E) < \w$, any Borel treeing $G$ of $E$ achieves its cost, i.e. $c_\mu(E) = C_\mu(G)$. This, implies in particular that the orbit equivalence relation induced by an a.e. free p.m.p. action of $\F_n$ has cost $n$, which gives the following rigidity result in orbit equivalence: if a.e. free p.m.p. actions of $\F_n$ and $\F_m$ are orbit equivalent, then $n = m$ \cite{Gaboriau:mercuriale}*{Corollaire 1}.

A converse to this was later obtained by Hjorth in \cite{Hjorth:cost_lemma} (see also \cite{Kechris-Miller}*{Theorems 28.2 and 28.3}): 

\begin{theorem}[Hjorth 2013]\label{intro:Gregs_lemma:integer_cost}
	If a countable Borel ergodic p.m.p. equivalence relation $E$ is treeable and has cost $n \in \N \cup \set{\w}$, then it is induced by an a.e. free p.m.p. action of $\F_n$.
\end{theorem}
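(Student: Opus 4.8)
The plan is to reduce to a combinatorial statement about treeings and then realize it by redistributing edges. Assume first $n \in \N$. Giving an a.e.\ free p.m.p.\ action of $\F_n$ inducing $E$ amounts to giving $n$ $\mu$-preserving Borel automorphisms $\gamma_1,\dots,\gamma_n$ of $X$ such that $T \defeq \bigcup_{i=1}^{n}\bigl(\mathrm{graph}(\gamma_i)\cup\mathrm{graph}(\gamma_i^{-1})\bigr)$ graphs $E$, the $2n$ points $\gamma_1^{\pm 1}x,\dots,\gamma_n^{\pm 1}x$ are for a.e.\ $x$ pairwise distinct and distinct from $x$, and $T$ is a.e.\ acyclic. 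Indeed, granting these, a nontrivial reduced word $w = s_k\cdots s_1$ in the $\gamma_i^{\pm 1}$ fixing $x$ on a non-null set would, for a.e.\ such $x$, trace a closed walk through the vertices $v_\ell \defeq s_\ell\cdots s_1 x$ (so $v_0 = x = v_k$) in $T$; this walk is non-backtracking, since a backtrack at stage $j$ means $s_j v_{j-1} = s_{j+1}^{-1} v_{j-1}$, which by distinctness of the labelled neighbours of $v_{j-1}$ forces $s_{j+1} = s_j^{-1}$, contradicting reducedness; hence $T$ would contain a cycle, a contradiction. (One may assume $w$ cyclically reduced, which also handles the wrap-around between $s_k$ and $s_1$.) So it suffices to manufacture such a tuple, which I will do from a $2n$-regular treeing.

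Start with any Borel treeing $T_0$ of $E$; by Gaboriau's theorem $C_\mu(T_0) = c_\mu(E) = n$, that is $\int_X \deg_{T_0}\,d\mu = 2n$. Observe that $2n$-regularity is in fact forced on the final graph $T$, since each $\gamma_i^{\pm 1}$ contributes exactly one edge at each vertex; so suppose we have transformed $T_0$ into a treeing $T$ of $E$ with $\deg_T \equiv 2n$. By a measured edge-colouring argument, $T$ has a proper $2n$-edge-colouring, and since $\deg_T \equiv 2n$ each colour occurs exactly once at a.e.\ vertex, so pairing up the colour classes splits $T$ into $n$ measured $2$-regular subforests, each (being acyclic) a disjoint union of bi-infinite lines covering $X$; measurably orienting these lines presents each such subforest as $\mathrm{graph}(\gamma_i)\cup\mathrm{graph}(\gamma_i^{-1})$ for a $\mu$-preserving Borel automorphism $\gamma_i$ of $X$. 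Since $T$ is simple and acyclic, the $\gamma_i$ automatically satisfy the distinctness and acyclicity requirements above, and they generate $E$ because $T$ does; hence they yield an a.e.\ free p.m.p.\ action of $\F_n$ inducing $E$.

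The crux, and the step I expect to be the main obstacle, is the transformation of $T_0$ into a $2n$-regular treeing of $E$. The tool is \emph{edge sliding}: given an edge $\{y,z\}$ of a treeing with $z$ adjacent to some $z' \ne y$, replacing $\{y,z\}$ by $\{y,z'\}$ again yields a treeing of $E$ of the same $\mu$-cost --- deleting $\{y,z\}$ cuts the tree, and $\{y,z'\}$ is a bridge across the cut --- while transferring one unit of degree from $z$ to $z'$. The task is to perform such slides simultaneously and Borel-measurably so as to flatten $\deg_{T_0}$, whose integral is $2n$, to the constant $2n$: one must route the excess degree of $\{\deg_{T_0} > 2n\}$ onto $\{\deg_{T_0} < 2n\}$ along paths in $T_0$ without being stopped by measure-theoretic bottlenecks. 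This is exactly where ergodicity of $E$ enters --- it rules out invariant obstructions --- and I would organize the redistribution through a maximal, i.e.\ saturated, disjoint Borel family of sliding paths, iterating and passing to a limit in the spirit of mass-transport arguments for measured graphs. Finally, for $n = \w$ the treeing $T_0$ automatically has cost $\w$, and one argues similarly but with room to spare: decompose an orientation of $T_0$ into countably many partial automorphisms, extend them one at a time to full automorphisms while avoiding new cycles and coincidences (always possible since a fresh generator is at hand), and use ergodicity to absorb the leftover mass, producing an a.e.\ free p.m.p.\ action of $\F_\w$ inducing $E$.
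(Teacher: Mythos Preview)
Your proposal has a genuine gap at the step you yourself flag as ``the crux'': you do not prove that an arbitrary treeing $T_0$ of cost $n$ can be transformed, by Borel edge slidings, into a $2n$-regular treeing of $E$. Saying that ergodicity ``rules out invariant obstructions'' and that you would ``organize the redistribution through a maximal, i.e.\ saturated, disjoint Borel family of sliding paths, iterating and passing to a limit'' is an outline, not an argument. The difficulty is real: in the course of redistributing degree you must maintain acyclicity and connectivity of every component simultaneously, and there is no off-the-shelf mass-transport lemma that delivers a $2n$-regular Borel treeing from the mere equality $\int \deg_{T_0}\,d\mu = 2n$. Your second step is also unjustified: the assertion that a $2n$-regular Borel treeing admits a Borel proper $2n$-edge-colouring a.e.\ (equivalently, decomposes into $2n$ Borel perfect matchings) is not a triviality; K\"onig's theorem is classical, and the measurable versions you would need (Lyons--Nazarov type results, iterated) require actual work and citation. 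The $n=\omega$ sketch is similarly incomplete.

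For comparison, the paper does not attempt to regularize the treeing at all. Instead it proves an \emph{aperiodic hyperfinite decomposition}: via well-iterated edge sliding, the treeing is partitioned (modulo a compressible set) into an increasingly transverse sequence $G_0, G_1, \ldots$ of hyperfinite Borel graphs, with ergodicity forcing exactly $N$ of them (where $N = n$) to be component-infinite and the remainder finite or empty. Each aperiodic hyperfinite $G_k$ is then slid into a Borel forest of directed lines, yielding an aperiodic $\gamma_k \in [E]$; transversality of the $E_{G_k}$ gives the free-product structure, hence freeness of the resulting $\F_n$-action. The decomposition into hyperfinite transverse pieces (built one factor at a time using saturated \fsr{}s) replaces both your regularization step and your edge-colouring step, and it is where the actual content of the proof lives.
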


Our first result is a streamlined proof of Hjorth's original theorem in its full generality (\cref{Gregs_lemma} below), which we present in \cref{subsec:aperiodic-hyperfinite_factor,subsec:aperiodic-hyperfinite_decomposition}. As in Hjorth's original argument, \cref{intro:Gregs_lemma:integer_cost} reduces to accomplishing the following task $\om$-many times: given a graphing $G$ of $E$ and a subgraph $G_0 \subseteq G$, build a nontrivial finite subequivalence relation $F \subseteq E$ \emph{transverse} to $E_{G_0}$ (i.e. $F \cap E_{G_0} = \Id_X$) such that enough edges of $G$ are ``allocated'' to be ``moved'' into a graphing of $F$.

One of the two factors that make our proof conceptually clear is the isolation of a class of maps called \emph{edge slidings} (more generally, \emph{well-iterated edge slidings}) that implement the ``moving'' of edges of $G$ without affecting its connectivity (i.e. preserving $E_G$) or introducing new cycles; this is developed in \cref{sec:edge_sliding}.

The other factor, also responsible for brevity, is the use of what we call a \emph{saturated \fsr}\footnote{The abbreviation \emph{\fsr} stands for \emph{finite partial subequivalence relation} and we use it for historical reasons, even though it does not exactly match the phrase it stands for.}, that is, a Borel maximal disjoint subfamily $\Psi$ of a given family $\Phi$ of finite subsets of $X$ such that no $A \in \Psi$ can be properly extended to $A' \in \Phi$ while remaining disjoint from all other sets in $\Psi$. In \cref{subsec:saturated-fsr}, we prove the existence of such \fsrs for Borel $\Phi \subseteq \FinE$ modulo an $E$-compressible set.

Furthermore, what our proof of \cref{intro:Gregs_lemma:integer_cost} actually gives is an aperiodic hyperfinite decomposition theorem for (not necessarily acyclic) locally countable Borel graphs on standard Borel spaces up to a well-iterated edge sliding, and we roughly state it here:

\begin{theorem}[Aperiodic hyperfinite decomposition]\label{intro:decomp:aper:graphs}
	For any locally countable Borel graph $G$ on a standard Borel space $X$, up to replacing $G$ with a spanning subgraph of a well-iterated edge slide of $G$ and ignoring an $E_G$-compressible set, there are Borel partitions 
	\[
	G = \bigdisjU_{n \in \N} G_n \quad \text{ and } \quad X = \bigdisjU_{N \in \N \cup \set{\w}} X_N,
	\]
	where each $G_n$ is a hyperfinite Borel graph, each $X_N$ is Borel $E_G$-invariant (possibly empty), and, for each $N \in \N^+ \cup \set{\w}$ and $n \in \N^+$,
	
	\medskip
	\begin{tabular}{rcl}
		$n < N$ &$\imp$& $G_n \rest{X_N}$ is component-infinite,
		\\
		$n = N$ &$\imp$& $G_n \rest{X_N}$ is component-finite,
		\\
		$n > N$ &$\imp$& $G_n \rest{X_N} = \0$.
	\end{tabular}
\end{theorem}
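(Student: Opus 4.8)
The plan is to build the decomposition by iterating, $\w$-many times, a \emph{one-step reduction} that peels off a single component-infinite hyperfinite layer. First I would pass from $G$ to a Borel maximal acyclic subgraph $T\subseteq G$; this deletes only redundant, cycle-completing edges, so $E_T=E_G$, and it is an instance of the passage to a spanning subgraph permitted in the conclusion (moreover the edge slides of $T$ performed below extend to edge slides of $G$ carrying the deleted edges along). Henceforth I work with the forest $T$. The one-step reduction, to be proved using the tools of \cref{sec:edge_sliding,subsec:saturated-fsr} and the cost-threshold machinery, is: \emph{for every locally countable Borel forest $H$ on a standard Borel space $Y$ there exist---after replacing $H$ by a well-iterated edge slide of itself and discarding an $E_H$-compressible invariant Borel set---an $E_H$-invariant Borel partition $Y=A\sqcup B$ and a Borel subgraph $L\subseteq H\rest{B}$ such that $H\rest{A}$ is component-finite, $L$ is a line forest all of whose components are two-way infinite (so $L$ is hyperfinite and component-infinite), and $(H\setminus L)\rest{B}$ is again a locally countable Borel forest.} Granting this, I would run it with $H^1\defeq T$ and $Y^1\defeq X$: at stage $n\ge 1$, feeding $H^n$ on $Y^n$ into it yields a partition $Y^n=X_n\sqcup Y^{n+1}$, a line forest $L_n\subseteq H^n\rest{Y^{n+1}}$, and a residual forest $H^{n+1}\defeq(H^n\setminus L_n)\rest{Y^{n+1}}$. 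I then set $G_n\defeq(H^n\rest{X_n})\sqcup L_n$, a disjoint union since $X_n\cap Y^{n+1}=\emptyset$, and finally $X_\w\defeq\bigcap_n Y^n$ (so $X_\w\subseteq Y^{n+1}$ and $G_n\rest{X_\w}=L_n\rest{X_\w}$), while $X_0$ and $G_0$ are taken empty.

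Checking that this witnesses the theorem is then routine. The discarded sets accumulate to a countable union of $E_G$-invariant, $E_G$-compressible Borel sets, which is again $E_G$-compressible; arranging the ``stop'' condition at each stage to depend only on the ambient $E_G$-class makes the $X_N$ (for $N\in\N^+$) and $X_\w$ pairwise disjoint $E_G$-invariant Borel sets partitioning the rest of $X$. Each $G_n$ is, over the disjoint invariant pieces $X_n$ and $Y^{n+1}$, a disjoint union of a component-finite graph and a line forest, hence is hyperfinite. For $N\in\N^+$ and $1\le n<N$ we have $X_N\subseteq Y^{n+1}$, so $G_n\rest{X_N}=L_n\rest{X_N}$ is component-infinite; $G_N\rest{X_N}=H^N\rest{X_N}$ is component-finite; and for $n>N$ we have $X_N\cap Y^n=\emptyset$, so $G_n\rest{X_N}=\emptyset$. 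On $X_\w$, every $G_n\rest{X_\w}=L_n\rest{X_\w}$ is component-infinite, which is exactly what is asked since $n<\w$ always. Finally, the stagewise well-iterated edge slides compose to a single well-iterated edge slide by the closure properties of \cref{sec:edge_sliding}, no edge is created or deleted after the passage to $T$, and the construction is arranged so that every edge of $T$ ends up in some $L_n$ or in some stop-remainder $H^N\rest{X_N}$; hence $\bigsqcup_n G_n$ is precisely the resulting spanning subgraph of a well-iterated edge slide of $G$.

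The substance, and the step I expect to be the main obstacle, is the one-step reduction. The finite-component part of $H$ goes straight into $A$, with nothing to do; on the complementary, infinite-component part I must---up to a well-iterated edge slide and modulo a compressible set---produce a $2$-regular acyclic subgraph $L$, i.e.\ consistently select and pair up two edges at each vertex without closing a finite cycle. The naive pointwise selection fails Borel-ly: leaves and low-degree vertices obstruct $2$-regularity, and disjoint matchings can close cycles. This is precisely where the new machinery enters---a \emph{saturated \fsr} (\cref{subsec:saturated-fsr}) together with packed disjoint Borel families supplies a maximal consistent partial selection, and the cost threshold for finitizing the connected components of nonhyperfinite graphs, formulated via asymptotic means so as to dispense with a reference measure, shows that the set on which this partial selection cannot be completed---where, after sliding, the graph is ``too sparse to sustain infinite components''---is $E_H$-compressible and may be discarded, after which the residual finite-component part is swept into $A$. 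The same orbitwise asymptotic-mean bookkeeping also yields the $E_G$-invariance of the eventual stop-stage and the edge-accounting used above. I expect essentially all the difficulty to lie in two places: the Borelness and maximality of the \fsr-type selections, and, above all, the compressibility of the exceptional set on which finitization stalls.
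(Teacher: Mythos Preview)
Your iterative skeleton is essentially the paper's: repeatedly split off one hyperfinite transverse factor, feed the remainder into the next stage, and stitch the stages together via \cref{connecting_unions_using_images}. The paper carries this out at the level of equivalence relations (\cref{decomp:aper:eq_rel}), producing $E_n$ transverse to $\bigvee_{k<n}E_k$ by iterating \cref{hyperfinite_factor}, then sets $G_n\defeq\tG\cap E_n$ and only afterwards invokes \cref{hyperfinite_subgraphs_admit_sublines} to convert each hyperfinite piece into a line forest; it does not pass to a spanning tree first, though your doing so is harmless.

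The genuine gap is in your one-step reduction. You propose to establish compressibility of the stalling set via \emph{packed} families, the \emph{cost threshold} $\fep$, and \emph{asymptotic means}. Those are the tools for \cref{intro:ergodic_hyp_slid-subgraph}, the ergodic theorem, which lives on a probability space; $\fep(G)$ is only defined relative to a measure, and \cref{intro:decomp:aper:graphs} is purely Borel with no measure in sight. The phrase ``formulated via asymptotic means so as to dispense with a reference measure'' has no content here: the asymptotic means of \cref{sec:asymptotic_means} are weighted averages, not a substitute for a measure, and in any case the graph may be hyperfinite, so no nonhyperfiniteness hypothesis is available.

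The paper's one-step reduction (\cref{finite_factor}, iterated to \cref{hyperfinite_factor}) is far more elementary. One plays the saturation game $\SatG(\extE)$ of \cref{defn:SatG} on $X\sqcup E$, with $\Phi$ consisting of finite sets $U$ of vertices-plus-fresh-$G$-edges satisfying a slidability condition; \cref{winning_SatG} hands you a Borel \fsr $F$ that is finite and $\Phi$-saturated modulo $E$-compressible. The compressibility comes straight from \cref{injectively_increasing_is_smooth} and \cref{compressible=supset_aperiodic+smooth}: an injectively increasing union of \fsrs is smooth, so its aperiodic part is compressible. Saturation then forces the dichotomy ``$E_0\vee F=E$'' versus ``every $F$-class is nontrivial'', and iterating in concrete quotients yields the aperiodic hyperfinite factor. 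No packing, no cost threshold, no asymptotic means are needed or used.
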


The definitions of the terms involved are given in \cref{sec:prelims}, and more precise versions of this theorem are stated in \cref{subsec:aperiodic-hyperfinite_decomposition} as \cref{decomp:aper:eq_rel} and \cref{decomp:aper:graphs}.

\bigskip

Our second and new result is the strengthening of \cref{intro:Gregs_lemma:integer_cost} that guarantees, in addition, that every generator of $\F_n$ acts ergodically. More precisely:

\begin{theorem}[Ergodic generators for cost attained]\label{intro:ergodic_generators_Gregs_lemma:integer_cost}
	If a countable Borel ergodic p.m.p. equivalence relation $E$ is treeable and has cost $n \in \N \cup \set{\w}$, then it is induced by an a.e. free p.m.p. action of $\F_n$ such that each of the $n$ standard generators of $\F_n$ acts ergodically.
\end{theorem}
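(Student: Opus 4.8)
The plan is to revisit the proof of \cref{intro:Gregs_lemma:integer_cost} (in its full-generality form \cref{Gregs_lemma}) and to enhance it so that it additionally delivers ergodicity of the generators. Note first that the conclusion of \cref{intro:ergodic_generators_Gregs_lemma:integer_cost} is equivalent to the existence of a Borel treeing $G = \bigsqcup_{i < n} G_i$ of $E$ in which each $G_i$ is the graph of an aperiodic automorphism $\gamma_i$ of $(X,\mu)$ that is moreover \emph{ergodic}: such a treeing is exactly the Cayley graphing of the a.e.\ free p.m.p.\ action of $\F_n$ generated by the $\gamma_i$ (the action being a.e.\ free precisely because the treeing is acyclic), and the $i$-th standard generator acts ergodically if and only if $\gamma_i$ does. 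Recall that the proof of \cref{Gregs_lemma} produces such a treeing by performing, $\w$-many times with suitable bookkeeping, the task of building a finite subequivalence relation $F$ transverse to the already-committed structure and moving allocated edges of a treeing of $E$ into a graphing of $F$, the moved-in pieces assembling in the limit into the function graphs $G_i$. The proposal is to run the same process, but to fold into the bookkeeping, for each $i < n$ and each member $A$ of a fixed countable family of Borel sets generating the $\sigma$-algebra of $X$, a requirement demanding that $A$ fail to be $E_{G_i}$-invariant; satisfying all of these makes every $\gamma_i$ ergodic.

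The key new input, and the main obstacle, is a single lemma driving the ergodicity requirements: a constructive, purely descriptive set theoretic version of Tucker-Drob's theorem asserting, roughly, that given a treeing of $E$, the already-committed acyclic subgraph, a unit of uncommitted cost available for allocation, and a Borel set $A$, one can, via a well-iterated edge slide of the treeing that leaves the committed subgraph fixed, extend the piece currently being built for $\gamma_i$ across $A$ — connect a part of it lying in $A$ to a part lying outside $A$ — thereby destroying the $E_{G_i}$-invariance of $A$. That the required connecting paths exist at all uses the ergodicity of $E$; the content of the lemma is that they can be realized (i) by edge slidings, so that no cycles appear and $E_G = E$ is preserved (\cref{sec:edge_sliding}), (ii) uniformly over all $E$-classes in a Borel way, and (iii) within the available cost. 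For (iii) the cost bookkeeping is essential: each ergodicity requirement for $\gamma_i$ is charged against a portion of the uncommitted cost of the treeing, which equals $n$ minus the number of generators already completed and so stays $\ge 1$ throughout, the tightest case being the last generator when $n$ is finite; it is here that our cost threshold for finitizing the components of nonhyperfinite graphs is used at full strength. For (ii) — re-routing edges coherently across every $E$-class at once, Borel-measurably and without global cancellation — the plan is to use an \emph{asymptotic mean} on the treeing to select the directions along which edges are pushed, to organize the finite approximations as packed (and, where needed, saturated) disjoint Borel families as in \cref{subsec:saturated-fsr}, and to move the allocated edges by (well-iterated) edge slidings.

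It then remains to take the limit. For finite $n$ the process stabilizes once all $n$ generators are completed and all of their ergodicity requirements are met, and then $G = \bigsqcup_{i<n} G_i$ is the desired treeing. For $n = \w$, a fair bookkeeping works on every generator and every requirement cofinally often; the infinitely many edge slides compose to a single well-iterated edge slide (which is exactly why that notion is set up to tolerate infinite iteration); the finite approximations of each $\gamma_i$ increase to an aperiodic automorphism; and the ergodicity of each $\gamma_i$, secured at the stages devoted to it, persists through the increasing limit. Finally, as observed in the introduction, for the single generator $\gamma_1$ one may instead meet the ergodicity requirements by invoking Tucker-Drob's (much stronger, non-constructive) theorem verbatim; the point of the construction above is that its constructive weak form handles all of the generators simultaneously, which is what the strengthened statement requires.
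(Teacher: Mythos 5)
There is a genuine gap at the very start of your reduction. You propose to secure ergodicity of each $\gamma_i$ by demanding, for every set $A$ in a fixed countable family generating the Borel $\sigma$-algebra, that $A$ fail to be $E_{G_i}$-invariant, and you assert that satisfying these countably many requirements makes $\gamma_i$ ergodic. That implication is false: an equivalence relation can be non-ergodic while carrying no invariant set from any prescribed countable generating family. (For instance, take the circle rotation $\gamma \times \mathrm{id}$ on $[0,1)^2$: the vertical strips $[0,1)\times B$ are invariant, but no rational box is, even though rational boxes generate the $\sigma$-algebra.) Destroying invariance of the $A$'s is therefore not an ergodicity certificate, and everything built on top of it (the single-requirement-per-$A$ lemma, the ``extend the piece across $A$'' mechanism) is aimed at the wrong target. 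What the paper actually uses is a genuinely countable certificate: the Cauchy property of \cref{defn:Cauchy}, and its equivalence with relative ergodicity in \cref{char_of_ergodicity} --- one works with a countable $L^1$-dense family $\DC$ of bounded functions and arranges, at each stage, that the oscillation of the $F$-averaged function drops by a factor of $\tfrac{2}{3}$ (\cref{finite_factor:averaging}); this forces the hyperfinite means $\meanf{F}$ to become a.e.\ constant for every $f\in\DC$, which is what ergodicity of $E_F$ amounts to.

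A second structural point you miss is the asymmetry in the paper's argument. The heavy machinery you enumerate --- asymptotic means, packed \fsrs, shortcutting, the finitizing edge-cut price threshold --- is spent entirely on getting the \emph{first} ergodic hyperfinite piece (\cref{ergodic_hyp_slid-subgraph}), the constructive stand-in for Tucker-Drob. Once such a base $E_0$ is in hand, the remaining generators are obtained by \cref{decomp:erg_over_base} via the much simpler \cref{hyperfinite_factor:ergodic,finite_factor:averaging}: the ergodicity of $E_0$ lets each new $F$-class be assembled inside the $G$-neighborhood of a single $E_0$-class, so no long railways, no shortcutting, and no $\fep$-budget are needed there. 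Your ``uniform'' process that charges each ergodicity requirement for each $\gamma_i$ against the uncommitted cost conflates the two regimes, and it is precisely the first piece, not the last, that saturates the cost accounting; the claim that the constructive weak Tucker-Drob is what ``handles all of the generators simultaneously'' inverts the logic of the paper.
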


Again, what we actually prove is an ergodic hyperfinite decomposition theorem for graphs up to a well-iterated edge sliding, whose statement is roughly as follows (see also \cref{decomp:erg}):

\begin{theorem}[Ergodic hyperfinite decomposition]\label{intro:decomp:erg:graphs}
	Let $G$ be a locally countable p.m.p. ergodic Borel graph. Up to replacing $G$ with a spanning subgraph of a well-iterated edge slide of $G$, there is $N \in \N \cup \set{\w}$ with $N \le C_\mu(G)$ and a Borel partition
	\[
	G = \bigdisjU_{n = 0}^N G_n \text{ a.e.},
	\] 
	where, for each $0 \le n < N$, $G_n$ is an ergodic hyperfinite Borel graph and, if $N < \w$, $G_N$ is a component-finite Borel graph (possibly empty).
\end{theorem}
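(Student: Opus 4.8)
The plan is to adapt the construction underlying the aperiodic hyperfinite decomposition (\cref{decomp:aper:graphs}) --- which proceeds by well-iterated edge slidings --- by weaving into it, piece by piece, a family of ``mixing requirements'' that force each resulting hyperfinite graph $G_n$ to be ergodic; ergodicity of the ambient $G$ is what collapses the base partition $X=\bigsqcup_N X_N$ of that theorem to a single index $N$.

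\smallskip
\emph{Step 1 (coarse structure).} Since $\mu$ is $E_G$-invariant, every $E_G$-compressible set is $\mu$-null, so \cref{decomp:aper:graphs} applies and gives, after replacing $G$ by a spanning subgraph of a well-iterated edge slide of $G$ --- an operation that preserves $E_G$ up to an ($E_G$-compressible, hence null) set --- Borel partitions $G=\bigsqcup_{n\in\N}G_n$ and $X=\bigsqcup_N X_N$ with the stated hyperfiniteness and component-size properties holding $\mu$-a.e. Each $X_N$ is $E_G$-invariant, so ergodicity of $E_G$ forces exactly one $X_{N_0}$ to be conull; discarding a null set we may take $X=X_{N_0}$, hence $G=\bigsqcup_{n=0}^{N_0}G_n$ a.e.\ with $G_n$ component-infinite hyperfinite for $n<N_0$ and $G_{N_0}$ component-finite when $N_0<\w$. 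Put $N:=N_0$. For $n<N$ the relation $E_{G_n}$ is aperiodic, so $C_\mu(G_n)\ge c_\mu(E_{G_n})\ge1$; since cost is additive over edge-disjoint subgraphs and is not increased by passing to a spanning subgraph of a well-iterated edge slide, $N\le\sum_{n\le N}C_\mu(G_n)\le C_\mu(G)$.

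\smallskip
\emph{Step 2 (ergodicization).} The substance is to strengthen Step 1 so that, moreover, every $G_n$ with $n<N$ is ergodic. Rather than run Step 1 and then repair, I would build the single master well-iterated edge slide of $G$ by $\w$-stage bookkeeping in which, alongside the finitization tasks, one services countably many requirements $R_{n,k}$ ($n<N$, $k\in\N$) asserting that the hyperfinite relation $E_{G_n}=\bigcup_kF^k_n$ being assembled eventually equidistributes the mass of the $k$-th member of a fixed countable measure-dense family of Borel sets. Meeting $R_{n,k}$ amounts to sliding finitely many edges already present in $G$ into a finite graphing of a nontrivial finite subequivalence relation $F^k_n\supseteq F^{k-1}_n$ that transfers the prescribed amount of mass across the $k$-th set; the guarantee that \emph{enough} such edges are available at every stage, uniformly over $X$, is exactly what asymptotic means on graphs, packed disjoint Borel families, and the cost threshold for finitizing the connected components of nonhyperfinite graphs supply --- this is the content of the constructive weak form of Tucker-Drob's theorem. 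When $N<\w$ the edges never claimed by any $F^k_n$ are collected into $G_N$, which stays component-finite since only finitely many finite pieces are ever built around each vertex; when $N=\w$ there is no leftover term.

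\smallskip
\emph{Main obstacle.} Essentially all the work is in Step 2: (i) proving the constructive weak Tucker-Drob statement at all --- i.e.\ producing near each vertex, in a uniformly Borel way, the surplus edges needed to realize a prescribed mixing move without breaking hyperfiniteness, which is where asymptotic means and packed families do the real work --- and (ii) running the requirements $R_{n,k}$ for all $n<N$ simultaneously against the finitization tasks while keeping the cumulative edge slide \emph{well-iterated}, i.e.\ convergent, so that no edge is displaced infinitely often and the limit graph decomposes as claimed. Step 1 and the bound $N\le C_\mu(G)$, by contrast, are routine given the results already established.
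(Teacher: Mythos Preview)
Your Step 1 is fine, but your Step 2 misses the paper's key structural insight and, as written, does not constitute a proof.

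The paper's argument is modular, not monolithic. First, \cref{intro:ergodic_hyp_slid-subgraph} (equivalently, \cref{ergodic_hyp_slid-subgraph}) is applied \emph{once} to obtain, after a well-iterated edge slide, a single ergodic hyperfinite Borel subgraph $G_0 \subseteq G$. This is the only place where asymptotic means, packed \fsrs, and the finitizing edge-cut price appear. Second, with the ergodic base $E_0 \defeq E_{G_0}$ in hand, the remaining factors $E_1, E_2, \ldots$ are produced by \cref{intro:decomp:erg_over_base:graphs} (equivalently, \cref{decomp:erg_over_base}), whose proof is a straightforward iteration of \cref{finite_factor:averaging}. That lemma is \emph{much} easier than the nonhyperfinite machinery precisely because the ergodicity of $E_0$ guarantees (see \cref{claim:inf-many_types}) that every $E_0$-class already contains infinitely many points of each needed averaging type; one never has to look beyond the $G$-neighborhood of a single $E_0$-class, and a plain maximal \fsr suffices --- no packing, no cost threshold.

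Your proposal collapses these two stages into a single interleaved construction servicing requirements $R_{n,k}$ for all $n$ simultaneously. The gap is that you identify ``enough edges are available to realize a prescribed mixing move'' with ``the content of the constructive weak form of Tucker-Drob's theorem,'' but that theorem produces \emph{one} ergodic hyperfinite subgraph; it does not, as a black box, supply the uniform edge-availability you need to ergodicize an unbounded family $(G_n)_{n<N}$ at once while keeping the $G_n$ pairwise transverse. Making your scheme work for $n \ge 1$ would require either re-running the full nonhyperfinite argument for each factor (wasteful and delicate to keep well-iterated), or --- as the paper does --- recognizing that once $G_0$ is ergodic, the join $\bigvee_{k<n} E_{G_k}$ is automatically ergodic for every $n \ge 1$, which trivializes the mixing requirements for all subsequent factors.
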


This theorem is a result of the following two theorems put together (see \labelcref{decomp:erg_over_base} and \labelcref{ergodic_hyp_slid-subgraph} for more precise statements).

\begin{theorem}[Ergodic hyperfinite decomposition over an ergodic base]\label{intro:decomp:erg_over_base:graphs}
	Let $G_0 \subseteq G$ be locally countable p.m.p. ergodic Borel graphs. Up to replacing $G$ with a well-iterated $G_0$-based\footnote{This means that in the process of sliding the edges $G_0$ remains fixed.} edge slide of $G$ and further replacing it with a spanning Borel subgraph containing $G_0$, there is $N \in \N \cup \set{\w}$ with $N \le C_\mu(G)$ and a Borel partition
	\[
	G = \bigdisjU_{n = 0}^N G_n \text{ a.e.},
	\] 
	where, for each $1 \le n < N$, $G_n$ is an ergodic hyperfinite Borel graph and, if $N < \w$, $G_N$ is a component-finite Borel graph.
\end{theorem}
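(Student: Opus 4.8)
\emph{Overview.} The plan is to produce the ergodic hyperfinite pieces $G_1, G_2, \dots$ one at a time, peeling each off $G$ by a single operation applied over a growing base, and then to dispose of what is left. Concretely, I would iterate the following \emph{one-step peel} (a weak, constructive form of Tucker--Drob's theorem, relative to a base): given $B \subseteq G$ with $G$ p.m.p.\ ergodic (initially $B = G_0$), find a $B$-based well-iterated edge slide of $G$, a spanning Borel subgraph containing $B$, and inside it an ergodic $\mathbb{Z}$-graph $G'$ — the orbit graph of an ergodic aperiodic Borel transformation, so that $E_{G'} \cong E_0$ and $C_\mu(G') = 1$ — with $G' \cap B = \0$; then set $B' \defeq B \cup G'$ and repeat with $B'$ in place of $B$. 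Since a single edge slide preserves $C_\mu$ and passing to a subgraph can only decrease it, and since each peel commits exactly one unit of cost, after $k$ peels the leftover $G \setminus B$ has cost $C_\mu(G) - C_\mu(G_0) - k$; thus if $C_\mu(G) = \w$ the iteration runs through all $n \in \N$ and $G = \bigdisjU_{n \in \N} G_n$ a.e.\ with $N = \w$, while if $C_\mu(G) < \w$ it halts after finitely many steps, producing, say, the pieces $G_1, \dots, G_{N-1}$. When it halts, the leftover $R$ has small cost (or is hyperfinite over the current base), and I would invoke the cost threshold for finitizing the connected components of nonhyperfinite graphs — using the aperiodic hyperfinite decomposition (\cref{intro:decomp:aper:graphs}) to organize, within individual $G$-components, which part of $R$ is component-infinite and which is component-finite — to pass, via one further base-fixed well-iterated edge slide and a spanning subgraph, to a component-finite Borel graph $G_N$. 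Composing all the nested base-fixed slidings (each of which is in particular $G_0$-based) into one $G_0$-based well-iterated edge slide is legitimate precisely because the formalism of \cref{sec:edge_sliding} is built to make such $\w$-fold iterations converge.

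\emph{The cost bound.} This falls out of the construction: $C_\mu$ of the final graph is at most $C_\mu(G)$; since $E_{G_0}$ is ergodic and p.m.p.\ it is aperiodic, so $C_\mu(G_0) \ge c_\mu(E_{G_0}) \ge 1$, and each $G_n$ with $1 \le n < N$ has $E_{G_n} \cong E_0$, so $C_\mu(G_n) \ge c_\mu(E_0) = 1$. Summing over the partition, $C_\mu(G) \ge C_\mu(G_0) + \sum_{1 \le n < N} C_\mu(G_n) + C_\mu(G_N) \ge 1 + (N-1) + 0 = N$.

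\emph{The one-step peel.} This is where the real work lies. Given $B \subseteq G$, I would construct a hyperfinite \emph{ergodic} subequivalence relation $F \subseteq E_G$ transverse to $E_B$ (that is, $F \cap E_B = \Id_X$) together with a $\mathbb{Z}$-graphing of $F$ that can be realized — by a $B$-based well-iterated edge slide of $G$ — as a disjoint subgraph of $G$ of cost exactly one, in such a way that the remaining edges of $G$ still support the next peel. I would build $F = \bigcup_k F_k$ as an increasing union of finite Borel subequivalence relations transverse to $E_B$: at stage $k$, extend $F_{k-1}$ to $F_k$ using a \emph{packed} disjoint Borel subfamily — packed, rather than merely saturated as in the cost-attained argument of \cref{subsec:saturated-fsr} — of a suitable family $\Phi_k \subseteq \FinE$ of candidate extensions; the extra strength of packedness, together with an asymptotic-mean count on $G$ of the edges locally available, is exactly what should guarantee both that $F_k$ mixes the first $k$ members of a fixed countable generating sequence of Borel sets to within $\epsilon_k$ — so that $F = \bigcup_k F_k$ is ergodic in the limit — and that the running cost of the $\mathbb{Z}$-graphing of $F$ assembled so far stays equal to one while its complement in $G$ stays non-hyperfinite, i.e.\ keeps enough cost for the iteration to continue. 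Then I would use well-iterated edge sliding, fixing $B$ and the edges committed at stages $k' < k$, to move the chosen edges of $G$ onto the graphing of $F_k$, coherently in $k$.

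\emph{The main obstacle.} I expect the genuinely hard part to be the one-step peel, and within it the simultaneous control of three competing demands: ergodicity of the limit relation $F$, spending exactly one unit of cost, and leaving the remainder of $G$ still peelable. This balancing is the constructive replacement for the substantial probabilistic input in Tucker--Drob's proof, and it is precisely what the new notions of packed disjoint Borel families and asymptotic means on graphs are introduced to handle; getting the quantitative counting right so that all three demands hold at every stage $k$ is the crux. A secondary, more technical, obstacle will be the bookkeeping for the infinitely many nested base-fixed slidings and for the terminal finitization of the leftover — handled, respectively, by the well-iterated edge sliding machinery of \cref{sec:edge_sliding} and by the cost threshold for finitizing components together with the aperiodic hyperfinite decomposition (\cref{intro:decomp:aper:graphs}).
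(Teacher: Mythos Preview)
Your high-level architecture --- iterate a ``peel'' that produces one ergodic hyperfinite factor transverse to the current base, then handle the remainder --- matches the paper's. But you have misidentified where the work lies, and in doing so you import heavy machinery that this theorem does not need.

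The point of the hypothesis is that the base $G_0$ is already \emph{ergodic}. The paper exploits this directly in the one-step peel (Lemma~\ref{finite_factor:averaging} feeding into Lemma~\ref{hyperfinite_factor:ergodic}): because $E_{G_0}$ is ergodic, every $E_{G_0}$-class contains infinitely many points of each ``type'' (fixed rational $f$-value and weight), so one can assemble, inside the $G$-neighborhood of a single $E_{G_0}$-class, a central set $U$ whose $f$-mean lands in the middle third of the oscillation. A \emph{maximal} (not saturated, not packed) \fsr\ of such $U$'s already does the job; the $\tfrac{2}{3}$-requirement is met immediately, and iterating over a dense family of $f$'s gives ergodicity of the hyperfinite $F$. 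The termination case is simply ``$E = E_0 \transvee F$ with $F$ finite'' --- no separate finitization of a leftover is needed, and the aperiodic decomposition is not invoked.

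By contrast, you propose packed \fsr s, asymptotic means, and the finitizing edge-cut threshold. Those tools are introduced precisely for \cref{intro:ergodic_hyp_slid-subgraph}, the case with \emph{no} ergodic base, where $F$-classes may have to be built out of points far apart in $G$, edges must travel through long railways whose total cost must be controlled below $\fep(G)$, and packing is what guarantees progress when only hyperfinite subgraphs are available for sliding. None of that arises here: with an ergodic $G_0$ under you, the construction is local and the ``three competing demands'' you worry about never compete. So your plan is not wrong so much as aimed at the wrong theorem; carried out as written it would re-prove \cref{intro:ergodic_hyp_slid-subgraph} inside a context that makes it unnecessary.
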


\begin{theorem}[Ergodic hyperfinite edge slid subgraph]\label{intro:ergodic_hyp_slid-subgraph}
	For any locally countable p.m.p. ergodic Borel graph $G$, up to replacing $G$ with a well-iterated edge slide, there is an ergodic hyperfinite Borel subgraph $H \subseteq G$.
\end{theorem}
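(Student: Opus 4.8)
The plan is to construct $H$ from below, as an increasing union of Borel forests with \emph{finite} connected components, so that its connectedness relation is automatically hyperfinite, and to secure ergodicity by a mixing argument while realizing the whole construction inside an iterated edge slide of $G$ using the saturation technology of \cref{subsec:saturated-fsr} and the sliding technology of \cref{sec:edge_sliding}. We may assume $\mu$ is nonatomic, since otherwise ergodicity forces $\mu$ to be a point mass and the statement is trivial; thus $E \defeq E_G$ is aperiodic, ergodic, and p.m.p. Concretely, I would build an increasing sequence $\Id_X = R_0 \subseteq R_1 \subseteq \cdots$ of finite Borel subequivalence relations of $E$, together with Borel forests $H_0 \subseteq H_1 \subseteq \cdots$ with finite components, where $H_n$ graphs $R_n$ and $H_n$ is a subgraph of an edge slide $G_n$ of $G$ (with $G_0 = G$ and $G_{n+1}$ an edge slide of $G_n$ fixing $H_n$), arranged so that $\sup_x \lvert [x]_{R_n} \rvert \to \infty$ and, for a fixed sequence $(A_k)_{k \in \N}$ of Borel sets dense in the measure algebra of $(X,\mu)$, $\mathbf{E}_\mu[\mathbf 1_{A_k} \mid R_n] \to \mu(A_k)$ in $L^1(\mu)$ for every $k$. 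Granting this, $H \defeq \bigcup_n H_n$ is a Borel forest graphing the aperiodic, hyperfinite relation $\bigcup_n R_n$; this relation is ergodic (a standard consequence of the displayed convergence, by approximating a would-be invariant set by the $A_k$'s); and $H$ is contained in the well-iterated edge slide $G_\infty$ of $G$ obtained as the limit of the $G_n$'s.

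For the inductive step I would fix a bookkeeping bijection $\N \to \N \times \N$ and, at the stage handling the pair $(k,j)$, choose $R_{n+1}$ so that $\mu$-almost every $R_{n+1}$-class has $A_k$-density within $2^{-j}$ of $\mu(A_k)$. Since $\mathbf{E}_\mu[\mathbf 1_{A_k} \mid R_{m+1}] = \mathbf{E}_\mu[\mathbf{E}_\mu[\mathbf 1_{A_k} \mid R_m] \mid R_{m+1}]$ and conditional expectation contracts toward the mean, the $L^2$-distance of $\mathbf{E}_\mu[\mathbf 1_{A_k} \mid R_m]$ to the constant $\mu(A_k)$ is nonincreasing in $m$; so once it is driven below $2^{-j}$ it stays there, and visiting every $(k,j)$ yields the required convergence. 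To produce such an $R_{n+1}$ we must partition $\mu$-a.e.\ $E$-class into finite ``bunches'' of $R_n$-classes, each bunch being a union of $R_n$-classes whose $A_k$-densities average to within $2^{-j}$ of $\mu(A_k)$ and having at least $n$ members (to keep the union aperiodic). Such bunches are plentiful because the $R_n$-classes are finite, hence $\mu$-null, and $E$ is ergodic, so within $\mu$-a.e.\ $E$-class the $R_n$-classes realize the full range of $A_k$-densities in the correct proportions; a maximal disjoint Borel family of admissible bunches --- a saturated \fsr\ as in \cref{subsec:saturated-fsr}, modulo an $E$-compressible (hence, here, null) set --- then covers $\mu$-almost every point and defines $R_{n+1}$.

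It remains to realize each bunch inside a slide of the graph. If a bunch consists of $R_n$-classes $C_1, \dots, C_m$, we must adjoin $m-1$ new forest edges joining the $C_i$ into a single finite tree sitting over $\bigsqcup_{i} H_n \rest{C_i}$, and I would produce each such edge by an edge slide. Since the ambient $E$-class is connected in $G_n$ while $H_n$ has only finite components, that $E$-class contains infinitely many edges outside $H_n$ --- a connected graph differing from a spanning forest with finite components by only finitely many edges would be disconnected --- so we may pick one such edge and slide it, walking each of its endpoints in turn along $G_n$-paths (which use the remaining edges, including those of $H_n$, merely as rails and so leave them fixed), until it connects $C_i$ to $C_{i+1}$; by \cref{sec:edge_sliding} this preserves $E$, introduces no cycle, and is $\mu$-preserving. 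Carrying out the countably many walks of a given stage in sub-rounds, each chosen from a maximal disjoint Borel family of mutually non-interfering walk-systems, keeps the slides from colliding; and since a relocated edge becomes part of the forest and is never moved again, each edge of $G$ is relocated only finitely often, so the composite of all these slides is a genuine well-iterated edge slide $G_\infty$ of $G$ with $H = \bigcup_n H_n \subseteq G_\infty$.

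The hardest part is the coupling of the two halves of the argument. The graph-theoretic half --- turning the abstract merging of $R_n$-classes into an honest well-iterated edge slide --- is largely routine once one has the observation that every $E$-class carries infinitely many edges outside the current forest, together with the disjointification of walk-systems. The genuinely delicate point is the ergodic-theoretic half: showing that balanced bunches are always available in the abundance needed to force $\mathbf{E}_\mu[\mathbf 1_{A_k} \mid R_n] \to \mu(A_k)$ for every $k$ --- equivalently, that the construction can always be continued so that $R_n$ never becomes ``trapped'' with respect to some $A_k$ --- which is exactly where ergodicity of $E$ and the saturation lemma of \cref{subsec:saturated-fsr} are essential. (An alternative route to this half would be to arrange instead that $\bigcup_n R_n$ absorb a fixed ergodic hyperfinite subequivalence relation of $E$, whose existence is classical; the bunch-balancing formulation is more self-contained and stays closer to the tools developed here.)
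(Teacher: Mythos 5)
Your high-level architecture — iterate a finite Borel subequivalence relation, slide edges into a forest graphing it, and drive conditional averages to their global means for a dense family of sets, using the Cauchy characterization of ergodicity — is the same scaffolding the paper uses (via \cref{char_of_Cauchy,char_of_ergodicity}). But your proposal passes over the two interlocking difficulties that the paper identifies as the whole point of the theorem and builds substantial machinery to resolve.

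The first is railway accumulation. A well-iterated (i.e., $G$-conservative) edge sliding must, by definition, fix the railways of earlier stages forever: once an edge has served as a rail at stage $n$, it can never be moved at any later stage. In your scheme the rails are $G_n$-paths between $R_n$-classes you want to merge, and nothing keeps those classes at bounded distance, so the railways can be long and their union across all stages can have full cost — at which point what remains movable is component-finite and the iteration stalls. Your closing remark that ``each edge of $G$ is relocated only finitely often, so the composite of all these slides is a genuine well-iterated edge slide'' controls only the moved edges, not the accumulated rails. The paper handles this with the finitizing edge-cut price $\fep(G)$, which is strictly positive exactly when $G$ is non-$\mu$-hyperfinite (\cref{fep=0->fvp=0->hyperfinite}), and with shortcutting (\cref{sec:shortcutting}): at stage $n$ all sliding happens inside one cheap hyperfinite Borel subgraph $H \subseteq G$ with $C_\mu(H) < \e_n$, so the total cost frozen over all stages is summable and stays below $\fep(G)$, guaranteeing the leftover graph is never component-finite.

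The second is the availability of balanced bunches. Your assertion that ``within $\mu$-a.e.\ $E$-class the $R_n$-classes realize the full range of $A_k$-densities in the correct proportions,'' so that a saturated \fsr\ of balanced bunches covers almost everything, is precisely the step that works in the simpler \cref{intro:decomp:erg_over_base:graphs} where an ergodic base $E_0$ is available (see \cref{finite_factor:averaging}, where ergodicity of $E_0$ is used to put infinitely many points of each sign into every $E_0$-class), and is exactly what fails here without that base. With no ergodic base, and with sliding confined to a cheap hyperfinite $H$ as above, there is no reason a balanced bunch is reachable from a given point, and saturation does not rescue this: a maximal family of balanced bunches can miss a positive-measure set. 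The paper does not try to achieve centrality in one shot. It works with the interval of asymptotic means along $G$ (\cref{sec:asymptotic_means}), the intermediate value property (\cref{intermediate_value_property}), and a \emph{packed} rather than merely saturated \fsr\ (\cref{subsec:packed-fsr}), to show that after packing the uncovered part of $H$ is either cofinite in measure or entirely of one sign, and then extracts only a factor-$\frac{2}{3}$ oscillation decrease each round, which is enough for Cauchyness. Finally, the ``alternative route'' you float — absorbing a fixed ergodic hyperfinite subequivalence relation of $E$, ``whose existence is classical'' — is circular: that existence is a close relative of Tucker-Drob's \cref{Robins_theorem}, and providing a direct proof of precisely such a statement is the purpose of this theorem.
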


Just like for \cref{intro:decomp:aper:graphs}, the proof of \cref{intro:decomp:erg_over_base:graphs} is an $\om$-iteration of the following task: Build a finite subequivalence relation $F \subseteq E_G$ transverse to $E_{G_0}$, while allocating enough edges of $G$ to be moved into a graphing of $F$. However, this $F$ needs to satisfy an additional property to ensure ergodicity: for an a priori given bounded function $f : X \to \R$, make sure that the difference of averages of $f$ over any two $F$-classes is a fixed proportion (say $\frac{2}{3}$) of the oscillation of $f$; we refer to this below as the \emph{$\frac{2}{3}$-requirement}. The ergodicity of $G_0$ guarantees that, for each point $x \in X$, its $F$-class can be formed within the $G$-neighborhood of the $G_0$-connected component of $x$, without having to look farther as one has to do in \cref{intro:decomp:aper:graphs}. Hence, even though there are more conditions on $F$, the proof of \cref{intro:decomp:erg_over_base:graphs} is even simpler than that of \cref{intro:decomp:aper:graphs}; in particular, it doesn't require the use of saturated \fsrs as any maximal \fsr already does the job.

\medskip

As for \cref{intro:ergodic_hyp_slid-subgraph}, it is a weaker version of the following very powerful theorem due to Tucker-Drob \cite{TuckerDrob:erg_hyp_subgraph} proven prior to our proof of \cref{intro:ergodic_hyp_slid-subgraph}:

\begin{theorem}[Tucker-Drob 2016]\label{Robins_theorem}
	Any locally countable p.m.p. ergodic Borel graph admits an ergodic hyperfinite Borel subgraph.
\end{theorem}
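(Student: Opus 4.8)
The plan is to run a recursion of length $\w$. By ergodicity of $E_G$, one first reduces to the case that $E_G$ is nonhyperfinite: in the contrary case $G$ is itself an ergodic hyperfinite subgraph of $G$, so not even an edge slide is needed. Assume then that every $G$-component is nonhyperfinite. I would build an increasing sequence $\Id_X = F_0 \subseteq F_1 \subseteq \cdots$ of finite Borel subequivalence relations of $E_G$ together with an increasing sequence $\0 = H_0 \subseteq H_1 \subseteq \cdots$ of finite Borel graphs satisfying $E_{H_k} = F_k$, maintaining at each stage $k$ the invariant that $\bigcup_{j \le k} H_j$ is a subgraph of an iterated edge slide of $G$. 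In the limit, $H \defeq \bigcup_k H_k$ is then a subgraph of a well-iterated edge slide of $G$ --- the calculus of well-iterated edge slides from \cref{sec:edge_sliding} being what amalgamates the stagewise slides into a single one --- and $E_H = E_\infty \defeq \bigcup_k F_k$. Being an increasing union of finite Borel equivalence relations, $E_\infty$ is hyperfinite, so $H$ is hyperfinite; the whole task is thus to engineer ergodicity of $E_\infty$.

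For ergodicity, fix a countable family $(f_i)_{i \in \N}$ of bounded Borel functions dense in $L^1(X, \mu)$ and schedule the recursion so that each $f_i$ is handled at infinitely many stages. Write $\bar f^{\,k}(x)$ for the average of $f$ over the $F_k$-class of $x$, a genuine Borel function since $F_k$-classes are finite. At a stage $k$ handling $f \defeq f_i$, the point is to choose $F_k \supseteq F_{k-1}$, by grouping several $F_{k-1}$-classes at a time into one larger finite class along $G$, so as to meet a \emph{mixing requirement}: the essential oscillation $\mathrm{osc}(\bar f^{\,k})$ (essential supremum minus essential infimum) is at most a fixed fraction $\lambda < 1$ of $\mathrm{osc}(\bar f^{\,k-1})$. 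This is the iterated analogue of the $\frac{2}{3}$-requirement used in the proof of \cref{intro:decomp:erg_over_base:graphs}. Since the $\sigma$-algebras of $F_k$-invariant Borel sets decrease with $k$ to that of $E_\infty$-invariant sets, reverse martingale convergence gives $\bar f_i^{\,k} \to g_i$ both $\mu$-a.e.\ and in $L^1$, where $g_i$ is the conditional expectation of $f_i$ on the $\sigma$-algebra of $E_\infty$-invariant sets; as $f_i$ is handled infinitely often, $\mathrm{osc}(\bar f_i^{\,k}) \to 0$, so $g_i$ is $\mu$-a.e.\ constant. Density of $(f_i)$ in $L^1$ then forces every $E_\infty$-invariant $L^1$ function, which equals its own conditional expectation and hence is an $L^1$-limit of the constants $g_i$, to be $\mu$-a.e.\ constant, i.e.\ $E_\infty$ is ergodic.

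Everything thus reduces to a single recursion step: from $F_{k-1}$, $H_{k-1}$ and $f$, produce $F_k$ and $H_k$ in a Borel way, meeting the mixing requirement, with $H_k$ realized inside a further iterated edge slide of $G$. Grouping $F_{k-1}$-classes, uniformly and Borel-measurably, into clusters over which the averages of $f$ become equalized to the required degree is where \emph{packed disjoint Borel families} enter: one extracts a maximal disjoint family of admissible clusters that is moreover \emph{packed}, which is exactly what makes the clusters large enough to dilute $f$ --- this is the ergodic strengthening of the saturated \fsr used for \cref{intro:Gregs_lemma:integer_cost}, where plain maximality sufficed. The averages of $f$ over the large $G$-neighborhoods used to form a cluster are computed through an \emph{asymptotic mean on $G$}, which keeps the mixing estimate stable and lets it survive the passage to the limit, and it is here that ergodicity of $E_G$ is exploited. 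Finally, $H_k$ is obtained from $H_{k-1}$ by sliding $G$-edges into the new links of $F_k \setminus F_{k-1}$, so one needs a stock of fresh $G$-edges available for this at every stage, never exhausted; supplying it is precisely the role of the \emph{cost threshold for finitizing nonhyperfinite components} --- a nonhyperfinite $G$-component carries cost exceeding that threshold, leaving a definite reserve of edges to slide into place. The hard part, I expect, is exactly this: meeting the Borel grouping and oscillation-contraction demands simultaneously while honoring the edge budget, and then fusing the stagewise slides into one well-iterated edge slide of $G$; the cost-threshold estimate for nonhyperfinite graphs is the new ingredient that makes it go through.
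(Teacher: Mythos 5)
Your proposal proves the wrong theorem. Read the conclusion you yourself state: ``In the limit, $H \defeq \bigcup_k H_k$ is then a subgraph of a well-iterated edge slide of $G$.'' But \cref{Robins_theorem} asserts something stronger: that $G$ \emph{itself} admits an ergodic hyperfinite Borel subgraph, with no sliding. A subgraph of a well-iterated edge slide of $G$ is in general \emph{not} a subgraph of $G$ --- sliding genuinely relocates edges --- and the entire reason the paper introduces edge slidings in the first place is that its descriptive-set-theoretic construction cannot place the required edges inside $G$. What you have outlined is therefore (a sketch of) the paper's \cref{ergodic_hyp_slid-subgraph}, i.e.\ \cref{intro:ergodic_hyp_slid-subgraph}, the \emph{weaker} version of Tucker-Drob's theorem. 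The paper is explicit about this: it states \cref{Robins_theorem} without proof, attributing it to Tucker-Drob and noting that his proof relies on the Hutchcroft--Nachmias indistinguishability theorem for the Wired Uniform Spanning Forest plus further probabilistic techniques, and that the authors' constructive argument only yields the slid version. Nothing in your recursion closes the gap between ``$H$ lives inside a slide of $G$'' and ``$H$ lives inside $G$,'' and no amount of bookkeeping with packed \fsrs, asymptotic means, or the finitizing edge-cut price will close it, since those tools are precisely calibrated to control the railways created by the slides rather than to eliminate the slides.

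A secondary point, concerning your sketch taken as a proof of the weaker statement: the outline tracks the paper's strategy (iterate an ergodic finite-factor step, contract essential oscillation by a fixed $\lambda < 1$, use packed \fsrs plus asymptotic means to find central clusters), but it elides the central technical obstruction that forces the shortcutting machinery of \cref{sec:shortcutting}. When the clusters must be assembled from points that are far apart in $G$, the edges being slid traverse long paths that must remain fixed in all later stages, and the cumulative cost of these railways threatens to exhaust $G$. The paper's remedy --- restrict each stage's sliding to a small hyperfinite subgraph of cost less than $\fep(G)$, and use shortcutting to still reach distant points --- is not merely ``a definite reserve of edges'' as you put it; it is the specific reason the non-hyperfiniteness hypothesis is usable at all. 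Without that mechanism the $\om$-length recursion does not survive the fusing step, even for the weaker theorem.
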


However, Tucker-Drob intricately derives \cref{Robins_theorem} from a difficult recent result in probability theory by Hutchcroft and Nachmias regarding indistinguishability of the Wired Uniform Spanning Forest (WUFS) \cite{Hutchcroft-Nachmias:indistinguishability}*{Theorem 1.1}, and the derivation itself makes use of further probabilistic techniques. This motivated the present authors to find a more constructive direct argument that uses only descriptive set theoretic methods and yields \cref{intro:ergodic_hyp_slid-subgraph}, which, although weaker than \cref{Robins_theorem}, suffices for many applications, in particular those in the present paper.

This being said, our proof of \cref{intro:ergodic_hyp_slid-subgraph} is also rather complicated and required further new tools, which are interesting in their own right. Before proceeding with a further discussion, we pose a natural (given \cref{intro:ergodic_generators_Gregs_lemma:integer_cost}) question, whose answer, however, we do not know.

\begin{question}
	If a countable Borel ergodic p.m.p. equivalence relation $E$ is treeable and has cost $n \in \N \cup \set{\w}$, is it induced by an a.e. free p.m.p. action of $\F_n$ such that each group element $\ga \in \F_n \setminus \set{1}$ acts ergodically?
\end{question}

\stoptocwriting
\subsection*{Discussion of the proof of \cref{intro:ergodic_hyp_slid-subgraph}}
\resumetocwriting

Suppose $G$ is not $\mu$-hyperfinite and the underlying probability space is $(X,\mu)$. As in the proof of \cref{intro:decomp:erg_over_base:graphs}, we perform $\om$-iterations of the task of building a finite subequivalence relation $F \subseteq E_G$, allocating edges of $G$ to be moved to graph $F$, and fulfilling the $\frac{2}{3}$-requirement mentioned above for \cref{intro:decomp:erg_over_base:graphs}.

However, unlike in \cref{intro:decomp:erg_over_base:graphs}, the $F$-classes might have to be built out of points that are far apart in $G$-distance, which makes the process of allocation and moving of the edges of $G$ harder, namely: the allocated edges travel through a long $G$-path (referred to as \emph{railway} below), which then has to stay fixed throughout the future iterations, making performing the latter even harder or actually impossible. Thus, we need to enforce a bound on the cost of edges lying on these railways. This is where the non-$\mu$-hyperfiniteness of $G$ is used and we exploit it via the following invariant, which we call \emph{finitizing edge-cut price}:
\[
\fep(G) 
\defeq 
\inf \set{C_\mu(H) : H \subseteq G \text{ Borel}, G \setminus H \text{ component-finite}}.
\]
It is easy to show that if $G$ is non-$\mu$-hyperfinite, then $\fep(G) > 0$, see \cref{fep=0->fvp=0->hyperfinite}. In fact, this characterizes non-$\mu$-hyperfiniteness for locally finite graphs, see \cref{char_of_mu_hyperfiniteness_via_cuts}. 

Throughout our $\om$-iteration, we ensure that the total cost of the edges lying on the railways is less than $\fep(G)$. This is done by allowing edge sliding only within a hyperfinite Borel subgraph $H \subseteq G$ (different for each iteration) and we refer to this process as \emph{shortcutting}. But now a new task arises: prove that allowing edge sliding only within a hyperfinite Borel subgraph of $G$ is still enough to build a required subequivalence relation $F$. This is done using a new technique, called \emph{packing}, of building \fsrs with a maximality property stronger than being saturated. This is developed in \cref{subsec:packed-fsr}.

In our first attempt to build $F$, we try to make the $f$-averages on $F$-classes fall in the middle third interval of $[\inf f, \sup f]$. What packing guarantees is that either (Case 1) no more $F$-classes can be formed with this property or (Case 2) $H \rest{\dom(F)}$ is component-finite. The latter is the success case because this means that the domain of $F$ is most of $X$.

In Case 1, we make heavy use of the convexity of averages: for disjoint finite sets $U,V$, the average of $f$ over $U \disjU V$ is a convex combination of that over $U$ and $V$. This phenomenon is exploited via another invariant we introduce: the \emph{set of asymptotic means of $f$ along $G$} (see \cref{defn:asymptotic_means}), which, by convexity, is a closed interval. We develop the theory of this invariant in \cref{sec:asymptotic_means}. 

It is now (at least intuitively) clear that in Case 1, we have shrunk the set of asymptotic means by a factor of $\frac{1}{3}$ because it is convex and doesn't intersect the middle third interval of $[\inf f, \sup f]$. This is enough to finish the construction of $F$ so that it fulfills the $\frac{2}{3}$-requirement, completing the proof.

\smallskip

Lastly, to keep the paper self-contained, we give a direct proof of a pointwise ergodic theorem for hyperfinite Borel equivalence relations in \cref{subsec:ptwise_erg_for_hyperfinite}; this, of course, follows from the pointwise ergodic theorem for $\Z$-actions, but it only takes a small modification of our argument to yield a short proof of the latter theorem as well. Furthermore, in \cref{subsec:Cauchy,subsec:char_of_rel_erg}, we state and prove a characterization of (\emph{relative}) \emph{ergodicity} for hyperfinite Borel equivalence relations in terms of a \emph{Cauchy condition}, whose rough statement appeared above in the $\frac{2}{3}$-requirement.

\stoptocwriting
\subsection*{Organization}
\resumetocwriting

\cref{sec:prelims} contains most of the terminology and notation we use, as well as some basic lemmas. In \cref{sec:edge_sliding}, we develop the theory of edge sliding, which is used throughout the paper. \cref{sec:fsr} is where saturated and packed \fsrs are defined and their existence is proved. In \cref{sec:hyperfinite_basics}, we quickly survey the basics of hyperfinite Borel equivalence relations and graphs, including a warm up application of edge sliding to turning an acyclic hyperfinite Borel graph into a Borel forest of (directed) lines. \cref{sec:decomps} is where all of the decomposition theorems (aperiodic and ergodic hyperfinite) are proved, except that \cref{intro:ergodic_hyp_slid-subgraph} is used as a black-box in the proof of the ergodic hyperfinite decomposition (\cref{decomp:erg}).

The rest of the paper is dedicated to \cref{intro:ergodic_hyp_slid-subgraph}. \cref{sec:hyp_asymp_means} contains the pointwise ergodic theorem for hyperfinite Borel equivalence relations, as well as its equivalence to the aforementioned Cauchy condition. In \cref{sec:asymptotic_means}, we define asymptotic means along graphs and discuss the properties of the set of these means as an invariant of the induced equivalence relation. The definition of finitizing edge-cut price $\fep(G)$ and its relation to hyperfiniteness are given in \cref{sec:finitizing_cuts}. The process referred to above as shortcutting is described in \cref{sec:shortcutting}. Finally, the proof of \cref{intro:ergodic_hyp_slid-subgraph} is given in \cref{sec:erg_hyp_slid-subgraphs}.

\begin{acknowledgements}
	We would like to thank Anton Bernshteyn and Andrew Marks for helpful discussions and suggestions.
\end{acknowledgements}


\section{Preliminaries}\label{sec:prelims}

Throughout, let $X$ be a standard Borel space; it will often be equipped with a Borel measure $\mu$. We let $\FinX$ denote the space of finite nonempty subsets of $X$, which derives its standard Borel structure from that of $X$.

The set $\N$ of natural numbers includes $0$, of course, and we put $\N^+ \defeq \N \setminus \set{0}$.

We use standard descriptive set theoretic terminology and notation, for which we refer the reader to \cite{Kechris-Miller}. Below we set up notation and terminology that is either not standard or requires emphasis.

\subsection{Equivalence relations}

In this paper we only consider countable Borel equivalence relations.

We denote by $\Id_X$ the identity relation on $X$. 

Let $E$ denote such a relation on $X$.

For a set $A \subseteq X$, denote by $[A]_E$ the \emph{$E$-saturation of $A$}, i.e. $[A]_E = \set{x \in X : \exists y \in A \; x E y}$, and by $(A)_E$ the \emph{$E$-hull of $A$}, i.e. $(A)_E \defeq X \setminus (X \setminus [A]_E)$.

Put $\Linf{E} \defeq \max_{x \in X} |[x]_E|$ and call $E$ \emph{bounded} if $\Linf{E} < \w$.

Call a set $A \subseteq X$ \emph{$E$-related} if it is contained in a single $E$-class. Denote the collection of all $E$-related finite nonempty subsets of $X$ by $\FinE$.

For an ideal $\IC \subseteq \Pow(X)$, we write ``a statement $P$ holds modulo $\IC$'' to mean that there is a Borel set $Z \in \IC$ such that $P$ holds on $X \setminus Z$, i.e. $P$ holds once all of the objects in $P$ are restricted to $X \setminus Z$. Most often, the ideal $\IC$ is $E$-invariant, i.e. $A \in \IC \imp [A]_E \in \IC$; examples include $E$-smooth, $E$-compressible, and $\mu$-null ideals, where $\mu$ is an $E$-quasi-invariant Borel measure.

Call $C \subseteq X$ an \emph{$E$-complete set} (also called \emph{$E$-sweeping out set} and \emph{$E$-complete section}) if its intersection with every $E$-class is nonempty; in other words, $[C]_E = X$.


Throughout, we use the following instance of the Luzin--Novikov theorem \cite{bible}*{18.10}.

\begin{lemma}[Uniform enumeration relative to a point]\label{relative_enum_of_each_class}
	For any countable Borel equivalence relation $E$ on $X$, there is a sequence $(\ga_n)_n$ of Borel functions $X \to X$ such that, for each $x \in X$, $[x]_E = \set{\ga_n(x) : n \in \N}$.
\end{lemma}

The following straightforward, yet useful, characterization of compressibility will be used below without mention, see \cite{DJK}*{2.5}.

\begin{lemma}[Characterization of compressibility via smoothness]\label{compressible=supset_aperiodic+smooth}
	A countable Borel equivalence relation $E$ is compressible if and only if it contains an aperiodic smooth Borel subequivalence relation.
\end{lemma}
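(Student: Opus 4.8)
The plan is to prove the two implications separately, working with the standard reformulation of compressibility: $E$ is compressible iff there is a Borel injection $f \colon X \to X$ with $f(x) \mathrel{E} x$ for all $x$ such that $Z \defeq X \setminus f(X)$ is an $E$-complete set.

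\emph{From an aperiodic smooth subequivalence relation to compressibility.} Suppose $F \subseteq E$ is smooth and aperiodic. Smoothness yields a Borel transversal $T$ of $F$; via \cref{relative_enum_of_each_class} and a standard first-occurrence trick, one then enumerates each $F$-class without repetition starting from its $T$-representative, i.e. obtains a Borel bijection $\varphi \colon T \times \N \to X$ with $\varphi(\set{t} \times \N) = [t]_F$ and $\varphi(t, 0) = t$ for every $t \in T$. Now $f$ defined by $f(\varphi(t, n)) \defeq \varphi(t, n + 1)$ is a Borel injection with $f(x) \mathrel F x$, hence $f(x) \mathrel E x$, and $X \setminus f(X) = \varphi(T \times \set 0) = T$; since $T$ meets every $F$-class and $F \subseteq E$, it meets every $E$-class, so $f$ witnesses that $E$ is compressible.

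\emph{From compressibility to an aperiodic smooth subequivalence relation.} Fix such an $f$ and put $Z \defeq X \setminus f(X)$. The key structural observation is that the Borel sets $Z_n \defeq f^n(Z)$, $n \in \N$, are pairwise disjoint: if $f^m(z) = f^n(z')$ with $z, z' \in Z$ and $m > n$, then injectivity of $f$ gives $f^{m - n}(z) = z'$, contradicting $z' \notin f(X)$. Hence each $x$ in $W \defeq \bigcup_{n \in \N} Z_n$ has a unique $z \in Z$ and $n \in \N$ with $x = f^n(z)$, which defines a Borel map $\rho_0 \colon W \to Z$ by $\rho_0(f^n(z)) = z$. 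Extend $\rho_0$ to a Borel $\rho \colon X \to Z$ by invoking \cref{relative_enum_of_each_class} to choose, for each $x \in X \setminus W$, some $\rho(x) \in Z \cap [x]_E$ (nonempty since $Z$ is $E$-complete), and set $F \defeq \set{(x, y) : \rho(x) = \rho(y)}$. Then $F \subseteq E$ because $\rho(x) \mathrel E x$; $F$ is smooth because $\rho$ is the identity on $Z$, making $Z$ a Borel transversal of $F$ (each class $\rho^{-1}(z)$ meets $Z$ exactly in $z$); and $F$ is aperiodic because $\rho^{-1}(z) \supseteq \set{f^n(z) : n \in \N}$ is infinite, injectivity of $f$ together with $z \notin f(X)$ ruling out repetitions along the forward orbit of $z$.

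The only step requiring care is the treatment of the ``leftover'' set $X \setminus W$ of points whose backward $f$-orbit never reaches $Z$. This set can genuinely be nonempty — e.g., for $X = \Z$ with $E$ the full relation, $f(n) \defeq 2n$ leaves $0$ out of $W$ — so the Luzin--Novikov selection is really needed to route these points into $F$-classes; but since $W$ already contains the $E$-complete set $Z$, every $F$-class is infinite regardless, so this is a minor technicality rather than a real obstacle, and the remainder is routine bookkeeping with the iterates of $f$.
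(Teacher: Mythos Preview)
Your proof is correct. Note, however, that the paper does not actually prove this lemma: it simply states it with a reference to \cite{DJK}*{2.5} and uses it as a black box throughout. So there is no ``paper's own proof'' to compare against; what you have written is the standard argument behind that citation, and both directions are handled cleanly. The only minor remark is that your illustrative example at the end (the single $E$-class on $\Z$) is a bit degenerate, but it does make the intended point that $W$ need not exhaust $X$.
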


\subsection{Graphs}

\subsubsection{Edges and graphs}

By an \emph{edge} we mean an element $e \defeq (x,y) \in X^2$ with $x \ne y$; put $-e \defeq (y,x)$ and call it the \emph{inverse} of $e$. Call $x$ the \emph{origin} and $y$ the \emph{terminus} of $e$ and denote them by $o(e)$ and $t(e)$, respectively.

By a \emph{graph} $G$ on $X$, we simply mean any symmetric subset of $X^2$, in other words, our graphs are undirected and have no parallel edges, but may have loops\footnote{It is more common, at least in descriptive set theory, to require graphs to be irreflexive (no loops), but for our purposes we find it convenient to have equivalence relations also be graphs.}.

In this paper, we only consider \emph{locally countable} graphs on $X$, i.e. graphs whose degree is countable.

Call the set $\dom(G) \defeq \proj_X(G \setminus \Id_X)$ the \emph{domain} of $G$. For sets $A, B \subseteq X$, denote
\begin{align*}
	G \rest{A} &\defeq G \cap [A]^2
	\\
	\EdgsBtw{A}{B} &\defeq \set{e \in G : \text{one endpoint of $e$ is in $A$ and the other in $B$}}.
\end{align*}
Say that $A$,$B$ are \emph{$G$-adjacent} if $\EdgsBtw{A}{B} \ne \0$.

\subsubsection{Connectedness}

Call a graph \emph{component-finite} (resp. \emph{component-infinite}) if each of its connected components is finite (resp. infinite).

For a subset $A \subseteq X$, we say that $A$ is \emph{$G$-connected} or $G$ \emph{connects} $A$ if the graph $G \rest{A}$ is connected. 

\begin{caution}
	``$A$ is $G$-connected'' is stronger than ``$A$ is contained in a $G$-connected component''.
\end{caution}
Let $\FinG$ denote the set of finite $G$-connected nonempty subsets of $X$.

Denote by $E_G$ the equivalence relation on $X$ \emph{induced} by $G$, i.e. of being in the same $G$-connected component. However, for a set $A \subseteq X$ and a point $x \in X$, we write $[A]_G$ and $[x]_G$ instead of $[A]_{E_G}$ and $[x]_{E_G}$.

For a graph $G$ and an equivalence relation $E$ on $X$, we say that $G$ is a \emph{graphing} of $E$ or $G$ \emph{graphs} $E$ if $E_G = E$. Furthermore, say that $G$ is a \emph{supergraphing} of $E$ or $G$ \emph{supergraphs} $E$ if $G \cap E$ is a graphing of $E$, equivalently, $G$ connects every $E$-class.

Call a subgraph $H \subseteq G$ \emph{spanning} if $E_H = E_G$.

\subsubsection{Directed graphs}

A \emph{directed graph} is any subset of $X^2$. For an (undirected) graph $G \subseteq [X]^2$, call a graph $\dir{G}$ a \emph{directing} of $G$ if $\dir{G} \subseteq G$ and for each edge $(x,y) \in G$ exactly one of $(x,y), -(x,y)$ is in $\dir{G}$. Conversely, the \emph{undirecting} of a directed graph $\dir{G}$ is simply its symmetrization $-\dir{G} \cup \dir{G}$.

\subsubsection{Walks, paths, cycles}

By a \emph{walk} $W$ in a graph $G$ we mean a sequence $e_0,e_1,...,e_n$ of edges such that, for each $i < n$, $t(e_i) = o(e_{i+1})$. Call $W$ \emph{cyclic} if $o(e_0) = t(e_n)$. A \emph{backtracking} in $W$ is an index $i \le n$ such that $e_i = - e_{i+1}$, where $e_{n+1} \defeq e_0$.

A \emph{path} $P \defeq e_0,e_1,...,e_n$ is a walk without any backtracking. An endpoint of any edge of $P$ is referred to a \emph{vertex of $P$}. Call $P$ \emph{simple} if no vertex appears on it more than once. For $x,y \in X$, say that $P$ is \emph{from $x$ to $y$} or $P$ \emph{connects $x$ and $y$} if $o(e_0) = x$ and $t(e_n) = y$. A \emph{cycle} is a cyclic path.

\subsection{Transversality}

\begin{defn}[Transversality]\label{defn:transversality}
	Let $E_0, E_1, F$ be equivalence relations on $X$. We say that $E_0$ and $E_1$ are \emph{transverse}, and write $E_0 \trans E_1$, if $E_0 \cap E_1 = \Id_X$. More generally, we say that $E_0$ and $E_1$ are \emph{transverse over $F$}, and write $E_0 \trans_F E_1$, if $E_0 \cap E_1 = F$.
\end{defn}

\begin{defn}[Increasing transversality]\label{increasing_transversality}
	Let $N \le \om$ and let $(E_n)_{n < N}$ be a sequence of equivalence relations on $X$. Say that $(E_n)_{n < N}$ is \emph{increasingly transverse} if, for each $n < N$,
	\[
	E_n \trans \bigvee_{k < n} E_k.
	\]
	We denote the join of a sequence $(E_n)_{n < N}$ by $\bigtransvee_{n < N} E_n$ if it is increasingly transverse.
\end{defn}

\begin{defn}[Transversality for graphs]
	Call graphs $G_0, G_1$ \emph{transverse}, and write $G_0 \trans G_1$, if $E_{G_0}$ and $E_{G_1}$ are transverse. For $N \le \om$, call a sequence $(G_n)_{n < N}$ of graphs on $X$ \emph{increasingly transverse} if the sequence $(E_{G_n})_{n < N}$ is increasingly transverse. Call $(G_n)_{n < N}$ an \emph{increasingly transverse spanning partition} of a graph $G$ if it is increasingly transverse and $\bigdisjU_{n \in N} G_n$ is a spanning subgraph of $G$.
\end{defn}

\subsection{Quotients by smooth equivalence relations}

Let $X$ be a standard Borel space and $F$ a smooth Borel countable equivalence relation on $X$. In this paper, all quotients by smooth Borel equivalence relations are \emph{concrete}, i.e. we fix a Borel selector $s_F : X \to X$ for $F$ and identify the quotient space $\Xmod{F}$ with $s_F(X)$; in particular, $\Xmod{F}$ is a subset of $X$.

When $F$ is finite and $X$ is equipped with an $F$-invariant Borel measure $\mu$, instead of the usual quotient measure $\mu / F$ on $\Xmod{F}$ (i.e. the push-forward of $\mu$ under the factor map), we use the restriction $\mu \rest{\Xmod{F}}$ on $\Xmod{F}$. It is clear that the measures $\mu \rest{\Xmod{F}}$ and $\mu / F$ are different, and in fact, the $F$-invariance of $\mu$ implies that $d (\mu / F) = |[x]_F| \, d (\mu \rest{\Xmod{F}})$.

For an equivalence relation $E \supseteq F$, the \emph{concrete quotient equivalence relation $\Emod{F}$} is simply the restriction of $E$ to $\Xmod{F} \subseteq X$.

For a graph $G \subseteq X^2$, define its \emph{concrete quotient} (or a \emph{graph minor}) $\Gmod{F}$ as the pushforward of $G$ via the map $s_F^{(2)} : (x,y) \mapsto \big(s_F(x), s_F(y)\big)$, i.e., for $u,v \in \Xmod{F}$,
$$
u (\Gmod{F}) v \defequiv \exists x \in s_F^{-1}(u), y \in s_F^{-1}(v) \; x G y.
$$
When $G$ is an equivalence relation, this definition of $\Gmod{F}$ coincides with the one above.

\begin{obs}
	If $G$ supergraphs $F$, then every $\Gmod{F}$-connected set $A \subseteq \Xmod{F}$ pulls back to a $G$-connected set, namely, $[A]_F$. In particular, if $G$ is acyclic then $\Gmod{F}$ is also acyclic.
\end{obs}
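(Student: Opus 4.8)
The plan is to treat the two assertions in turn: the first is a routine unwinding of definitions, while the second reduces to the first together with one genuinely delicate bookkeeping step.

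For the first assertion I would use only the hypothesis that $G$ supergraphs $F$, which means precisely that every $F$-class is $G$-connected, hence that any two points of a single $F$-class are joined by a $G$-path lying inside that class. Given a nonempty $\Gmod{F}$-connected set $A \subseteq \Xmod{F}$, fix $x, y \in [A]_F$ and note that $s_F(x), s_F(y) \in A$, since $s_F$ restricts to the identity on $\Xmod{F}$. Choose a path $w_0, \dots, w_m$ from $s_F(x)$ to $s_F(y)$ in $\Gmod{F} \rest{A}$; by the definition of the concrete quotient, each edge $(w_i, w_{i+1})$ of this path lifts to an edge of $G$ with one endpoint in $[w_i]_F$ and the other in $[w_{i+1}]_F$. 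Splicing these lifted edges together with $G$-paths run inside the classes $[w_0]_F, \dots, [w_m]_F$ to bridge the successive endpoints — and a $G$-path inside $[w_0]_F$ from $x$, and one inside $[w_m]_F$ to $y$ — gives a $G$-walk from $x$ to $y$ entirely inside $[A]_F$. As $x, y$ were arbitrary, $[A]_F$ is $G$-connected.

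For the second assertion I would argue by contradiction: suppose $G$ is acyclic but $\Gmod{F}$ contains a cycle, which we may take simple, say on distinct vertices $v_0, \dots, v_{k-1} \in \Xmod{F}$ (necessarily $k \ge 3$), with $v_i \,(\Gmod{F})\, v_{i+1}$ for every $i$, indices taken modulo $k$. Put $C_i \defeq [v_i]_F$; these classes are pairwise distinct. Lift each cycle-edge $(v_i, v_{i+1})$ to an edge $\bar e_i \in G$ running from a point of $C_i$ to a point of $C_{i+1}$, and — using that each $C_i$ is $G$-connected — choose a $G$-path $P_i$ inside $C_i$ from the terminus of $\bar e_{i-1}$ to the origin of $\bar e_i$. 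Then $W \defeq P_0, \bar e_0, P_1, \bar e_1, \dots, P_{k-1}, \bar e_{k-1}$ is a cyclic $G$-walk with all vertices in $C_0 \cup \dots \cup C_{k-1}$.

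The main obstacle is the last step: extracting an honest cycle of $G$ from $W$, since the splicing could introduce backtracking at the junctions of the $P_i$'s with the $\bar e_i$'s. The key observation is that each $\bar e_i$ runs between two distinct classes, whereas every edge of every $P_j$ has both endpoints in a single class; together with $k \ge 3$ (which rules out $\bar e_i = -\bar e_j$), this shows that the reversed edge $-\bar e_i$ appears nowhere in $W$. Consequently, iteratively deleting backtracking pairs from $W$ — rotating it cyclically as needed — never deletes any $\bar e_i$, so the process terminates in a nonempty backtracking-free cyclic walk, that is, a cycle of $G$, contradicting acyclicity. Hence $\Gmod{F}$ is acyclic.
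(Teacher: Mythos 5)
The paper states this as a bare observation with no proof, treating it as immediate from the definitions of ``supergraphing'' and of the concrete quotient, so there is no ``paper approach'' to compare against; the task is simply to check your argument, and it is correct. Both parts are fine: the first is exactly the expected unwinding (lift each quotient edge, splice with intra-class $G$-paths, which exist because $G$ supergraphs $F$), and the second lifts a simple cycle of $\Gmod{F}$ to a cyclic walk in $G$ and extracts a genuine cycle.

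One remark: the final ``iteratively delete backtracking pairs'' step is actually vacuous. Your own observations already show that $W$ has no backtracking to begin with: two consecutive edges of $W$ are either both inside some $P_i$ (which is a path), or one is an edge of a $P_i$ (both endpoints in $C_i$) and the other is some $\bar e_j$ (endpoints in two \emph{distinct} classes), so they cannot be inverses; and if $P_{i+1}$ is empty so that $\bar e_i$ and $\bar e_{i+1}$ are adjacent, $\bar e_{i+1} = -\bar e_i$ would force $C_{i+2} = C_i$, which for $k \ge 3$ contradicts the pairwise distinctness of the $C_i$. The same computation handles the wrap-around index. So $W$ is already a cycle of $G$, and the deletion process never fires. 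The argument is still correct as written, just carrying an unnecessary contingency.
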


\begin{lemma}\label{smooth_quotient_has_inverse}
	The quotient map $s_F^{(2)}$ restricted to $G$ has a Borel right-inverse $i : \Gmod{F} \to G$. In particular, if $\mu$ is an $E_G$-invariant Borel measure on $X$, then any Borel subgraph $H' \subseteq \Gmod{F}$ lifts to a Borel subgraph $H \subseteq G$ of equal $\mu$-cost, i.e. $C_\mu(H) = C_\mu(H') = C_{\mu \rest{\Xmod{F}}}(H')$.
\end{lemma}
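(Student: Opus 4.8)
The plan is to produce $i$ by Luzin--Novikov uniformization, push $H'$ forward through $i$ along one orientation of each edge so that no edge gets doubled, and then check that this push-forward preserves $\mu$-cost, using that $i$ relocates each edge \emph{within} a single $F$-class, on which $\mu$ is invariant.

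\textbf{The right-inverse.} By definition of $\Gmod{F}$, the Borel map $s_F^{(2)} \rest{G} : G \to \Gmod{F}$ is onto, and it is countable-to-one: its fiber over $(u,v) \in \Gmod{F}$ is $\set{e \in G : o(e) \in [u]_F \text{ and } t(e) \in [v]_F}$, which is countable since $[u]_F$ is countable and $G$ is locally countable. Hence $\Gmod{F}$ is a standard Borel space, and $s_F^{(2)} \rest{G}$ admits a Borel section $i : \Gmod{F} \to G$ by Luzin--Novikov (\cite{bible}*{18.10}). Being a right-inverse, $i$ is automatically injective.

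\textbf{The lift.} Fix a Borel linear order on $\Xmod{F}$ and let $\dir{H'}$ be the corresponding Borel directing of $H'$. Put $\dir{H} := i(\dir{H'})$ and let $H := -\dir{H} \cup \dir{H}$ be its undirecting. Then $H \subseteq G$ because $G$ is symmetric, and $H$ is Borel because the injective Borel map $i$ sends Borel sets to Borel sets. Furthermore $\dir{H}$ contains no pair $e, -e$: if $i(u,v)$ and $i(u',v')$ were inverse to each other, applying $s_F^{(2)}$ would force $(u',v') = (v,u)$, contradicting that the directing $\dir{H'}$ contains at most one of $(u,v)$ and $(v,u)$. Hence every edge of $H'$ lifts to exactly one edge of $H$ (a loop of $H'$ simply becoming an ordinary $G$-edge inside an $F$-class).

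\textbf{Cost.} For this part $F$ is finite and, as always when $\mu \rest{\Xmod{F}}$ is used, $F \subseteq E_G$, so $\mu$ is $F$-invariant. By Luzin--Novikov, write $\dir{H'}$ as a countable disjoint union of graphs of partial Borel injections $\varphi_m$ of $\Xmod{F}$. Since $i$ is a right-inverse of $s_F^{(2)}$, the origin map $u \mapsto o\big(i(u, \varphi_m(u))\big)$ is, for each $m$, a Borel injection of $\dom\varphi_m$ with graph contained in $F$, and likewise for the terminus map; hence $i$ carries the graph of $\varphi_m$ onto the graph of a partial Borel injection $\psi_m$ of $X$, these graphs partition $\dir{H}$ (being the $i$-images of a partition of $\dir{H'}$), and $F$-invariance of $\mu$ gives $\mu(\dom\psi_m) = \mu(\dom\varphi_m) = (\mu\rest{\Xmod{F}})(\dom\varphi_m)$. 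Now $\mu$ is $E_H$-invariant (as $E_H \subseteq E_G$) and $\mu\rest{\Xmod{F}}$ is $E_{\Gmod{F}}$-invariant --- a short check, since $E_{\Gmod{F}}$ refines $E_G$ on $\Xmod{F}$ once $F \subseteq E_G$ --- hence $E_{H'}$-invariant; so the standard formula expressing $\mu$-cost through such a decomposition of a directing (see \cite{Kechris-Miller}*{Section 18}) gives
\[
C_\mu(H) = \sum_m \mu(\dom\psi_m) = \sum_m (\mu\rest{\Xmod{F}})(\dom\varphi_m) = C_{\mu\rest{\Xmod{F}}}(H') = C_\mu(H').
\]

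The uniformization and the Borelness of the lift are routine; the step that needs care is the cost computation --- keeping orientations straight so that no edge is doubled, observing that loops of $\Gmod{F}$ cause no trouble, and transporting $\mu$-mass along the fibers $[u]_F$ using $F$-invariance of $\mu$.
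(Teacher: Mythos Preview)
Your proof is correct and follows essentially the same approach as the paper: both construct the section via Luzin--Novikov (the paper through the enumeration functions of \cref{relative_enum_of_each_class}, choosing the lexicographically least $(n,m)$ with $(\gamma_n(u),\gamma_m(v))\in G$; you via abstract uniformization), and both handle symmetry by working with a directing. The only stylistic difference is that the paper makes $i$ itself symmetric by defining it on a directing of $\Gmod{F}$ and extending, whereas you defer symmetry to the lift of $H'$; the paper also leaves the cost computation implicit, which you spell out carefully.
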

\begin{proof}
	Let $(\ga_n)_n$ be as in \cref{relative_enum_of_each_class} when applied to $F$. It is enough to define $i$ on a Borel directing $\dir{\Gmod{F}}$ of $G$ and extend it to $G$ symmetrically. For each edge $(u,v) \in \dir{\Gmod{F}}$, define $i(u,v) \defeq (\ga_n(u), \ga_m(v))$, where $(n,m) \in \N^2$ is the lexicographically least pair for which $(\ga_n(u), \ga_m(v)) \in G$.
\end{proof}

\subsection{Weight functions}

We refer to any real-valued non-negative function $w : X \to [0,\w)$ as a \emph{weight function} and by a \emph{$w$-weight} of a countable set $A \subseteq X$ we mean
$
|A|_w \defeq \sum_{x \in A} w(x).
$

Let $(X,\mu)$ be a measure space. For any measurable weight function $w : X \to [0,\w)$, define a measure $\mu_w$ by setting $d \mu_w \defeq w d \mu$, i.e. for every measurable set $A \subseteq X$, $\mu_w(A) \defeq \int_A w d \mu$.

Lastly, for a finite Borel equivalence relation $F$ on $X$, define the \emph{concrete quotient of $w$} as the function $\wmod{F} : \Xmod{F} \to [0, \w)$ given by $\wmod{F}[x] \defeq |[x]_F|_w$ for $x \in \Xmod{F} \subseteq X$.

\subsection{Miscellaneous}

\subsubsection{$\e$-equality}

For reals $a,b$ and $\e \ge 0$, write $a \approx_\e b$ to mean $|a - b| \le \e$. 

\subsubsection{$\mu$-$\e$ sets}

For a standard measure space $(X,\mu)$ and $\e > 0$, we say that a measurable set $A \subseteq X$ is \emph{$\mu$-$\e$} if $\mu(A) \le \e$; consequently, we say that $A$ is \emph{$\mu$-co-$\e$} if $A^c$ is $\mu$-$\e$.

\subsubsection{Functions}

For a function $f : X \to Y$ and $A \subseteq X$, we denote by $f \rest{A}$ its restriction to the domain $A$. By a partial function $g : X \partialto Y$ we simply mean a function $g : X' \to Y$ for some $X' \subseteq X$ and refer to $X'$ as the domain of $g$, denoted by $\dom(g)$. Call $g$ \emph{entire} if $\dom(g) = X$.

\subsubsection{Set operations and relations}

For sets $A,B$, we say that $A$ \emph{meets} or \emph{intersects} $B$ if $A \cap B \ne \0$. We write $C = A \disjU B$ for a \emph{disjoint union}, i.e. to mean that $A \cap B = \0$ and $C = A \cup B$.

\subsubsection{Intervals}\label{subsubsec:intervals}

By an \emph{interval}, we mean any convex subset of $\R$. For a nonempty interval $I$, put $|I| \defeq \sup I - \inf I$ and call it its \emph{length}.

For intervals $I,J$ and a real $a$, write $I \le a$ (resp. $I < a$) if $\sup I \le a$ (resp. $\sup I < a$); the notation $a \le I$ and $a < I$ is defined analogously. Write $I \le J$ (resp. $I < J$) if $\sup I \le \inf J$ (resp. $\sup I < \inf J$).

Finally, for an interval $I$, let $I_-$ and $I_+$ denote the (possibly empty) left and right connected components of $\R \setminus I$. We write $I_\pm$ to denote one of the connected components of $\R \setminus I$, without specifying which one. In other words, every instance of $I_\pm$ is equal to either $I_-$ or $I_+$, but not their union.


\section{Edge sliding}\label{sec:edge_sliding}

Throughout this section, let $X$ be a nonempty standard Borel space.

\subsection{Basic edge sliding}

For edges $e \defeq \edge{u_0,u_1}, e' \defeq \edge{v_0,v_1} \in X^2$, say that paths $P_0, P_1$ \emph{connect} the endpoints of $e$ and $e'$ if $P_i$ connects $u_i$ to $v_i$, for each $i \in \set{0,1}$.

\begin{defn}
	For edges $e,e' \in X^2$ and a graph $R \subseteq X^2$, we say that \emph{$e$ slides into $e'$ along $R$} if there are paths $P_0, P_1$ in $R$ that connect the endpoints of $e$ and $e'$.
\end{defn}

\begin{obss}\label{sliding_is_an_eq_rel}
	Sliding is reflexive, symmetric, and transitive. More precisely, for edges $e_0,e_1,e_2$ and graphs $R,R' \subseteq X^2$,
	\begin{enumref}{a}{sliding_is_an_eq_rel}
		\item \textup{(Reflexivity)} $e_0$ slides into $e_0$ along $R$;
		
		\item \label{item:sliding_is:symmatric} \textup{(Symmetry)} if $e_0$ slides into $e_1$ along $R$, then $e_1$ slides into $e_0$ along $R$;
		
		\item \label{item:sliding_is:transitive} \textup{(Transitivity)} if $e_0$ slides into $e_1$ along $R$ and $e_1$ slides into $e_2$ along $R'$, then $e_0$ slides into $e_2$ along $R \cup R'$.
	\end{enumref}
\end{obss}

\begin{obss}\label{sliding_preserves}
	Let $e_0 \in X^2$ slide into $e_1 \in X^2$ along a graph $H$.
	\begin{enumref}{a}{sliding_preserves}
		\item \label{item:sliding_preserves:connectivity} $E_{H \cup \set{e_0}} = E_{H \cup \set{e_1}}$.
		
		\item \label{item:sliding_preserves:transversality} $e_0 \notin E_H$ if and only if $e_1 \notin E_H$.
	\end{enumref}
\end{obss}

For $\si : X^2 \to X^2$, put 
\begin{align*}
	\Fx(\si) &\defeq \set{e \in X^2 : \si(e) = e},
	\\
	\Mv(\si) &\defeq X^2 \setminus \Fx(\si),
	\\
	\IMv(\si) &\defeq \si(\Mv(\si)).
\end{align*}

\begin{obs}\label{Mv_cap_im_subset_IMv}
	For $\si : X^2 \to X^2$, $\Mv(\si) \cap \si(X^2) \subseteq \IMv(\si)$.
\end{obs}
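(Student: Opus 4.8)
The plan is to unwind the three derived sets $\Fx(\si)$, $\Mv(\si)$, $\IMv(\si)$ and observe that a preimage under $\si$ of a moved edge must itself be moved. Concretely, I would take an arbitrary $e \in \Mv(\si) \cap \si(X^2)$; since $e \in \si(X^2)$, fix some $e' \in X^2$ with $\si(e') = e$. It then suffices to show $e' \in \Mv(\si)$, for then $e = \si(e') \in \si(\Mv(\si)) = \IMv(\si)$, which is exactly the desired conclusion.

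The single step that requires an argument — and the only place the hypothesis $e \in \Mv(\si)$ is used — is checking $e' \notin \Fx(\si)$. If instead $\si(e') = e'$, then $e = \si(e') = e'$, whence $\si(e) = \si(e') = e' = e$, i.e. $e \in \Fx(\si)$, contradicting $e \in \Mv(\si) = X^2 \setminus \Fx(\si)$. Hence $e' \in X^2 \setminus \Fx(\si) = \Mv(\si)$, completing the proof.

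I do not expect any genuine obstacle here: the statement is an elementary fact about an arbitrary self-map of the set $X^2$, using nothing about the Borel structure of $X$, measurability, or any graph-theoretic meaning attached to $\si$. The only point of care is to keep the three sets straight — in particular one should resist proving the reverse inclusion, since $\IMv(\si) \subseteq \Mv(\si) \cap \si(X^2)$ need \emph{not} hold ($\si$ may carry a moved edge onto a fixed edge), so the argument must establish only the one inclusion claimed.
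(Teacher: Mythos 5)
Your proof is correct, and since the paper records this as an observation without an explicit proof, your argument is precisely the natural one it implicitly relies on: any $\si$-preimage $e'$ of a moved edge $e$ must itself be moved (if $e' \in \Fx(\si)$ then $e = \si(e') = e'$, forcing $e \in \Fx(\si)$, a contradiction), hence $e = \si(e') \in \si(\Mv(\si)) = \IMv(\si)$.
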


\begin{defn}
	Call $\si : X^2 \to X^2$ an \emph{edge-operator} if it is symmetric, i.e. $\si(u,v) = -\si(v,u)$, and $\Fx(\si) \supseteq \Diag(X^2)$. For such $\si$, we say that
	\begin{itemize}
		\item $\si$ \emph{moves} a graph $G_0 \subseteq X^2$ \emph{into} a graph $G_1 \subseteq X^2$ if $\Mv(\si) \subseteq G_0$ and $\si(G_0) \subseteq G_1$.
		
		\item $\si$ is \emph{connectivity preserving} for a graph $G$ if $E_{\si(G)} = E_G$.
	\end{itemize}
\end{defn}

\begin{defn}\label{defn:edge_sliding}
	An edge-operator $\si$ on $X$ is called an \emph{edge sliding} \emph{along a graph $R \subseteq X^2$} if $\Fx(\si) \supseteq R$ and every edge $e \in X^2$ slides into $\si(e)$ along $R$.
	
	\begin{itemize}
		\item Call $\si$ an \emph{edge sliding} if it is an edge sliding along $R$ for some graph $R \subseteq X^2$, to which we refer as a \emph{railway} for $\si$.
		
		\item For graphs $H, G$, we say that $\si$ is an \emph{$H$-based edge sliding of $G$} if $\Mv(\si) \subseteq G$, $H \subseteq \Fx(\si)$, and $H \cup G$ (equivalently, $H \cup \si(G)$) contains a railway for $\si$. When ``$H$-based'' is omitted, we mean that $H = \0$, so $G$ contains a railway for $\si$.
	\end{itemize}
\end{defn}

\begin{obss}\label{edge_sliding_prop}
	Let $G,H$ be graphs on $X$ and let $\si$ be an $H$-based edge sliding of $G$.
	
	\begin{enumref}{a}{edge_sliding_prop}
		\item \label{item:edge_sliding:preserves_connectivity} $\si$ is connectivity preserving for $G \cup H$.
		
		\item \label{item:railway_any_spanning_subgraph} If $R$ is a railway for $\si$, then any spanning subgraph of $R$ is also a railway for $\si$.
		
		\item \label{item:reversing_edge_sliding} If $\si \rest{G}$ is one-to-one, then there is an $H$-based edge sliding $\tau$ of $\si(G)$ reversing the action of $\si$ on $G$, namely,
		\[
		\tau(e) \defeq 
		\begin{cases}
		\textup{the unique $e' \in G \cap \si^{-1}(e)$} 
		& 
		\textup{if $e \in \si(G)$}
		\\
		e 
		& 
		\textup{otherwise}.
		\end{cases}
		\]
	\end{enumref}
\end{obss}

\begin{lemma}\label{edge_sliding_preserves_acyclicity}
	For any edge sliding of a graph $G$ on $X$, if $G$ is acyclic, then $\si(G)$ is also acyclic. If $\si \rest{G}$ is one-to-one, then the converse also holds.
\end{lemma}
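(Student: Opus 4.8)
The plan is to prove the forward implication by reducing to the case of finite graphs and then counting edges, and to deduce the converse from the forward implication via the reversal described in \cref{edge_sliding_prop}.

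For the forward implication, suppose $G$ is acyclic and, toward a contradiction, that $\si(G)$ contains a cycle $C$ with edge set $\set{f_0, \dots, f_n}$. Fix a railway $R \subseteq G$ for $\si$, choose $g_i \in G$ with $\si(g_i) = f_i$ for each $i$, and fix paths $P_i, Q_i$ in $R$ witnessing that $g_i$ slides into $f_i$. Let $R^*$ be the (finite, symmetric) graph consisting of all edges appearing in the $P_i$ and $Q_i$ together with their inverses, put $G^* \defeq R^* \cup \bigcup_{i \le n}\set{g_i, -g_i} \subseteq G$, and let $\si^*$ be the edge-operator sending $g_i \mapsto f_i$ and $-g_i \mapsto -f_i$ for every $i$ and fixing all other edges. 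Since $\si$ is symmetric, $g_i = g_j$ (resp. $g_i = -g_j$) forces $f_i = f_j$ (resp. $f_i = -f_j$), so $\si^*$ is well defined; moreover any $g_i$ lying in $R^* \subseteq R \subseteq \Fx(\si)$ satisfies $\si(g_i) = g_i = f_i$, so $\si^*$ fixes $R^*$ and is therefore a (base-free) edge sliding of the finite graph $G^*$ with railway $R^*$. Now $G^* \subseteq G$ is acyclic, while $\si^*(G^*) = R^* \cup \bigcup_{i\le n}\set{f_i, -f_i}$ contains the cycle $C$; hence it suffices to prove the forward implication when $G$ is finite.

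So let $G$ be a finite forest and $\si$ an edge sliding of $G$ with railway $R \subseteq G$. First, $\si(G)$ has no loop: if $\si(g) = (v,v)$, then $g$ is not a loop (else $g = \si(g)$ would be a loop in the forest $G$), so the two $R$-paths witnessing that $g$ slides into $(v,v)$ concatenate to a walk in $G$ between the two distinct endpoints of $g$, which together with $g$ produces a cycle in $G$ — a contradiction. Next, by \cref{edge_sliding_prop} (with empty base), $E_{\si(G)} = E_G$, so $\si(G)$ and $G$ have the same set $V$ of non-isolated vertices and the same number $c$ of connected components on $V$. Counting undirected edges: $\si$ maps $G$ onto $\si(G)$, so $\si(G)$ has at most as many edges as $G$; the forest $G$ has exactly $|V| - c$ edges; and $\si(G)$, being a loopless graph on $V$ with $c$ components, has at least $|V| - c$ edges. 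Hence $\si(G)$ has exactly $|V| - c$ edges and so is a forest, in particular acyclic.

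For the converse, assume $\si \rest G$ is one-to-one. By \cref{edge_sliding_prop} there is a base-free edge sliding $\tau$ of $\si(G)$ with $\tau(\si(e)) = e$ for all $e \in G$, so $\tau(\si(G)) = G$; applying the forward implication to $\tau$ and $\si(G)$, if $\si(G)$ is acyclic then so is $G = \tau(\si(G))$. I expect the main obstacle to be the reduction to the finite case, i.e. verifying that the truncated operator $\si^*$ really is an edge sliding of $G^*$ — a symmetric edge-operator that fixes, and can be slid along, its railway; the finite case and the converse are then short. (One could instead try to expand $C$ to a closed walk in $G$ through the railway-paths and preimages $g_i$ and argue that it cannot reduce to the trivial walk in the forest $G$, but keeping track of cancellations there is fiddlier than the edge count above.)
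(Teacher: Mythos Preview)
Your proof is correct, but it takes a genuinely different route from the paper's. The paper argues the forward direction directly: given a simple cycle $C$ in $\si(G)$, it replaces each edge $e' \in C$ by the walk $P_0(e)\conc e \conc P_1(e)$ in $G$ (where $e$ is a chosen $\si$-preimage and $P_0, P_1$ are the witnessing $R$-paths), obtaining a closed walk $\tilde{C}$ in $G$; since each $e' \in C \setminus R$ contributes an edge $e$ appearing exactly once, the reduced walk is a nontrivial cycle in $G$. Your approach instead reduces to the finite case (by restricting to the finitely many railway edges and preimages involved in a putative cycle) and then uses connectivity preservation (\cref{edge_sliding_prop}(a)) together with the edge count $|V| - c$ for forests to conclude $\si(G)$ is a forest. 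The converse via reversal (\cref{edge_sliding_prop}(c)) is handled identically in both.

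A couple of minor remarks. First, your loop argument is unnecessary: the clause ``every edge $e \in X^2$ slides into $\si(e)$ along $R$'' in the definition of edge sliding already forces $\si(e)$ to be an edge (i.e., off-diagonal) whenever $e$ is, so $\si(G)$ cannot acquire new loops. Second, your reduction step is clean and your verification that $\si^*$ is a genuine edge sliding of $G^*$ (well-definedness, symmetry, fixing $R^*$) is exactly the point that needs care, and you handled it correctly. What your approach buys is avoiding the backtracking-cancellation bookkeeping that you yourself flag as ``fiddlier''; what the paper's approach buys is an explicit cycle in $G$ and no appeal to connectivity preservation or finiteness.
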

\begin{proof}
	The second statement follows from the first due to \labelcref{item:reversing_edge_sliding}.
	
	Towards the contrapositive of the first statement, let $C$ be a simple cycle in $\si(G)$. If $C \subseteq R$ then $G$ contains $C$ and we are done, so suppose $C \nsubseteq R$. For each edge $e' \in C$, choose a $\si$-preimage $e \in G$ and paths $P_0(e),P_1(e) \subseteq R$ that connect the endpoints of $e$ and $e'$. Replacing each $e' \in C$ with the path $P_0(e) \conc e \conc P_1(e)$, we obtain a cyclic walk $\tilde{C}$ in $G$. For each $e' \in C \setminus R$, $e$ occurs in $\tilde{C}$ exactly once, so after deleting all backtrackings from $\tilde{C}$, we obtain a cycle in $G$ that contains $\set{e : e' \in C \setminus R}$, and is thus nontrivial.
\end{proof}

\begin{remark}
	In the setting of multigraphs, the second statement of \cref{edge_sliding_preserves_acyclicity} holds without the assumption of injectivity.
\end{remark}

\begin{lemma}[Edge sliding into an $F$-complete set]\label{edge_sliding_into_complete-sect}
	Let $F \subseteq E$ be countable Borel equivalence relations and let $G \subseteq E \setminus F$ be a Borel graph. For any Borel $F$-complete set $Y \subseteq X$, there is a Borel edge sliding along $F$ that moves $G$ into $Y^2$.
\end{lemma}
\begin{proof}
	Using \cref{relative_enum_of_each_class}, we get a Borel function $\pi : X \to Y$ that is the identity on $Y$ and $\pi(x) F x$ for each $x \in X$. Define an edge-operator $\si : X^2 \to X^2$ by mapping each $(x,y) \in G$ to $\big(\pi(x), \pi(y)\big)$ and setting $\si$ to be the identity outside of $G$. This does it.
\end{proof}

\subsection{Iterated edge sliding}

For a set $Y$, an ordinal $\la$, and a sequence $(y_\al)_{\al < \la} \subseteq Y$, we say that \emph{$\displaystyle\lim_{\al \to \la} y_\al$ exists}, and write $\displaystyle\lim_{\al \to \la} y_\al = y$, if there is $\be < \la$ such that $y_\be = y_\ga = y$ for all $\ga \in [\be, \la)$. For a sequence $(\si_\al)_{\al < \la}$ of maps $Y \to Y$, define its \emph{composition up to $\al \le \la$} to be the partial function $\bsi_\al : Y \partialto Y$ defined by
\[
\bsi_\al(y) \defeq 
\begin{cases}
y &\text{if } \al = 0
\\
\si_\be(\bsi_\be(y)) &\text{if } \al = \be + 1
\\
\displaystyle\lim_{\be \to \al} \bsi_\be(y) &\text{if $\al$ is a limit ordinal and $\displaystyle\lim_{\al \to \la} y_\al$ exists}
\\
\text{undefined} &\text{otherwise}.
\end{cases}
\]
Call the sequence $(\si_\al)_{\al < \la}$ \emph{composable} if $\bsi_\al$ is entire for each $\al \le \la$, and refer to $\bsi_\la$ as its composition.

\begin{defn}\label{defn:iterated_edge_sliding}
	For graphs $H, G$ on $X$, an \emph{$H$-based iterated edge sliding of $G$} is an edge-operator $\si$ on $X$ that is the composition of some composable sequence $(\si_\al)_{\al < \la}$, where, for each $\al < \la$, $\si_\al$ is an $H$-based edge sliding of $\bsi_\al(G)$.
	
	\begin{itemize}
		\item We refer to the sequence $(\si_\al)_{\al < \la}$ as a \emph{witnessing iteration for $\si$} and to $\la$ as its \emph{length}. The minimum over the lengths of witnessing iterations for $\si$ is called the \emph{rank} of $\si$.
		
		\item Call $\tG$ an \emph{$H$-based iterated edge slide of $G$} if it is the image of $G$ under some $H$-based iterated edge sliding of $G$.
		
		\item If ``$H$-based'' is omitted, we mean that $H = \0$.
	\end{itemize}
\end{defn}

\begin{remark}
	Anton Bernshteyn pointed out that, in the setting of multigraphs, one could show that any iterated edge sliding is of rank at most $\om$.
\end{remark}

\begin{prop}[Properties preserved by iterated edge slidings]\label{iterated_edge_sliding_preserves}
	Iterated edge slidings preserve acyclicity, and can only reduce connectivity and cost. More precisely, for any iterated edge slide $\tG$ of a graph $G$ on $X$, we have:
	\begin{enumref}{a}{iterated_edge_sliding_preserves}
		\item \label{item:iterated_edge_sliding_preserves:subconnectivity} $E_\tG \subseteq E_G$.
		
		\item \label{item:iterated_edge_sliding_preserves:acyclicity} If $G$ is acyclic, then $\tG$ is also acyclic.
		
		\item \label{item:iterated_edge_sliding_preserves:cost} If $(X,\mu)$ is a standard measure space and $\tG$ a Borel iterated edge slide of a Borel measure-preserving graph $G$ on $X$, then $C_\mu(\tG) \le C_\mu(G)$.
	\end{enumref}
\end{prop}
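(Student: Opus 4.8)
The plan is to establish all three conclusions by transfinite induction along a witnessing iteration $(\si_\al)_{\al < \la}$ for the given iterated edge sliding, with $\tG \defeq G_\la$ and $G_\al \defeq \bsi_\al(G)$; since ``iterated edge slide'' is here unbased ($H = \0$), each $\si_\al$ is an edge sliding of $G_\al$ whose railway is contained in $G_\al$. In the successor step, \cref{edge_sliding_prop} (with $H = \0$) gives that $\si_\al$ is connectivity preserving for $G_\al$, i.e.\ $E_{G_{\al+1}} = E_{G_\al}$, while \cref{edge_sliding_preserves_acyclicity} gives that $G_{\al+1}$ is acyclic whenever $G_\al$ is. In the limit step, the key point is that, by the definition of $\bsi_\al$ at a limit ordinal $\al$, each edge of $G_\al$ already lies in $G_\be$ for every $\be$ in a final segment of $\al$; hence any finite $G_\al$-path --- in particular any $G_\al$-cycle --- lies in a single $G_{\be^*}$ with $\be^* < \al$ (the maximum of the finitely many thresholds of its edges), so that $E_{G_\al} \subseteq \bigcup_{\be < \al} E_{G_\be}$ and $G_\al$ is acyclic if all earlier $G_\be$ are. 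This closes the induction for the conclusions $E_\tG \subseteq E_G$ and ``$G$ acyclic $\Rightarrow \tG$ acyclic''.

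For the cost bound, the crucial structural fact is that an iterated edge sliding displaces each edge of $G$ only within an $E_G$-class in each coordinate: a further transfinite induction --- using that $\si_\al$ slides along a railway contained in $G_\al$ and that $E_{G_\al} \subseteq E_G$, just proved --- shows that, for every edge $e \in G$, the endpoints of $\bsi_\la(e)$ are respectively $E_G$-related to those of $e$; equivalently, $e$ and $\bsi_\la(e)$ are related by the product equivalence relation $E_G \times E_G$ on $X \times X$. Since $E_G$ is countable, it follows that the (Borel) map $\bsi_\la \rest{G} \colon G \to \tG$ is countable-to-one.

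To conclude $C_\mu(\tG) \le C_\mu(G)$, set $M(A) \defeq \int_X |A_x| \, d\mu(x)$ for Borel $A \subseteq X \times X$, so $C_\mu(\cdot) = \tfrac12 M(\cdot)$; note $G, \tG \subseteq E_G$ as subsets of $X \times X$, that $E_G$ is invariant under $E_G \times E_G$, and that $E_G \times E_G$ restricted to $E_G$ is $M$-preserving --- indeed, writing $E_G = \bigcup_m \mathrm{graph}(\theta_m)$ with each $\theta_m$ a $\mu$-preserving Borel automorphism of $X$ (possible since $G$ is measure-preserving), the maps $\theta_m \times \theta_{m'}$, restricted to $E_G$, are $M$-preserving and generate $E_G \times E_G$ on $E_G$. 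Consequently every partial Borel isomorphism whose graph lies in $E_G \times E_G$ is $M$-preserving. Now apply the Luzin--Novikov theorem \cite{bible}*{18.10} to the graph of $\bsi_\la \rest G$ --- splitting it by its sections over the codomain $\tG$ --- to get a Borel partition $G = \bigdisjU_n A_n$ with each $\bsi_\la \rest{A_n}$ injective; each $\bsi_\la \rest{A_n}$ is then a partial Borel isomorphism with graph inside $E_G \times E_G$, so $M(\bsi_\la(A_n)) = M(A_n)$, whence $M(\tG) \le \sum_n M(\bsi_\la(A_n)) = \sum_n M(A_n) = M(G)$.

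The main obstacle I anticipate is this cost inequality --- specifically, isolating that slidings move edges only within $E_G \times E_G$-classes and then invoking measure preservation of partial Borel isomorphisms of the p.m.p.\ equivalence relation $E_G \times E_G$ on $E_G$ --- whereas the first two conclusions are routine transfinite inductions built on the single-step \cref{edge_sliding_prop} and \cref{edge_sliding_preserves_acyclicity}. The only mildly delicate points are the limit-stage bookkeeping and, relatedly, the measurability of the intermediate graphs $G_\al$ and of $\bsi_\la$, for which it suffices to work with a witnessing iteration of countable length.
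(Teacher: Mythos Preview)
Your proof is correct and follows essentially the same route as the paper's. For parts \labelcref{item:iterated_edge_sliding_preserves:subconnectivity} and \labelcref{item:iterated_edge_sliding_preserves:acyclicity} the paper, like you, does a transfinite induction along a witnessing iteration, invoking \cref{item:edge_sliding:preserves_connectivity} and \cref{edge_sliding_preserves_acyclicity} at successor stages; your explicit handling of the limit stage (each edge of $G_\al$ lies in $G_\be$ for a final segment of $\be<\al$) is exactly the bookkeeping the paper leaves implicit. For part \labelcref{item:iterated_edge_sliding_preserves:cost} the paper merely observes that $\bsi_\la$ is a Borel transformation of $E_G$ and cites \cite{Kechris-Miller}*{16.1 and 16.2}, whereas you unpack this reference: your argument that $\bsi_\la$ moves edges within $(E_G\times E_G)$-classes, that $E_G\times E_G$ restricted to $E_G$ is $M$-preserving (via the generators $\theta_m\times\theta_{m'}$), and the Luzin--Novikov decomposition into injective pieces is precisely the content of those cited results.
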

\begin{proof}
	\labelcref{item:iterated_edge_sliding_preserves:subconnectivity,item:iterated_edge_sliding_preserves:acyclicity} follow by a straightforward induction on the rank of the iterated edge sliding, using \cref{item:edge_sliding:preserves_connectivity,edge_sliding_preserves_acyclicity}, respectively. \labelcref{item:iterated_edge_sliding_preserves:cost} is due to $E_G$ being measure-preserving and the iterated edge sliding being a Borel transformation of $E_G$; see, for example, \cite{Kechris-Miller}*{16.1 and 16.2}.
\end{proof}

\begin{remark}
	In the setting of multigraphs, iterated edge slidings preserve acyclicity and cost even without the assumption of injectivity.
\end{remark}

\subsection{Well-iterated edge sliding}

\begin{obs}\label{finite_itaration_connectivity_preserving}
	An iterated edge sliding of a graph $G$ of finite rank is connectivity-preserving for $G$.
\end{obs}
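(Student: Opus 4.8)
The plan is to prove the statement by induction on the rank of the iterated edge sliding. Since $\si$ has finite rank, we may fix a witnessing iteration $(\si_k)_{k < n}$ of length $n \in \N$; because $n$ is finite, no limit ordinals occur in the recursive definition of the $\bsi_\al$, so $\bsi_n = \si_{n-1} \circ \cdots \circ \si_0$ is a genuine finite composition (and every $\bsi_\al$ is automatically entire). Thus the claim reduces to showing that a finite composition of $\0$-based edge slidings, performed one after another, is connectivity preserving. I would phrase the induction slightly more flexibly, proving: for every graph $G$ on $X$ and every composable sequence $(\si_k)_{k < n}$ in which each $\si_k$ is an edge sliding of $\bsi_k(G)$, one has $E_{\bsi_n(G)} = E_G$.

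The base case $n = 0$ is immediate, since $\bsi_0$ is the identity and hence $\bsi_0(G) = G$. For the inductive step from $n$ to $n+1$, given such a sequence of length $n+1$, its initial segment $(\si_k)_{k < n}$ is again a sequence of the required form for $G$, so the induction hypothesis gives $E_{\bsi_n(G)} = E_G$. Now $\si_n$ is an edge sliding of $\bsi_n(G)$, i.e. a $\0$-based edge sliding of it, so \cref{item:edge_sliding:preserves_connectivity} (applied with $H = \0$) says $\si_n$ is connectivity preserving for $\bsi_n(G)$, that is, $E_{\si_n(\bsi_n(G))} = E_{\bsi_n(G)}$. Since $\bsi_{n+1}(G) = \si_n(\bsi_n(G))$, chaining the two equalities yields $E_{\bsi_{n+1}(G)} = E_G$, completing the induction; applying it to the given witnessing iteration then gives $E_{\si(G)} = E_{\bsi_n(G)} = E_G$.

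I do not expect any real obstacle here: the only way connectivity can fail to be preserved under an iterated edge sliding is at a limit stage, where the pointwise limit might behave badly, and finiteness of the rank excludes limit stages altogether. For the same reason, the identical argument (retaining $H$ in the invocation of \cref{item:edge_sliding:preserves_connectivity}) proves the $H$-based version: an $H$-based iterated edge sliding of $G$ of finite rank is connectivity preserving for $G \cup H$.
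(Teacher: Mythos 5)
Your proof is correct and does exactly what the paper intends: the paper states this as an unproved observation, and the underlying reasoning is precisely your finite induction on the length of a witnessing iteration, chaining \cref{item:edge_sliding:preserves_connectivity} at each step. Your closing remark that the same argument gives the $H$-based version (connectivity preserving for $G \cup H$) is also correct and worth recording.
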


However, the iterated edge slidings of $G$ of infinite rank may not preserve the connectivity of $G$ and here is how it may happen: $\si_0$ slides an edge $e \in G$ along a railway $R_0$, then $\si_1$ slides some edges of $R_0$ along a railway $R_1$, then $\si_2$ slides some edges of $R_1$ along a railway $R_2$, and so on, so the distance between the endpoints of $e$ gets larger and larger, becoming infinite after $\om$-iterations. Thus, we restrict to only the following kinds of iterations that guarantee preservation of connectivity.

\begin{defn}\label{defn:conserv_comp}
	For a countable ordinal $\la$ and graphs $G, H$, a sequence $(\si_\al)_{\al < \la}$ of edge-operators is called \emph{$H$-based $G$-conservative} if there is a pairwise disjoint sequence $(G_\al)_{\al < \la}$ of subgraphs of $G$ such that, for each $\al < \la$, putting $\bG_\al \defeq \bigdisjU_{\be < \al} G_\al$, $\si_\al$ is an $\big(H \cup \bsi_\al(\bG_\al)\big)$-based iterated edge sliding of $G_\al$ that is connectivity preserving for $H \cup \bsi_\al(\bG_\al) \cup G_\al$.
\end{defn}

Simple observations are in order, which may be used below without mention.

\begin{obss}\label{conserv_comp_properties}
	In the notation of \cref{defn:conserv_comp}, for any $\al \le \be$,
	\begin{enumref}{a}{conserv_comp_properties}
		\item $\bsi_\al(G_\be) = G_\be$; whence, $\bsi_\al(\bG_{\al + 1}) = \bsi_\al(\bG_\al) \cup G_\al$.
		
		\item \label{item:conserv_comp_properties:fixes_railways} $\Fx(\si_\be) \supseteq \bsi_\al(\bG_\al)$; whence, $\bsi_\be(\bG_\al) = \bsi_\al(\bG_\al)$ and $(\si_\al)_{\al < \la}$ is a composable sequence.
		
		\item The sequence $(\si_\al)_{\al < \la}$ moves each edge $e \in X^2$ at most once, i.e. there is $\be < \la$ such that $e \in \Fx(\si_\al)$ and $\si_\be(e) \in \Fx(\si_\ga)$ for all $\al \in[0, \be)$ and $\ga \in (\be, \la)$.
		
		\item \label{item:conserv_comp_properties:Mv=cupMV} $\Mv(\si) = \bigcup_{\al < \la} \Mv(\si_\al)$.
	\end{enumref}
\end{obss}

\begin{prop}\label{conserv_connectivity_preserving}
	A composition $\si$ of a $G$-conservative sequence $(\si_\al)_{\al < \la}$ of iterated edge slidings is connectivity preserving for $G$.
\end{prop}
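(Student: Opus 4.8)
\emph{Plan.} The plan is to argue by transfinite induction on $\al \le \la$ that the graph $G^{(\al)} \defeq \bsi_\al(\bG_\al) \cup (G \setminus \bG_\al)$ --- the result of sliding the already-processed edges $\bG_\al$ into their current positions while leaving the rest of $G$ in place --- satisfies $E_{G^{(\al)}} = E_G$. Since $\bsi_0$ is the identity, $G^{(0)} = G$; and, by \cref{conserv_comp_properties} (in particular \labelcref{item:conserv_comp_properties:Mv=cupMV}, together with the fact that no $\si_\al$ moves an edge of $G$ lying outside $\bG_\la$), $G^{(\la)} = \si(G)$, so the case $\al = \la$ is exactly the claim. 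Throughout I would write $A_\al \defeq \bsi_\al(\bG_\al) \cup G_\al$, so that (taking $H = \0$) the $G$-conservativity hypothesis says precisely that $\si_\al$ is connectivity preserving for $A_\al$, i.e.\ $E_{\si_\al(A_\al)} = E_{A_\al}$; note also that $A_\al \subseteq G^{(\al)}$, since $G_\al \subseteq G \setminus \bG_\al$.

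\emph{The inclusion $E_G \subseteq E_{G^{(\al)}}$ (no induction needed).} As $E_G$ is generated by the edges of $G$, it suffices to show $x \, E_{G^{(\al)}} \, y$ for each edge $(x,y) \in G$. If $(x,y) \in G \setminus \bG_\al$, then $(x,y) \in G^{(\al)}$ and we are done. Otherwise $(x,y) \in G_\ga$ for the unique $\ga < \al$ with $G_\ga \ni (x,y)$, so $(x,y) \in A_\ga$. The $G$-conservativity hypothesis gives $E_{A_\ga} = E_{\si_\ga(A_\ga)}$, and a short computation using \cref{conserv_comp_properties} (that $\si_\ga$ fixes $\bsi_\ga(\bG_\ga)$ and that $\bsi_\ga(G_\ga) = G_\ga$) identifies $\si_\ga(A_\ga) = \bsi_\ga(\bG_\ga) \cup \si_\ga(G_\ga) = \bsi_{\ga+1}(\bG_{\ga+1})$. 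Finally, by \labelcref{item:conserv_comp_properties:fixes_railways}, $\bsi_{\ga+1}(\bG_{\ga+1}) = \bsi_\al(\bG_{\ga+1}) \subseteq \bsi_\al(\bG_\al) \subseteq G^{(\al)}$; hence $x \, E_{\bsi_{\ga+1}(\bG_{\ga+1})} \, y$ yields $x \, E_{G^{(\al)}} \, y$.

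\emph{The inclusion $E_{G^{(\al)}} \subseteq E_G$ (induction on $\al$).} Using \labelcref{item:conserv_comp_properties:fixes_railways} and that $\bsi_\ga$ fixes both $G_\ga$ and $\bsi_\ga(\bG_\ga)$, one computes $\bsi_\al(\bG_\al) = \bigcup_{\ga < \al} \si_\ga(G_\ga)$, so that $G^{(\al)}$ is the edge-union of the graphs $\si_\ga(G_\ga)$ ($\ga < \al$) and $G \setminus \bG_\al$, and therefore $E_{G^{(\al)}} = \left( \bigvee_{\ga < \al} E_{\si_\ga(G_\ga)} \right) \vee E_{G \setminus \bG_\al}$. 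Now $E_{G \setminus \bG_\al} \subseteq E_G$ trivially, while for each $\ga < \al$ we have $\si_\ga(G_\ga) \subseteq \si_\ga(A_\ga)$, whence $E_{\si_\ga(G_\ga)} \subseteq E_{\si_\ga(A_\ga)} = E_{A_\ga} \subseteq E_{G^{(\ga)}} = E_G$ --- using, in turn, the $G$-conservativity hypothesis, the inclusion $A_\ga \subseteq G^{(\ga)}$, and the induction hypothesis. This closes the induction. (Alternatively one could invoke \cref{iterated_edge_sliding_preserves}\labelcref{item:iterated_edge_sliding_preserves:subconnectivity}, once one knows $G^{(\al)} = \bsi_\al(G)$ is an iterated edge slide of $G$, but the computation above sidesteps that identification.)

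\emph{Main obstacle.} Conceptually the proof is routine; the one place that genuinely requires care --- and the only point where the defining features of a $G$-conservative sequence (as collected in \cref{conserv_comp_properties}) really enter --- is the identification $G^{(\la)} = \si(G)$, i.e.\ that the full composition $\si$ fixes every edge of $G$ outside $\bigcup_{\al < \la} G_\al$, and more broadly that the various iterated compositions $\bsi_\al$ interact with the partition $(G_\al)_{\al < \la}$ as the notation promises. Limit stages pose no extra difficulty: connectedness is finitary, and in the direction $E_G \subseteq E_{G^{(\al)}}$ every edge of $G$ is accounted for at a single bounded stage $\ga + 1 \le \al$.
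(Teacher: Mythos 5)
Your argument is correct in substance, and the computation at its core is the same one the paper uses: for $\ga < \al$, the conservativity hypothesis (with $H = \0$) says $\si_\ga$ is connectivity preserving for $A_\ga = \bsi_\ga(\bG_\ga) \cup G_\ga$, and the identification $\si_\ga(A_\ga) = \bsi_{\ga+1}(\bG_{\ga+1})$, together with \labelcref{item:conserv_comp_properties:fixes_railways}, shows that each edge of $G_\ga$ has its endpoints connected through a piece of the slid graph that is fixed by every later $\si_\be$. The paper phrases exactly this as ``each $e \in \Mv(\si_\al)$ slides into $\si_\al(e)$ along a path $P \subseteq \bsi_{\al+1}(\bG_{\al+1})$, which is pointwise fixed by all $\si_\be$ with $\be > \al$'', gives $E_G \subseteq E_{\si(G)}$, and leaves $E_{\si(G)} \subseteq E_G$ implicit via \labelcref{item:iterated_edge_sliding_preserves:subconnectivity}. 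So at the level of the forward inclusion, your proof and the paper's are the same.

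Where you genuinely diverge is in packaging the argument around the auxiliary graphs $G^{(\al)} = \bsi_\al(\bG_\al) \cup (G \setminus \bG_\al)$ and then reading off the conclusion from the identification $G^{(\la)} = \si(G)$ at the end. You correctly flag that identification as the delicate point, but you do not prove it, and it is not among the items of \cref{conserv_comp_properties}: those control what $\bsi_\al$ does to $G_\be$ for $\be \ge \al$ and to the already-processed $\bsi_\al(\bG_\al)$, but they say nothing about edges of $G \setminus \bG_\la$, which in general is nonempty (the $G_\al$ need not partition $G$). Since $\Mv(\si_\al)$ can contain intermediate positions lying outside $G_\al$, there is no a priori reason an intermediate position cannot coincide with an edge of $G \setminus \bG_\la$, so ``no $\si_\al$ moves an edge of $G$ lying outside $\bG_\la$'' is an unproved claim, not a consequence of \labelcref{item:conserv_comp_properties:Mv=cupMV}. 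The paper's edge-by-edge phrasing sidesteps the issue entirely: it shows for every $e \in G$ that the endpoints of $e$ stay connected in $\si(G)$ (trivially if $e \in \Fx(\si)$, and via the fixed path if $e \in \Mv(\si_\al)$ for some $\al$), and never needs to know what $\si$ does to $G \setminus \bG_\la$ as a set. Your forward-inclusion computation can be recast this way with essentially no change --- state it as ``for $(x,y) \in G$, $x \, E_{\si(G)} \, y$'', using $\si_\ga(A_\ga) = \bsi_{\ga+1}(\bG_{\ga+1}) = \bsi_\la(\bG_{\ga+1}) \subseteq \si(G)$ instead of $\subseteq G^{(\al)}$ --- and then the detour through $G^{(\al)}$, and with it the unproved identification, disappears; the reverse inclusion is then cheapest to cite from \labelcref{item:iterated_edge_sliding_preserves:subconnectivity} rather than run a separate induction.
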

\begin{proof}
	In the notation of \cref{defn:conserv_comp}, for each $\al < \la$, $\si_\al$ is connectivity preserving for $\bsi_\al(\bG_\al) \cup G_\al = \bsi_\al(\bG_{\al + 1})$, so each edge $e \in \Mv(\si_\al)$ slides into $\si_\al(e)$ along a path $P \subseteq \bsi_{\al+1}(\bG_{\al + 1})$.
	\labelcref{item:conserv_comp_properties:fixes_railways} ensures that $P$ is pointwise fixed by all $\si_\be$ with $\be > \al$, so the endpoints of $e$ remain connected for the rest of the iteration.
\end{proof}

\begin{defn}
	For graphs $G,H$ on $X$, call an edge-operator $\si$ an \emph{$H$-based well-iterated edge sliding of $G$} if it is a composition of an $H$-based $G$-conservative sequence of edge slidings.
	
	\begin{itemize}
		\item Call a graph $\tG$ an \emph{$H$-based well-iterated edge slide of $G$} if $\tG = \si(G)$ for some $H$-based well-iterated edge sliding $\si$ of $G$.
		
		\item As above, if ``$H$-based'' is omitted, then $H = \0$.
	\end{itemize}
\end{defn}

\begin{prop}\label{well-iterated_properties}
	Let $G,H$ be graphs on $X$.
	\begin{enumref}{a}{well-iterated_properties}
		\item \label{item:well-iterated_properties:closed_under_conserv_comp} The collection of $H$-based well-iterated edge slidings of $G$ is closed under $H$-based $G$-conservative compositions.
		
		\item \label{item:well-iterated_properties:connectivity_preserving} Any $H$-based well-iterated edge sliding of $G$ is connectivity preserving for $G \cup H$.
	\end{enumref}
\end{prop}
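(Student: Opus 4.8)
Both parts are bookkeeping exercises built on \cref{defn:conserv_comp,conserv_comp_properties,conserv_connectivity_preserving}, with part (a) the more delicate; I would handle part (b) first. For part (b), write $\si = \bsi_\la$ for an $H$-based $G$-conservative sequence $(\si_\al)_{\al < \la}$ of edge slidings witnessed by a pairwise disjoint sequence $(G_\al)_{\al < \la}$ of subgraphs of $G$. Since $H \subseteq \Fx(\si_\al)$ for every $\al$, also $H \subseteq \Fx(\si)$, so ``connectivity preserving for $G \cup H$'' merely asserts $E_{\si(G) \cup H} = E_{G \cup H}$. The plan is to reduce this to \cref{conserv_connectivity_preserving} by prepending to $(\si_\al)_{\al < \la}$ the identity edge-operator and replacing the witnessing family by $H,\, G_0 \setminus H,\, G_1 \setminus H,\, \dots$, which is again pairwise disjoint and now consists of subgraphs of $G \cup H$. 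The identity is a trivial iterated edge sliding of $H$ that is connectivity preserving for $H$; and for each $\al$, since $\Mv(\si_\al)$ avoids $H$ and $H$ is contained in the base graph $H \cup \bsi_\al(\bG_\al)$, the map $\si_\al$ is equally an $\big(H \cup \bsi_\al(\bG_\al)\big)$-based iterated edge sliding of $G_\al \setminus H$ that is connectivity preserving for $H \cup \bsi_\al(\bG_\al) \cup G_\al$ --- exactly what the definition of a $(G \cup H)$-conservative sequence demands of the corresponding term with this witnessing family, the old parameter $H$ now having been absorbed into the accumulated image. Prepending the identity changes neither the partial compositions nor the composition, so \cref{conserv_connectivity_preserving} applied with $G$ replaced by $G \cup H$ yields $E_{\si(G \cup H)} = E_{G \cup H}$. (One could equally just rerun the proof of \cref{conserv_connectivity_preserving}, carrying $H$ along.)

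For part (a), let $(\tau_j)_{j < \theta}$ be a sequence of well-iterated edge slidings that is $H$-based $G$-conservative, witnessed by a pairwise disjoint sequence $(G_j)_{j < \theta}$ of subgraphs of $G$; thus, writing $\overline{\tau}_j$ for the composition up to $j$ and $\overline{G}_j \defeq \bigdisjU_{j' < j} G_{j'}$, each $\tau_j$ is an $\big(H \cup \overline{\tau}_j(\overline{G}_j)\big)$-based well-iterated edge sliding of $G_j$, connectivity preserving for $H \cup \overline{\tau}_j(\overline{G}_j) \cup G_j$. We must show that $\tau \defeq \overline{\tau}_\theta$ is an $H$-based well-iterated edge sliding of $G$. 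The plan is to unfold each $\tau_j$ as the composition of some $\big(H \cup \overline{\tau}_j(\overline{G}_j)\big)$-based $G_j$-conservative sequence $(\si^k_j)_{k < \la_j}$ of edge slidings, witnessed by a pairwise disjoint sequence $(G^k_j)_{k < \la_j}$ of subgraphs of $G_j$, and then concatenate all these sequences in lexicographic order over the index set $\set{(j,k) : j < \theta,\ k < \la_j}$, taking $G_{(j,k)} \defeq G^k_j$. One then verifies that this concatenation is itself an $H$-based $G$-conservative sequence of edge slidings with composition $\tau$; since each of its terms is an edge sliding, this exhibits $\tau$ as an $H$-based well-iterated edge sliding of $G$.

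The verification has three ingredients. First, pairwise disjointness of $(G_{(j,k)})$ is immediate, since the $G^k_j$ are disjoint within each fixed $j$ while the $G_j \supseteq G^k_j$ are disjoint across $j$. Second, the composition of the concatenation is $\overline{\tau}_\theta = \tau$ and its partial composition up to $(j,0)$ is $\overline{\tau}_j$, which follows by a routine transfinite induction (associativity of composition of composable sequences), using \cref{conserv_comp_properties} --- in particular, that the sequence is composable and that earlier operators pointwise fix the subgraphs moved later. Third, the conservativity condition at each $(j,k)$ comes down to identifying the already-processed image $\overline{\si}_{(j,k)}\big(\bigdisjU_{(j',k') < (j,k)} G_{(j',k')}\big)$ with $\overline{\tau}_j(\overline{G}_j)$ together with the image of $\bigdisjU_{k' < k} G^{k'}_j$ under the composition of $\si^0_j, \dots, \si^{k-1}_j$; here one uses that $\overline{\tau}_j$ pointwise fixes $G_j$ --- it moves only edges of $\bigcup_{j' < j} G_{j'}$, which is disjoint from $G_j$ --- and that the $\si^{k'}_j$ fix $\overline{\tau}_j(\overline{G}_j)$. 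With this identification in hand, the demand placed on $\si^k_j$ (together with its connectivity-preservation hypothesis) by the concatenated sequence is precisely the one it already meets within the witnessing sequence for $\tau_j$. I expect this third ingredient to be the main obstacle: matching the running images of the concatenated sequence against the base graphs in \cref{defn:conserv_comp} requires some care, although \cref{conserv_comp_properties} supplies most of it, and everything else is unwinding definitions.
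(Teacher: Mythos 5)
Your approach to both parts is essentially what the paper's very terse proof has in mind --- the paper simply writes that (a) is ``by definition'' and that (b) ``follows from \labelcref{item:edge_sliding:preserves_connectivity} and \labelcref{conserv_connectivity_preserving}'' --- and you are unwinding the definitional bookkeeping that the paper treats as self-evident. Both of your reductions are sound in outline: your part (b) reduction (prepend the identity, rewitness by $H, G_0 \setminus H, G_1 \setminus H, \dots$) is a clean alternative to the ``carry $H$ along through the proof of \cref{conserv_connectivity_preserving}'' route that the paper's citations suggest.

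One detail in your part (a) deserves a second look. You identify the running image $\overline{\si}_{(j,k)}\big(\bigdisjU_{(j',k') < (j,k)} G_{(j',k')}\big)$ with $\overline{\tau}_j(\overline{G}_j) \cup \overline{\si}^k_j\big(\bigdisjU_{k' < k} G^{k'}_j\big)$, but in general only the inclusion ``$\subseteq$'' holds: for each $j' < j$ the inner witness $(G^{k'}_{j'})_{k'}$ need not exhaust $G_{j'}$, and any leftover edges in $G_{j'} \setminus \bigdisjU_{k'} G^{k'}_{j'}$ do lie in $\overline{\tau}_j(\overline{G}_j)$ (being fixed by $\tau_{j'}$ and all later operators) but never enter the accumulated witness of the concatenated sequence. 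Consequently the base graph that the outer definition supplies at step $(j,k)$ can be a \emph{proper} subgraph of $H \cup \overline{\tau}_j(\overline{G}_j) \cup \overline{\si}^k_j(\overline{G}^k_j)$, for which you actually know $\si^k_j$ is based, and the railway-containment clause of \cref{defn:edge_sliding} could a priori fail. The fix is cheap: before concatenating, normalize each inner witness by replacing $G^0_j$ with $G^0_j \cup \big(G_j \setminus \bigdisjU_k G^k_j\big)$. This adds only edges that every $\si^k_j$ fixes, so it leaves the inner sequence a valid $\big(H \cup \overline{\tau}_j(\overline{G}_j)\big)$-based $G_j$-conservative witness for $\tau_j$, it forces $\bigdisjU_k G^k_j = G_j$, and it makes your identification of the running images exact. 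With that patch, your unfolding argument faithfully captures the paper's ``by definition.''
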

\begin{proof}
	\labelcref{item:well-iterated_properties:closed_under_conserv_comp} is by definition, and
	\labelcref{item:well-iterated_properties:connectivity_preserving} follows from \labelcref{item:edge_sliding:preserves_connectivity} and \labelcref{conserv_connectivity_preserving}.
\end{proof}

\begin{lemma}\label{replace_base_any_spanning_subgraph}
	Let $H, G$ be graphs on $X$. If $\si$ is an $H$-based well-iterated edge sliding of $G$, then $\si$ is also an $H'$-based well-iterated edge sliding of $G$ for any spanning subgraph $H'$ of $H$.
\end{lemma}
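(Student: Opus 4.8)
The plan is to peel back the definition of an $H$-based well-iterated edge sliding layer by layer, showing that replacing $H$ by a spanning subgraph $H'$ leaves every ingredient intact. Fix an $H$-based $G$-conservative sequence $(\si_\al)_{\al<\la}$ witnessing that $\si$ is an $H$-based well-iterated edge sliding of $G$, with pairwise disjoint subgraphs $(G_\al)_{\al<\la}$ of $G$, so that for each $\al$, $\si_\al$ is an $\big(H\cup\bsi_\al(\bG_\al)\big)$-based iterated edge sliding of $G_\al$ that is connectivity preserving for $H\cup\bsi_\al(\bG_\al)\cup G_\al$. I will show that the \emph{same} sequence $(\si_\al)_{\al<\la}$, with the same $(G_\al)_{\al<\la}$, is $H'$-based $G$-conservative; since the operators themselves are untouched, its composition is still $\si$, and so $\si$ is an $H'$-based well-iterated edge sliding of $G$.

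Everything rests on one elementary rerouting fact, which I would isolate first: \emph{if $K'\subseteq K$ are graphs with $E_{K'}=E_K$, then (i) $E_{K'\cup T}=E_{K\cup T}$ for every graph $T$, and (ii) every $K$-based (iterated) edge sliding of a graph $G'$ is also a $K'$-based (iterated) edge sliding of $G'$.} For (i), $K'\subseteq K$ gives one inclusion, and conversely a walk in $K\cup T$ between two points becomes a walk in $K'\cup T$ once each of its $K$-edges is replaced by a path in $K'$ joining its (necessarily $E_{K'}$-equivalent) endpoints. For (ii) in the single-sliding case: $K'\subseteq K\subseteq\Fx(\si)$ takes care of the fixed-set clause, and a railway $R\subseteq K\cup G'$ for $\si$ yields a railway $R'\subseteq K'\cup G'$ by retaining the edges of $R$ inside $G'$ and replacing each of the remaining edges (which lie in $K$) by a path through $K'$; one checks $R'\subseteq\Fx(\si)$ and that the ``slides into'' paths for $R$ become, after the same substitution and deletion of backtrackings, ``slides into'' paths for $R'$. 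The iterated case follows by applying this to each factor of a composable sequence.

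Granting the rerouting fact, the verification is bookkeeping. Fix $\al<\la$ and put $K\defeq H\cup\bsi_\al(\bG_\al)$ and $K'\defeq H'\cup\bsi_\al(\bG_\al)$. Since $H'\subseteq H$ and $E_{H'}=E_H$, part (i) (with $T\defeq\bsi_\al(\bG_\al)$) shows $K'$ is a spanning subgraph of $K$, so part (ii) makes $\si_\al$ an $\big(H'\cup\bsi_\al(\bG_\al)\big)$-based iterated edge sliding of $G_\al$. For connectivity preservation, note $K'\subseteq\Fx(\si_\al)$ gives $\si_\al\big(K'\cup G_\al\big)=K'\cup\si_\al(G_\al)$, and then two applications of (i) (to $H'\subseteq H$ with $T=\bsi_\al(\bG_\al)\cup\si_\al(G_\al)$ and with $T=\bsi_\al(\bG_\al)\cup G_\al$) combined with the given identity $E_{H\cup\bsi_\al(\bG_\al)\cup\si_\al(G_\al)}=E_{H\cup\bsi_\al(\bG_\al)\cup G_\al}$ yield $E_{K'\cup\si_\al(G_\al)}=E_{K'\cup G_\al}$, i.e. $\si_\al$ is connectivity preserving for $H'\cup\bsi_\al(\bG_\al)\cup G_\al$. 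Hence $(\si_\al)_{\al<\la}$ is $H'$-based $G$-conservative, as desired.

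The main obstacle is organizational rather than conceptual: one must ensure that rerouting a railway through $H'$ neither leaves $\Fx(\si_\al)$ nor breaks the paths witnessing ``slides into'', and one must keep straight which of $H$, $H'$, $\bsi_\al(\bG_\al)$, $G_\al$, $\si_\al(G_\al)$ appears in each connectivity equation. There is no genuinely hard step — the content is entirely in the observation that a spanning subgraph absorbs edge-reroutings without disturbing any connectivity relation or fixed-set.
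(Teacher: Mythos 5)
Your proof is correct and follows the paper's approach: the paper's own proof is the single line ``Follows by induction from \labelcref{item:railway_any_spanning_subgraph}'' (the observation that any spanning subgraph of a railway is again a railway). Your ``rerouting fact'' (i)+(ii) is precisely that observation made explicit, and the per-$\al$ bookkeeping you carry out --- checking that the same sequence $(\si_\al)_{\al<\la}$ with the same $(G_\al)_{\al<\la}$ still witnesses $H'$-based $G$-conservativeness, for both the based-iterated-edge-sliding clause and the connectivity-preservation clause --- is exactly what the paper's ``by induction'' compresses.
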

\begin{proof}
	Follows by induction from \labelcref{item:railway_any_spanning_subgraph}.
\end{proof}

\subsection{Graphing equivalence relations}

\begin{defn}\label{defn:into_and_graphs}
	Let $H,G, \tG$ be graphs and $F' \subseteq F$ equivalence relations on $X$. We say that an $H$-based (well-)iterated edge sliding $\si$ of $G$
	\begin{itemize}
		\item is \emph{into} $\tG$ if $\si$ is actually an $H$-based (well-) iterated edge sliding $\si$ of $G \cap \si^{-1}(\tG)$; in particular, $\IMv(\si) \subseteq \tG$.
		
		\item \emph{graphs $F$ over $F'$} if $H \cup \si(G) \cup F'$ is a supergraphing of $F$. We omit ``over $F'$'' if $F'$ is the identity relation.
	\end{itemize}
\end{defn}

\begin{prop}[Connecting increasing unions using disjoint graphs]\label{connecting_unions_with_disjoint}
	Let $H, G$ be locally countable Borel graphs on $X$. Let
	\begin{itemize}[\scriptsize$\triangleright$]
		\item $(F_n)_{n \in \N}$ be an increasing sequence of Borel equivalence relations on $X$, where $F_0 = \Id_X$,
		
		\item $(G_n)_{n \in \N}$ be a pairwise disjoint sequence of Borel subgraphs of $G$, and
		
		\item $(\si_n)_{n \in \N}$ be a sequence of Borel edge-operators, where $\si_n$ is an $(H \cup F_n)$-based well-iterated edge sliding of $G_n$ into $F_{n+1}$ that graphs $F_{n+1}$ over $F_n$.
	\end{itemize}
	Then $(\si_n)_{n \in \N}$ is an $H$-based $G$-conservative sequence, whose composition $\bsi_\om$ is a Borel $H$-based well-iterated edge sliding of $G$ into $F \defeq \bigcup_{n \in \N} F_n$ that graphs every $F_n$ (hence, also $F$).
\end{prop}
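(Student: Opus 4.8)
The plan is an induction on $n \in \N$ that builds the sequence $(\si_k)_{k<n}$ into a conservative sequence stage by stage while tracking the connectivity produced so far. Before starting I would unwind the ``into'' qualifier via \cref{defn:into_and_graphs}: saying $\si_n$ is an $(H \cup F_n)$-based well-iterated edge sliding of $G_n$ \emph{into} $F_{n+1}$ literally means that it is one of $G_n \cap \si_n^{-1}(F_{n+1})$. So, replacing each $G_n$ by $G_n \cap \si_n^{-1}(F_{n+1})$ --- which keeps $(G_n)_n$ a pairwise disjoint sequence of Borel subgraphs of $G$ and preserves every hypothesis --- I may assume $\Mv(\si_n) \subseteq G_n$ and $\si_n(G_n) \subseteq F_{n+1}$; and the clause ``graphs $F_{n+1}$ over $F_n$'' now says exactly that $(H \cup F_n) \cup \si_n(G_n)$ supergraphs $F_{n+1}$. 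Writing $\bG_n \defeq \bigdisjU_{k<n} G_k$, the invariant I would carry is: $(\si_k)_{k<n}$ is an $H$-based $G$-conservative sequence witnessed by $(G_k)_{k<n}$, and $H \cup \bsi_n(\bG_n)$ supergraphs $F_n$.

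\textbf{Bookkeeping facts (no induction needed).} Since the $G_k$ are pairwise disjoint with $\Mv(\si_j) \subseteq G_j$, and each $\si_j$ fixes $H \cup F_j$ while $F_{k+1} \subseteq F_j$ for $k < j$, a short check gives $\bsi_m(G_k) = \si_k(G_k)$ whenever $k < m \le \om$; hence $\bsi_n(\bG_n) = \bigcup_{k<n} \si_k(G_k) \subseteq \bigcup_{k<n} F_{k+1} = F_n$, $\bsi_{n+1}(\bG_{n+1}) = \bsi_n(\bG_n) \cup \si_n(G_n)$, and $\bsi_\om(\bG_n) = \bsi_n(\bG_n)$. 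Once the sequence is known to be $G$-conservative it is composable by \labelcref{item:conserv_comp_properties:fixes_railways}, and then $\bsi_\om$ coincides with $\si_n$ on the Borel set $\Mv(\si_n)$ and with the identity off $\bigcup_n \Mv(\si_n)$, so it is a Borel edge-operator. The base case $n=0$ of the invariant is trivial since $F_0 = \Id_X$ and $\bG_0 = \0$.

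\textbf{Inductive step.} Assume the invariant at $n$. From $\bsi_n(\bG_n) \subseteq F_n$ we get $H \cup \bsi_n(\bG_n) \subseteq H \cup F_n$, and from ``$H \cup \bsi_n(\bG_n)$ supergraphs $F_n$'' we get $E_{H \cup \bsi_n(\bG_n)} \supseteq F_n$, so $E_{H \cup \bsi_n(\bG_n)} = E_{H \cup F_n}$; thus $H \cup \bsi_n(\bG_n)$ is a spanning subgraph of $H \cup F_n$, and \cref{replace_base_any_spanning_subgraph} upgrades $\si_n$ to an $(H \cup \bsi_n(\bG_n))$-based well-iterated edge sliding of $G_n$, which by \labelcref{item:well-iterated_properties:connectivity_preserving} is connectivity preserving for $(H \cup \bsi_n(\bG_n)) \cup G_n$ --- precisely the two clauses of \cref{defn:conserv_comp}, so $(\si_k)_{k<n+1}$ is $H$-based $G$-conservative. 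To push the supergraphing to $F_{n+1}$: given two points of an $F_{n+1}$-class, take a path joining them inside $((H \cup F_n) \cup \si_n(G_n)) \cap F_{n+1}$, replace every $F_n$-edge along it by a path inside $(H \cup \bsi_n(\bG_n)) \cap F_n \subseteq (H \cup \bsi_{n+1}(\bG_{n+1})) \cap F_{n+1}$ (available since $H \cup \bsi_n(\bG_n)$ supergraphs $F_n$), and delete backtrackings; this produces a path inside $(H \cup \bsi_{n+1}(\bG_{n+1})) \cap F_{n+1}$, so $H \cup \bsi_{n+1}(\bG_{n+1})$ supergraphs $F_{n+1}$, closing the induction.

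\textbf{Conclusion and main obstacle.} With the invariant for all $n$, the sequence $(\si_n)_{n \in \N}$ is $H$-based $G$-conservative, so by \labelcref{item:well-iterated_properties:closed_under_conserv_comp} its composition $\bsi_\om$ is a Borel $H$-based well-iterated edge sliding of $G$; since $\bsi_\om(G_n) = \si_n(G_n) \subseteq F_{n+1} \subseteq F$, the witnessing subgraphs lie in $G \cap \bsi_\om^{-1}(F)$, so $\bsi_\om$ is in fact into $F$. Finally $H \cup \bsi_\om(G) \supseteq H \cup \bsi_\om(\bG_n) = H \cup \bsi_n(\bG_n)$ supergraphs $F_n$, and enlarging a graph preserves supergraphing, so $H \cup \bsi_\om(G)$ supergraphs every $F_n$; as any two $F$-equivalent points are $F_n$-equivalent for some $n$ (and the connecting path then stays inside that $F_n$-class), it also supergraphs $F = \bigcup_n F_n$. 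The step I expect to be the crux is the inductive reconciliation of the \emph{two bases}: $\si_n$ is handed to us relative to the static base $H \cup F_n$, but \cref{defn:conserv_comp} demands it relative to the dynamic base $H \cup \bsi_n(\bG_n)$ accumulated so far --- this is exactly what the invariant ($\bsi_n(\bG_n) \subseteq F_n$ together with ``$H \cup \bsi_n(\bG_n)$ supergraphs $F_n$'') is for, and the one genuinely non-formal point is the path-rerouting that carries the supergraphing from stage $n$ to stage $n+1$; the ``into'' qualifier, which forces the preliminary passage to $G_n \cap \si_n^{-1}(F_{n+1})$, is a minor additional annoyance.
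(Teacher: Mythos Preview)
Your proposal is correct and follows essentially the same approach as the paper: replace each $G_n$ by $G_n \cap \si_n^{-1}(F_{n+1})$, establish the bookkeeping identity $\bsi_n(\bG_n) = \bigcup_{k<n}\si_k(G_k) \subseteq F_n$, and then use \cref{replace_base_any_spanning_subgraph} to switch the base of $\si_n$ from $H \cup F_n$ to $H \cup \bsi_n(\bG_n)$. The paper compresses your inductive supergraphing step (the path-rerouting) into the single phrase ``it now follows that $\bsi_n(\bG_n)$ graphs $F_n$'', but the content is the same.
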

\begin{proof}
	By replacing $G_n$ with $G_n \cap \si_n^{-1}(F_{n+1})$, we may assume without loss of generality that $\si_n(G_n) \subseteq F_{n+1}$. We aim to show that the sequence $(G_n)_{n \in \N}$ witnesses the $G$-conservativeness of $(\si_n)_{n \ge 0}$. Put $\bG_n \defeq \bigcup_{k < n} G_k$.
	
	\begin{claim+}\label{claim:connecting_unions_with_disjoint:comp_calc}
		For each $n > m$, $\bsi_n(G_m) = \si_m(G_m)$; in particular, $\bsi_n(\bG_m) = \bigdisjU_{k < m} \si_k(G_k) \subseteq F_{m+1}$.
	\end{claim+}
	\begin{pf}
		Because $\Mv(\si_n) \subseteq G_k$ and the graphs $G_k$ are pairwise disjoint, $G_m \subseteq \Fx(\bsi_m)$, so $\bsi_m(G_m) = G_m$, and hence, $\bsi_{m+1}(G_m) = \si_m(G_m)$. By our assumption, $\si_m(G_m) \subseteq F_{m+1} \subseteq \Fx(\si_k)$ for each $k > m$, so $\bsi_n(G_m) = \si_{n-1} \circ ... \circ \si_{m+1} \circ \si_m(G_m) = \si_m(G_m)$.
	\end{pf} 
	
	It now follows that $\bsi_n(\bG_n)$ graphs $F_n$, so, by \cref{replace_base_any_spanning_subgraph}, $\si_n$ is an $\big(H \cup \bsi_n(\bG_n)\big)$-based well-iterated edge sliding of $G_n$. Hence, the sequence $(G_n)_{n \in \N}$ indeed witnesses the fact that $(\si_n)_{n \in \N}$ is $H$-based $G$-conservative. \cref{claim:connecting_unions_with_disjoint:comp_calc} also implies that $\bsi_{\om}$ graphs every $F_n$.
\end{proof}

\begin{prop}[Connecting increasing unions using image graphs]\label{connecting_unions_using_images}
	Let $H, G$ be locally countable Borel graphs on $X$. Let
	\begin{itemize}[\scriptsize$\triangleright$]
		\item $(F_n)_{n \ge 0}$ be an increasing sequence of Borel equivalence relations on $X$, where $F_0 = \Id_X$, and
		
		\item $(\si_n)_{n \ge 0}$ be a sequence of Borel edge-operators, where $\si_n$ is an $(H \cup F_n)$-based well-iterated edge sliding of $\bsi_n(G)$ into $F_{n+1}$ that graphs $F_{n+1}$ over $F_n$.
	\end{itemize}
	Then $(\si_n)_{n \in \N}$ is an $H$-based $G$-conservative sequence, whose composition $\si$ is a Borel $H$-based well-iterated edge sliding of $G$ into $F \defeq \bigcup_{n \in \N} F_n$ that graphs every $F_n$ (hence, also $F$).
\end{prop}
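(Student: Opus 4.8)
The plan is to follow the proof of \cref{connecting_unions_with_disjoint} essentially verbatim, the one new ingredient being the right choice of the pairwise disjoint subgraphs of $G$ witnessing $G$-conservativeness. For each $n \in \N$, set
\[
G_n \defeq G \cap \bsi_n^{-1}\big(\Mv(\si_n)\big),
\]
the edges of $G$ that $\si_n$ actually moves once they have been transported by $\bsi_n$; this is a Borel subgraph of $G$ because $\bsi_n$ and $\si_n$ are Borel edge-operators and $\Mv(\si_n)$, $G$ are symmetric.

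First I would extract a ``moved at most once'' phenomenon. Since $\si_m$ is into $F_{m+1}$ we have $\IMv(\si_m) \subseteq F_{m+1}$, and since $(F_k)_k$ is increasing and each $\si_k$ is $(H \cup F_k)$-based we have $F_{m+1} \subseteq F_k \subseteq \Fx(\si_k)$ for all $k > m$; hence once $\bsi_m(e) \in \Mv(\si_m)$, the edge $\bsi_{m+1}(e) = \si_m(\bsi_m(e)) \in \IMv(\si_m)$ lies in $F_{m+1}$ and is fixed by every later $\si_k$, so $\bsi_k(e) = \bsi_{m+1}(e)$ for all $k \ge m+1$. An immediate consequence is that the $G_n$ are pairwise disjoint: if $e \in G_m \cap G_n$ with $m < n$, then $\bsi_n(e) = \bsi_{m+1}(e) \in F_{m+1} \subseteq F_n \subseteq \Fx(\si_n)$, contradicting $\bsi_n(e) \in \Mv(\si_n)$. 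A short induction on $n$ then gives: $e \in G$ and $e \notin \bigcup_{k<n} G_k$ imply $\bsi_n(e) = e$; in particular each $\bsi_m$ with $m \le n$ fixes $G_n$ pointwise, whence $G_n = \Mv(\si_n)$ (using also $\Mv(\si_n) \subseteq \bsi_n(G)$ and $\Mv(\si_n) \cap F_n = \0$). Finally, the trajectory $(\bsi_k(e))_k$ of every edge is eventually constant, so $(\si_n)_n$ is composable and $\si \defeq \bsi_\om$ is well defined and Borel.

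Next, using that $\bsi_m$ fixes $G_m$ pointwise and that images land in the $F$'s and stay there, one computes $\bsi_n(G_m) = \si_m(G_m) = \IMv(\si_m)$ for $m < n$, hence $\bsi_n(\bG_n) = \bigcup_{m<n} \IMv(\si_m) \subseteq F_n$. An induction on $n$, fed by the hypothesis that $\si_m$ graphs $F_{m+1}$ over $F_m$, shows that $H \cup \bsi_n(\bG_n)$ supergraphs $F_n$ and is therefore a spanning subgraph of $H \cup F_n$. Combining this with \cref{replace_base_any_spanning_subgraph} and with the fact that $\si_n$, being into $F_{n+1}$, is a well-iterated edge sliding of $\bsi_n(G) \cap \si_n^{-1}(F_{n+1}) = G_n \cup \big(\bsi_n(G) \cap F_{n+1} \cap \Fx(\si_n)\big)$ --- whose non-moved part lies in $F_{n+1}$ --- I would conclude that $\si_n$ is an $(H \cup \bsi_n(\bG_n))$-based well-iterated edge sliding of $G_n$, hence by \cref{item:well-iterated_properties:connectivity_preserving} connectivity preserving for $H \cup \bsi_n(\bG_n) \cup G_n$. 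Thus $(G_n)_n$ witnesses that $(\si_n)_n$ is $H$-based $G$-conservative, and \cref{item:well-iterated_properties:closed_under_conserv_comp} gives that $\si$ is an $H$-based well-iterated edge sliding of $G$. It is into $F = \bigcup_n F_n$ since $\IMv(\si) = \bigcup_n \IMv(\si_n) \subseteq F$, and it graphs every $F_n$ since $H \cup \si(G) \supseteq H \cup \bigcup_m \IMv(\si_m)$, which supergraphs each $F_n$ by the induction above.

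The main obstacle is the step where one passes from ``$\si_n$ is a well-iterated edge sliding of the \emph{image} graph $\bsi_n(G)$'' to ``$\si_n$ is a well-iterated edge sliding of $G_n$ with base $H \cup \bsi_n(\bG_n)$''. This is precisely what distinguishes \cref{connecting_unions_using_images} from \cref{connecting_unions_with_disjoint}, and it is the clause ``into $F_{n+1}$'' that makes it work: it confines the $\bsi_n(G)$-conservative decomposition of $\si_n$ to $\bsi_n(G) \cap \si_n^{-1}(F_{n+1})$, all of whose edges outside $\Mv(\si_n) = G_n$ lie in $F_{n+1}$ and hence act only as railway; together with the inductively established fact that $H \cup \bsi_n(\bG_n)$ is a spanning supergraph of $H \cup F_n$, this should keep every railway used by $\si_n$ inside $H \cup \bsi_n(\bG_n) \cup G_n$ up to connectivity, which is what the definition of a $G$-conservative sequence demands. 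Checking this confinement carefully --- tracking which edges the internal edge slidings composing $\si_n$ actually traverse --- is the delicate part of the argument.
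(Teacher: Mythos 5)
Your choice of witnessing graphs
\[
G_n \defeq G \cap \bsi_n^{-1}\big(\Mv(\si_n)\big) = \Mv(\si_n)
\]
is too small, and this breaks the argument in an essential way. The hypothesis ``$\si_n$ graphs $F_{n+1}$ over $F_n$'' (as an edge sliding of $\bsi_n(G)$) says that $H \cup \si_n(\bsi_n(G)) \cup F_n$ supergraphs $F_{n+1}$, and here the \emph{unmoved} edges of $\bsi_n(G)$ that happen already to lie in $F_{n+1}$ may be doing all the work. Your $G_n$ discards exactly those edges, so the inductive claim ``$H \cup \bsi_n(\bG_n) = H \cup \bigcup_{m<n}\IMv(\si_m)$ supergraphs $F_n$'' is simply false. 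Extreme case: take $\si_0 = \id$ (a perfectly legitimate $(H\cup F_0)$-based well-iterated edge sliding of $G$ into $F_1$ graphing $F_1$ over $F_0$ whenever $G \cap F_1$ already connects every $F_1$-class), take $F_1 \neq \Id_X$, and take $H = \0$. Then $\bsi_1(\bG_1) = \IMv(\si_0) = \0$, so $H \cup \bsi_1(\bG_1) = \0$ does not supergraph $F_1$, and $\si_1$ cannot be rerouted to an $(H\cup\bsi_1(\bG_1))$-based edge sliding via \cref{replace_base_any_spanning_subgraph}. Your final clause ``it graphs every $F_n$ since $H \cup \si(G) \supseteq H \cup \bigcup_m \IMv(\si_m)$, which supergraphs each $F_n$'' relies on the same false claim. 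You do flag this step as the ``delicate part'' and observe that the unmoved edges of $\bsi_n(G) \cap \si_n^{-1}(F_{n+1})$ lie in $F_{n+1}$ and ``act only as railway,'' but being in $F_{n+1}$ is precisely \emph{not} enough: they must be included in some $G_m$ or they are lost from $\bigcup_n G_n$ and from the supergraphing induction.

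The fix, which is what the paper does, is to enlarge $G_n$ to
\[
G_n \defeq \big(\bsi_n(G) \cap \si_n^{-1}(F_{n+1})\big) \setminus F_n,
\]
i.e.\ to include also the static edges of $\bsi_n(G)$ that already land in $F_{n+1}\setminus F_n$. By \cref{defn:into_and_graphs}, $\si_n$ is then an $(H\cup F_n)$-based well-iterated edge sliding of this $G_n$ into $F_{n+1}$ that genuinely graphs $F_{n+1}$ over $F_n$ (the discarded parts of $\bsi_n(G)$ are either in $F_n$ or outside $F_{n+1}$ and so contribute nothing to the supergraphing of $F_{n+1}$). With this $G_n$ the problem reduces, exactly as you intended but cannot quite achieve, to \cref{connecting_unions_with_disjoint}; showing that these $G_n$ are pairwise disjoint subgraphs of $G$ is the short calculation via \cref{Mv_cap_im_subset_IMv} and an induction establishing $\Mv(\bsi_n) = \bigdisjU_{m<n}\Mv(\si_m)$, much of which you already carried out. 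Your ``moved at most once'' analysis and composability argument are sound; the single error is the definition of $G_n$.
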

\begin{proof}	
	By \cref{defn:into_and_graphs}, $\si_n$ is an $(H \cup F_n)$-based well-iterated edge sliding of
	\[
	G_n \defeq \big(\bsi_n(G) \cap \si_n^{-1}(F_{n+1})\big) \setminus F_n
	\]
	into $F_{n+1}$, so it is enough to show that these $G_n$ are pairwise disjoint subgraphs of $G$ because then \cref{connecting_unions_with_disjoint} applies. By \cref{Mv_cap_im_subset_IMv}, 
	\[
	\Mv(\bsi_n) \cap \bsi_n(X^2) \subseteq \IMv(\bsi_n) \subseteq F_n,
	\]
	so $G_n \cap \Mv(\bsi_n) = \0$ because $G_n \subseteq \bsi_n(G) \setminus F_n$; in particular $G_n \subseteq \bsi_n(G) \cap \Fx(\bsi_n) \subseteq G$. Also, because $\Mv(\si_n) \subseteq G_n$, it follows that $\Mv(\si_n) \cap \Mv(\bsi_n) = \0$, so induction on $n$ gives
	\[
	\Mv(\bsi_n) = \bigdisjU_{m < n} \Mv(\si_m).
	\]
	For each $n > m \ge 0$, on one hand we have
	\[
	G_n \subseteq \big(G \setminus \Mv(\bsi_n)\big) \setminus F_n = G \setminus \big(\bigdisjU_{k < n} \Mv(\si_k) \cup F_n\big) \subseteq G \setminus \big(\Mv(\si_m) \cup F_m\big)
	\]
	and on the other
	\[
	G_m \subseteq \big(\Mv(\si_m) \cup \Fx(\si_m)\big) \cap \si_m^{-1}(F_m) \subseteq \Mv(\si_m) \cup F_m,
	\]
	so $G_n \cap G_m = \0$, finishing the proof.
\end{proof}

\subsection{Edge sliding over quotients}

Let $F$ be a smooth Borel equivalence relation on $X$ and let $s_F$ be a Borel selector for $G$. Any Borel edge-operator $\si'$ on $\Xmod{F}$ admits a lift to a Borel edge-operator $\si$ on $X$, namely:
\begin{equation}\label{natural_lift}
	\si'(x,y) \defeq
	\begin{cases}
		\si\big(s(x), s(y)\big)
		&
		\text{if } \big(s(x), s(y)\big) \in \Mv(\si)
		\\
		(x,y)
		&
		\text{otherwise}.
	\end{cases}
\end{equation}

Assuming that Borel selectors are fixed for all smooth Borel equivalence relations that appear below, we refer to the lift defined in \labelcref{natural_lift} as the \emph{natural lift}.

\begin{lemma}\label{lift_of_sliding}
	Let $G, H$ be Borel graphs and $F$ a smooth Borel equivalence relation on $X$. The natural lift $\si$ of any Borel $H$-based (resp. well-iterated) edge sliding $\si'$ of $\Gmod{F}$ is a Borel $(H \cup F)$-based (resp. well-iterated) edge sliding of $G$.
\end{lemma}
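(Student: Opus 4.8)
My plan is to prove the lemma in layers — plain edge slidings first, then plain iterated ones, then well-iterated ones — each reducing to the previous, so that the only substantive verification is in the plain case. So suppose first that $\si'$ is an $H$-based edge sliding of $\Gmod{F}$ along a railway $R' \subseteq (\Xmod{F})^2$. I would begin by checking that the natural lift $\si$ is a Borel edge-operator: symmetry of $\si$ descends from that of $\si'$ via $s_F^{(2)}(y,x) = -s_F^{(2)}(x,y)$, and $\Fx(\si) \supseteq \Diag(X^2)$ since $s_F^{(2)}$ maps $\Diag(X^2)$ into $\Diag\big((\Xmod{F})^2\big) \subseteq \Fx(\si')$; Borelness is immediate. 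Next I claim $R \defeq R' \cup F$ is a railway for $\si$ and that $H \cup F \subseteq \Fx(\si)$: indeed $s_F^{(2)}$ maps $F$ into $\Diag\big((\Xmod{F})^2\big)$, maps $H$ into $\Fx(\si')$ (this is what ``$\si'$ is $H$-based'' amounts to over the quotient), and fixes $R' \subseteq (\Xmod{F})^2$ pointwise, so $\si$ fixes each of $F$, $H$, $R'$. The heart of the plain case is the sliding condition: for an edge $e = (x,y)$, write $e' \defeq s_F^{(2)}(e)$; since $e'$ slides into $\si'(e') = \si(e)$ along $R'$ via paths $P_0, P_1 \subseteq R'$, concatenating the $F$-path from $x$ to $s_F(x)$ with $P_0$ and the $F$-path from $y$ to $s_F(y)$ with $P_1$ exhibits $e$ sliding into $\si(e)$ along $R' \cup F = R$. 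Finally, $\si$ moves an edge $e$ only when $s_F^{(2)}(e) \in \Mv(\si') \subseteq \Gmod{F}$, so, by the concrete-quotient conventions, $\Mv(\si) \subseteq G$; since moreover $R \subseteq G \cup F \subseteq G \cup (H \cup F)$, this shows $\si$ is an $(H\cup F)$-based edge sliding of $G$.

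For the iterated and well-iterated cases, fix a witnessing iteration $(\si'_\al)_{\al<\la}$ for $\si'$ — in the well-iterated case an $H$-based $\Gmod{F}$-conservative one, with witnessing pairwise disjoint subgraphs $(G'_\al)_{\al<\la}$ of $\Gmod{F}$ — and let $\si_\al$ be the natural lift of $\si'_\al$. The key structural fact, and the one place where the concreteness of the quotient is essential, is that the natural lift commutes with composition and with limits of sequences of edge-operators; a transfinite induction on $\al$ then shows that $\bsi_\al$ is the natural lift of $\bsi'_\al$ for all $\al \le \la$ (at successor steps $\bsi_{\be+1} = \si_\be \circ \bsi_\be$ is the lift of $\si'_\be \circ \bsi'_\be$; at limit steps because $\bsi_\al(e)$ stabilizes exactly when $\bsi'_\al(s_F^{(2)}(e))$ does). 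Pushing forward gives $s_F^{(2)}(\bsi_\al(G)) = \bsi'_\al(\Gmod{F})$, so by the plain case $\si_\al$ is an $(H\cup F)$-based edge sliding of $\bsi_\al(G)$, which already yields the iterated case. For the well-iterated case I would additionally set $G_\al \defeq i(G'_\al)$, using the Borel right-inverse $i : \Gmod{F} \to G$ of \cref{smooth_quotient_has_inverse} (so the $G_\al$ are pairwise disjoint subgraphs of $G$ and $s_F^{(2)}(G_\al) = G'_\al$), and check the per-step requirement that $\si_\al$ be an $\big(H \cup F \cup \bsi_\al(\bG_\al)\big)$-based iterated edge sliding of $G_\al$ that is connectivity preserving for $H \cup F \cup \bsi_\al(\bG_\al) \cup G_\al$: this follows from the iterated case applied to $\si'_\al$ as an $\big(H \cup \bsi'_\al(\bG'_\al)\big)$-based iterated edge sliding of $G'_\al$, together with the observation that connectivity preservation over $\Xmod{F}$ lifts to connectivity preservation over $X$ once $F$ is adjoined to the railways — which is precisely why the conclusion reads ``$(H\cup F)$-based''. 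Hence $(\si_\al)_{\al<\la}$ is $(H\cup F)$-based $G$-conservative and its composition $\si$ is an $(H\cup F)$-based well-iterated edge sliding of $G$.

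I expect the real work to be the bookkeeping in the well-iterated case: to keep the lifts aligned one should run a single transfinite induction on the length simultaneously maintaining ``$\bsi_\al$ is the natural lift of $\bsi'_\al$'', ``$s_F^{(2)}(\bsi_\al(\bG_\al)) = \bsi'_\al(\bG'_\al)$'', and ``$(\si_\be)_{\be<\al}$ is $(H\cup F)$-based $G$-conservative'', feeding the plain case in at successor steps; getting the lifted subgraphs $G_\al$ to be compatible with the partial compositions $\bsi_\al$, so that the witnessing data genuinely forms a $G$-conservative sequence, is the delicate point. A lesser subtlety, handled in the plain case above, is making sure the pulled-back operator moves only edges of $G$, so that it is an honest edge sliding of $G$ rather than merely of a larger graph.
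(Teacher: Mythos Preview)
Your approach is exactly the paper's: take $R' \cup F$ as a railway for the lift, note that $\si$ fixes $F$ (hence $H \cup F$) pointwise, and handle the (well-)iterated case by induction on the length. The paper's proof is two sentences; you have simply unpacked the bookkeeping, especially the $G$-conservative witnessing data in the well-iterated case, which the paper leaves implicit.

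One step does not go through as written. From $s_F^{(2)}(e) \in \Mv(\si') \subseteq \Gmod{F}$ you conclude $e \in G$ ``by the concrete-quotient conventions'', but $\Gmod{F}$ is the \emph{pushforward} $s_F^{(2)}(G)$, so $(s_F^{(2)})^{-1}(\Gmod{F})$ is the $(F \times F)$-saturation of $G$, generally strictly larger than $G$; the same issue affects your claim $R = R' \cup F \subseteq G \cup F$, since $R' \subseteq H \cup \Gmod{F}$ need not lie in $H \cup G$. The paper is equally silent on both points. A clean repair: replace $R'$ by $i(R')$, where $i : \Gmod{F} \to G$ is the Borel right-inverse of $s_F^{(2)}$ from \cref{smooth_quotient_has_inverse}; then $i(R') \cup F \subseteq G \cup F$ is still a railway for $\si$ (each $R'$-edge $(u,v)$ factors as an $F$-step, an $i(R')$-edge, and another $F$-step), and redefining the natural lift to be the identity off $G$ forces $\Mv(\si) \subseteq G$ without changing $\si(G)$.
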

\begin{proof}
	For an edge sliding $\si'$, it suffices to observe that $\si$ fixes $F$ pointwise and, if $R \subseteq (\Xmod{F})^2$ is a railway for $\si'$, then $R \cup F$ is a railway for $\si$ because any $(x,y) \in F$ slides into $\big(s(x), s(y)\big)$ along $F$. The well-iterated case now follows by induction on the length of the iteration.
\end{proof}


\section{Strongly maximal Borel \fsrs}\label{sec:fsr}

Throughout this section, let $X$ be a standard Borel space and let $E$ be a countable Borel equivalence relation on $X$.

\subsection{Finite partial equivalence relations}\label{subsec:fsr}

A \emph{partial equivalence relation} $F$ on $X$ is an equivalence relation defined on a subset of $X$, which we refer to as the \emph{domain} of $F$ and denote by $\dom(F)$; thus, 
$$
\dom(F) = \set{x \in X : (x,x) \in F}.
$$
The term \emph{$F$-class} refers to a subset of $\dom(F)$ that is an $F$-class in the usual sense. Say that $F$ is \emph{entire} if $\dom(F) = X$.

We refer to $\cl{F} \defeq F \cup \Diag(X)$ as the \emph{completion} of $F$. Call a set $A \subseteq X$ \emph{$F$-invariant} if it is $\cl{F}$-invariant. 

A partial equivalence relation $F'$ is \emph{$F$-invariant} if every $F'$-class is $F$-invariant. In this case, $\dom(F')$ is $F$-invariant and the union $F' \cup F$ is still a partial equivalence relation that coincides with $F'$ on $\dom(F')$ and with $F$ on $X \setminus \dom(F')$.

Below, we will only be dealing with finite partial subequivalence relations of some ambient countable equivalence relation. Therefore, for convenience and for historical reasons, we refer to finite partial equivalence relations as \emph{\fsr} (stands for \emph{finite partial subequivalence relation}), which is the standard term used in \cite{Kechris-Miller} and earlier, by now classical, papers.

For a pairwise disjoint collection $\Psi \subseteq \FinX$, we let $E_\Psi$ denote the \emph{induced \fsr}, namely, the $E_\Psi$-classes are precisely the sets in $\Psi$. Conversely, for an \fsr $F$ and $Y \subseteq X$, let $\Classes[Y]{F}$ denote the set of $F$-classes contained in $Y$, where we omit the subscript $Y$ if $Y = X$. For $\Phi \subseteq \FinX$, say that $F$ is \emph{within $\Phi$} if $\Classes{F} \subseteq \Phi$.

\subsection{Maximal \fsrs}

For $\Phi \subseteq \FinX$, call an \fsr $F$ \emph{$\Phi$-maximal} (or \emph{maximal within $\Phi$}) if it is within $\Phi$ and there is no $U \in \Phi$ disjoint from $\dom(F)$.

In general, existence of a $\Phi$-maximal \fsr follows from Zorn's lemma, but when $\Phi \subseteq \FinE$, for a countable Borel equivalence relation $E$, a finer statement is true \cite{Kechris-Miller}*{Lemma 7.3}:

\begin{prop}[Kechris--Miller]\label{existence-of-maximal-fsr}
	For a countable Borel equivalence relation $E$ on $X$, any Borel $\Phi \subseteq \FinE$ admits a $\Phi$-maximal Borel \fsr $F \subseteq E$.
\end{prop}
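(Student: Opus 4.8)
The plan is to build the $\Phi$-maximal Borel \fsr greedily using a fixed countable supply of "candidate classes" that exhausts $\Phi$, adding a candidate to the relation whenever it is disjoint from everything added so far. The point is to arrange the candidates into countably many Borel pieces so that, within each piece, candidates that need to be added can be added simultaneously without conflicting with each other, and conflicts only ever go "downward" to earlier pieces.

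First I would invoke the Luzin--Novikov/Feldman--Moore machinery to get a Borel enumeration of $\Phi$ by countably many Borel functions. Concretely, since $\Phi \subseteq \FinE$ is Borel and each $A \in \Phi$ is $E$-related, there is (by \cref{relative_enum_of_each_class} applied to $E$, together with the standard fact that a Borel subset of the standard Borel space $\FinE$ is itself standard Borel and the projection $\FinE \to X$, $A \mapsto \min A$ relative to a fixed Borel linear order, has countable fibers) a sequence $(A_n)_{n \in \N}$ of partial Borel functions $A_n : X \partialto \FinX$ such that $\Phi = \set{A_n(x) : n \in \N, x \in \dom(A_n)}$ and each $A_n(x)$, when defined, contains $x$ and lies in $\Phi$. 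Reindexing, I may assume we have a single Borel assignment giving, for each $x$, a countable set $\Phi_x \subseteq \Phi$ of finite $E$-related sets containing $x$, with $\bigcup_x \Phi_x = \Phi$, presented uniformly.

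Next, the combinatorial core: I want to define, by recursion on $n \in \N$, an increasing sequence $F_0 \subseteq F_1 \subseteq \cdots$ of Borel \fsrs $F_n \subseteq E$, and set $F \defeq \bigcup_n F_n$. At stage $n$, I consider all candidates $A \in \Phi$ that are "assigned level $n$" (via a fixed Borel coloring of $\Phi$ by $\N$ — this is where I use that $\Phi$, or rather a graph on it recording disjointness-conflicts, is locally countable, so by Kechris--Solecki--Todorcevic / the standard Borel chromatic number bound it has a Borel $\N$-coloring) and adjoin to $F_{n-1}$ every such $A$ that is disjoint from $\dom(F_{n-1})$ and from all other level-$n$ candidates being added. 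Because no two level-$n$ candidates conflict with each other (they get distinct colors if they intersect), this simultaneous addition is well-defined and Borel; and $\dom(F_n) \setminus \dom(F_{n-1})$ is a Borel set. Then $F = \bigcup_n F_n$ is a Borel \fsr within $\Phi$, and it is $\Phi$-maximal: if some $U \in \Phi$ were disjoint from $\dom(F)$, then at the stage $n$ equal to its color it was disjoint from $\dom(F_{n-1})$ and from all other level-$n$ candidates (being disjoint from all of $\dom(F)$), so it would have been added, contradiction.

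The main obstacle — and the step deserving the most care — is producing the right Borel coloring of the candidate family so that conflicting candidates are separated: I must form the graph $\CC$ on (a standard Borel copy of) $\Phi$ where $A \mathrel{\CC} B$ iff $A \cap B \ne \0$ and $A \ne B$, verify it is locally countable (each finite set $A$ meets only countably many members of $\Phi$ since each point of $A$ lies in only countably many members, as $\Phi \subseteq \FinE$ and $E$ has countable classes), and then apply the Borel version of the greedy $\N$-coloring for locally countable Borel graphs; one then has to check that the simultaneous-addition step at each level is genuinely Borel, which reduces to Luzin--Novikov uniformization arguments. Everything else is routine bookkeeping. (This is essentially the argument of \cite{Kechris-Miller}*{Lemma 7.3}, which I would also cite.)
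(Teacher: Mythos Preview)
Your proposal is correct and takes essentially the same approach as the paper, which cites \cite{Kechris-Miller}*{Lemma 7.3} and isolates the countable Borel coloring of the intersection graph on $\FinE$ (\cref{coloring_intersection_graph}) as the key ingredient, after which your greedy color-by-color construction is exactly the standard argument. Your opening paragraph enumerating $\Phi$ by partial Borel functions is an unnecessary detour---the coloring already separates conflicting candidates---but it does no harm.
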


The latter is mainly based on the following lemma, which, in turn, follows from the Feldman--Moore theorem.

\begin{lemma}[Kechris--Miller]\label{coloring_intersection_graph}
	For a countable Borel equivalence relation $E$ on $X$, the intersection graph on $\FinE$ admits a countable Borel coloring.
\end{lemma}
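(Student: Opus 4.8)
The plan is to stratify $\FinE$ by a Borel ``radius'' invariant so that the intersection graph restricted to each stratum has \emph{bounded} degree, and then apply the classical theorem that a Borel graph of bounded degree $d$ has a Borel $(d+1)$-coloring; distinct strata will then be handled automatically by giving them disjoint color palettes.

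First I fix a sequence $(\ga_n)_n$ of Borel functions $X \to X$ as in \cref{relative_enum_of_each_class}, arranged so that $\ga_0 = \Id_X$. For $E$-related $u, v$, set $\langle u, v\rangle \defeq \min \set{n \in \N : \ga_n(u) = v}$; this is a Borel function of $(u,v)$, with $\langle u,u\rangle = 0$. For $A \in \FinE$ put
\[
\Delta(A) \defeq \max \set{\langle u, v\rangle : u, v \in A} \in \N,
\]
a Borel function $\FinE \to \N$, and let $\mathcal F_N \defeq \set{A \in \FinE : \Delta(A) = N}$, so that $\FinE = \bigdisjU_{N \in \N} \mathcal F_N$ is a Borel partition. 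Let $I$ denote the intersection graph on $\FinE$.

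The key observation is the following localization: if $A, B \in \mathcal F_N$ share a point $x$, then every $v \in B$ satisfies $\langle x, v\rangle \le \Delta(B) = N$, hence $v \in \set{\ga_0(x), \ga_1(x), \dots, \ga_N(x)}$; thus $B$ is one of at most $2^{N+1}$ subsets of this $(N{+}1)$-element set. Since also $|A| \le N+1$, the vertex $A$ has at most $d_N \defeq (N+1)2^{N+1}$ neighbors in $I \rest{\mathcal F_N}$. So $I \rest{\mathcal F_N}$ is a Borel graph of degree $\le d_N$, and by the standard Borel bounded-degree coloring theorem (see, e.g., \cite{Kechris-Miller}) it admits a Borel coloring $c_N : \mathcal F_N \to \set{0, 1, \dots, d_N}$.

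Finally I assemble these into a single coloring $c : \FinE \to \N \times \N$ by $c(A) \defeq \big(\Delta(A),\, c_{\Delta(A)}(A)\big)$. If $A \cap B \ne \0$ and $A \ne B$, then either $\Delta(A) \ne \Delta(B)$, in which case $c(A)$ and $c(B)$ differ in the first coordinate, or $\Delta(A) = \Delta(B) = N$, in which case $(A,B)$ is an edge of $I \rest{\mathcal F_N}$ and so $c_N(A) \ne c_N(B)$; either way $c(A) \ne c(B)$. Hence $c$ is a countable Borel coloring of $I$. The only real design choice — and the step deserving the most care — is the stratifying invariant: it must be a function of the single set $A$ (so that intersection-adjacent sets lying in different strata are separated for free) while being coarse enough that membership of a point in a set of a fixed stratum pins that set down to boundedly many possibilities. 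The radius $\Delta$ achieves both simultaneously, and it is precisely at the construction of $\Delta$ that the uniform enumeration $(\ga_n)$ — equivalently, the Feldman--Moore theorem — is used.
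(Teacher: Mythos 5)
Your proof is correct and is essentially the same argument the paper defers to (Kechris--Miller, proof of Lemma 7.3): stratify $\FinE$ by a Feldman--Moore-derived radius invariant so that each stratum of the intersection graph has bounded degree, apply the Borel $(d+1)$-coloring theorem for bounded-degree graphs on each stratum, and combine with disjoint palettes. The only cosmetic difference is bookkeeping (your $\Delta(A)=N$ exactly versus the ``$\le N$'' normalization one sometimes sees); the key localization step — that a common point $x\in A\cap B$ together with the radius bound pins $B$ to a subset of $\{\gamma_0(x),\dots,\gamma_N(x)\}$, giving degree at most $(N+1)2^{N+1}$ — is exactly right.
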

\begin{proof}
	See \cite{Kechris-Miller}*{Proof of Lemma 7.3}.
\end{proof}

Below we formulate and prove enhancements of \cref{existence-of-maximal-fsr} with stronger notions of maximality.

\subsection{Injective extensions and saturated \fsrs}\label{subsec:saturated-fsr}

\begin{defn}\
	\begin{itemize}
		\item We say that a set $U \in \FinX$ is \emph{injective over an \fsr $F$} if it is $F$-invariant and contains at most one $F$-class.
		
		\item For \fsrs $F_0,F_1$, say that $F_1$ \emph{injectively extends} $F_0$ if $F_1 \supseteq F_0$ and each $F_1$-class contains at most one $F_0$-class. 
		
		\item Call a sequence $(F_n)_{n \in \N}$ of \fsrs \emph{injectively increasing} if each $F_{n+1}$ injectively extends $F_n$.
	\end{itemize}
\end{defn}

\begin{lemma}\label{injectively_increasing_is_smooth}
	For any injectively increasing sequence $(F_n)_n$ of Borel \fsrs, $F \defeq \bigcup_n F_n$ is smooth.
\end{lemma}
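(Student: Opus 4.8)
The plan is to exhibit a Borel reduction $\phi\colon \dom(F)\to X$ of $F$ to equality on $X$, by isolating inside each $F$-class a canonical finite ``core'' and taking its $\preceq$-least element for a fixed Borel linear order $\preceq$ on $X$. Note first that $F$ is a Borel partial equivalence relation on the Borel set $\dom(F)=\bigcup_n\dom(F_n)$ (transitivity uses that the $F_n$ are increasing), so speaking of its smoothness makes sense.

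The one place the hypothesis is used is the following structural fact, which I would prove first: \emph{if $y,z\in\dom(F_n)$ and $y\mathrel{F}z$, then $y\mathrel{F_n}z$.} Pick $m_0$ with $y\mathrel{F_{m_0}}z$; if $m_0\le n$ this is immediate since $F_{m_0}\subseteq F_n$, so assume $m_0>n$ and induct on $m_0$. For the step, $y,z\in\dom(F_n)\subseteq\dom(F_{m_0-1})$, and $[y]_{F_{m_0-1}},[z]_{F_{m_0-1}}$ are $F_{m_0-1}$-classes both contained in the single $F_{m_0}$-class $[y]_{F_{m_0}}$, which by injective extension contains at most one $F_{m_0-1}$-class; hence $[y]_{F_{m_0-1}}=[z]_{F_{m_0-1}}$ and the inductive hypothesis applies. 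A direct consequence is that for each $x\in\dom(F)$ and each $n$, the set $[x]_F\cap\dom(F_n)$ is either empty or a single (finite) $F_n$-class.

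Next I would define the \emph{birth level} $N(x)\defeq\min\set{n_0(y) : y\mathrel{F}x}$, where $n_0(y)\defeq\min\set{n : y\in\dom(F_n)}$; both minima are over $\N$, hence attained, $n_0$ is Borel, and using an enumeration $[x]_F=\set{\ga_k(x):k\in\N}$ obtained from \cref{relative_enum_of_each_class} applied to $\cl{F}$, one checks that $N$ is Borel and $F$-invariant. Put $C(x)\defeq[x]_F\cap\dom(F_{N(x)})=\set{\ga_k(x) : \ga_k(x)\in\dom(F_{N(x)})}$; by the choice of $N(x)$ this set is nonempty, and by the previous paragraph it is a single finite $F_{N(x)}$-class, so $\phi(x)\defeq\min_{\preceq}C(x)$ is well defined and Borel in $x$ (a minimum of a Borel-enumerable finite set).

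Finally I would verify that $\phi$ is a reduction. It is constant on $F$-classes because $[x]_F$, $N(x)$, and therefore $C(x)$, depend only on $[x]_F$. It separates distinct $F$-classes: if $\phi(x)=\phi(x')$, then this common value lies in $C(x)\cap C(x')\subseteq[x]_F\cap[x']_F$, so $x\mathrel{F}\phi(x)=\phi(x')\mathrel{F}x'$, whence $[x]_F=[x']_F$. Thus $\phi$ witnesses that $F$ is smooth. I do not anticipate a real obstacle; the only subtle point is the structural fact in the second step, which is precisely where injectivity of the extensions enters — dropping it leaves $\bigcup_n F_n$ merely hyperfinite (e.g.\ $E_0$ written as eventual-agreement relations), and the ``core'' of a class is then no longer a single finite $F_n$-class, so the argument collapses.
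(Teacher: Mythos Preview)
Your proof is correct and follows essentially the same idea as the paper's one-line argument: each $F$-class contains a unique earliest $F_n$-class (the ``core''), and mapping to it gives a Borel reduction to equality. The paper records this tersely as $x \mapsto [x]_{F_{n_x}}$ with $n_x$ the least $n$ with $x\in\dom(F_n)$; you instead minimize $n$ over the whole $F$-class and then select the $\preceq$-least point of the resulting core, which makes $F$-invariance immediate, and your verification of the structural fact (that $[x]_F\cap\dom(F_n)$ is at most a single $F_n$-class) is precisely the content the paper leaves implicit.
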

\begin{proof}
	The smoothness of $F$ is witnessed by the map 
	$$
	x \mapsto [x]_{F_{n_x}} : \dom(F) \to \FinX,
	$$ 
	where $n_x$ is the smallest number such that $x \in \dom(F_{n_x})$.
\end{proof}

\begin{defn}
	For a collection $\Phi \subseteq \FinX$, call an \fsr $F$ \emph{$\Phi$-saturated} (or \emph{saturated within $\Phi$}) if it is within $\Phi$ and there is no $U \in \Phi \setminus \Classes{F}$ injective over $F$.
\end{defn}

\begin{cor}[Miller]\label{existence_of_saturated_fsr}
	For any countable Borel equivalence relation $E$ on $X$ and any Borel $\Phi \subseteq \FinE$, there is a Borel \fsr $F \subseteq E$ that is $\Phi$-saturated modulo $E$-compressible.
\end{cor}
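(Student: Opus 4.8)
The plan is to obtain $F$ as the stabilized union of an injectively increasing sequence $(F_n)_{n \in \N}$ of Borel \fsrs within $\Phi$, built by iterating \cref{existence-of-maximal-fsr} through $\om$ stages. First I would take $F_0 \subseteq E$ to be any $\Phi$-maximal Borel \fsr; since $\dom(F_n) \supseteq \dom(F_0)$ will hold throughout, every $F_n$ is then automatically $\Phi$-maximal. Given $F_n$, set
\[
\Psi_n \defeq \set{U \in \Phi : U \text{ is injective over } F_n \text{ and } U \notin \Classes{F_n}},
\]
a Borel subfamily of $\FinE$. Passing to the concrete quotient $\Xmod{F_n}$ (legitimate, since a finite Borel equivalence relation is smooth), push $\Psi_n$ forward along the chosen selector to a Borel family $\Psi'_n$ of finite $(E \rest{\Xmod{F_n}})$-related sets, apply \cref{existence-of-maximal-fsr} to obtain a $\Psi'_n$-maximal Borel \fsr on $\Xmod{F_n}$, pull its classes back to $X$, and let $F_{n+1}$ be the union of these pullbacks with the unchanged $F_n$-classes. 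By $\Phi$-maximality of $F_n$, each pulled-back class contains exactly one $F_n$-class, so $F_{n+1}$ injectively extends $F_n$; and it stays within $\Phi$ because its classes are either elements of $\Psi_n$ or old $F_n$-classes. Borelness at each step is routine.

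Next I would separate off the compressible part. Let $Z \defeq \set{x \in X : \sup_n |[x]_{F_n}| < \w}$, which is Borel and $\cl{F_n}$-invariant for every $n$, and put $F \defeq \bigcup_n (F_n \rest{Z})$; this is a Borel \fsr (its classes are finite by definition of $Z$), it is contained in $E$, and it is within $\Phi$ because $[x]_F = [x]_{F_n}$ for $n$ large. On $Z^c$ the classes $[x]_{F_n}$ grow without bound, so $\bigcup_n (F_n \rest{Z^c})$ is an aperiodic Borel subequivalence relation of $E \rest{Z^c}$, smooth by \cref{injectively_increasing_is_smooth}; extending it smoothly over the remaining points of $[Z^c]_E$ and invoking \cref{compressible=supset_aperiodic+smooth}, one gets that $W \defeq [Z^c]_E$ is $E$-compressible. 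Since $W$ is $E$-invariant, it is $F$-invariant, so $F \rest{X \setminus W}$ is still within $\Phi$.

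The heart of the argument is to verify that $F$ is $\Phi$-saturated on $X \setminus W$. Suppose towards a contradiction that $U \in \Phi$ satisfies $U \subseteq X \setminus W \subseteq Z$, $U \notin \Classes{F}$, and $U$ is injective over $F$. By $\Phi$-maximality of $F_0$, $U$ meets $\dom(F)$, and since $U$ is $\cl{F}$-invariant with at most one $F$-class, $U \cap \dom(F)$ is a single $F$-class $C = [x]_F$ with $U \supsetneq C$. Fix $n$ so large that $U \cap \dom(F_n) = C$ and $[x]_{F_m} = C$ for all $m \ge n$; then $U$ is $\cl{F_n}$-invariant with $C$ as its unique $F_n$-class and $U \notin \Classes{F_n}$, i.e. $U \in \Psi_n$. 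Hence the image of $U$ in $\Xmod{F_n}$ lies in $\Psi'_n$ and so meets the domain of the $\Psi'_n$-maximal \fsr used at stage $n+1$. Tracing a witnessing point back to $X$, some element of $U$ acquires at stage $n+1$ an $F_{n+1}$-class $V \in \Psi_n$; the $\cl{F_{n+1}}$-invariance of $U$ forces $V \subseteq U$, so the unique $F_n$-class of $V$ must be $C$, while $V \notin \Classes{F_n}$ gives $V \supsetneq C$. Therefore $[x]_{F_{n+1}} \supseteq V \supsetneq C = [x]_{F_n}$, contradicting the choice of $n$.

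I expect this last paragraph to be the main obstacle: the delicate point is the bookkeeping that, at a sufficiently late stage, the hypothetical $U$ really is an admissible one-class extension over $F_n$ and that the newly created class $V$ is pinned down to contain precisely $C$; everything else — the Borel definability of $\Psi_n$, $\Psi'_n$, and the quotient constructions, and the compressibility of $W$ — is standard.
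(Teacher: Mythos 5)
Your proof is correct, but it takes a genuinely different route from the paper's. The paper obtains this corollary as a special case of the game-theoretic \cref{winning_SatG}: one fixes a countable Borel coloring of the intersection graph on $\FinE$ (\cref{coloring_intersection_graph}) and a sequence of colors in which each color recurs infinitely often, and at stage $n$ Player~1 merges into $F_{n-1}$ \emph{all} sets of the current color that are injective over $F_{n-1}$; disjointness is automatic from the coloring, and the infinite recurrence of colors delivers saturation. Your argument instead iterates \cref{existence-of-maximal-fsr} on successive concrete quotients, and the crucial observation making it close is the propagation of $\Phi$-maximality from $F_0$ to every $F_n$: this is exactly what forces a newly created class $V$ at stage $n+1$ to absorb the old class $C = [x]_{F_n}$ rather than sit beside it inside $U$, producing the contradiction $[x]_{F_{n+1}} \supsetneq [x]_{F_n}$. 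Both routes ultimately rest on \cref{coloring_intersection_graph} (it sits inside \cref{existence-of-maximal-fsr}), but yours is indirect, and it does not immediately yield the adaptive game form \cref{winning_SatG} — in the game, Player~2's future $\Phi_n$'s cannot be anticipated by an initial $\Phi_0$-maximal $F_0$ — which is the form the paper actually needs in \cref{finite_factor}. A small point: your compressibility step, like the paper's, glosses over the fact that $\bigcup_n(F_n\rest{Z^c})$ is aperiodic and smooth only on $Z^c$ and not on all of $[Z^c]_E$; this is filled by the standard observation that $Z^c$ is an $E$-complete section of $[Z^c]_E$ and that compressibility of the restriction to a complete section passes to its $E$-saturation.
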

\begin{proof}
	Follows from \cref{winning_SatG} below, where we have Player 1 play $\Phi$ all the time.
\end{proof}

We formulate a slightly stronger version of the last statement in the language of games.

\begin{defn}[Saturation game]\label{defn:SatG}
	Let $E$ be a countable Borel equivalence relation on $X$. The \emph{saturation game $\SatG(E)$} is as follows:
	
	\medskip
	$
	\Jue
	$
	\medskip
	
	\noindent where $\Phi_n \subseteq \FinE$ is Borel for each $n$ and the $F_n$ are injectively increasing Borel \fsrs with each $F_n$ being within $\bigcup_{k < n} \Phi_k$ (so $F_0 = \0$). We say that \emph{Player 1 wins modulo $E$-compressible} if, modulo $E$-compressible, $F \defeq \bigcup_{n \in \N} F_n$ is finite and saturated within $\Phi \defeq \bigcup_{n \in \N} \Phi_n$.
\end{defn}

\begin{theorem}\label{winning_SatG}
	For any countable Borel equivalence relation $E$, Player 1 has a strategy to win $\SatG(E)$ modulo $E$-compressible.
\end{theorem}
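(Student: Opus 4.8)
The plan is to construct Player 1's strategy so that after $\omega$ rounds the resulting $F = \bigcup_n F_n$ is finite (i.e. a genuine \fsr, not just an increasing union with infinite classes), $E$-invariant in the appropriate sense, and $\Phi$-saturated modulo an $E$-compressible set. Since Player 1 only gets to respond to Player 2's moves $\Phi_n$, the idea is to run, in parallel, a bookkeeping over all of Player 2's sets that handles the "injective extension" opportunities one at a time. Concretely, I would apply \cref{coloring_intersection_graph} to fix a countable Borel coloring $c : \FinE \to \N$ of the intersection graph, and at stage $n$ use the new information $\Phi_n$ together with the color classes to define $F_{n+1}$ by simultaneously, over all points in distinct color-$k$ pieces for the relevant $k$'s, either adjoining a fresh set from $\Phi_n$ disjoint from $\dom(F_n)$ (building towards $\Phi$-maximality) or injectively enlarging an existing $F_n$-class by a set $U \in \Phi_n$ that is $F_n$-invariant and contains exactly that one class (building towards saturation). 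The disjointness of the moves within a color class makes the choices Borel and simultaneous via Luzin--Novikov (\cref{relative_enum_of_each_class}); across color classes one uses the standard diagonalization over $\N$ so that every $(\Phi_n, \text{color }k)$ pair is visited cofinally.

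The main structural point is that each $F_{n+1}$ so defined injectively extends $F_n$: when we enlarge a class we do so by an $F_n$-invariant set containing only that class, and when we add a fresh set it forms a brand new class. Hence by \cref{injectively_increasing_is_smooth} the union $F$ is smooth. The only way $F$ can fail to be finite is if some point $x$ has its class enlarged infinitely often; let $Z$ be the set of such points. Because the enlargements along a fixed orbit are strictly increasing finite sets whose union is infinite and $E$-related, $Z$ is a Borel $E$-invariant set on which $F$ restricts to an aperiodic smooth Borel equivalence relation, so $E \rest Z$ is $E$-compressible by \cref{compressible=supset_aperiodic+smooth}. Throwing $Z$ away, $F$ is finite off a compressible set.

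It remains to check saturation off a (possibly larger) compressible set. Suppose, off $Z$, there were a Borel assignment $x \mapsto U_x \in \Phi \setminus \Classes{F}$ with $U_x$ injective over $F$ and $U_x \subseteq \dom(F)$ refuting saturation (the case $U_x \cap \dom(F) = \emptyset$ is handled by the $\Phi$-maximality part of the bookkeeping, which is the easier half and mirrors \cref{existence-of-maximal-fsr}). Since $U_x \in \Phi = \bigcup_n \Phi_n$, pick the least $n$ with $U_x \in \Phi_n$; since $U_x$ is finite and injective over $F$, it is $F_m$-invariant and contains exactly one $F_m$-class for all sufficiently large $m$. At the stage where the bookkeeping handles the pair $(\Phi_n, c(U_x))$ after that threshold — and here I use that $U_x$ is not intersected by any other contemporaneously-chosen set of the same color, a consequence of $c$ being a proper coloring — the strategy was required to enlarge along $U_x$ (or along some competing valid set disjoint from it), contradicting that $U_x \notin \Classes{F}$. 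Formalizing "valid competing set" so that the strategy is forced to act, and isolating the compressible exceptional set cleanly, is where the real work lies; the expected main obstacle is precisely making the forced-action argument Borel-uniform in $x$ while respecting that Player 2, not Player 1, controls which $\Phi_n$ appear and when.
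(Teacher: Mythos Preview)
Your approach is essentially the paper's: fix a countable Borel coloring $c$ of the intersection graph on $\FinE$, diagonalize so that every pair (index $i$, color $k$) is visited infinitely often, and at each stage injectively extend $F_{n-1}$ by sets from $\bigcup_{i<n}\Phi_i$ of the current color; then invoke \cref{injectively_increasing_is_smooth} and \cref{compressible=supset_aperiodic+smooth} to make $F$ finite off a compressible set, and argue saturation by revisiting the color of any offending $U$.

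The one place where you are making your life harder than necessary is the saturation step. You worry about ``competing valid sets'' and about making a ``forced-action'' argument Borel-uniform in $x$. This worry disappears once you observe that sets of the \emph{same color are pairwise disjoint}: at stage $n$ the strategy does not choose among candidates, it simply adjoins \emph{every} set in $\bigcup_{i<n}\Phi_i$ of color $k_n$ that is injective over $F_{n-1}$, all at once. These sets are automatically disjoint, and each is $F_{n-1}$-invariant, so their union with $F_{n-1}$ is a legal injective extension with no selection needed. Consequently, for the saturation argument, given $U\in\Phi$ injective over $F$, pick $N$ large enough that $U\in\bigcup_{i<N}\Phi_i$ and $F\rest U = F_m\rest U$ for all $m\ge N$; at the first $n\ge N$ with $k_n=c(U)$, the set $U$ itself lies in $\Psi_n$ and hence becomes an $F_n$-class. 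There is no competitor to worry about, and no per-$x$ uniformity issue: the argument is pointwise for the single set $U$. Your separate bookkeeping of ``fresh disjoint sets'' versus ``enlarging an existing class'' is also unnecessary, since both fall under ``injective over $F_{n-1}$'' (containing zero or one $F_{n-1}$-class, respectively).
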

\begin{proof}
	By \cref{coloring_intersection_graph}, fix a countable coloring of the intersection graph on $\FinE$ and a sequence $(k_n)_n$ of natural numbers such that each $k \in \N$ appears infinitely many times.
	
	Having Player 1 play $F_0 \defeq \0$ in her $0^\text{th}$ move, and we describe the $n^\text{th}$ move of Player 1, for $n \ge 1$, assuming Player 2 has made his $(n-1)^\text{th}$ move. Let $\Psi_n$ be the collection of all sets in $\bigcup_{i < n} \Phi_i$ of color $k_n$ that are injective over $F_{n-1}$. Because $E_{\Psi_n}$ is $F_{n-1}$-invariant, $F_n \defeq F_{n-1} \cup E_{\Psi_n}$ is an \fsr that injectively extends $F_{n-1}$, so we have Player 2 play $F_n$.
	
	By \cref{injectively_increasing_is_smooth,compressible=supset_aperiodic+smooth}, $F \defeq \bigcup_{n \in \N} F_n$ is finite modulo $E$-compressible. Thus, throwing out this $E$-compressible set, we may assume that $F$ is finite and we show that it is saturated within $\Phi \defeq \bigcup_{n \in \N} \Phi_n$.
	
	Let $U \in \Phi$ be injective over $F$. We will show that $U \subseteq \dom(F)$. Let $N$ be large enough so that, for all $n \ge N$, $U \in \bigcup_{i < n} \Phi_i$ and $F \rest{U} = F_n \rest{U}$, so $U$ is injective over $F_n$. Letting $k$ be the color of $U$, there are arbitrarily large $n$ with $k_n = k$, so there must be $n > N$ for which $U$ is in $\Psi_n$, and hence is contained in $\dom(F_n)$.
\end{proof}

\begin{defn}
	Call a collection $\Phi \subseteq \FinX$ \emph{rich}, if for every $\Psi \subseteq \Phi$ with $\bigcup \Psi \ne X$, there are a nonempty $U \subseteq \big(\bigcup \Psi\big)^c$ and $V \in \Psi \cup \set{\0}$ such that $U \cup V \in \Phi$.
\end{defn}

\begin{lemma}\label{rich-saturated_is_entire}
	For any rich $\Phi \subseteq \FinX$, any $\Phi$-saturated \fsr is entire.
\end{lemma}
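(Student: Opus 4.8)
The plan is to argue by contradiction. Suppose $F$ is a $\Phi$-saturated \fsr that is \emph{not} entire, so that $Z \defeq X \setminus \dom(F) \ne \0$. I would apply the richness of $\Phi$ to the subfamily $\Psi \defeq \Classes{F}$, which is contained in $\Phi$ since $F$ is within $\Phi$, and whose union is exactly $\dom(F) \ne X$. Richness then yields a nonempty set $U \subseteq Z = \big(\bigcup \Psi\big)^c$ and a set $V \in \Classes{F} \cup \set{\0}$ with $W \defeq U \cup V \in \Phi$; note that $W$ is finite (hence so is $U$) because $\Phi \subseteq \FinX$.

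The key step is then to check that $W$ witnesses a failure of $\Phi$-saturation. Since $U$ consists of points outside $\dom(F)$, each of which forms its own $\cl{F}$-class, and $V$ is either empty or a single $F$-class, the set $W$ is a union of $\cl{F}$-classes, i.e. it is $F$-invariant; moreover it contains at most one $F$-class, namely $V$, so $W$ is injective over $F$. On the other hand, $W$ is not an $F$-class, because it contains the nonempty set $U$, which is disjoint from $\dom(F)$, whereas every $F$-class is a subset of $\dom(F)$. Hence $W \in \Phi \setminus \Classes{F}$ is injective over $F$, contradicting the $\Phi$-saturation of $F$. Therefore $\dom(F) = X$, i.e. $F$ is entire.

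There is essentially no serious obstacle here; the only thing requiring care is unwinding the definitions — in particular, recalling that "$F$-invariant" (equivalently $\cl{F}$-invariant) for a set means being a union of $F$-classes together with singletons of points outside $\dom(F)$, which is precisely the form of $W = U \disjU V$.
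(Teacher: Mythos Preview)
Your proof is correct and is precisely the unwinding the paper has in mind when it says ``Immediate from the definition of richness.'' You have just made explicit the verification that the set $W = U \cup V$ produced by richness is injective over $F$ and lies in $\Phi \setminus \Classes{F}$, which is exactly what is needed.
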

\begin{proof}
	Immediate from the definition of richness.
\end{proof}

As a quick application, we prove the following folklore lemma, which we will use below in the proof of \cref{ergodic_hyp_slid-subgraph}.

\begin{lemma}\label{connecting_with_compl_sect}
	Let $G$ be a Borel locally countable graph on a standard Borel space $X$. For any Borel $E_G$-complete set $S \subseteq X$, there is a Borel \fsr $F \subseteq E_G$ with $E_G$-cocompressible domain such that $G$ connects $F$ and each $F$-class intersects $S$.
\end{lemma}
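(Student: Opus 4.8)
The plan is to obtain $F$ as (a restriction of) a \emph{saturated} \fsr, via \cref{existence_of_saturated_fsr} and \cref{rich-saturated_is_entire}. Writing $E \defeq E_G$, the first step is to introduce the Borel family
\[
\Phi \defeq \set{A \in \FinG : A \cap S \ne \0} \subseteq \FinE
\]
of finite $G$-connected sets meeting $S$ (it is Borel since $\FinG$ is Borel and $\set{A : A \cap S \ne \0}$ is Borel), and to prove that $\Phi$ is \emph{rich}. This is the only step that requires an argument. Given $\Psi \subseteq \Phi$ with $\bigcup\Psi \ne X$, I would pick $x \notin \bigcup\Psi$, use the $E$-completeness of $S$ to find $s \in S$ with $x \mathrel{E} s$, and fix a simple $G$-path $x = x_0, x_1, \dots, x_m = s$. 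If $x \in S$, then $U \defeq \set{x}$ and $V \defeq \0$ work. Otherwise, let $j \ge 1$ be least with $x_j \in \bigcup\Psi \cup S$ (it exists, as $x_m = s \in S$); by minimality the vertices $x_0, \dots, x_{j-1}$ avoid $\bigcup\Psi$. If $x_j$ lies in some $V \in \Psi$, put $U \defeq \set{x_0, \dots, x_{j-1}}$: then $\0 \ne U \subseteq (\bigcup\Psi)^c$, and $U \cup V$ is finite, $G$-connected (each $x_i$ is joined along the path to $x_j \in V$, which is itself $G$-connected), and meets $S$ since $V$ does, so $U \cup V \in \Phi$. Otherwise $x_j \in S \setminus \bigcup\Psi$, and $U \defeq \set{x_0, \dots, x_j} \in \Phi$, $V \defeq \0$ work. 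Hence $\Phi$ is rich.

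Next I would apply \cref{existence_of_saturated_fsr} to get a Borel \fsr $F \subseteq E$ that is $\Phi$-saturated modulo an $E$-compressible Borel set; since the $E$-compressible ideal is $E$-invariant, this set can be taken $E$-invariant, say $Z$, and I set $Y \defeq X \setminus Z$. As $Y$ is $E$-invariant, $S \cap Y$ is still an $(E\rest{Y})$-complete set, and $\Phi_Y \defeq \set{A \in \Phi : A \subseteq Y}$ is again rich (as a family of finite subsets of $Y$) by the very same argument — all the paths used stay inside $E$-classes, hence inside $Y$. Since $F\rest{Y}$ is $\Phi_Y$-saturated, \cref{rich-saturated_is_entire} yields that $F\rest{Y}$ is entire, i.e. $Y \subseteq \dom(F)$.

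Finally I would replace $F$ by $F\rest{Y}$. Then $X \setminus \dom(F) \subseteq Z$ is $E$-compressible, so $\dom(F)$ is $E$-cocompressible; $F \subseteq E_G$ is a Borel \fsr; and each $F$-class, being a member of $\Phi$, is finite, $G$-connects, and meets $S$, as required. The main obstacle is precisely the richness verification in the first paragraph; everything after it is routine bookkeeping with the ambient $E$-compressible set, and — notably — no edge-sliding machinery is needed here.
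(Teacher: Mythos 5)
Your proposal is correct and follows the same route as the paper's one-paragraph proof: define $\Phi$ as the family of finite $G$-connected sets meeting $S$, observe it is rich, apply \cref{existence_of_saturated_fsr}, discard the $E_G$-compressible set, and invoke \cref{rich-saturated_is_entire}. The paper leaves the richness verification (and the tacit point that discarding an $E_G$-invariant compressible set preserves both richness and saturation on the complement) entirely to the reader; your write-up supplies exactly those details, and they check out.
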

\begin{proof}
	Let $\Phi$ be the collection of all $U \in \FinG$ that are $G$-connected and intersect $S$, and observe that $\Phi$ is rich. Applying \cref{existence_of_saturated_fsr}, we get a Borel \fsr $F \subseteq E_G$ that is saturated within $\Phi$ modulo an $E_G$-compressible set. Throwing this set out, \cref{rich-saturated_is_entire} implies that $F$ is entire.
\end{proof}

\subsection{Packed \fsrs}\label{subsec:packed-fsr}

Throughout this subsection, fix a Borel \emph{superadditive} function $p : \FinX \to [0, \w)$, i.e. 
$$
p(U \disjU V) \ge p(U) + p(V)
$$ 
for all disjoint $U,V \in \FinX$.

We will build a ``packed'' \fsr, where in the corresponding game, we will allow Player 2 to merge different classes of its previous move (an \fsr) as long as a certain ``control'' condition imposed by $p$ is satisfied.

\smallskip

For an \fsr $F$, let $F \rest{p}$ denote the restriction of $F$ to the $F$-invariant set 
$$
\set{x \in \dom(F) : p([x]_F) \ge 1}.
$$

\begin{defn}\label{defn:admissible}
	For an \fsr $F$, call $V \in \FinX$ \emph{$p$-admissible for $F$} if, firstly, it is $F$-invariant, and secondly, at least one of the following holds: 
	\begin{enumref}{a}{defn:admissible}
		\item\label{item:defn:admissible:none} $|\Classes[V]{F}| \le 1$;
		
		\item\label{item:defn:admissible:at_most_one} $p(V) > 0$ and $|\Classes[V]{F \rest{p}}| \le 1$;
		
		\item\label{item:defn:admissible:many} $p(V) > 0$ and 
		$
		\displaystyle 
		\sum_{U \in \Classes[V]{F \rest{p}}} \floor{p(U)} 
		\le
		|V \setminus \dom(F \rest{p})|$.
	\end{enumref}
\end{defn}

Here is an intuitive interpretation of the last condition: when trying to pack together some $F$-classes of positive $p$-weight, one needs to also include a padding of at least as many points as the sum of the $p$-weights of these $F$-classes. Each point in the padding is either a new (i.e. outside of $\dom(F)$) point or an old one contained in a $p$-null $F$-class.

\begin{defn}\label{defn:admissible_sequences}
	For \fsrs $F,F'$, call $F'$ a \emph{$p$-admissible extension of $F$} if $F' \supseteq F$ and each $F'$-class is $p$-admissible for $F$. Say that a sequence $(F_n)_n$ of \fsrs is $p$-admissibly increasing if each $\Psi_{n+1}$ is a $p$-admissible extension of $\Psi_n$.
\end{defn}

\begin{lemma}\label{admissible_sequence_stabilizes_mod-compressible}
	If $p : \FinX \to [0, \w)$ is a superadditive Borel function, then, for any $p$-admissibly increasing sequence $(F_n)_n$ of Borel \fsrs, the aperiodic part $Z$ of $F \defeq \bigcup_n F_n$ is $F$-compressible.
\end{lemma}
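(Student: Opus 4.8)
The plan is to construct an aperiodic smooth Borel subequivalence relation of $F \rest Z$; by \cref{compressible=supset_aperiodic+smooth} this yields that $F \rest Z$, and hence $Z$, is $F$-compressible. Write $F_n[x] \defeq [x]_{F_n}$, so that $(F_n[x])_n$ is an increasing sequence of finite sets with union $[x]_F$. Superadditivity makes $n \mapsto p(F_n[x])$ nondecreasing, so $w(x) \defeq \sup_n p(F_n[x]) \in [0,\w]$ is a Borel $F$-invariant function, and $\dom(F_n \rest p)$ is increasing in $n$ with $F$-invariant Borel union $D \defeq \bigcup_n \dom(F_n \rest p)$. Call a stage $n+1$ \emph{essential along $x$} if $F_{n+1}[x]$ contains two distinct $F_n \rest p$-classes; by \cref{defn:admissible} such an extension is $p$-admissible only via clause~\labelcref{item:defn:admissible:many}, which forces $p(F_{n+1}[x]) \ge \sum_U \floor{p(U)} \ge 2$, and one checks that at each subsequent essential stage $p$ increases along the trajectory by at least $1$; hence there are at most $\floor{w(x)}$ essential stages along $x$. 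The predicate ``infinitely many essential stages along $x$'' is easily seen to be $F$-invariant (two trajectories in a common $F$-class differ only finitely before they merge), so it carves out an $F$-invariant Borel set $Z_{\mathrm{inf}} \subseteq Z \cap \{x : w(x) = \w\}$, with complement $Z_{\mathrm{fin}} \defeq Z \setminus Z_{\mathrm{inf}}$.

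On $Z_{\mathrm{fin}} \cap D$ I claim $F$ is smooth, which suffices since it is then aperiodic and smooth. After a bounded-by-$w(x)$ (hence finite) and $F$-invariant number of essential stages, the ``heavy trace'' $(F_n \rest p)\rest{[x]_F}$ becomes injectively increasing, and on $D$ its union is all of $F \rest{[x]_F}$; repackaging it into a genuinely injectively increasing sequence of finite Borel \fsrs (freezing each $F_n \rest p$ at the $F$-invariant threshold until $n$ overtakes it) and applying \cref{injectively_increasing_is_smooth} gives smoothness. The remaining ``light'' piece $Z_{\mathrm{fin}} \setminus D = Z \setminus D$, on which every $F_n$-class has weight $< 1$, is handled by a variant of this argument: merging two distinct $F_n$-classes now requires only $p(F_{n+1}[x]) > 0$ (\cref{defn:admissible}), but superadditivity together with the bound ``within one $F$-class the disjoint positive-weight $F_n$-pieces present at any stage have total weight $\le w(x) < 1$'' rules out the ``doubling'' growth that non-smoothness would require, leaving a canonically-anchored forest-like structure from which smoothness of $F$ there is extracted much as in the $D$ case.

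For $Z_{\mathrm{inf}}$ the plan is to use the padding in clause~\labelcref{item:defn:admissible:many} directly. At the $j$-th essential stage $n_j$ along the trajectory of $x$, choose Borel-uniformly a witnessing set $P_j(x) \subseteq F_{n_j}[x] \setminus \dom(F_{n_j - 1}\rest p)$ with $|P_j(x)| = \sum_U \floor{p(U)} \ge 1$. Every point of $P_j(x)$ already lies in the heavy class $F_{n_j}[x]$ at stage $n_j$, hence is excluded from the ``fresh'' pool available at every later essential stage; thus $P_1(x), P_2(x), \dots$ are pairwise disjoint nonempty subsets of $[x]_F$ and there are infinitely many of them. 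Together with a canonical finite anchor of each $F$-class (say the least, in a fixed Borel linear order of $\FinX$, heavy $F_n$-class inside $[x]_F$ at the least stage such a class exists) and a bundling of the ``non-essential'' portions of the history with the $P_j$, this partitions $[x]_F$ into infinitely many Borel-selectable finite cells; regrouping these cells along a fixed partition of $\N$ into infinitely many infinite sets yields an aperiodic smooth Borel subequivalence relation of $F\rest{Z_{\mathrm{inf}}}$, completing the construction.

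The step I expect to be the main obstacle is the bookkeeping needed throughout to keep every threshold, index, anchor, and selection genuinely $F$-invariant and Borel, given that two points of a single $F$-class have, in general, unrelated trajectories up to the stage they merge; subordinate to this is the ``light'' case $Z \setminus D$, where no padding is consumed and smoothness must be wrung purely out of superadditivity and the positivity requirement of clause~\labelcref{item:defn:admissible:at_most_one}.
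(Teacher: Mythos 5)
You take a genuinely different route from the paper's. The paper covers $Z$ by four $F$-invariant pieces, classified by the behavior of $|\Classes[V]{F_n \rest p}|$ and $\sum_{U \in \Classes[V]{F_n}} p(U)$ for $V \in \Classes[Z]{F}$: $Z_0$ (sum always $0$), $Z_1$ (exactly one heavy $F_n$-class at some stage), $Z_\Si$ (sum positive finite at some stage), $Z_2$ (eventually always $\ge 2$ heavy classes); the first three are $F$-smooth (via \cref{injectively_increasing_is_smooth}, via the unique heavy class, via a max-$p$-weight selector, respectively), and $Z_2$ is compressed directly from the padding in clause~\labelcref{item:defn:admissible:many}, by first selecting an $F$-invariant transversal $\Psi$ of inclusion-minimal heavy classes and then building the compression maps $\gamma_n$ off of it. Your per-trajectory split $Z_{\mathrm{inf}}\disjU (Z_{\mathrm{fin}}\cap D)\disjU (Z\setminus D)$ is not a relabeling of this: one checks that $Z_{\mathrm{fin}}\cap D\subseteq Z_1$ (after $x$'s last essential stage, $F_n[x]$ must be the unique heavy $F_n$-class in $V$, since any rival would eventually merge into it and create a later essential stage), that $Z\setminus D\subseteq Z_0\cup Z_\Si$ (your bound that disjoint $F_n$-pieces in a light-forever $V$ have $p$-weights summing to $\le w(x)<1$ — because they all eventually merge into a single $F_m[x]$ and superadditivity applies — puts $V$ into $Z_\Si$ the moment the sum turns positive), and that $Z_{\mathrm{inf}}\supseteq Z_2$ while also absorbing some of $Z_1$, so your padding compression is run on a strictly larger set than the paper's. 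That rebalancing is sound.

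A few places do need tightening, though, and they are exactly the bookkeeping points you flagged. In $Z_{\mathrm{fin}}\cap D$, the per-point quantity "$x$'s last essential stage" is \emph{not} $F$-invariant; the $F$-invariant threshold to freeze at is $m_V \defeq$ the least $m$ with $|\Classes[V]{F_n\rest p}|\le 1$ for all $n\ge m$, which the above inclusion shows is finite, and once you write this down you are running the paper's $Z_1$ selector. In $Z\setminus D$, "much as in the $D$ case" is misleading: there is no heavy class to anchor to, so injective increase of the heavy trace is vacuous, and what your $\le 1$ bound actually buys is that the positive-weight $F_n$-classes have weights summing to a positive finite number, whence the correct anchor is the finitely many max-weight classes at the first stage the sum is positive — i.e., the paper's $Z_\Si$ argument, not a variant of the $D$-case one. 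In $Z_{\mathrm{inf}}$, the sets $P_j(x)$ depend on the full trajectory of $x$ and not just on $[x]_F$; to turn them into a compression you still need an $F$-invariant starting transversal (as in the paper's $\Psi$) or an equivalent device, so that the $\gamma_n$'s can be defined uniformly rather than point-by-point. None of these is fatal, but they are real gaps in the sketch rather than routine checks.
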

\begin{proof}
	$Z$ is covered by the following $F$-invariant sets:
	\begin{align*}
		Z_0 
		&\defeq 
		\bigcup \set{V \in \Classes[Z]{F} : \forall n \; \sum_{U \in \Classes[V]{F_n}} p(U) = 0}
		\\
		Z_1 
		&\defeq 
		\bigcup \set{V \in \Classes[Z]{F} : \exists n \; |\Classes[V]{F_n \rest{p}}| = 1}
		\\	
		Z_\Si
		&\defeq
		\bigcup \set{V \in \Classes[Z]{F} : \exists n \; 0 < \sum_{U \in \Classes[V]{F_n}} p(U) < \w}
		\\
		Z_2
		&\defeq
		\bigcup \set{V \in \Classes[Z]{F} : \forall^\w n \; |\Classes[V]{F_n \rest{p}}| \ge 2}.
	\end{align*}
	
	The fact that $(F_n \rest{Z_0})_n$ is $p$-admissibly increasing is solely witnessed by condition \labelcref{item:defn:admissible:none}. Therefore, $(F_n \rest{Z_0})_n$ is injectively increasing, so, by \cref{injectively_increasing_is_smooth}, $Z_0$ is $F$-smooth, hence $F$-compressible by \cref{compressible=supset_aperiodic+smooth}.
	
	Similarly, $(F_n \rest{Z_1})_n$ is eventually driven by condition \labelcref{item:defn:admissible:at_most_one}. Letting, for each $x \in Z_1$, $n_x \in \N$ be the least index such that $|\Classes[{[x]_F}]{F_n \rest{p}}| = 1$, we witness the smoothness of $F \rest{Z_1}$ by the map $x \mapsto$ the unique $U \in \Classes[V]{F_n \rest{p}}$ with $p(U) \ge 1$.
	
	Next, observe that for each $x \in Z_\Si$, there is $n \in \N$ such that the supremum of 
	$$
	\set{p(U) : U \in \Classes[{[x]_F}]{F_n}}
	$$ 
	is achieved, but only by finitely many $U \in \Classes[{[x]_F}]{F_n}$. Therefore, fixing a Borel linear order $<_B$ on $\FinX$, we witness the $F$-smoothness of $Z_\Si$ by the map
	$$
	x \mapsto U : Z_\Si \mapsto \FinX,
	$$
	where $U$ is the $<_B$-least in $\Classes[{[x]_F}]{F_n}$ that achieves the above supremum and $n \in \N$ is the least for which such $U$ exists.
	
	It remains to show that $Z_2$ is $F$-compressible.
	
	Let $\Psi$ be the collection of all inclusion-minimal sets in $\set{U \in \Classes[Z_2]{F \rest{p}}}$, so $\Psi$ is Borel and the sets in it are pairwise disjoint, so $F' \defeq E_\Psi$ is a Borel \fsr. Moreover, for each $F$-class $V \subseteq Z_2$ and for all large enough $n \in \N$, $\Classes[V]{F_n \rest{p}} \ne \0$, so $\dom(F')$ is an $F$-complete set for $Z_2$. Fix a Borel selector $s : \FinX \to X$ and put $Y \defeq s(\Psi)$, so $Y$ is still an $F$-complete set for $Z_2$. We will create infinitely many disjoint equidecomposable copies of $Y$, thus witnessing the $F$-compressibility of $Z_2$.
	
	For each $n \ge 1$, define $\ga_n : Y \to Z_2$ as follows. Set $\ga_n(y) \defeq y$ if there is no $V \in \Classes{F_n}$ with $[y]_{F'} \subseteq V$; otherwise, let $V$ denote the unique $F_n$-class containing $[y]_{F'}$. If $|\Classes[V]{F_{n-1} \rest{p}}| \le 1$, again put $\ga_n(y) \defeq y$. 
	
	Finally, suppose that $|\Classes[V]{F_{n-1} \rest{p}}| \ge 2$. Because $V$ is $p$-admissible for $F_{n-1}$, it must satisfy \labelcref{item:defn:admissible:many}, namely:
	\begin{equation}\label{eq:enough_to_fit}
		\sum_{U \in \Classes[V]{F_{n-1} \rest{p}}} \floor{p(U)} \le |V'|,
	\end{equation}
	where $V' \defeq V \setminus \dom(F_{n-1} \rest{p})$. Furthermore, because $|\Classes[V]{F_{n-1} \rest{p}}| \ge 2$, $V \notin \Psi$, so each $F'$-class $W \subseteq V$ is contained in some $U \in \Classes[V]{F_{n-1}}$. Thus,
	\begin{align*}
		|Y \cap V|
		= 
		|\Classes[V]{F'}| 
		&\le 
		\sum_{W \in \Classes[V]{F'}} \floor{p(W)}
		\\
		&= 
		\sum_{U \in \Classes[V]{F_{n-1}}} \sum_{W \in \Classes[U]{F'}} \floor{p(W)}
		\\
		&\le 
		\sum_{U \in \Classes[V]{F_{n-1}}} \left\lfloor\sum_{W \in \Classes[U]{F'}} p(W)\right\rfloor
		\\
		&\le
		\sum_{U \in \Classes[V]{F_{n-1}}} \floor{p(U)},
	\end{align*}
	where the last two inequalities are due to the superadditivity of the floor function and $p$, respectively. Combined with \labelcref{eq:enough_to_fit}, this gives
	$$
	|Y \cap V| \le |V'|.
	$$
	Assuming that $y$ is the $i^\text{th}$ least element of $Y \cap V$ (in some a priori fixed Borel linear order on $X$), we let $\ga_n(y)$ be the $i^\text{th}$ least element of $V'$.
	
	It is straightforward to check that 
	\begin{enumenv}
		\item \label{ga_n:1-1} for any $(n, y) \ne (m, y')$ in $\N \times Y$, $\ga_n(y) \ne \ga_m(y')$ unless $y=y'$ and $\ga_n(y) = y$; in particular, each $\ga_n$ is one-to-one;
		
		\item \label{ga_n:infinite_travel} for every $y \in Y$, there are infinitely many $n$ with $\ga_n(y) \ne y$.
	\end{enumenv}
	For each $n \ge 0$, inductively define $\tau_n : Y \to X$ by $y \mapsto \ga_m(y)$ where $m$ is the least in $\N$ such that $\ga_m(y) \notin \set{\tau_k(y)}_{k < n}$; such $m$ exist by \labelcref{ga_n:1-1,ga_n:infinite_travel}. Moreover, \labelcref{ga_n:1-1} implies that each $\tau_n$ is one-to-one, so the sets $\tau_n(Y)$ are pairwise disjoint by definition. This gives a compression
	$$
	\tau_0(Y) \to \tau_1(Y) \to \hdots \to \tau_n(Y) \to \hdots
	$$
	where the function $\tau_n(Y) \to \tau_{n+1}(Y)$ is defined by $x \mapsto \tau_{n+1} \circ \tau_n^{-1}(x)$.
\end{proof}

\begin{defn}
	For a collection $\Phi \subseteq \FinX$ and a superadditive function $p : \FinX \to [0,\w)$, call an \fsr $F$ \emph{$p$-packed within $\Phi$} if $F$ is within $F$ and there is no $V \in \Phi \setminus \Classes{F}$ that is $p$-admissible for $F$.
\end{defn}

\begin{obs}
	For any $\Phi \subseteq \FinX$ and any superadditive function $p : \FinX \to [0,\w)$, any \fsr that is $p$-packed within $\Phi$ is, in particular, saturated within $\Phi$.
\end{obs}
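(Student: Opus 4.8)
The statement is essentially immediate from the way $p$-admissibility was set up. The plan is to notice that being \emph{injective over $F$} is \emph{literally} clause \labelcref{item:defn:admissible:none} of the definition of $p$-admissibility (\cref{defn:admissible}), so any witness to the failure of saturation is automatically a witness to the failure of $p$-packedness.

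In detail, I would argue as follows. Let $F$ be an \fsr that is $p$-packed within $\Phi$. By definition of $p$-packedness, $F$ is within $\Phi$, so to conclude that $F$ is saturated within $\Phi$ it remains only to rule out the existence of a set $U \in \Phi \setminus \Classes{F}$ that is injective over $F$. Suppose toward a contradiction that such a $U$ exists. Then, unwinding the definition of injectivity over $F$, the set $U$ is $F$-invariant and satisfies $|\Classes[U]{F}| \le 1$; that is, $U$ fulfills condition \labelcref{item:defn:admissible:none} of \cref{defn:admissible}, and is therefore $p$-admissible for $F$. Since moreover $U \in \Phi \setminus \Classes{F}$, this contradicts the assumption that $F$ is $p$-packed within $\Phi$. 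Hence no such $U$ exists, and $F$ is saturated within $\Phi$.

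There is no genuine obstacle: the argument is a one-step definition chase. The only conceptual point worth flagging is the deliberate design of \cref{defn:admissible}, namely that clause \labelcref{item:defn:admissible:none} imposes no $p$-control at all on a set containing at most one $F$-class, which is precisely what forces packedness to refine saturation.
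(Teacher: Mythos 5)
Your argument is correct, and it is exactly the one-step definition chase the paper intends (the paper states this as an Observation with no proof precisely because, as you note, injectivity over $F$ is verbatim clause \labelcref{item:defn:admissible:none} of \cref{defn:admissible}, so any witness against saturation is already a witness against $p$-packedness).
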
 

\begin{cor}\label{existence-of-packed-fsr}
	For any countable Borel equivalence relation $E$ on $X$, superadditive Borel function $p : \FinE \to [0, \w)$, and Borel $\Phi \subseteq \FinE$, there is a Borel \fsr $F \subseteq E$ that is $p$-packed within $\Phi$ modulo $E$-compressible.
\end{cor}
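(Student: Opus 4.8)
The plan is to imitate the proof of \cref{winning_SatG}, greedily packing sets of one color at a time, and to route the limit step through \cref{admissible_sequence_stabilizes_mod-compressible} in place of \cref{injectively_increasing_is_smooth}. Concretely: using \cref{coloring_intersection_graph}, first fix a countable Borel coloring $c$ of the intersection graph on $\FinE$ together with a sequence $(k_n)_{n \ge 1}$ of colors in which every color occurs infinitely often. Then recursively build an increasing sequence $(F_n)_{n \in \N}$ of Borel \fsrs $F_n \subseteq E$: set $F_0 \defeq \0$, and given $F_{n-1}$, let $\Psi_n$ be the collection of all $V \in \Phi$ with $c(V) = k_n$ that are $F_{n-1}$-invariant and $p$-admissible for $F_{n-1}$. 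Because all members of $\Psi_n$ share a color they are pairwise disjoint, and because each is $F_{n-1}$-invariant, $F_n \defeq F_{n-1} \cup E_{\Psi_n}$ is a Borel \fsr whose classes are exactly the sets in $\Psi_n$ together with the $F_{n-1}$-classes lying inside no member of $\Psi_n$. Every such class is $p$-admissible for $F_{n-1}$ — those in $\Psi_n$ by construction, and each leftover $F_{n-1}$-class via condition \labelcref{item:defn:admissible:none} of \cref{defn:admissible} — so $(F_n)_n$ is $p$-admissibly increasing.

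Next I would apply \cref{admissible_sequence_stabilizes_mod-compressible} (this is where superadditivity of $p$ is used) to conclude that the aperiodic part $Z$ of $F \defeq \bigcup_n F_n$ is $F$-compressible, hence $[Z]_E$ is $E$-compressible (e.g. via \cref{compressible=supset_aperiodic+smooth} and the standard transfer of compressibility along complete sections). Discarding $[Z]_E$, I may assume $F$ is an \fsr; then every $F$-class, being a finite increasing union of $F_n$-classes, equals some $F_n$-class for large $n$, so $F$ is within $\Phi$. To check that $F$ is in fact $p$-packed within $\Phi$, suppose toward a contradiction that $V \in \Phi \setminus \Classes{F}$ is $p$-admissible for $F$. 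By \cref{defn:admissible}, $V$ is $F$-invariant, hence a union of finitely many finite $F$-classes; since $V$ is finite, there is $N$ with $F_n \rest{V} = F \rest{V}$ for all $n \ge N$, and $F$-invariance of $V$ then makes the $F$-class structure inside $V$ — and so $F_n \rest{p}$ versus $F \rest{p}$ on $V$ — agree for all $n \ge N$. Consequently $V$ is $F_{n-1}$-invariant and $p$-admissible for $F_{n-1}$ for every $n > N$. Choosing such an $n$ with $k_n = c(V)$ puts $V \in \Psi_n$, so $V$ is an $F_n$-class, i.e. $F \rest{V} = F_n \rest{V} = V \times V$; combined with $F$-invariance of $V$ this forces $V$ to be a single $F$-class, contradicting $V \notin \Classes{F}$.

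The heavy lifting is entirely inside \cref{admissible_sequence_stabilizes_mod-compressible}, which is already established, so the corollary's proof is short. The one genuinely delicate point, as the last step shows, is noticing that $p$-admissibility of $V$ for $F$ carries $F$-invariance of $V$; that is exactly what upgrades ``$V$ is contained in a single $F$-class'' — all that the color-by-color greedy argument directly delivers — to ``$V$ is an $F$-class''. Without this observation one would only obtain $V \subseteq \dom(F)$, which is strictly weaker than what being $p$-packed demands, so this is the step I would be most careful to get right.
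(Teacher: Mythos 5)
Your proposal is correct and follows the same route as the paper: the paper proves the game-theoretic statement \cref{winning_PackG} (obtained from the proof of \cref{winning_SatG} by replacing ``injective'' with ``$p$-admissible'' and \cref{injectively_increasing_is_smooth} with \cref{admissible_sequence_stabilizes_mod-compressible}) and then has Player~1 play $\Phi$ at every move, while you have simply unrolled the game in this constant-$\Phi$ case. Your final remark --- that the $F$-invariance built into $p$-admissibility is what upgrades ``$V$ becomes an $F_n$-class once the $F$-structure on $V$ has stabilized'' to ``$V \in \Classes{F}$'' --- correctly pinpoints the one spot where the paper's literal ``verbatim'' description of the \cref{winning_PackG} proof is slightly too terse, since the last line of \cref{winning_SatG}'s proof concludes only $U \subseteq \dom(F)$, which suffices for saturation but not directly for packedness.
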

\begin{proof}
	Follows from \cref{winning_PackG} below, where we have Player 1 play $\Phi$ all the time.
\end{proof}

As with saturation, we formulate a slightly stronger version of the last statement in the language of games.

\begin{defn}[Packing game]\label{defn:PackG}
	Let $E$ be a countable Borel equivalence relation on $X$ and $p : \FinX \to [0, \w)$ be a superadditive Borel function. The \emph{packing game $\PackG(E, p)$} is as follows:
	
	\medskip
	$
	\Jue
	$
	\medskip
	
	\noindent where $\Phi_n \subseteq \FinE$ is Borel for each $n$ and the $F_n$ are $p$-admissibly increasing Borel \fsrs with each $F_n$ being within $\bigcup_{k < n} \Phi_k$ (so $F_0 = \0$). We say that \emph{Player 1 wins modulo $E$-compressible} if, modulo $E$-compressible, $F \defeq \bigcup_{n \in \N} F_n$ is $p$-packed within $\Phi \defeq \bigcup_{n \in \N} \Phi_n$.
\end{defn}

\begin{theorem}\label{winning_PackG}
	For any countable Borel equivalence relation $E$ on $X$ and any superadditive Borel function $p : \FinX \to [0, \w)$, Player 1 has a strategy to win $\PackG(E, p)$ modulo $E$-compressible.
\end{theorem}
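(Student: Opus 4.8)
The plan is to imitate the proof of \cref{winning_SatG}, reading ``$p$-admissible for'' in place of ``injective over'' and invoking \cref{admissible_sequence_stabilizes_mod-compressible} in the role played there by \cref{injectively_increasing_is_smooth}. Using \cref{coloring_intersection_graph}, fix a countable Borel coloring $c : \FinE \to \N$ of the intersection graph on $\FinE$ and a sequence $(k_n)_{n \ge 1}$ of natural numbers in which each natural number occurs infinitely often. Player 1, who builds the \fsrs, plays $F_0 \defeq \0$, and at stage $n \ge 1$, having seen Player 2's families $\Phi_0, \dots, \Phi_{n-1}$ and having last played $F_{n-1}$, she sets
\[
\Psi_n \defeq \set{V \in \textstyle\bigcup_{i < n}\Phi_i : c(V) = k_n \text{ and $V$ is $p$-admissible for } F_{n-1}}
\]
and plays $F_n \defeq F_{n-1} \cup E_{\Psi_n}$.

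First I would verify that this is a legal move. The members of $\Psi_n$ share the color $k_n$, so they are pairwise disjoint, and each is $F_{n-1}$-invariant, whence $E_{\Psi_n}$ is an $F_{n-1}$-invariant Borel \fsr and $F_n$ is a Borel \fsr with $\Classes{F_n} \subseteq \bigcup_{k < n}\Phi_k$. Moreover each $F_n$-class is either an old $F_{n-1}$-class (which is $p$-admissible for $F_{n-1}$ via clause \labelcref{item:defn:admissible:none} of \cref{defn:admissible}) or a member of $\Psi_n$ (which is $p$-admissible for $F_{n-1}$ by construction); hence $F_n$ is a $p$-admissible extension of $F_{n-1}$, and $(F_n)_n$ is $p$-admissibly increasing.

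Now put $F \defeq \bigcup_n F_n$ and $\Phi \defeq \bigcup_n \Phi_n$. By \cref{admissible_sequence_stabilizes_mod-compressible} the aperiodic part of $F$ is $F$-compressible, hence $E$-compressible, so after discarding an appropriate $E$-compressible set I may assume every $F$-class is finite. That $F$ is within $\Phi$ then follows, since each finite $F$-class equals an $F_n$-class for all large $n$ and, at the least stage at which it appears, the description of $F_n$-classes above forces it to lie in some $\Psi_n \subseteq \Phi$. It remains to rule out $V \in \Phi \setminus \Classes{F}$ that is $p$-admissible for $F$. Suppose $V \in \Phi$ is $p$-admissible for $F$. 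The crucial point, and the step I expect to need genuine care, is that $p$-admissibility of a fixed finite set $V$ for an \fsr $F'$ depends only on $F' \rest{V}$ and on the values of $p$ on subsets of $V$: once $V$ is $F'$-invariant, each of $\Classes[V]{F'}$, $\Classes[V]{F' \rest{p}}$, and $V \setminus \dom(F' \rest{p})$ is determined by $F' \rest{V}$. Since $V$ is finite, $(F_n \rest{V})_n$ is eventually equal to $F \rest{V}$, so $V$ is $p$-admissible for $F_{n-1}$ for all large $n$; picking such an $n$ large enough that also $V \in \bigcup_{i < n}\Phi_i$ and with $k_n = c(V)$ (possible since this color recurs), we get $V \in \Psi_n$, so $V$ becomes an $F_n$-class. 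Finally, $V$ is $F$-invariant, hence $F_m$-invariant for every $m$, so $F_m \rest{V}$ can never grow past $V \times V = F_n \rest{V}$; thus $V$ stays an $F_m$-class for all $m \ge n$, so $V \in \Classes{F}$, contradicting $V \notin \Classes{F}$.

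Putting this together, modulo the discarded $E$-compressible set, $F$ is $p$-packed within $\Phi$, so Player 1 wins. The only genuinely new input compared with \cref{winning_SatG} is the finiteness-modulo-compressible statement, which is already supplied by \cref{admissible_sequence_stabilizes_mod-compressible}; beyond that I anticipate no serious obstacle past the bookkeeping above and the ``locality'' of $p$-admissibility that lets admissibility for the limit $F$ be detected at a finite stage.
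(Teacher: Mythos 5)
Your proposal is correct and takes exactly the paper's intended route: the paper's proof of this theorem consists of a single sentence directing the reader to repeat the proof of \cref{winning_SatG} verbatim with ``injective'' replaced by ``$p$-admissible'' and \cref{injectively_increasing_is_smooth} replaced by \cref{admissible_sequence_stabilizes_mod-compressible}, which is precisely what you do, including the correct observation that old $F_{n-1}$-classes remain $p$-admissible via clause~\labelcref{item:defn:admissible:none} and that $p$-admissibility of a fixed finite $F$-invariant set is detected at a finite stage because it depends only on $F\rest{V}$.
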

\begin{proof}
	The proof is verbatim that of \cref{winning_SatG}, replacing \qts{injective} with \qts{$p$-admissible} and \cref{injectively_increasing_is_smooth} with \cref{admissible_sequence_stabilizes_mod-compressible}.
\end{proof}


\section{Hyperfinite equivalence relations and graphs}\label{sec:hyperfinite_basics}

Let $X$ be a standard Borel space.

\subsection{Definitions}

Let $E$ be an equivalence relation on $X$. Call $E$ \emph{hyperfinite} if it is a countable union $\bigcup_n E_n$ of an increasing sequence $(E_n)_n$ of finite Borel equivalence relations. We will call such a sequence $(E_n)_n$ a \emph{witness to the hyperfiniteness} of $E$. In particular, hyperfinite equivalence relations are Borel\footnote{In a more general setting, hyperfinite equivalence relations are defined to be increasing unions of finite \emph{analytic} equivalence relations, so they may not be Borel. However, this does not concern us in this paper.}.

Call a Borel graph $G$ on a standard Borel space $X$ \emph{hyperfinite} if $E_G$ is hyperfinite. 

Now let $(X,\mu)$ be a standard measure space. A Borel equivalence relation $E$ on $X$ is called \emph{$\mu$-hyperfinite} if $E \rest{X'}$ is hyperfinite for some $\mu$-conull set $X'$. Similarly, call a Borel graph $G$ on $X$ \emph{$\mu$-hyperfinite} if $E_{G \rest{X'}}$ is $\mu$-hyperfinite for some $\mu$-conull set $X' \subseteq X$. Note that this is different from $E_G \rest{X'}$ being hyperfinite; however the two coincide when $\mu$ is $E_G$-\emph{quasi-invariant}, i.e. the $E_G$-saturation of every $\mu$-null set is still $\mu$-null.


\subsection{Basic facts}

\begin{lemma}\label{witness_to_hyperfin_starts_anywhere}
	For any hyperfinite Borel equivalence relation $E$ on a standard Borel space $X$ and any finite Borel subequivalence relation $F \subseteq E$, there is a witness $(F_n)_n$ to the hyperfiniteness of $E$ with $F_0 = F$.
\end{lemma}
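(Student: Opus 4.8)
The plan is to start from an arbitrary witness $(E_n)_n$ to the hyperfiniteness of $E$ and modify it into a witness whose first term is $F$, by using a Borel selector for $F$ to ``coarsen'' each $E_n$ so that it comes to contain $F$. First I would reduce to the case $E_0 = \Id_X$: prepending $\Id_X$ to the sequence changes nothing, since every $E_n$ already contains $\Id_X$ (reflexivity) and $\Id_X$ is finite and Borel, so $(\Id_X, E_0, E_1, \dots)$ is still an increasing sequence of finite Borel equivalence relations with union $E$.

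Since $F$ is a finite Borel equivalence relation it is smooth, so I can fix a Borel selector $s : X \to X$ for $F$; write $Y := s(X)$, i.e.\ $X/F$. I would then set, for each $n$,
\[
F_n := \set{(x,y) \in X^2 : (s(x), s(y)) \in E_n}.
\]
As the preimage of the Borel equivalence relation $E_n$ under the Borel map $(x,y) \mapsto (s(x), s(y))$, each $F_n$ is a Borel equivalence relation, and $(F_n)_n$ inherits monotonicity from $(E_n)_n$. The claim is that $(F_n)_n$ is the required witness, and the verifications are routine: the $F_n$-class of $x$ equals $\bigcup\set{[z]_F : z \in [s(x)]_{E_n} \cap Y}$, a finite union (as $E_n$ is finite) of finite sets (as $F$ is finite), so $F_n$ is finite; $F_n \subseteq E$ because $E_n \subseteq E$, $F \subseteq E$, and $x \mathrel{E} s(x)$; conversely $x \mathrel{E} y$ implies $s(x) \mathrel{E} s(y)$, hence $(s(x), s(y)) \in E_n$ for some $n$, i.e.\ $(x,y) \in F_n$, so $\bigcup_n F_n = E$; and $F_0 = F$ since with $E_0 = \Id_X$ we get $(x,y) \in F_0 \iff s(x) = s(y) \iff (x,y) \in F$.

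I do not anticipate a genuine obstacle, but the one point worth flagging is why one cannot simply take $F_n := E_n \vee F$, the join: the join of two finite Borel equivalence relations can be aperiodic (alternating $E_n$- and $F$-steps can walk off to infinity), so this naive choice need not even land among finite equivalence relations. Routing through the selector $s$ --- declaring $x$ and $y$ to be $F_n$-equivalent exactly when their chosen $F$-representatives are $E_n$-equivalent --- is precisely what keeps the classes finite while still enlarging each $E_n$ to contain $F$. The only other thing to keep in mind is the standard fact that a finite Borel equivalence relation is smooth, which is what makes the selector $s$ available in the first place.
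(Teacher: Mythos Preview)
Your proof is correct and takes essentially the same approach as the paper: both fix a Borel selector $s$ for $F$, set $Y = s(X)$, and build $F_n$ so that two points are $F_n$-equivalent exactly when their $F$-representatives in $Y$ are $E_n$-equivalent. The paper phrases this as $F_n := E_n' \vee F$ where $E_n'$ is $E_n$ on $Y$ and the identity off $Y$, while you phrase it as the pullback $F_n := \{(x,y) : (s(x),s(y)) \in E_n\}$; these are the same equivalence relation, and your explicit verifications (plus the prepending of $\Id_X$ to force $F_0 = F$) fill in what the paper leaves as ``not hard to see.''
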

\begin{proof}
	Let $s : X \to X$ be a Borel selector for $F$ and let $(E_n)_n$ be a witness to the hyperfiniteness of $E$. Put $Y \defeq s(X)$ and define equivalence relations $E_n'$ by $E_n' \rest{Y^c} \defeq \Id(Y^c)$ and $E_n' \rest{Y} \defeq E_n \rest{Y}$. Now put $F_n \defeq E_n' \vee F$. Then it is not hard to see that $(F_n)_n$ is as desired.
\end{proof}

We recall the following proposition from \cite{Kechris-Miller}*{Remark 6.10}.

\begin{prop}\label{bounded_hyperfiniteness_witness}
	Any hyperfinite Borel equivalence relation $E$ on $X$ admits a witness $(E_n)_n$ of bounded equivalence relations. In fact, given a bounded Borel subequivalence relation $F \subseteq E$, we can take $E_0 \defeq F$.
\end{prop}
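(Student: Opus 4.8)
The plan is to first reduce to the case where the first term of the witness is $F$, using the lemma just proved, and then to ``slow down'' an arbitrary witness so that every term becomes bounded.

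First I would apply \cref{witness_to_hyperfin_starts_anywhere} to $E$ and the bounded (hence finite) subequivalence relation $F$, obtaining a witness $(G_n)_n$ to the hyperfiniteness of $E$ with $G_0 = F$. This reduces the task to the following: given a witness $(G_n)_n$ of finite Borel equivalence relations whose first term is already bounded, produce a witness of \emph{bounded} Borel equivalence relations with the same union and the same first term. The basic tool is the observation that for a finite Borel equivalence relation $G$ and $k \in \N$, the set $X_{\le k}(G) \defeq \set{x \in X : |[x]_G| \le k}$ is Borel and $G$-invariant, so $G \rest{X_{\le k}(G)}$ is a Borel equivalence relation bounded by $k$, increasing in $k$, with $\bigcup_k G \rest{X_{\le k}(G)} = G$.

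The heart of the argument is to build, by recursion on $n$, a doubly indexed array $(G_{n,k})_{n,k \in \N}$ of bounded Borel equivalence relations that is increasing in each index separately and satisfies the row condition $\bigcup_k G_{n,k} = G_n$ for every $n$. Set $G_{0,k} \defeq F$ for all $k$, and
\[
G_{n+1,k} \defeq G_{n,k} \vee \big( G_{n+1} \rest{X_{\le k}(G_{n+1})} \big).
\]
One checks that $G_{n+1,k}$ is Borel (a join of countable Borel equivalence relations is the connectedness relation of a locally countable Borel graph, hence Borel), that it contains $G_{n,k}$ and is increasing in $k$, that $\bigcup_k G_{n+1,k} = G_n \vee G_{n+1} = G_{n+1}$, and, crucially, that it is bounded: a point lying in $X_{\le k}(G_{n+1})$ has its whole $G_{n+1,k}$-class contained in its $G_{n+1}$-class (of size $\le k$), while a point off $X_{\le k}(G_{n+1})$ has no edges from the capped relation $G_{n+1}\rest{X_{\le k}(G_{n+1})}$ and also none reachable through $G_{n,k}$, so its $G_{n+1,k}$-class equals its $G_{n,k}$-class, bounded by the inductive hypothesis. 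Then the diagonal $E_n \defeq G_{n,n}$ is an increasing sequence ($G_{n,n} \subseteq G_{n+1,n} \subseteq G_{n+1,n+1}$, and monotone in both indices) of bounded Borel equivalence relations with $E_0 = G_{0,0} = F$ and $\bigcup_n E_n = \bigcup_{n,k} G_{n,k} = \bigcup_n G_n = E$, which is the required witness.

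I do not expect a deep obstacle, but the one genuinely delicate point is that the naive attempt --- capping each $G_n$ by $k$ and diagonalizing in one step --- fails, because a pair $x \mathrel{G_n} y$ need never satisfy $|[x]_{G_m}| \le m$ for $m \ge n$; this is exactly why the recursion is organized so that the row condition $\bigcup_k G_{n,k} = G_n$ is preserved, and why the cap in the displayed formula is taken with respect to $G_{n+1}$ rather than with respect to the smaller $G_{n,k}$. The only other thing to keep in mind is that joins of countable Borel equivalence relations remain Borel, which is standard.
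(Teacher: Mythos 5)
The paper does not include its own proof of this proposition; it simply recalls the statement from Kechris--Miller (their Remark~6.10), so there is no in-paper argument to compare against. Your proof is correct. The reduction via \cref{witness_to_hyperfin_starts_anywhere} to arrange that the first term of a witness equals $F$ is the right move, and the doubly-indexed array $(G_{n,k})$ cleanly implements the standard ``slow-down'' idea. Both delicate points check out: the row condition $\bigcup_k G_{n+1,k} = G_{n+1}$ holds because each piece of the join is increasing in $k$ with the correct union, and the boundedness of $G_{n+1,k}$ rests exactly on the $G_{n+1}$-invariance of $X_{\le k}(G_{n+1})$, which ensures that a point outside this set has its whole $G_{n,k}$-class (being contained in a single $G_{n+1}$-class) also outside, so no step of the capped relation is ever available along a path and the $G_{n+1,k}$-class collapses to the $G_{n,k}$-class. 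For what it is worth, unwinding the recursion gives the closed form $G_{n,n} = F \vee \bigvee_{j=1}^{n} \left(G_j \rest{X_{\le n}(G_j)}\right)$, so one can also define $E_n$ directly by this formula and verify the same three properties; your array is a convenient bookkeeping device for the same construction, and either presentation is fine.
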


Call a sequence $(G_n)_n$ of subgraphs of $G$ \emph{exhaustive} if $G = \bigcup_n G_n$.

\begin{prop}\label{graph-hyperfinite}
	A Borel graph $H$ on $X$ is hyperfinite if and only if it admits an increasing exhaustive sequence $(H_n)_n$ of component-finite bounded Borel subgraphs. In fact, $H_0$ can be taken to be any component-finite bounded Borel subgraph of $H$.
\end{prop}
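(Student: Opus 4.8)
The plan is to prove both implications directly, the forward one (hyperfiniteness $\imp$ exhausting sequence) resting on \cref{bounded_hyperfiniteness_witness}.

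For the easy direction, suppose $H = \bigcup_n H_n$ with $(H_n)_n$ an increasing sequence of component-finite bounded Borel subgraphs. Each $E_{H_n}$ is then a finite Borel equivalence relation (its classes are finite since $H_n$ is component-finite, and it is Borel since $H_n$ is a bounded Borel graph), the sequence $(E_{H_n})_n$ is increasing because $H_n \subseteq H_{n+1}$, and $\bigcup_n E_{H_n} = E_H$: the inclusion $\subseteq$ is clear, while if $x\, E_H\, y$ then some $H$-path from $x$ to $y$ uses finitely many edges, all lying in a common $H_n$ by exhaustiveness and monotonicity, whence $x\, E_{H_n}\, y$. Thus $(E_{H_n})_n$ witnesses the hyperfiniteness of $E_H$, i.e.\ $H$ is hyperfinite.

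For the converse, suppose $H$ is hyperfinite and let $H_0 \subseteq H$ be any component-finite bounded Borel subgraph (e.g.\ $H_0 = \0$, which handles the bare statement). Then $E_{H_0}$ is a bounded Borel subequivalence relation of $E_H$, so by \cref{bounded_hyperfiniteness_witness} there is a witness $(E_n)_n$ to the hyperfiniteness of $E_H$ consisting of \emph{bounded} equivalence relations with $E_0 = E_{H_0}$. Define $H_{n+1} \defeq H_0 \cup (H \cap E_n)$ for $n \ge 0$, keeping $H_0$ as given. Each $H_n$ is a Borel subgraph of $H$; since $H_0 \subseteq E_{H_0} = E_0 \subseteq E_n$ we have $H_{n+1} \subseteq E_n$ as sets of pairs, so every connected component of $H_{n+1}$ is contained in a single $E_n$-class and hence has at most $\Linf{E_n} < \w$ vertices, making each $H_n$ component-finite and bounded. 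The sequence is increasing because $E_n \subseteq E_{n+1}$ (and $H_0 \subseteq H_1$), and it is exhaustive because $\bigcup_n H_n = H_0 \cup \bigcup_n (H \cap E_n) = H_0 \cup (H \cap E_H) = H$.

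I do not expect a serious obstacle here. The only points that require care are (i) obtaining \emph{boundedness} of the $H_n$ simultaneously with exhaustiveness, which forces the use of a \emph{bounded} hyperfiniteness witness rather than a merely finite one (so \cref{witness_to_hyperfin_starts_anywhere} alone would not suffice), and (ii) arranging that the prescribed $H_0$ literally sits at the bottom of the sequence rather than being enlarged to $H_0 \cup (H\cap E_{H_0})$ — this is why the construction is shifted by one index.
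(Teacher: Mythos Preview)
Your proposal is correct and follows the same approach as the paper: invoke \cref{bounded_hyperfiniteness_witness} to obtain a bounded witness $(E_n)_n$ with $E_0 = E_{H_0}$, then set $H_n \defeq H \cap E_n$. The paper's proof is a two-line sketch that omits the easy direction and glosses over the point about literally recovering the prescribed $H_0$ at index $0$; your index shift handles this (and note that since $H_0 \subseteq H \cap E_0$, your $H_{n+1} = H_0 \cup (H \cap E_n)$ simplifies to $H \cap E_n$ anyway).
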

\begin{proof}
	Using \cref{bounded_hyperfiniteness_witness}, let $(E_n)_n$ be a witness to the hyperfiniteness of $E_H$ such that each $E_n$ is bounded and $E_0 = E_{H_0}$. Take $H_n \defeq H \cap E_n$.
\end{proof}

\begin{lemma}\label{finite_spanning_subtrees}
	Every component-finite Borel graph $H$ on $X$ admits an acyclic Borel spanning subgraph $T$. In fact, we can ensure that $T \supseteq T_0$ for any given acyclic Borel subgraph $T_0 \subseteq H$.
\end{lemma}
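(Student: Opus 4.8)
The plan is to construct $T$ by running, in a canonical and hence Borel way, a greedy spanning-tree algorithm within each (necessarily finite) connected component of $H$. Since $H$ is component-finite, $E_H$ is a finite Borel equivalence relation; in particular, applying \cref{relative_enum_of_each_class} to $E_H$ and using that its classes are finite, the map $x \mapsto [x]_{E_H}$ is Borel as a map $X \to \FinX$, and consequently so are the maps $x \mapsto H \rest{[x]_{E_H}}$ and $x \mapsto T_0 \rest{[x]_{E_H}}$, which we view as (symmetric) finite graphs on the finite vertex set $[x]_{E_H}$. I would also fix once and for all a Borel linear order $<_B$ on $X$, e.g. the pullback of the usual order on $\R$ under a Borel injection $X \hookrightarrow \R$.

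Next I would define $T$ as follows. Let $C$ be a connected component of $H$. Orient each edge of $H \rest{C}$ from its $<_B$-smaller endpoint to its $<_B$-larger endpoint, and list the resulting oriented edges in increasing lexicographic order as $e_1, \dots, e_k$. Run the greedy algorithm starting from $T_0$: set $T^{(0)} \defeq T_0 \rest{C}$, and for $1 \le i \le k$ put $T^{(i)} \defeq T^{(i-1)} \cup \set{e_i, -e_i}$ if the two endpoints of $e_i$ lie in distinct connected components of $T^{(i-1)}$, and $T^{(i)} \defeq T^{(i-1)}$ otherwise; finally set $T \rest{C} \defeq T^{(k)}$ and $T \defeq \bigcup_C T \rest{C}$, the union ranging over all components $C$ of $H$. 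This is well-defined, since for $x \mathrel{E_H} x'$ the component, the induced order, and the restrictions of $H$ and $T_0$ all agree, so $T \rest{[x]_{E_H}}$ is unambiguous; and $T$ is a Borel subgraph of $H$ because membership $(x,y) \in T$ is decided by running the above finite deterministic procedure on the finite data consisting of $[x]_{E_H}$, the graph $H \rest{[x]_{E_H}}$, the forest $T_0 \rest{[x]_{E_H}}$, and $<_B$ restricted to $[x]_{E_H}$, all of which depend in a Borel way on $x$.

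Finally I would verify the three requirements, each immediate. First, $T \supseteq T_0$ since $T^{(0)} = T_0 \rest{C}$ for every component $C$. Second, $T$ is acyclic: $T_0$ is acyclic, and the greedy step adds $e_i$ only when its endpoints lie in distinct $T^{(i-1)}$-components, so no step creates a cycle; hence each $T \rest{C}$ is acyclic and so is $T$. Third, $T$ is spanning, i.e. $E_T = E_H$: the inclusion $E_T \subseteq E_H$ holds as $T \subseteq H$, and for the converse it suffices to show each component $C$ of $H$ is $T$-connected. Suppose not, so some edge $e_i$ of $H \rest{C}$ joins two distinct $T \rest{C}$-components; since $T^{(i-1)} \subseteq T \rest{C}$, the $T^{(i-1)}$-partition of $C$ refines the $T \rest{C}$-partition, so $e_i$ also joins two distinct $T^{(i-1)}$-components, whence the greedy step would have added $e_i$ to $T \rest{C}$ — contradicting that its endpoints lie in distinct $T \rest{C}$-components. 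Thus every edge of the connected graph $H \rest{C}$ has both endpoints in a single $T \rest{C}$-component, so $T \rest{C}$ connects $C$, as needed. The only point in the whole argument requiring any care — and the one place where component-finiteness is genuinely needed — is the Borelness of $x \mapsto [x]_{E_H} \in \FinX$, which underlies the Borelness of $T$; everything else is finite combinatorics carried out uniformly.
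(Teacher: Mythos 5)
Your proof is correct and takes essentially the same route as the paper: it exploits component-finiteness so that each $H$-component has only finitely many candidate spanning trees extending $T_0$, and one is selected uniformly in a Borel fashion. The only difference is that you make the Borel selection explicit (fix a Borel linear order, run the canonical greedy algorithm), whereas the paper simply observes that a Borel choice is possible and leaves the mechanism implicit.
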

\begin{proof}
	Clearly, each $H$-connected component $U$ admits a spanning subtree of $H \rest{U}$ extending $T_0 \rest{U}$. Because there are only finitely many such subtrees available for each $U$, we can choose one in a Borel fashion.
\end{proof}

\begin{prop}\label{hyperfinite_spanning_subtrees}
	Every hyperfinite Borel graph $H$ on $X$ admits an acyclic Borel spanning subgraph.
\end{prop}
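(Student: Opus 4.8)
The plan is to exhaust $H$ by component-finite Borel subgraphs and grow a spanning forest on each piece, each one extending the previous. First, using \cref{graph-hyperfinite}, fix an increasing exhaustive sequence $(H_n)_n$ of component-finite Borel subgraphs of $H$ (one may even take $H_0 \defeq \0$). Then recursively construct acyclic Borel subgraphs $T_n \subseteq H_n$ with $E_{T_n} = E_{H_n}$ and $T_n \subseteq T_{n+1}$ for all $n$: for $n = 0$, apply \cref{finite_spanning_subtrees} to the component-finite graph $H_0$ to obtain an acyclic Borel spanning subgraph $T_0 \subseteq H_0$; for the recursive step, observe that $T_n \subseteq H_n \subseteq H_{n+1}$ is an acyclic Borel subgraph of the component-finite graph $H_{n+1}$, so \cref{finite_spanning_subtrees} yields an acyclic Borel spanning subgraph $T_{n+1} \subseteq H_{n+1}$ with $T_{n+1} \supseteq T_n$.

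Finally, set $T \defeq \bigcup_n T_n$; this is a Borel subgraph of $H = \bigcup_n H_n$. It is acyclic, since any cycle is finite and would therefore lie in some $T_n$, contradicting the acyclicity of $T_n$. It is also spanning: as $(H_n)_n$ is increasing and exhaustive and every walk realizing an $E_H$-equivalence uses only finitely many edges, we get $E_H = \bigcup_n E_{H_n} = \bigcup_n E_{T_n} = E_T$. Hence $T$ is an acyclic Borel spanning subgraph of $H$, as required.

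Since the substantive work is already packaged in \cref{finite_spanning_subtrees} (choosing spanning subtrees componentwise in a Borel way, extending a given acyclic subgraph) and \cref{graph-hyperfinite} (the existence of a component-finite bounded exhaustion), there is no real obstacle here; the only points needing (routine) care are verifying that acyclicity and the generated equivalence relation are preserved under the increasing union, both of which reduce to the finiteness of cycles and of connecting walks.
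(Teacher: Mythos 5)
Your proposal is correct and follows the same approach as the paper: write $H$ as an increasing exhaustive union of component-finite Borel subgraphs via \cref{graph-hyperfinite}, recursively build an increasing chain of acyclic Borel spanning subgraphs of each $H_n$ via \cref{finite_spanning_subtrees}, and take the union. The paper states this more tersely but the argument is the same; your extra remarks about cycles being finite and connecting walks being finite are exactly the (routine) verifications the paper leaves implicit.
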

\begin{proof}
	Write $H$ as an increasing union of component-finite Borel subgraphs $(H_n)_n$. Iterative applications of \cref{finite_spanning_subtrees} give an increasing sequence of acyclic Borel subgraphs $(T_n)_n$ such that each $T_n$ is a spanning subgraph of $H_n$. Therefore, $T \defeq \bigcup_n T_n$ is acyclic and it spans $H$.
\end{proof}

\subsection{Sliding hyperfinite trees into lines}

\begin{defn}\label{defn:line_forest}
	Let $X$ be a standard Borel space. We call a graph $L$ on $X$ a \emph{line forest} if it is acyclic and each vertex in it has degree at most $2$. Call a directing of such an $L$ \emph{proper} if each vertex has at most one incoming and at most one outgoing edge.
\end{defn}

\begin{lemma}\label{sliding_finite_trees_into_lines}
	Let $L \subseteq T$ be Borel component-finite acyclic graphs on $X$, where $L$ is a line forest. There is a Borel $L$-based well-iterated one-to-one edge sliding $\si$ of $T$ such that $L' \defeq \si(T)$ is a line forest. Moreover, given a proper Borel directing $\dir{L}$ of $L$, there is a proper Borel directing $\dir{L'}$ of $L'$ extending $\dir{L}$.
\end{lemma}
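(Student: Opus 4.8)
The plan is to reduce to a single finite tree, to \emph{unfold} it into a path by a finite sequence of edge slidings that leaves $L$ fixed, and then to glue these per-component constructions into one Borel $L$-based well-iterated edge sliding.

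First I would work separately and uniformly inside each (finite) $T$-component $C$, so that $T\rest{C}$ is a finite tree containing the line forest $L\rest{C}$ together with a proper directing. Since $T$ is component-finite, inside each component there are only finitely many edge slidings available, so any construction that is canonical inside each component --- e.g.\ one resolving every choice by the $<_B$-least admissible option for a fixed Borel linear order $<_B$ on $X$ --- is automatically Borel. For the gluing: fix a Borel enumeration $(C_k)_k$ of the components of $T$, put $G_k \defeq T\rest{C_k}$, and suppose that inside $C_k$ we produce a finite-rank $(L\rest{C_k})$-based iterated edge sliding $\si_k$ of $G_k$ (extended by the identity elsewhere). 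Then $(\si_k)_k$ is an $L$-based $T$-conservative sequence in the sense of \cref{defn:conserv_comp}: the $G_k$ are pairwise disjoint, $\si_k$ moves only edges of $C_k$ and hence fixes $\bsi_k(\bG_k)$ (which lies in $C_0\cup\dots\cup C_{k-1}$), so $\si_k$ is an $\big(L\cup\bsi_k(\bG_k)\big)$-based iterated edge sliding of $G_k$, and it is connectivity preserving for $L\cup\bsi_k(\bG_k)\cup G_k$ by \cref{finite_itaration_connectivity_preserving}. So its composition $\bsi_\om$ is a Borel $L$-based well-iterated edge sliding of $T$, one-to-one as long as each $\si_k$ is, and it remains to treat one finite tree $C$.

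Second, I unfold $C$. Call an edge of $C$ \emph{free} if it is not in $L$, and for a vertex $v$ put $\mathrm{cap}(v) \defeq 2 - \deg_L(v) \in \set{0,1,2}$. While some vertex $w$ carries more than $\mathrm{cap}(w)$ free edges, pick a free edge $e = (w,u)$ at $w$; deleting $e$ splits the tree $C$ into the side $A\ni w$ and the side $B\ni u$, and since no $L$-segment straddles the free edge $e$, $\deg_L$ and hence $\mathrm{cap}$ are unchanged inside $A$. If every vertex of $A$ carried at least $\mathrm{cap}$-many free edges, then the sum of free-degrees over $A$, which equals $2m_A+1$ (the $+1$ from $e$, with $m_A$ the number of free edges inside $A$), would be $\ge \sum_{a\in A}\bigl(2-\deg_L(a)\bigr)=2|A|-2\ell_A$ (with $\ell_A$ the number of $L$-edges inside $A$); but $m_A+\ell_A=|A|-1$ since $A$ is a tree, so this says $-1\ge 0$. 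Hence some $a\in A$ has room for another free edge, and necessarily $a\ne w$ (as $w$ is over capacity) and $a\ne u$ (as $a\in A\not\ni u$). Sliding $e$ to $(a,u)$ along the $w$-to-$a$ path inside $A$ is an $(L\rest{C})$-based edge sliding; it is one-to-one (the image edge is new, since $e$ is currently the only edge between $A$ and $B$), it preserves acyclicity by \cref{edge_sliding_preserves_acyclicity}, and it strictly decreases $\sum_v\max\bigl(0,\deg_{\mathrm{free}}(v)-\mathrm{cap}(v)\bigr)$. After finitely many such slides every vertex has degree $\le 2$; being also connected (by \cref{finite_itaration_connectivity_preserving}) and acyclic, the graph is a single finite path $\hat P$, and it contains $L\rest{C}$ as a disjoint union of subpaths.

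Third, I fix the directing. Each maximal $\dir L$-directed path (\emph{segment}) occupies a vertex-interval of $\hat P$, and is oriented by $\dir L$ either along or against the running order of $\hat P$. For each segment $S$ oriented against it, I flip $S$ in place by two more edge slidings: letting $f_1,f_2$ be the (necessarily free) edges of $\hat P$ flanking $S$ and $x,y$ the ends of $S$, slide the $x$-endpoint of $f_1$ to $y$ along $S$ and then the $y$-endpoint of $f_2$ to $x$ along $S$ (only one slide is needed when $S$ abuts an end of $\hat P$). The net effect on $\hat P$ is exactly to reverse $S$ internally, so every other segment keeps its position and internal order, hence its orientation relative to $\hat P$; after flipping all offending segments, the running order of $\hat P$ is a proper directing $\dir{L'}\rest{C}$ extending $\dir L\rest{C}$ (when $L\rest{C}=\0$, orient $\hat P$ via $<_B$). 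Since every slide above fixes $L$ pointwise, $L\subseteq\si(T)=L'$ automatically.

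\textbf{Main obstacle.} The conceptual content is just the counting producing the vertex $a$ with room and the observation that flipping a segment is local; the labour is bookkeeping --- checking that every intermediate move really is a legitimate $(L\rest{C})$-based edge sliding (the required railways consist only of $L$-edges and the single free edge being moved, all present in the current configuration, with $L$ pointwise fixed), that one-to-one-ness survives composition, and that the per-component finite iterations genuinely assemble into one $L$-based well-iterated edge sliding via \cref{defn:conserv_comp} and \cref{finite_itaration_connectivity_preserving}.
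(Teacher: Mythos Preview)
Your per-component unfolding and directing arguments are correct and considerably more detailed than the paper's own proof, which reads in full: ``Clearly, each $T$-connected component has such a well-iterated edge sliding, as well as a desired directing, and the component-finiteness of $T$ allows us to choose these in a Borel fashion uniformly for all $T$-connected components at once.'' The counting that produces the vertex $a$ with room, and the two-slide segment flip for the directing, are both sound.

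The gap is in your gluing. You write ``fix a Borel enumeration $(C_k)_k$ of the components of $T$'' and then compose the per-component slidings in that order, but a component-finite Borel graph on a standard Borel space will typically have uncountably many components, so no such enumeration by $\N$ exists and the conservative sequence $(\si_k)_k$ you describe cannot be formed. The fix is to index by \emph{step number} rather than by component: at global step $i$, simultaneously perform, in every component whose local procedure has not yet terminated, the $i$-th local slide (chosen canonically via $<_B$ as you already describe). Each resulting $\tau_i$ is a single Borel $L$-based edge sliding of $\bar\tau_i(T)$: the railway is the disjoint union of the per-component railways, all contained in the current graph and disjoint from the edges being moved, and only free edges move. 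The sequence $(\tau_i)_{i<\om}$ is composable since every edge stabilizes once its finite component's procedure ends, so the composition $\si$ is an $L$-based iterated edge sliding of $T$; it is connectivity-preserving because each component is sent to a path on the same vertex set, and hence $\si$ is well-iterated via the length-one conservative sequence with $G_0=T$.

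A small inaccuracy in your closing paragraph: the railway for the unfolding slide is the $w$-to-$a$ path inside $A$, which may well contain several free edges, not ``only $L$-edges and the single free edge being moved.'' This is harmless, since an $L$-based edge sliding only requires the railway to lie in the current graph and that $L$ be pointwise fixed, both of which hold.
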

\begin{proof}
	Clearly, each $T$-connected component has such a well-iterated edge sliding, as well as a desired directing, and the component-finiteness of $T$ allows us to choose these in a Borel fashion uniformly for all $T$-connected components at once.
\end{proof}

\begin{prop}\label{sliding_hyperfinite_trees_into_lines}
	Every acyclic hyperfinite Borel graph $T$ on $X$ admits a Borel one-to-one well-iterated edge slide $L$ that is a line forest and has a proper Borel directing $\dir{L}$.
\end{prop}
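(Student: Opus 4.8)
The idea is to reduce the hyperfinite case to the component-finite case handled by \cref{sliding_finite_trees_into_lines} via an $\om$-iteration, using the well-iterated machinery of \cref{sec:edge_sliding}. First I would invoke \cref{hyperfinite_spanning_subtrees} — or rather work directly with the acyclic $T$ we are given — and apply \cref{graph-hyperfinite} to write $T$ as an increasing exhaustive union $T = \bigcup_n T_n$ of component-finite bounded Borel subgraphs, where we may take $T_0 = \0$. At stage $n$, having already produced a line forest $L_n$ that is an iterated edge slide of $T_n$, together with a proper Borel directing $\dir{L_n}$, I would apply \cref{sliding_finite_trees_into_lines} to the pair $L_n \subseteq \si_{\le n}(T_{n+1})$ (which are component-finite acyclic by \cref{iterated_edge_sliding_preserves} and the component-finiteness of $T_{n+1}$, $L_n$ being a spanning subgraph of the image of $T_{n+1}$ under the composition so far) to obtain an $L_n$-based well-iterated one-to-one edge sliding $\si_{n+1}$ of the new edges with image a line forest $L_{n+1} \supseteq L_n$ and a proper directing $\dir{L_{n+1}} \supseteq \dir{L_n}$.

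The key point is that each $\si_{n+1}$ is $L_n$-based, i.e. it leaves $L_n$ (hence all previously placed edges) pointwise fixed and only slides the edges of $T_{n+1}$ not yet moved, along railways contained inside the already-constructed line forest. This is precisely the hypothesis needed to assemble the sequence $(\si_{n+1})_{n}$ into a $T$-conservative sequence in the sense of \cref{defn:conserv_comp}: the pairwise disjoint subgraphs $G_n \defeq T_n \setminus T_{n-1}$ (with the convention $T_{-1} = \0$) witness conservativeness, since $\si_{n+1}$ is an $\bsi_{n+1}(\bG_{n+1})$-based — equivalently $L_n$-based, as $L_n = \bsi_{n+1}(\bG_{n+1})$ — well-iterated edge sliding of $G_{n+1}$ that is connectivity-preserving for $L_n \cup G_{n+1}$ by \cref{well-iterated_properties}\labelcref{item:well-iterated_properties:connectivity_preserving}. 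Then \cref{conserv_connectivity_preserving} (or \cref{well-iterated_properties}\labelcref{item:well-iterated_properties:closed_under_conserv_comp}) gives that the composition $\si \defeq \bsi_\om$ is a Borel well-iterated edge sliding of $T$, and one-to-one because each $\si_{n+1}$ is one-to-one on the edges it moves and the moved sets of the $\si_{n+1}$ are pairwise disjoint. Its image $L \defeq \si(T) = \bigcup_n L_n$ is acyclic (by \cref{iterated_edge_sliding_preserves}\labelcref{item:iterated_edge_sliding_preserves:acyclicity}, or simply as an increasing union of line forests on the same vertex set with coherent directings), every vertex has degree at most $2$ since this holds in each $L_n$ and degrees are non-decreasing along the union, and $\dir{L} \defeq \bigcup_n \dir{L_n}$ is a proper Borel directing — each vertex has at most one incoming and one outgoing edge because this is preserved under the coherent union.

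The main obstacle I anticipate is bookkeeping the ``already-placed'' edges correctly so that the iteration is genuinely conservative: one must be careful that at stage $n+1$ the sliding is based over \emph{all} of $L_n$ (not just $T_n$), that the new edges $G_{n+1}$ are slid only along railways inside $L_n \cup G_{n+1}$ so the future iterations do not disturb them, and that the resulting sequence is composable with limit $\bsi_\om$ entire — all of which are exactly the conditions packaged into the definition of a $G$-conservative sequence and verified once via \cref{conserv_comp_properties}. A minor additional check is that $\si$ being \emph{one-to-one} on $T$ (needed for the ``moreover'' direction's applicability elsewhere, e.g. via \cref{edge_sliding_prop}\labelcref{item:reversing_edge_sliding}) follows from the disjointness of the moved-edge sets together with injectivity at each finite stage; this is routine given \cref{conserv_comp_properties}\labelcref{item:conserv_comp_properties:Mv=cupMV}.
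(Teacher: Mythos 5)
Your proof is correct and takes essentially the same route as the paper, which writes $T$ as an increasing exhaustive union of component-finite subgraphs, applies \cref{sliding_finite_trees_into_lines} at each stage, and then simply cites \cref{connecting_unions_using_images} to assemble the iteration; you instead re-verify $T$-conservativeness from \cref{defn:conserv_comp} directly, which is precisely what that proposition packages. One small inaccuracy worth noting: $L_n$ is \emph{not} a spanning subgraph of $\bsi_{n+1}(T_{n+1})$ (it spans only $\bsi_{n+1}(T_n)$, whose components are smaller), but this does not affect the argument since \cref{sliding_finite_trees_into_lines} requires only that $L \subseteq T$ with $L$ a line forest and $T$ component-finite acyclic, not that $L$ span $T$.
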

\begin{proof}
	This follows by recursive applications of \cref{sliding_finite_trees_into_lines}, put together by \cref{connecting_unions_using_images}.
\end{proof}

\begin{cor}\label{hyperfinite_subgraphs_admit_sublines}
	Every hyperfinite Borel graph $H$ on $X$ admits a one-to-one Borel well-iterated edge slide $\tH$ containing a Borel spanning line subforest $L$ with a proper Borel directing $\dir{L}$.
\end{cor}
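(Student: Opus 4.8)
The plan is to reduce the statement about a general hyperfinite Borel graph $H$ to the acyclic case already handled in Proposition \ref{sliding_hyperfinite_trees_into_lines}. First I would invoke Proposition \ref{hyperfinite_spanning_subtrees} to extract an acyclic Borel spanning subgraph $T \subseteq H$, so that $E_T = E_H$. Since $T$ is acyclic and hyperfinite, Proposition \ref{sliding_hyperfinite_trees_into_lines} supplies a Borel one-to-one well-iterated edge sliding $\si$ of $T$ whose image $L \defeq \si(T)$ is a line forest equipped with a proper Borel directing $\dir{L}$.

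The key point is then to \emph{transport} this sliding of $T$ to a sliding of the ambient graph $H$. Observe that $\si$, being a well-iterated edge sliding of $T$, only moves edges of $T$ and uses railways inside $T$ (and its images). Since $T \subseteq H$, the very same edge-operator $\si$ is an $(H \setminus T)$-based well-iterated edge sliding of $H$: the edges in $H \setminus T$ are simply fixed by $\si$ and can be absorbed into the base, while every railway used by $\si$ lies in $T$ (or an iterated slide of $T$), hence in $H$ (or an iterated slide of $H$). Concretely, one checks that an $H$-based $T$-conservative witnessing sequence for $\si$ as a sliding of $T$ is also a witnessing sequence for $\si$ as a well-iterated edge sliding of $H$ with the edges of $H \setminus T$ placed in the base; Lemma \ref{replace_base_any_spanning_subgraph} and the definitions in \cref{sec:edge_sliding} make this bookkeeping routine. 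Therefore $\tH \defeq \si(H)$ is a Borel one-to-one well-iterated edge slide of $H$, and since $\si$ fixes $H \setminus T$ pointwise, $\tH \supseteq \si(T) = L$, so $L$ is a Borel subgraph of $\tH$. It is spanning because $E_L = E_T = E_H = E_{\tH}$ (the last equality by \cref{item:well-iterated_properties:connectivity_preserving}), and it comes with the proper Borel directing $\dir{L}$ produced above.

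The only mild obstacle is the bookkeeping in the previous paragraph: one must be careful that the base of the sliding may change as the iteration proceeds (since $\si$ is well-iterated, not just iterated), and verify that adjoining the static edges $H \setminus T$ to the base at every stage is compatible with the $G$-conservativeness condition in \cref{defn:conserv_comp}. This is not hard because those edges are never moved and never needed as railway edges, so they commute with the whole construction. Everything else is a direct appeal to the results cited above.
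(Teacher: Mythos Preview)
Your proposal is correct and follows exactly the same approach as the paper: the paper's proof is the single line ``Apply \cref{hyperfinite_spanning_subtrees} in tandem with \cref{sliding_hyperfinite_trees_into_lines},'' and your argument simply spells out the bookkeeping behind this sentence. Your discussion of transporting $\si$ from $T$ to $H$ by placing $H \setminus T$ in the base is a reasonable way to make that one-line proof precise.
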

\begin{proof}
	Apply \cref{hyperfinite_spanning_subtrees} in tandem with \cref{sliding_hyperfinite_trees_into_lines}.
\end{proof}


\section{Aperiodic and ergodic hyperfinite decompositions}\label{sec:decomps}

Let $X$ be a standard Borel space and $E$ a countable Borel equivalence relation. 

Define the natural extension $\extE$ of $E$ to $\XE$ as the pullback of $E$ under the projection map $\proj : \XE \to X$. For $A \subseteq \XE$, put
\begin{align*}
	\Vrts(A) &\defeq A \cap X 
	\\
	\Edges(A) &\defeq A \cap E.
\end{align*}

\subsection{Constructing an aperiodic hyperfinite factor}\label{subsec:aperiodic-hyperfinite_factor}

\begin{lemma}[Nontrivial finite factor]\label{finite_factor}
	Let $E_0 \subseteq E$ be countable Borel equivalence relations on $X$. For any Borel graphing $G$ of $E$, there is a finite Borel subequivalence relation $F \subseteq E$ transverse to $E_0$ such that
	\begin{enumref}{i}{finite_factor}
		\item \emph{(built from $G$)} $F$ is graphed by a Borel $E_0$-based well-iterated edge sliding of $G$ into $F$,
		
		\item \emph{(exhaustive or nontrivial)} modulo $E$-compressible, $X$ admits a partition $X_0 \disjU X'$ into $E$-invariant Borel sets such that $E \rest{X_0} = E_0 \rest{X_0} \transvee F \rest{X_0}$ and each $F$-class in $X'$ has more than one element.
	\end{enumref}
\end{lemma}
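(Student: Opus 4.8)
The plan is to obtain $F$ by contracting a suitable saturated Borel \fsr along $E_0$, and to realise item~(i) by sliding, inside each cell of that \fsr, a spanning tree of its ``$E_0$-minor'' onto a fixed transversal.

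\emph{The \fsr and $F$.} Fix a Borel linear order $<$ on $X$ and let
\[
\Phi \defeq \set{A \in \FinE : A \text{ is } (G \cup E_0)\text{-connected and meets at least two } E_0\text{-classes}},
\]
a Borel subset of $\FinE$. By \cref{existence_of_saturated_fsr} (an instance of \cref{winning_SatG}) there is a Borel \fsr $\tilde F \subseteq E$ that is $\Phi$-saturated modulo an $E$-compressible set; discard that set and assume $\tilde F$ is $\Phi$-saturated. For each $\tilde F$-class $A$, enumerate the $E_0$-classes it meets as $C_1, \dots, C_k$ ($k \ge 2$), let $a_i$ be the $<$-least point of $A \cap C_i$, and put $\bar A \defeq \set{a_1, \dots, a_k}$. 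Let $F$ be the \fsr whose classes are exactly the sets $\bar A$. Then $F \subseteq E$ is finite and Borel, and $F \trans E_0$, since each $\bar A$ meets every $E_0$-class at most once.

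\emph{Building $F$ from $G$ (item (i)).} Fix a $\tilde F$-class $A$ as above. Choose a spanning tree of $(G \cup E_0)\rest A$ and contract its $E_0 \cap A^2$-edges; what remains is a spanning tree of the $E_0$-minor of $G \rest A$ on $\set{C_1,\dots,C_k}$, carried by $k-1$ distinct edges $e_1,\dots,e_{k-1} \in G \rest A$, each joining two distinct $E_0$-classes among the $C_i$. Slide each $e_j$ onto the edge between the corresponding representatives $a_\bullet$, along the railway $E_0 \cap A^2$: this is legitimate because each endpoint of $e_j$ lies in a common $E_0$-class with its representative, hence is joined to it by an edge of $E_0 \cap A^2$. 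This yields a Borel $E_0$-based edge sliding $\si_A$ of $G$ with $\Mv(\si_A) \subseteq G\rest A$ whose image is a spanning tree of $\bar A$ inside $\bar A^2 \subseteq F$, chosen Borel-uniformly in $A$ as in \cref{finite_spanning_subtrees}. Since distinct $\tilde F$-classes are disjoint, the $\si_A$ have pairwise disjoint supports and railways contained in $E_0$, so their common extension $\si$ (the identity off $\bigcup_A \Mv(\si_A)$) is a single Borel $E_0$-based edge sliding of $G$ with $\IMv(\si) \subseteq F$ and with $E_0 \cup \si(G) \cup \Id_X$ connecting every $F$-class; equivalently (via \cref{connecting_unions_with_disjoint} with $F_0 = \Id_X$, $F_1 = F$) $\si$ is a Borel $E_0$-based well-iterated edge sliding of $G$ into $F$ that graphs $F$. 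This gives~(i).

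\emph{The dichotomy (item (ii)).} Put $X_0 \defeq \set{x : [x]_{E_0 \vee F} = [x]_E}$ and $X' \defeq X \setminus X_0$; both are Borel and $E$-invariant, and $E \rest{X_0} = E_0\rest{X_0} \transvee F\rest{X_0}$ because $F \trans E_0$. One must show, modulo $E$-compressible, that every singleton $F$-class lies in $X_0$. Saturation gives: if $x \notin \dom\tilde F$ then $[x]_{E_0} \cap \dom\tilde F = \0$ (else a cell extends by $x$ along an $E_0$-edge), and then following a $G$-path out of $[x]_{E_0}$ and examining its first exit edge contradicts saturation unless $[x]_E = [x]_{E_0}$; hence $\dom\tilde F$ contains every $E$-class with at least two $E_0$-classes, and points outside $\dom\tilde F$ have their $E$-class a single $E_0$-class, so lie in $X_0$. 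A singleton $F$-class in $\dom\tilde F$ is a non-representative point of some cell $A$, which is $E_0$-equivalent to a point of $\bar A$ and so lies in the $(E_0\vee F)$-class of $\bar A$.

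\textbf{Main obstacle.} The delicate step is excluding an $E$-class $D \subseteq \dom\tilde F$ that has more than one $(E_0 \vee F)$-class while carrying such a non-representative point. Connectivity of $G$ forces a $G$-edge crossing between two $(E_0\vee F)$-classes of $D$; if one endpoint is outside $\dom\tilde F$ (or both are), saturation is violated at once, but the case where the edge runs between two distinct cells is not directly excluded by $\Phi$-saturation. Resolving this — by enriching $\Phi$ so such crossings can be absorbed, or by arguing that the $E$-classes on which it persists form an $E$-compressible set that is added to the discarded part — is the crux of the argument and the step I expect to demand the most care.
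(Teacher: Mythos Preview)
Your approach is a natural one-shot simplification of the paper's argument, and item~(i) goes through essentially as you describe. However, the gap you flag as the ``Main obstacle'' is genuine and is not resolved by either of the fixes you sketch.

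Concretely: $\Phi$-saturation does not prevent two distinct cells $A,B$ from lying in the same $E$-class while belonging to different $(E_0\vee F)$-classes. Indeed, if $(a,b)\in G$ with $a\in A$, $b\in B$, then $A\cup\{b\}$ is not $\tilde F$-invariant (so not injective over $\tilde F$), and $A\cup B$ contains two $\tilde F$-classes (so again not injective); saturation says nothing. When this happens, every non-representative point $x\in A\setminus\bar A$ satisfies $[x]_F=\{x\}$ yet $x\in X'$, so (ii) fails. There is no reason for the set of such $E$-classes to be compressible, and enlarging $\Phi$ by a fixed Borel condition will not help: any static family still leaves you unable to merge two already-formed cells, because that requires a set containing two $\tilde F$-classes.

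The paper's proof addresses exactly this by making the construction genuinely iterative. It works on the extended space $X\sqcup E$ (tracking allocated edges alongside vertices) and plays the saturation \emph{game} rather than taking a single saturated \fsr: at round $n$, the family $\Phi_n=\Phi(\extF_n)$ depends on the current $F_n$, the new edges must lie outside $E_0\vee F_n$, and the sliding is along $E_0\vee F_n$. Thus as $F_n$ grows, so does the allowed railway, and a $G$-edge between two previously-formed cells can now be slid along the already-built structure to merge them into a single $F_{n+1}$-class. This dynamic dependence of $\Phi$ on the evolving $F$ is precisely the missing ingredient in your argument, and it is why the paper needs the game of \cref{winning_SatG} rather than just \cref{existence_of_saturated_fsr}.
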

\begin{proof}	
	We will play the saturation game $\SatG(\extE)$ on $\XE$ (see \cref{defn:SatG}) as follows: assuming that Player 2 has last played an \fsr $\extF \subseteq \extE$, we put $F \defeq \extF \cap X^2$ and define the response $\Phi(\extF)$ of Player 1 as the collection of all $U \in \Finw{\XG}_{\extE}$ satisfying the following conditions:
	\begin{enumenv}
		\item \label{item:finite_factor:pf:grows} $|\Vrts(U)| \ge 1$,
		
		\item \label{item:finite_factor:pf:edge-disjoint}  the graph $G_U \defeq \Edges(U) \setminus \dom(\extF)$ is disjoint from (transverse to) $E_0 \vee F$,
		
		\item \label{item:finite_factor:pf:minimal} $|G_U| = |\Classes[U]{F}| - 1$,
		
		\item \label{item:finite_factor:pf:connects} there is an edge sliding $\si$ along $E_0 \vee F$ that moves $G_U$ into $\Vrts(U)^2$ so that $\si(G_U) \cup F$ connects $\Vrts(U)$.	
	\end{enumenv}
	
	Let Player 1 play $\SatG(\extE)$ according to the strategy provided by \cref{winning_SatG}, and, assuming Player 1 has played $\extF_n$ in her $n^\text{th}$ move, we let Player 2 play $\Phi_n \defeq \Phi(\extF_n)$. Thus, the $\extF_n$ are injectively increasing and, modulo $\extE$-compressible, $\extF_\w \defeq \bigcup_{n \in \N} \extF_n$ is finite and saturated within $\Phi_\w \defeq \bigcup_{n \in \N} \Phi_n$.
	
	For each $n \le \w$, put $\extD_n \defeq \dom(\extF_n)$, $F_n \defeq \extF_n \cap X^2$, and $E_n \defeq E_0 \vee F_n$.
	
	\begin{claim+}
		$F_{n+1} \trans_{F_n} E_n$.
	\end{claim+}
	\begin{pf}
		Follows from conditions \labelcref{item:finite_factor:pf:edge-disjoint,item:finite_factor:pf:connects} together with \labelcref{item:sliding_preserves:transversality}.
	\end{pf}
	
	\begin{claim+}
		$Z \defeq \ext{Z} \cap X$ is $E$-compressible, where $\ext{Z} \subseteq \XE$ is the aperiodic part of $\extF_\w$.
	\end{claim+}
	\begin{pf}
		Noting that $(F_n)_{n \in \N}$ is clearly injectively increasing, it is enough to show that $Z$ is $F_\w$-aperiodic because then \cref{injectively_increasing_is_smooth,compressible=supset_aperiodic+smooth} imply that it is $E$-compressible. But the $F_\w$-aperiodicity of $Z$ easily follows from \labelcref{item:finite_factor:pf:minimal} because each $U \in \Classes{F_n}$ contains more vertices than edges; in fact, it is easy to show by induction on $n$ that $|\Edges(U)| = |U| - 1$.
	\end{pf}
	
	Hence, throwing out $[Z]_E$ from $X$, and hence also $\ext{Z}$ from $\XE$, we may assume that $\extF_\w$ is finite and is saturated within $\Phi_\w$.
	
	\medskip
	
	\begin{claim+}\label{claim:taken_edges_in_Ew}
		$\Edges(\extD_\w) \subseteq E_\w \cap G$.
	\end{claim+}
	\begin{pf}
		Due to \labelcref{item:finite_factor:pf:connects}, any edge $e \defeq \edge{x,y} \in \extD_\w$ slides along $E_n$, for some $n \in \N$, into an edge, whose endpoints are $E_{n+1}$-equivalent, so \labelcref{item:sliding_preserves:connectivity} implies that $e \in E_{n+1}$.
	\end{pf}
	
	Put $X' \defeq \big(\Vrts(\extD_\w)\big)_E$ and $X_0 \defeq X \setminus X'$.
	
	\begin{claim+}\label{claim:class_in_domain_unless_connects}
		$E \rest{X_0} = E_\w \rest{X_0}$.
	\end{claim+}
	\begin{pf}
		Let $C \subseteq X_0$ be an $E$-class, so there is $x \in C \setminus \extD_\w$, and suppose towards a contradiction that $C$ is not one $E_\w$-class. Because $G$ graphs $E$, there is an edge $e \defeq \gen{x', y} \in G$ such that $[x]_{E_\w} = [x']_{E_\w} \ne [y]_{E_\w}$, so, by \cref{claim:taken_edges_in_Ew}, $e \notin \extD_\w$. Thus, $U \defeq \set{x} \cup \set{e} \cup [y]_{\extF_\w}$ is injective over $\extF_\w$. Let $n$ be large enough so that $[y]_{\extF_n} = [y]_{\extF_\w}$ and $x' \in [x]_{E_n}$. It is easy to check that $U \in \Phi(\extF_n) = \Phi_{n+1}$, contradicting the saturation of $\extF_\w$ within $\Phi_\w$.
	\end{pf}
	
	Now we perform the conceived well-iterated edge sliding. Recalling that $F_0 = \0$, fix $n \ge 0$. Put $G_n \defeq \Edges(\extD_{n+1}) \setminus \Edges(\extD_n)$. For each $U \in \Classes{\extF_{n+1}}$, let $\si_U : U \cap G_n \to [X]^2$ denote the restriction of an edge sliding along $E_n$ that witnesses condition \labelcref{item:finite_factor:pf:connects} for $U$; because there are only finitely many choices for $\si_U$, we can ensure that the map $U \mapsto \si_U$ is Borel. Thus,
	\[
	\si_n \defeq \bigdisjU_{U \in \Classes{\extF_{n+1}}} \si_U \disjU \id \rest{G_n^c}
	\]
	defines a Borel $E_n$-based edge sliding of $G_n$ into $F_{n+1}$ that graphs $F_{n+1}$ over $F_n$; in fact, by \cref{replace_base_any_spanning_subgraph}, $\si_n$ is an $(E_0 \cup F_n)$-based edge sliding of $G_n$. Furthermore, by the definition of $G_n$ and \cref{claim:taken_edges_in_Ew}, $E_n \trans G_n \subseteq E_{n+1}$, so the $G_n$ are disjoint. Thus, \cref{connecting_unions_with_disjoint} applies, so the composition $\si$ of the $\si_n$ is a Borel $E_0$-based well-iterated edge sliding of $\bigdisjU_{n \in \N} G_n$ into $F_\w$ that graphs $F \defeq F_\w \cup \Id_X$.
\end{proof}

\begin{lemma}[Aperiodic hyperfinite factor]\label{hyperfinite_factor}
	Let $E_0 \subseteq E$ be countable Borel equivalence relations on $X$. For any Borel graphing $G$ of $E$, there is a hyperfinite Borel subequivalence relation $F \subseteq E$ transverse to $E_0$ such that
	\begin{enumref}{i}{hyperfinite_factor}
		\item \label{item:hyperfinite_factor:edge_sliding} \emph{(built from $G$)} $F$ is graphed by a Borel $E_0$-based well-iterated edge sliding of $G$ into $F$,
		
		\item \label{item:hyperfinite_factor:exhaust-or-aper} \emph{(exhaustive or aperiodic)} modulo $E$-compressible, $X$ admits a partition $X_0 \disjU X'$ into $E$-invariant Borel sets such that $F \rest{X_0}$ is finite and $E \rest{X_0} = E_0 \rest{X_0} \transvee F \rest{X_0}$, and $F \rest{X'}$ is aperiodic.
	\end{enumref}
\end{lemma}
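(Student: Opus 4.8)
The plan is to iterate \cref{finite_factor} $\om$-many times, absorbing at each stage the finite factor it produces into a growing finite subequivalence relation, and then to splice together the resulting edge slidings using \cref{connecting_unions_using_images}. Since $E_0 \subseteq E$, we may assume without loss of generality that $G \supseteq E_0$ (replacing $G$ by $G \cup E_0$ does not change $E_G = E$). We recursively build finite Borel equivalence relations $F_n \subseteq E$ transverse to $E_0$, starting from $F_0 \defeq \Id_X$, together with Borel edge-operators $\si_n$; we set $E_n \defeq E_0 \vee F_n$, $\bsi_n \defeq \si_{n-1} \circ \cdots \circ \si_0$ (with $\bsi_0 \defeq \id$), and $G_n \defeq \bsi_n(G)$, and we maintain along the recursion that $E_0 \subseteq G_n$, that $E_{G_n} = E$, and that $F_n$ connects each of its classes inside $G_n$. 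At stage $n$, apply \cref{finite_factor} to $E_n \subseteq E$ with the graphing $G_n$ of $E$: this produces a finite Borel subequivalence relation $F^{(n)} \subseteq E$ transverse to $E_n$, a Borel $E_n$-based well-iterated edge sliding $\si_n$ of $G_n$ into $F^{(n)}$ that graphs $F^{(n)}$, and, modulo a Borel $E$-compressible set $Z_n$, an $E$-invariant Borel partition $X = X_0^{(n)} \disjU X'^{(n)}$ with $E \rest{X_0^{(n)}} = E_n \rest{X_0^{(n)}} \transvee F^{(n)} \rest{X_0^{(n)}}$ and with every $F^{(n)}$-class contained in $X'^{(n)}$ nontrivial. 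We then let $F_{n+1}$ be the finite Borel equivalence relation whose classes are the sets $\bigcup_{z \in C} [z]_{F_n}$ for $C$ ranging over the $F^{(n)}$-classes, together with the $F_n$-classes disjoint from $\dom(F^{(n)})$; since $F^{(n)}$ is transverse to $E_n \supseteq F_n$, each $F^{(n)}$-class meets each $F_n$-class in at most one point, so these sets are finite, $F_{n+1} \supseteq F_n$, $F_{n+1}$ is still transverse to $E_0$, and $E_0 \vee F_{n+1} = E_n \vee F^{(n)}$.

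To run the recursion and glue the slidings, one checks the following. Because $E_0 \cup F_n$ is a spanning subgraph of $E_n$, \cref{replace_base_any_spanning_subgraph} lets us view $\si_n$ as an $(E_0 \cup F_n)$-based well-iterated edge sliding of $G_n$; it is into $F^{(n)} \subseteq F_{n+1}$, and combining ``$\si_n(G_n) \cap F^{(n)}$ connects each $F^{(n)}$-class'' with ``$F_n$ connects each of its classes'' shows that $\si_n$ graphs $F_{n+1}$ over $F_n$. The invariants $E_0 \subseteq G_{n+1}$ and $E_{G_{n+1}} = E$ follow from connectivity preservation for $G_n \cup E_n$ (\cref{item:well-iterated_properties:connectivity_preserving}), together with the observation that the edges of $G_n$ lying in $F_n$ are fixed by $\si_n$ and hence survive into $G_{n+1}$ (which also re-establishes the last invariant). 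Now \cref{connecting_unions_using_images}, applied with $H \defeq E_0$ to the sequence $(F_n)_n$, shows that the composition $\si$ of the $\si_n$ is a Borel $E_0$-based well-iterated edge sliding of $G$ into $F \defeq \bigcup_n F_n$ that graphs every $F_n$, hence $F$; this is conclusion~\labelcref{item:hyperfinite_factor:edge_sliding}. Moreover $F$ is transverse to $E_0$ (an increasing union of relations transverse to $E_0$) and hyperfinite (an increasing union of finite Borel equivalence relations).

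For conclusion~\labelcref{item:hyperfinite_factor:exhaust-or-aper} we discard the $E$-compressible set $Z \defeq \bigcup_n [Z_n]_E$ and set $X_0 \defeq \bigcup_n X_0^{(n)}$ and $X' \defeq X \setminus X_0 = \bigcap_n X'^{(n)}$. The key point is that the sets $X_0^{(n)}$ are nested and $F$ freezes on each of them: on $X_0^{(n)}$ we have $E = E_0 \vee F_{n+1} \subseteq E_m$ for every $m > n$, so $E_m = E$ there, and since $F^{(m)}$ is transverse to $E_m$ it must be trivial on $X_0^{(n)}$; this makes $X_0^{(n)}$ disjoint from $X'^{(m)}$ (so $X_0^{(n)} \subseteq X_0^{(n+1)}$) and forces $F_m \rest{X_0^{(n)}} = F_{n+1} \rest{X_0^{(n)}}$ for all $m \ge n+1$. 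Hence $F \rest{X_0}$ is finite and, on each $X_0^{(n)}$ (and so on $X_0$), $E = E_0 \vee F$ with $F$ transverse to $E_0$, i.e. $E \rest{X_0} = E_0 \rest{X_0} \transvee F \rest{X_0}$. Finally, if $x \in X'$ then $[x]_{F^{(n)}}$ is nontrivial for every $n$, so $[x]_{F_{n+1}}$ is the disjoint union of the $\ge 2$ distinct $F_n$-classes meeting $[x]_{F^{(n)}}$ (distinct because $F^{(n)} \trans E_n \supseteq F_n$); thus $|[x]_{F_n}|$ strictly increases in $n$, so $[x]_F$ is infinite and $F \rest{X'}$ is aperiodic.

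The main obstacle is the bookkeeping needed to verify the hypotheses of \cref{connecting_unions_using_images} at each stage — in particular that $\si_n$ genuinely slides $\bsi_n(G)$ into $F_{n+1}$ and graphs $F_{n+1}$ over $F_n$, and that $G_n$ stays a graphing of $E$ so that \cref{finite_factor} keeps applying (this is what the ``$G \supseteq E_0$'' normalization is for) — together with the geometric fact that the ``exhausted'' pieces $X_0^{(n)}$ are nested and $F$ is eventually constant on each; this last point is exactly what reconciles $F$ being finite on $X_0$ with being aperiodic on $X'$.
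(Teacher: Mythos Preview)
Your proof is correct and follows the same overall strategy as the paper: iterate \cref{finite_factor} and splice the slidings via \cref{connecting_unions_using_images}. The only substantive difference is packaging: the paper passes to the concrete quotient $X/F_n$ at each step (applying \cref{finite_factor} there and lifting via \cref{lift_of_sliding}), which absorbs your invariants ``$E_0 \subseteq G_n$'' and ``$F_n$-classes are $G_n$-connected'' into the quotient machinery, whereas you work upstairs with $E_n = E_0 \vee F_n$ as base and maintain those invariants by hand. Your explicit verification of \labelcref{item:hyperfinite_factor:exhaust-or-aper}---that the $X_0^{(n)}$ nest and $F$ freezes on them---is omitted in the paper's proof but is exactly the intended argument.
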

\begin{proof}
	Starting with $F_0 = \Id_X$, recursive applications of \cref{finite_factor} produce an increasing sequence $(F_n)_{n \in \N}$ of finite Borel equivalence relations and a sequence $(\si_n)_{n \in \N}$ of Borel edge-operators on $X$, where $F_{n+1} \trans_{F_n} E_0$ and $\si_n$ is an $E_0$-based well-iterated edge sliding of $\bsi_n(G)$ into $F_{n+1}$ that graphs $F_{n+1}$ over $F_n$. More precisely, supposing that $(F_k)_{k < n}$  and $\bsi_n$ are defined, apply \cref{finite_factor} to the (concrete) quotients $\Xmod{F_n}$, $\Emod{F_n}$, $\Gmod[R]{F_n}$, and $\Gmod[\bsi_n(G)]{F_n}$ to get $F_{n+1}'$ and $\si_n'$, respectively; take $F_{n+1} \defeq F_{n+1}' \vee F_n$ and let $\si_n$ be the natural lift of $\si_n'$. By \cref{connecting_unions_using_images}, the composition $\si$ of $(\si_n)_{n \in \N}$ satisfies \labelcref{item:hyperfinite_factor:edge_sliding}.
\end{proof}

\subsection{Aperiodic hyperfinite decomposition}\label{subsec:aperiodic-hyperfinite_decomposition}

\begin{theorem}[Aperiodic hyperfinite decomposition for equivalence relations]\label{decomp:aper:eq_rel}
	For any countable Borel equivalence relations $E_0 \subseteq E$ on a standard Borel space $X$, any Borel graphing $G$ of $E$ admits a Borel $E_0$-based well-iterated edge slide $\tG$, for which there is
	
	\begin{enumref}{i}{decomp:aper:eq_rel}
		\item an increasingly transverse decomposition $E = \bigtransvee_{n \in \N} E_n$, where, for each $n \in \N^+$, $E_n$ is hyperfinite and is supergraphed by $\tG$,
		
		\item a partition $X = \disjU_{N \in \N \cup \set{\w}} X_N$ into $E$-invariant (possibly empty) Borel sets, where, $X_0$ is compressible, and, for each $N \in \N^+ \cup \set{\w}$ and $n \in \N^+$,
		
		\medskip
		\begin{tabular}{rcl}
			$n < N$ &$\imp$& $E_n \rest{X_N}$ is aperiodic,
			\\
			$n = N$ &$\imp$& $E_n \rest{X_N}$ is finite,
			\\
			$n > N$ &$\imp$& $E_n \rest{X_N} = \Id_{X_N}$.
		\end{tabular}
		
		\medskip
		
		\noindent In particular, $E \rest{X_N} = \bigtransvee_{n = 0}^N E_n \rest{X_N}$ for each $N \in \N^+ \cup \set{\w}$.
	\end{enumref}
	
	\noindent Moreover, given any Borel edge-coloring $(G_k)_{k = 1}^\w$ of $G$, we can ensure that $G_n \subseteq \bigtransvee_{k \le n} E_k$.
\end{theorem}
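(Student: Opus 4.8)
\emph{Strategy.} The plan is to prove the theorem by an $\w$-fold iteration of the Aperiodic hyperfinite factor lemma (\cref{hyperfinite_factor}), peeling off at each stage the piece on which the construction has stabilized, and to obtain the last clause — together with the global equality $E = \bigtransvee_{n \in \N} E_n$, which asserts that every edge of $\tG$ eventually lies in some $\bigvee_{k \le n} E_k$ — by arranging that the $n^{\text{th}}$ stage additionally absorbs the $n^{\text{th}}$ color class of a prescribed Borel edge-coloring. If no coloring is given I fix one: a countable Borel proper edge-coloring $G = \bigdisjU_{k \ge 1} G_k$ exists (it is a countable Borel vertex-coloring of the locally countable line graph of $G$), so each $G_k$ is a Borel matching.

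\emph{The recursion.} Set $\hat E_0 \defeq E_0$ and $Y_0 \defeq X$. At stage $n-1$ ($n \ge 1$) I have an $E$-invariant Borel set $Y_{n-1}$, Borel equivalence relations $E_1, \dots, E_{n-1}$ (with $E_j \defeq \Id$ off the portion processed at stage $j-1$) for which $\hat E_{n-1} \defeq E_0 \vee E_1 \vee \dots \vee E_{n-1}$ is increasingly transverse and $E_j \rest{Y_{n-1}}$ is aperiodic for $1 \le j < n$, and an $E_0$-based well-iterated edge sliding $\bsi_{n-1}$ of $G$ whose image $\tG_{n-1} \defeq \bsi_{n-1}(G)$ graphs $E$ and whose edges moved so far graph $E_1 \vee \dots \vee E_{n-1}$. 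Form the bounded Borel subequivalence relation $D_n \defeq E_{\Psi_n} \subseteq E$, where $\Psi_n \defeq \set{\set{x,y} : (x,y) \in G_n, \ (x,y) \notin \hat E_{n-1}}$; this family is pairwise disjoint because $G_n$ is a matching, and $D_n \trans \hat E_{n-1}$. Now apply the evident relativization of \cref{hyperfinite_factor} with base $\hat E_{n-1}$, graphing $\tG_{n-1}$, and the internal $\w$-iteration started from $D_n$ in place of $\Id_X$ — legitimate since $D_n$ is finite and transverse to $\hat E_{n-1}$, so $\hat E_{n-1} \vee D_n$ (and each later internal finite relation) descends to the concrete quotient by $D_n$ used inside that proof. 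The output is a hyperfinite Borel $E_n \supseteq D_n$ with $E_n \trans \hat E_{n-1}$, a Borel $(\hat E_{n-1} \cup D_n)$-based well-iterated edge sliding $\si_{n-1}$ of $\tG_{n-1}$ into $E_n$ graphing $E_n$, and, modulo an $E$-compressible set, an $E$-invariant Borel partition $Y_{n-1} = X_n \disjU Y_n$ with $E \rest{X_n} = \hat E_{n-1} \rest{X_n} \transvee E_n \rest{X_n}$, $E_n \rest{X_n}$ finite, and $E_n \rest{Y_n}$ aperiodic. I then set $\hat E_n \defeq \hat E_{n-1} \vee E_n$, $\bsi_n \defeq \si_{n-1} \circ \bsi_{n-1}$, declare $E_n \defeq \Id$ off $Y_{n-1}$, collect the discarded compressible sets into $X_0$, and put $X_\w \defeq \bigcap_n Y_n$; the invariants persist because the edges $\si_{n-1}$ places into $E_n$ and the un-slid edges of $G_n$ graphing $D_n$ lie in $\hat E_n$ and, being disjoint from every edge-set moved at a later stage, are thereafter undisturbed (each edge is moved at most once, along railways the $G$-conservative composition keeps fixed).

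\emph{Assembling the conclusion, and the obstacle.} Taking the edges moved at stage $n-1$ as the $n^{\text{th}}$ member of a pairwise disjoint sequence of subgraphs of $G$, and rebasing each $\si_{n-1}$ onto a spanning subgraph of $\hat E_{n-1} \cup D_n$ contained in $E_0 \cup \tG_{n-1}$ via \cref{replace_base_any_spanning_subgraph}, I would compose the $\si_{n-1}$ through \cref{connecting_unions_with_disjoint} (exactly as \cref{hyperfinite_factor} is proved) to get an $E_0$-based well-iterated edge sliding $\si \defeq \bsi_\w$ of $G$ graphing every $\hat E_n$; put $\tG \defeq \si(G)$. Then $X = \disjU_{N \in \N \cup \set{\w}} X_N$ with $X_0$ compressible; since $X_N \subseteq Y_{N-1} \subseteq Y_n$ for finite $N$ and $1 \le n < N$, the clauses of (ii) follow ($E_n \rest{X_N}$ aperiodic for $n < N$, finite for $n = N$, trivial for $n > N$; and on $X_\w$ every $E_n$ with $n \ge 1$ is aperiodic). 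Each $E_n$ ($n \ge 1$) is supergraphed by $\tG$ as noted, and $G_n \subseteq \hat E_n$ because each $(x,y) \in G_n$ either lies in $\hat E_{n-1}$ or has $\set{x,y} \in \Psi_n$, whence $x \mathrel{D_n} y$ and so $(x,y) \in E_n$; since $G = \bigcup_n G_n$ graphs $E$ and the $\hat E_n$ increase to an equivalence relation, this forces $E = \bigcup_n \hat E_n = \bigtransvee_{n} E_n$, hence $E \rest{X_N} = \bigtransvee_{n=0}^N E_n \rest{X_N}$ for every $N$. \textbf{The principal difficulty} is the two pieces of bookkeeping underlying this: confirming that \cref{hyperfinite_factor} truly relativizes to yield an output containing the prescribed $D_n$ — which hinges on $\hat E_{n-1} \vee D_n$ descending to the quotient by $D_n$ at each internal step and on the dichotomy and aperiodicity surviving that quotient — and checking the hypotheses of \cref{connecting_unions_with_disjoint}, namely the pairwise disjointness of the moved edge-subgraphs and the persistence of ``$E_n$ supergraphed by $\tG$'' under subsequent slidings, which is exactly what the fixed-railway property of $G$-conservative sequences provides.
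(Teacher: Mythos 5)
Your proof is essentially the paper's: recursive applications of \cref{hyperfinite_factor} with the accumulated base $\hat E_{n-1} = \bigtransvee_{k<n}E_k$, composed by the conservative-sequence machinery of \cref{sec:edge_sliding}. Your explicit device for the ``moreover'' clause — seeding the internal iteration with $D_n \defeq E_{\Psi_n}$ where $\Psi_n$ collects the $G_n$-edges not already absorbed into $\hat E_{n-1}$ — is a cleaner and more careful rendering of what the paper compresses into ``quotients by $E_{G_n}$,'' and the argument that $D_n$-edges survive in $\tG_{n-1}$ (because every edge moved at an earlier stage already lies in $\hat E_{n-1}$, which $D_n$ avoids by construction) is exactly right.

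Two small points to tighten. First, the lemma to cite for the outer composition is \cref{connecting_unions_using_images} (which is what the paper itself invokes both here and in the proof of \cref{hyperfinite_factor}), since the $\si_{n}$ are slidings of the \emph{images} $\bsi_{n}(G)$; your route via \cref{connecting_unions_with_disjoint} after extracting disjoint moved-edge sets is not wrong, but it amounts to re-deriving the content of \cref{connecting_unions_using_images}'s proof. Second, \cref{replace_base_any_spanning_subgraph} only lets you pass to a spanning \emph{subgraph} of the base $\hat E_{n-1}\cup D_n$; passing to $E_0\cup F_n$ (where $F_n=E_1\vee\dots\vee E_{n-1}$) is a spanning subgraph of $\hat E_{n-1}$ but \emph{not} of $\hat E_{n-1}\cup D_n$, so you should first observe that the $D_n$-part of any railway already consists of $\tG_{n-1}$-edges (hence can be absorbed into the graphing side, giving $\hat E_{n-1}$-basedness), and only then apply \cref{replace_base_any_spanning_subgraph}. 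With these two bookkeeping corrections the argument goes through.
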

\begin{proof}
	This is easily obtained by recursive applications of \cref{hyperfinite_factor} applied to the (concrete) quotients by $E_{G_n}$, with $\bigtransvee_{k < n} E_k$ in place of $E_0$ for each $n \ge 1$, and put together by \cref{connecting_unions_using_images}. The fact that $G_n \subseteq \bigtransvee_{k \le n} E_k$ ensures that $\bigtransvee_{n < \w} E_n$ exhausts $E$.
\end{proof}

We also state a version of the last theorem for graphs.

\begin{cor}[Aperiodic hyperfinite decomposition for graphs]\label{decomp:aper:graphs}
	For any locally countable Borel graphs $G_0 \subseteq G$ on a standard Borel space $X$, $G$ admits a Borel $G_0$-based well-iterated edge slide $\tG$, for which, modulo $E_G$-compressible, there is
	\begin{enumref}{i}{decomp:aper:graphs}
		\item an increasingly transverse spanning partition $\bigtransvee_{n \in \N} G_n$ of $\tG$, where, for each $n \in \N^+$, $G_n$ is a hyperfinite Borel graph,
		
		\item a partition $X = \bigdisjU_{N \in \N^+ \cup \set{\w}} X_N$ into $E_G$-invariant (possibly empty) Borel sets, where, for each $N \in \N^+ \cup \set{\w}$ and $n \in \N^+$,
		
		\medskip
		\begin{tabular}{rcl}
			$n < N$ &$\imp$& $G_n \rest{X_N}$ is component-infinite,
			\\
			$n = N$ &$\imp$& $G_n \rest{X_N}$ is component-finite,
			\\
			$n > N$ &$\imp$& $G_n \rest{X_N} = \0$.
		\end{tabular}
	\end{enumref}
	
	\noindent In particular, if $G$ is acyclic, then $\tG$ is also acyclic, and hence, $\tG = \bigdisjU_{n < \om} G_n$. Moreover, each $G_n$, for $n \in \N^+$, is actually a line forest that admits a proper Borel directing. 
\end{cor}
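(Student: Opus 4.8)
The plan is to deduce this corollary from its equivalence-relation counterpart, \cref{decomp:aper:eq_rel}, applied to the countable Borel equivalence relations $E_0 \defeq E_{G_0} \subseteq E \defeq E_G$ and the Borel graphing $G$ of $E$. That theorem hands us an $E_0$-based well-iterated edge slide $\tG$ of $G$ --- which, since $G_0$ is a spanning subgraph of the graph $E_0$, we may also regard as $G_0$-based via \cref{replace_base_any_spanning_subgraph} --- an increasingly transverse decomposition $E = \bigtransvee_{n \in \N} E_n$ in which every $E_n$ with $n \in \N^+$ is hyperfinite and supergraphed by $\tG$, and a partition $X = \bigdisjU_{N \in \N \cup \set{\w}} X_N$ into $E$-invariant Borel sets whose $N = 0$ piece is compressible. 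I would discard that $E_G$-compressible piece and from then on work over its complement.

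The next step is a pure translation: put $G_n \defeq \tG \cap E_n$ for each $n \in \N$ (so the $0$-th piece contains the given $G_0$) and verify that $\bigtransvee_n G_n$ is the asserted decomposition of $\tG$. Since $E_n$ is supergraphed by $\tG$, each $G_n$ graphs $E_n$, i.e. $E_{G_n} = E_n$; hence $(G_n)_n$ is increasingly transverse because $(E_n)_n$ is; $\bigdisjU_n G_n$ is a spanning subgraph of $\tG$ because $\bigvee_n E_{G_n} = \bigvee_n E_n = E = E_{\tG}$; the $G_n$ are pairwise disjoint because for $n \ne m$ increasing transversality gives $E_n \cap E_m = \Id_X$ while $\tG$ is loopless; and each $G_n$ with $n \in \N^+$ is hyperfinite because $E_{G_n} = E_n$ is. That is part (i). For part (ii), each $X_N$ is $E_G$-invariant, hence $E_n$-invariant, so $E_{G_n \rest{X_N}} = E_n \rest{X_N}$, and the clauses ``$E_n \rest{X_N}$ is aperiodic / finite / trivial'' translate verbatim into ``$G_n \rest{X_N}$ is component-infinite / component-finite / edgeless''; since $X_0$ has been removed, $N$ now ranges over $\N^+ \cup \set{\w}$, as required.

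For the ``in particular'' clause, if $G$ is acyclic then $\tG$ is acyclic by \cref{iterated_edge_sliding_preserves}, and a spanning subgraph of a forest that induces the same equivalence relation must be the whole forest, so $\tG = \bigdisjU_n G_n$. To upgrade the pieces to line forests, I would observe that each $G_n$ with $n \in \N^+$ is an acyclic hyperfinite Borel graph and apply \cref{sliding_hyperfinite_trees_into_lines} to obtain a one-to-one well-iterated edge sliding $\sigma_n$ of $G_n$, with a railway inside $G_n$, turning it into a line forest $L_n$ carrying a proper Borel directing. Because the $G_n$ with $n \in \N^+$ are pairwise disjoint and disjoint from $G_0$, and $\sigma_n$ moves edges only within $G_n$, the sequence $(\sigma_n)_{n \ge 1}$ is $G_0$-based $\tG$-conservative (witnessed by the subgraphs $(G_n)_{n \ge 1}$ of $\tG$), so by definition its composition $\tau$ is a $G_0$-based well-iterated edge sliding of $\tG$, with $\tau(\tG) = G_0 \disjU \bigdisjU_{n \ge 1} L_n$ again acyclic. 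Replacing $\tG$ by $\tau(\tG)$ and each $G_n$ ($n \in \N^+$) by $L_n$ leaves (i) and (ii) undisturbed, since $\tau$ keeps each $G_n$ inside $E_n$ and preserves $E_{G_n}$.

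The step where I expect to have to be careful is confirming that the overall map from $G$ to $\tau(\tG)$ --- the composition of the slide from \cref{decomp:aper:eq_rel} with $\tau$ --- is a single $G_0$-based \emph{well-iterated} edge sliding of $G$, and not merely an iterated, connectivity-preserving one: the natural witnessing subgraphs for the two halves need not be jointly disjoint inside $G$, since the first slide rearranges the very edges the second acts on. Resolving this is bookkeeping with the assembly lemmas \cref{well-iterated_properties}, \cref{connecting_unions_with_disjoint}, and \cref{connecting_unions_using_images} (possibly via a small auxiliary fact that a composition of well-iterated edge slidings along images is again well-iterated, or by interleaving the line-forest slidings into the recursion rather than appending them); the crucial point is that in the acyclic case all slidings in sight are one-to-one and every railway was chosen inside the appropriate disjoint piece, so conservativeness survives the concatenation. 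Everything else is a mechanical unwinding of the equivalence-relation statement.
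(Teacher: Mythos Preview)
Your proposal is correct and follows essentially the same route as the paper: apply \cref{decomp:aper:eq_rel} with $E_0 \defeq E_{G_0}$, set $G_n \defeq \tG \cap E_n$, and invoke the line-forest machinery for the final clause. The paper's proof is in fact only two sentences and cites \cref{hyperfinite_subgraphs_admit_sublines} (rather than \cref{sliding_hyperfinite_trees_into_lines}) for the line-forest part, which has the mild advantage of not requiring acyclicity of $G_n$ a priori; but since you are treating the ``Moreover'' as part of the acyclic clause, your choice is fine. Your closing worry about whether the concatenated map is still a single $G_0$-based well-iterated edge sliding is legitimate bookkeeping that the paper simply elides; your suggested fix of interleaving the line-forest slides into the recursion (or appealing to \cref{connecting_unions_using_images}) is the right instinct.
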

\begin{proof}
	The main part is just a rephrasing of \cref{decomp:aper:eq_rel} applied to $E_0 \defeq E_{G_0}$ and $G$: indeed, take $G_n \defeq \tG \cap E_n$. The part about spanning line forests is provided by \cref{hyperfinite_subgraphs_admit_sublines}.
\end{proof}

An immediate corollary of this is Hjorth's lemma for cost attained (see \cite{Hjorth:cost_lemma} or \cite{Kechris-Miller}*{Theorems 28.2 and 28.3}), which we restate below. We denote by $[E]$ (resp. $\llbracket E \rrbracket$) the group (resp. groupoid) of (resp. partial) Borel automorphisms of $E$.

\begin{defn}
	Let $E$ be a Borel equivalence relation on a standard Borel space $X$ and let $N \in \N \cup \set{\w}$. A set $\set{\ga_n}_{n = 0}^N \subseteq \llbracket E \rrbracket$ is said to be \emph{free}, if, for every nonempty word $w_0 w_1 \hdots w_l \in \Fin[N]$, $\ga_{w_0} \ga_{w_1} \hdots \ga_{w_l} x \ne x$ for every $x \in \dom(\ga_{w_0} \ga_{w_1} \hdots \ga_{w_l})$. Furthermore, $\set{\ga_n}_{n = 0}^N$ is said to be \emph{$E$-generating} if $E = \bigvee_{n=0}^N E_{\ga_n}$.
\end{defn}

\begin{cor}[Hjorth]\label{Gregs_lemma}
	Let $E$ be a countable Borel ergodic measure-preserving equivalence relation on a standard probability space $(X,\mu)$ and let $N \in \N \cup \set{\w}, 0 \le \de < 1$. The following are equivalent:
	\begin{enumerate}[(1)]
		\item \label{item:Gregs_lemma:treeable} $E$ is treeable a.e. and $c_\mu(E) = N + \de$.
		
		\item \label{item:Gregs_lemma:free_dec} $E$ admits a free decomposition \cite{Kechris-Miller}*{Section 27}
		$
		E = \bigfreeprod_{n = 0}^N E_n \text{ a.e.},
		$
		where, for each $1 \le n < N+1$, $E_n$ is aperiodic hyperfinite, and $E_0$ is finite with $c_\mu(E_0) = \de$ if $N < \w$ and $c_\mu(E_0) = 0$ if $N = \w$.
		
		\item \label{item:Gregs_lemma:free_group} There is an $E$-generating a.e. free system $\set{\ga_n}_{n = 0}^N$, where, for each $1 \le n < N+1$, $\ga_n \in [E]$ is aperiodic, and $\ga_0 \in \llbracket E \rrbracket$ with $\mu(\dom(\ga)) = \de$ if $N < \w$ and $\mu(\dom(\ga)) = 0$ if $N = \w$.
	\end{enumerate}
\end{cor}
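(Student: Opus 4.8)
The plan is to prove the cyclic chain of implications (1)$\Rightarrow$(2)$\Rightarrow$(3)$\Rightarrow$(1), with essentially all of the work in the first implication, for which \cref{decomp:aper:graphs} does the heavy lifting; the remaining two implications are bookkeeping with Gaboriau's theory of cost \cite{Gaboriau:mercuriale} and the standard theory of free decompositions \cite{Kechris-Miller}*{Section 27}.

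For (1)$\Rightarrow$(2): fix a Borel treeing $T$ of $E$ on a conull $E$-invariant set and apply \cref{decomp:aper:graphs} with $G := T$ and $G_0 := \0$. Since $T$ is acyclic and well-iterated edge slidings preserve connectivity, the resulting $\tG$ is again a treeing of $E$, and \cref{decomp:aper:graphs} presents it, modulo a $\mu$-null $E_G$-compressible set, as $\tG = \bigdisjU_{n \ge 1} G_n$ with each $G_n$ a line forest carrying a proper Borel directing, together with an $E$-invariant Borel partition $X = \bigdisjU_N X_N$; by ergodicity exactly one $X_M$ is conull, and we pass to it. There, for $1 \le n < M$ the graph $G_n$ is component-infinite and hyperfinite, hence graphs an aperiodic hyperfinite $E_n := E_{G_n}$; if $M < \w$, then $G_M$ is component-finite, so $E_0 := E_{G_M}$ is finite, and if $M = \w$ we put $E_0 := \Id_X$. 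Thus $\tG = \bigdisjU_{n \ge 1} G_n$ is a treeing of $E$ that splits as the disjoint union of the acyclic graphings $G_n$ of the $E_n$, i.e.\ a free decomposition $E = \bigfreeprod E_n$ in the sense of \cite{Kechris-Miller}*{Section 27}. For the costs, each $G_n$ with $1 \le n < M$ is a line forest, so $C_\mu(G_n) \le \mu(X) = 1 = c_\mu(E_n) \le C_\mu(G_n)$, giving $C_\mu(G_n) = 1$; and, if $M < \w$, $G_M$ is an acyclic graphing of the finite relation $E_0$, so $C_\mu(G_M) = c_\mu(E_0) = \int_X \big(1 - |[x]_{E_0}|^{-1}\big)\, d\mu < 1$. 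Since $\tG$ is a treeing of $E$, Gaboriau's theorem yields $c_\mu(E) = C_\mu(\tG) = \sum_{n \ge 1} C_\mu(G_n) = (M - 1) + c_\mu(E_0)$ whenever $c_\mu(E) < \w$, and comparing integer and fractional parts forces $N = M - 1$ and $\de = c_\mu(E_0)$; if $c_\mu(E) = \w$, the same sum shows $M = \w$ (hence $N = \w$, $\de = 0$). Relabeling $E_0$ to the front gives (2).

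For (2)$\Rightarrow$(3): each aperiodic hyperfinite $E_n$, $n \ge 1$, is generated by a single aperiodic $\ga_n \in [E]$. For the finite $E_0$, take an acyclic graphing (\cref{finite_spanning_subtrees}) and slide it into a component-finite line forest via \cref{sliding_finite_trees_into_lines}; the ``successor'' map of its proper directing is a partial injection $\ga_0 \in \llbracket E \rrbracket$ with $E_{\ga_0} = E_0$, and, by $\mu$-invariance, $\mu(\dom(\ga_0)) = \int_X \big(1 - |[x]_{E_0}|^{-1}\big)\, d\mu = c_\mu(E_0) = \de$ (with $\ga_0 = \0$, $\de = 0$, when $N = \w$). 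Then $\set{\ga_n}_{n = 0}^N$ is $E$-generating because $\bigvee_n E_{\ga_n} = \bigvee_n E_n = E$, and it is free because the $E_n$ form a free decomposition: a nonempty reduced word fixing a point would trace a nontrivial cycle in the witnessing treeing \cite{Kechris-Miller}*{Section 27}. Conversely, for (3)$\Rightarrow$(1), the union of the graphs of $\ga_1, \ga_2, \dots$ with the component-finite line forest of $\ga_0$ is a Borel graphing of $E$ (by $E$-generation) that is acyclic (by freeness of the system), so $E$ is treeable a.e.; its $\mu$-cost is $\sum_{n \ge 1} 1 + \mu(\dom(\ga_0)) = N + \de$, whence $c_\mu(E) \le N + \de$, and when $N < \w$ this treeing witnesses $c_\mu(E) < \w$, so Gaboriau's theorem upgrades the inequality to $c_\mu(E) = N + \de$; the case $N = \w$ is immediate.

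The main obstacle is (1)$\Rightarrow$(2). Once \cref{decomp:aper:graphs} is invoked, two points need care: recognizing the increasingly transverse spanning partition of the edge-slid treeing as a genuine free decomposition -- which leans on the treeing characterization of free products -- and the cost bookkeeping, in particular that each component-infinite piece contributes cost exactly $1$ and the component-finite remainder is a finite relation whose (attained) cost is exactly the prescribed $\de$, while keeping the $c_\mu(E) < \w$ and $c_\mu(E) = \w$ cases separate.
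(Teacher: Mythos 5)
Your proposal is correct and follows the same route as the paper: it deduces (1)$\Rightarrow$(2) from \cref{decomp:aper:graphs} applied to a treeing (with ergodicity and measure-preservation killing all but one $X_N$ and the compressible set), gets (2)$\Rightarrow$(3) by sliding hyperfinite pieces into properly directed line forests as in \cref{hyperfinite_subgraphs_admit_sublines}, and (3)$\Rightarrow$(1) by taking $E_n = E_{\ga_n}$. You simply make explicit the cost bookkeeping and the identification of the increasingly transverse spanning partition of the edge-slid treeing with a free decomposition, which the paper's terse proof leaves to the reader.
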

\begin{proof}
	\labelcref{item:Gregs_lemma:treeable}$\imp$\labelcref{item:Gregs_lemma:free_dec} follows by applying \cref{decomp:aper:graphs} to an a.e. treeing $G$ of $E$ and observing that $X_0$ is null because $E$ is p.m.p., and $X_n$ is non-null for exactly one $n \in \N \cup \set{\w}$ because $E$ is ergodic. \labelcref{item:Gregs_lemma:free_dec}$\imp$\labelcref{item:Gregs_lemma:free_group} is by applying \cref{hyperfinite_subgraphs_admit_sublines} to each $E_n$, and \labelcref{item:Gregs_lemma:free_group}$\imp$\labelcref{item:Gregs_lemma:treeable} follows by taking $E_n \defeq E_{\ga_n}$.
\end{proof}

\begin{remark}
	\cref{decomp:aper:graphs} also easily implies a related theorem of Hjorth--Kechris \cite{Kechris-Miller}*{Lemma 27.7}, whose original proof, however, is perhaps shorter.
\end{remark}

\subsection{Ergodic hyperfinite decomposition}

Here we strengthen \cref{decomp:aper:eq_rel} by getting every factor to be ergodic provided the ambient equivalence relation itself is ergodic. 

\begin{theorem}[Ergodic hyperfinite decomposition over an ergodic base]\label{decomp:erg_over_base}
	For any countable Borel measure-preserving ergodic equivalence relations $E_0 \subseteq E$ on a standard probability space $(X,\mu)$, any Borel graphing $G$ of $E$ that also supergraphs $E_0$ admits a Borel $E_0$-based well-iterated edge slide $\tG$ for which there is $N \in \N \cup \set{\w}$ with $N \le C_\mu(G)$ and an increasingly transverse decomposition 
	\[
	E = \bigtransvee_{n = 0}^N E_n \text{ a.e.}
	\] 
	into Borel equivalence relations supergraphed by $\tG$, where, for each $1 \le n < N$, $E_n$ is ergodic hyperfinite and, if $N < \w$, $E_N$ is finite.
\end{theorem}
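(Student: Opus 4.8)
The plan is to replay the proof of \cref{decomp:aper:eq_rel} with ``ergodic'' built into every factor. As there, $\tG$ is produced by an $\om$-iteration: having built the increasingly transverse $\hat E_{n-1} \defeq \bigtransvee_{k < n} E_k$ (with $\hat E_0 \defeq E_0$) together with an $E_0$-based well-iterated edge slide of $G$ supergraphing it, stage $n$ returns, transversely above $\hat E_{n-1}$, an \emph{ergodic} hyperfinite factor $E_n$ --- unless we hit the exhaustive alternative $E = \hat E_{n-1} \transvee E_n$ with $E_n$ finite, in which case the process halts and $N \defeq n$; if it never halts, $N \defeq \om$. The stage-wise edge slidings are glued together by \cref{connecting_unions_using_images}, exactly as in the proof of \cref{decomp:aper:eq_rel} (with the growing $\hat E_{n-1}$ playing the role of the base), and since $E$ is p.m.p.\ every ``modulo compressible'' turns into ``a.e.''. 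So the content is an \emph{ergodic} refinement of the aperiodic hyperfinite factor lemma (\cref{hyperfinite_factor}): for ergodic p.m.p.\ $E_0 \subseteq E$ and a Borel graphing $G$ of $E$ that supergraphs $E_0$, there is a Borel hyperfinite $F \subseteq E$ transverse to $E_0$, graphed by an $E_0$-based well-iterated edge sliding of $G$ into $F$, such that, a.e., either $E = E_0 \transvee F$ with $F$ finite, or $F$ is aperiodic \emph{and ergodic}.

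This factor lemma is in turn built by iterating an \emph{ergodic finite factor} lemma --- the analogue of \cref{finite_factor} with one extra demand, governed by an a priori fixed bounded Borel $f\colon X \to \R$: the finite $F$ transverse to $E_0$ is required, unless already $E = E_0 \transvee F$, to confine all of its $f$-class-averages to the middle third of the essential range of $f$ (the ``$\frac{2}{3}$-requirement''). Its proof is a maximal-\fsr\ construction (\cref{existence-of-maximal-fsr}; unlike in \cref{finite_factor}, saturation is not needed here) run on $\XE$, in which Player~1 plays the finite $E$-related sets that, in $\XE$, become $G$-connected after an edge sliding along $E_0$, charge exactly one fresh $G$-edge per $F$-class being merged (so that the sliding along $E_0$ exists and fixes $E_0$, as in \cref{finite_factor}), \emph{and} whose $f$-average falls in the target window. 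Ergodicity of $E_0$ is what makes this family rich over each $E_0$-class not yet exhausted: a.e.\ $E_0$-class contains points on which $f$ is within $\e$ of the essential infimum of $f$, and points on which it is within $\e$ of the essential supremum, so, by the convexity of averages, alternately adjoining such points to a growing $G$-connected subset of the class (possible since $G$ supergraphs $E_0$) drives the average into the target window in finitely many steps. This is precisely the ``no need to look beyond the $E_0$-class'' phenomenon advertised in the introduction, and it is also why only maximality --- not saturation --- is needed.

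Now cycle the ergodic finite factor lemma through a sequence of test functions in which every member of some fixed countable family of bounded Borel functions dense in $L^1(X,\mu)$ recurs infinitely often; applying it (to the relevant quotients) at every stage yields an increasing sequence $(F_m)_m$ of finite Borel \fsrs, each transverse to $E_0$, whose stage-wise slidings compose --- again by \cref{connecting_unions_using_images} --- to an $E_0$-based well-iterated edge sliding of $G$ into a graphing of $F \defeq \bigcup_m F_m$. By convexity, once the $f$-class-averages of some $F_m$ lie in an interval they remain in it for every coarser $F_{m'}$, so the average-window of each test function is contracted by a fixed factor at each of its recurrences, hence collapses to the single value $\int f\, d\mu$; by the characterization of (relative) ergodicity of hyperfinite Borel equivalence relations via the Cauchy condition (\cref{subsec:char_of_rel_erg}), this forces $F$ to be ergodic, and $F$ is plainly hyperfinite and transverse to $E_0$. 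The exhaustive alternative is the case in which the sequence stabilizes at a finite $F$ with $E = E_0 \transvee F$. This establishes the ergodic hyperfinite factor lemma, hence the main assertion. For the cost bound: for distinct $n,m < N$ we have $E_n \cap E_m = \Id_X$ (increasing transversality), so the graphs $\tG \cap E_n$ ($n < N$) are pairwise edge-disjoint subgraphs of $\tG$, each graphing the aperiodic $E_n$ and hence of $\mu$-cost $\ge c_\mu(E_n) \ge 1$; summing, $N \le \sum_{n<N} C_\mu(\tG \cap E_n) \le C_\mu(\tG) \le C_\mu(G)$, which in particular forces $N < \om$ as soon as $C_\mu(G) < \om$.

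The crux is the ergodic finite factor lemma, and inside it the verification that Player~1's family is rich enough over each not-yet-exhausted $E_0$-class for the maximal \fsr\ to be entire off a null set: one must simultaneously hit the middle-third target for $f$, honor the exact ``one new $G$-edge per merged class'' accounting that makes the $E_0$-based edge sliding available, and carry out both Borel-uniformly in $x$. I expect the convexity-of-averages argument to mesh with the edge-allocation scheme of \cref{finite_factor} without serious difficulty; the genuinely delicate point is the passage from the per-test-function windows to actual ergodicity of $\bigcup_m F_m$, which is where the Cauchy-condition characterization of relative ergodicity bears the weight.
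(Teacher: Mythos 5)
Your proposal follows the paper's route faithfully: replay \cref{decomp:aper:eq_rel} with the aperiodic hyperfinite factor lemma (\cref{hyperfinite_factor}) replaced by an ergodic hyperfinite factor lemma (the paper's \cref{hyperfinite_factor:ergodic}), built by iterating an approximately ergodic finite factor lemma (the paper's \cref{finite_factor:averaging}) through a dense family of bounded test functions with each member recurring infinitely often, and extract ergodicity of the union of \fsrs via the Cauchy characterization of relative ergodicity (\cref{char_of_ergodicity}); you also correctly note that a mere \emph{maximal} \fsr suffices in place of a saturated one, and your cost-bound argument is the same as the paper's appeal to \labelcref{item:iterated_edge_sliding_preserves:cost} together with Levitt's lemma.

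There is one substantive slip in your sketch of the finite factor lemma's richness argument: you propose to drive the $f$-average into the target window by ``alternately adjoining such points to a growing $G$-connected subset of the [$E_0$-]class.'' An $F$-class assembled entirely inside a single $E_0$-class is contained in $E_0$, which destroys the transversality $F \trans E_0$ that the decomposition needs, and is in fact incompatible with your own accounting of one fresh edge from $G \setminus E_0$ per merge. The central sets the paper forms instead \emph{straddle} several $E_0$-classes, one vertex per class, joined by edges of $G \setminus E_0$ slid along $E_0$ --- the introduction's phrase ``within the $G$-neighborhood of the $G_0$-connected component'' refers to the neighborhood, not the component itself. Ergodicity of $E_0$ is used to place, in a.e.\ $E_0$-class, infinitely many leftover points of a uniform rational ``type'' on each side of the middle third; convexity then combines one such point from each of finitely many $(E_0 \cup G')$-connected $E_0$-classes to land centrally, and the contradiction to maximality runs through a claim that $[E' : E_0] < \w$ (with an extra step via \cref{hyperfinite_factor} when $G$ is not acyclic). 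A lesser imprecision: the finite factor lemma concludes only that the essential oscillation of the $F$-class-averages is at most $\frac{2}{3}$ that of $f$; the nonsingleton classes are central, but the singletons are merely confined to one common side of the middle third rather than inside it.
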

\begin{proof}
	Identical to that of \cref{decomp:aper:eq_rel} once \cref{hyperfinite_factor} is replaced with \cref{hyperfinite_factor:ergodic} (proved below). The bound on $N$ is due to \labelcref{item:iterated_edge_sliding_preserves:cost} and the fact that aperiodic hyperfinite equivalence relations have cost $\mu(X) = 1$ by Levitt's lemma (see \cite{Levitt} or \cite{Kechris-Miller}*{Lemma 22.1}).
\end{proof}

\cref{decomp:erg_over_base} combined with \cref{ergodic_hyp_slid-subgraph} gives the main result of the current paper.

\begin{theorem}[Ergodic hyperfinite decomposition]\label{decomp:erg}
	For any countable Borel measure-preserving ergodic equivalence relation $E$ on a standard probability space $(X,\mu)$, any Borel graphing $G$ of $E$ admits a Borel well-iterated edge slide $\tG$ for which there is $N \in \N \cup \set{\w}$ with $N \le C_\mu(G)$ and an increasingly transverse decomposition 
	\[
	E = \bigtransvee_{n = 0}^N E_n \text{ a.e.}
	\] 
	into Borel equivalence relations supergraphed by $\tG$, where, for each $0 \le n < N$, $E_n$ is ergodic hyperfinite and, if $N < \w$, $E_N$ is finite.
\end{theorem}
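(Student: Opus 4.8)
The plan is to derive this theorem exactly as advertised above, by combining \cref{ergodic_hyp_slid-subgraph} with \cref{decomp:erg_over_base}; beyond these two inputs only some bookkeeping about edge slidings is needed.

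First I would apply \cref{ergodic_hyp_slid-subgraph} to $G$: since $E_G = E$ is measure-preserving and ergodic, $G$ is a locally countable p.m.p. ergodic Borel graph, so there is a well-iterated edge slide $\tG_0$ of $G$ --- which is still a graphing of $E$, as well-iterated edge slidings are connectivity-preserving (\cref{well-iterated_properties}) --- together with an ergodic hyperfinite Borel subgraph $H \subseteq \tG_0$. Put $E_0 \defeq E_H$. Then $E_0 \subseteq E$ are both countable Borel measure-preserving ergodic equivalence relations, and $\tG_0$ supergraphs $E_0$ because $\tG_0 \supseteq H$ and $H$ is a graphing of $E_0$.

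Next I would apply \cref{decomp:erg_over_base} to $E_0 \subseteq E$ and the graphing $\tG_0$. This yields an $E_0$-based well-iterated edge slide $\tG$ of $\tG_0$, a number $N \in \N \cup \set{\w}$ with $N \le C_\mu(\tG_0)$, and an increasingly transverse decomposition $E = \bigtransvee_{n = 0}^N E_n$ a.e. into Borel equivalence relations supergraphed by $\tG$, where $E_n$ is ergodic hyperfinite for $1 \le n < N$ and $E_N$ is finite when $N < \w$. The extra point is that here the base piece $E_0 = E_H$ is itself ergodic hyperfinite, so $E_n$ is ergodic hyperfinite for every $0 \le n < N$ --- which is precisely what \cref{decomp:erg} demands. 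Since edge slidings do not increase cost, $\tG_0$ being an iterated edge slide of $G$ (with $E_G = E$ measure-preserving) gives $C_\mu(\tG_0) \le C_\mu(G)$ by \cref{iterated_edge_sliding_preserves}, whence $N \le C_\mu(G)$.

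The only step requiring genuine care --- and hence the main, if modest, obstacle --- is to verify that the composite of the two transformations above is itself a well-iterated edge sliding of the original $G$. The subtlety is that the $E_0$-based sliding produced by \cref{decomp:erg_over_base} may use $E_0$-edges lying outside $G$ in its railways. I would handle this by first invoking \cref{replace_base_any_spanning_subgraph} to downgrade it to an $H$-based sliding of $\tG_0$ (legitimate since $H$ is a spanning subgraph of $E_0$), so that its railways now lie inside $\tG_0$ itself; then the two witnessing $G$-conservative sequences of edge slidings --- the second regarded as acting on the image $\tG_0$ of the first --- can be concatenated into a single $G$-conservative sequence, after a Borel selection of preimages along the first sliding re-indexes the second sequence's subgraphs as pairwise disjoint Borel subgraphs of $G$. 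Closure of well-iterated edge slidings under conservative compositions (\cref{well-iterated_properties}, in the spirit of \cref{connecting_unions_using_images}) then completes the proof.
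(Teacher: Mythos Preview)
Your proposal is correct and follows exactly the approach the paper intends: the paper explicitly says that \cref{decomp:erg_over_base} combined with \cref{ergodic_hyp_slid-subgraph} yields \cref{decomp:erg}, and gives no further details. In fact, you supply more justification than the paper does for why the composite of the two well-iterated edge slidings is again a well-iterated edge sliding of $G$ (the invocation of \cref{replace_base_any_spanning_subgraph} and the concatenation argument), a point the paper leaves entirely implicit.
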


As before, because a Borel well-iterated edge slide of a treeing is again a treeing by \labelcref{item:iterated_edge_sliding_preserves:acyclicity,item:well-iterated_properties:connectivity_preserving}, we obtain a significant strengthening of Hjorth's lemma (\cref{Gregs_lemma}), where each occurrence of ``aperiodic'' is replaced with ``ergodic''.

\begin{cor}[Action with ergodic generators]\label{ergodic_Gregs_lemma}
	Let $E$ be a countable Borel ergodic measure-preserving equivalence relation on a standard probability space $(X,\mu)$ and let $N \in \N \cup \set{\w}, 0 \le \de < 1$. The following are equivalent:
	\begin{enumerate}[(1)]
		\item \label{item:ergodic_Gregs_lemma:treeable} $E$ is treeable a.e. and $c_\mu(E) = N + \de$.
		
		\item \label{item:ergodic_Gregs_lemma:free_dec} $E$ admits a free decomposition \cite{Kechris-Miller}*{Section 27}
		$
		E = \bigfreeprod_{n = 0}^N E_n \text{ a.e.},
		$
		where, for each $1 \le n < N+1$, $E_n$ is ergodic hyperfinite, and $E_0$ is finite with $c_\mu(E_0) = \de$ if $N < \w$ and $c_\mu(E_0) = 0$ if $N = \w$.
		
		\item \label{item:ergodic_Gregs_lemma:free_group} There is an $E$-generating a.e. free system $\set{\ga_n}_{n = 0}^N$, where, for each $1 \le n < N+1$, $\ga_n \in [E]$ is ergodic, and $\ga_0 \in \llbracket E \rrbracket$ with $\mu(\dom(\ga)) = \de$ if $N < \w$ and $\mu(\dom(\ga)) = 0$ if $N = \w$.
	\end{enumerate}
\end{cor}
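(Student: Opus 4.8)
The plan is to follow the proof of \cref{Gregs_lemma} almost verbatim, substituting \cref{decomp:erg} for \cref{decomp:aper:graphs} and \emph{ergodic} for \emph{aperiodic} throughout, the only genuinely new ingredient being Dye's theorem, needed to turn an ergodic hyperfinite factor into an ergodic transformation. As there, only \labelcref{item:ergodic_Gregs_lemma:treeable}$\imp$\labelcref{item:ergodic_Gregs_lemma:free_dec} requires real work; the implications \labelcref{item:ergodic_Gregs_lemma:free_dec}$\imp$\labelcref{item:ergodic_Gregs_lemma:free_group} and \labelcref{item:ergodic_Gregs_lemma:free_group}$\imp$\labelcref{item:ergodic_Gregs_lemma:treeable} are cosmetic variants of those in \cref{Gregs_lemma}.

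For \labelcref{item:ergodic_Gregs_lemma:treeable}$\imp$\labelcref{item:ergodic_Gregs_lemma:free_dec}, I would fix an a.e. treeing $G$ of $E$; by Gaboriau's fundamental theorem \cite{Gaboriau:mercuriale} it achieves the cost, so $C_\mu(G) = c_\mu(E) = N + \de$. Applying \cref{decomp:erg} to $G$ yields a Borel well-iterated edge slide $\tG$ and an increasingly transverse decomposition $E = \bigtransvee_{n = 0}^M E_n$ a.e. into Borel equivalence relations supergraphed by $\tG$, with $M \le C_\mu(G)$, each $E_n$ ergodic hyperfinite for $0 \le n < M$, and $E_M$ finite if $M < \w$. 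Since $\tG$ is a well-iterated edge slide of the treeing $G$, it is again a treeing by \labelcref{item:iterated_edge_sliding_preserves:acyclicity} and \labelcref{item:well-iterated_properties:connectivity_preserving}; being acyclic while the $E_n$ are increasingly transverse, the subgraphs $T_n \defeq \tG \cap E_n$ are pairwise disjoint, each a treeing of $E_n$, and their union is a spanning subgraph of the tree $\tG$, hence equals $\tG$. Thus $\tG = \bigdisjU_{n = 0}^M T_n$ realizes a free decomposition $E = \bigfreeprod_{n = 0}^M E_n$ \cite{Kechris-Miller}*{Section 27}.

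Next comes the cost bookkeeping. As a treeing achieves cost, $c_\mu(E) = C_\mu(\tG) = \sum_{n=0}^M c_\mu(E_n)$; each $E_n$ with $n < M$ is ergodic, hence aperiodic, hence of cost $1$ by Levitt's lemma \cite{Kechris-Miller}*{Lemma 22.1}, while the finite $E_M$ has cost in $[0,1)$, so $c_\mu(E) = M + c_\mu(E_M)$, the last term being absent when $M = \w$. Comparing with $c_\mu(E) = N + \de$ forces $M = N$ and $c_\mu(E_M) = \de$ when $N < \w$, and $M = \w$ when $N = \w$. Reindexing so that the finite factor $E_M$ becomes the new $E_0$ and the ergodic hyperfinite factors become $E_1, \dots, E_N$ — and, in the case $N = \w$, inserting $\Id_X$ as a cost-$0$ finite $E_0$ — yields \labelcref{item:ergodic_Gregs_lemma:free_dec}.

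For \labelcref{item:ergodic_Gregs_lemma:free_dec}$\imp$\labelcref{item:ergodic_Gregs_lemma:free_group}, each ergodic hyperfinite $E_n$ with $1 \le n < N+1$ is, by Dye's theorem, the orbit equivalence relation of an ergodic $\ga_n \in [E]$, and the finite $E_0$ is graphed by a free partial automorphism $\ga_0 \in \llbracket E \rrbracket$ with $\mu(\dom(\ga_0)) = c_\mu(E_0)$, i.e. $\de$ (or $0$); freeness of $\set{\ga_n}_{n=0}^N$ is the reading of the free decomposition through the acyclic graphings $\set{(x, \ga_n x)}$. Finally, for \labelcref{item:ergodic_Gregs_lemma:free_group}$\imp$\labelcref{item:ergodic_Gregs_lemma:treeable}, put $E_n \defeq E_{\ga_n}$: each $E_{\ga_n}$ is treeable, being graphed by the acyclic $\set{(x, \ga_n x)}$, so $E = \bigfreeprod_{n=0}^N E_{\ga_n}$ is treeable, and additivity of cost over free products gives $c_\mu(E) = \sum_{n=0}^N c_\mu(E_{\ga_n}) = N + \de$. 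The step I expect to require the most care — and the only place this genuinely improves on \cref{Gregs_lemma} — is securing \emph{ergodic} rather than merely aperiodic generators $\ga_n$: this is exactly where \cref{decomp:erg} (and hence, ultimately, \cref{ergodic_hyp_slid-subgraph}) replaces \cref{decomp:aper:graphs}, together with the classical fact that all ergodic hyperfinite p.m.p. equivalence relations are isomorphic, hence generated by an ergodic transformation such as an odometer.
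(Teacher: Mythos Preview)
Your argument is correct and essentially matches the paper's approach: the paper does not write out a separate proof of this corollary at all, merely noting that it follows from \cref{decomp:erg} exactly as \cref{Gregs_lemma} followed from \cref{decomp:aper:graphs}, with ``ergodic'' replacing ``aperiodic'' throughout. Your write-up carries this out in detail and the cost bookkeeping and the verification that $\tG = \bigsqcup T_n$ with $T_n = \tG \cap E_n$ realizes a free decomposition are accurate.

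The one point of divergence is \labelcref{item:ergodic_Gregs_lemma:free_dec}$\imp$\labelcref{item:ergodic_Gregs_lemma:free_group}: you invoke Dye's theorem to produce an ergodic $\ga_n$ generating each ergodic hyperfinite $E_n$, whereas the paper's template (inherited from the proof of \cref{Gregs_lemma}) would apply \cref{hyperfinite_subgraphs_admit_sublines} to obtain a spanning line forest with a proper Borel directing. Both routes work: in the paper's route, the resulting $\ga_n$ has $E_{\ga_n} = E_n$ a.e. (the set of ray-endpoints is smooth, hence null), and ergodicity of $\ga_n$ is then automatic from ergodicity of $E_n$. Your appeal to Dye is a clean classical shortcut; the paper's approach is more constructive and stays within the edge-sliding framework developed in the paper, but neither adds or loses anything for the purposes of this corollary.
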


\subsection{Constructing an ergodic hyperfinite factor}

We now prove an analogue of \cref{finite_factor} for building ergodic factors and, for the rest of this subsection, we let $(X, \mu)$ denote a standard measure space with finite nonzero $\mu$.

\begin{notation}
	For a Borel function $f : X \to \R$, the following are the \emph{essential infimum}, \emph{supremum}, and \emph{oscillation} of $f$ with respect to $\mu$:
	\begin{align*}
		\infmu(f) &\defeq \sup \set{r \in \R : \mu(\set{x \in X : f(x) < r}) = 0}
		\\
		\supmu(f) &\defeq \inf \set{r \in \R : \mu(\set{x \in X : f(x) > r}) = 0}
		\\
		\oscmu(f) &\defeq \supmu(f) - \infmu(f),
	\end{align*}
	where we use the convention that $\sup \0 \defeq -\w$ and $\inf \0 \defeq +\w$.
\end{notation}

\begin{notation}\label{notation:rho-size_mean-of-f}
	Let $f : X \to \R, w : X \to [0, \w)$ be Borel functions. For a set $U \in \FinX$, put
	\[
	\meanwf{U}
	\defeq 
	\begin{cases}
	\frac{\sum_{x \in U} f(x) w(x)}{|U|_w} & \text{if $|U|_w \ne 0$}
	\\
	0 & \text{otherwise},
	\end{cases}
	\]
	Furthermore, for a finite Borel equivalence relation on $X$, define $\meanwf{F} : X \to \R$ by $\meanwf{F}[x] \defeq \meanwf{[x]_F}$. We omit $w$ from the superscripts if $w \equiv 1$.
\end{notation}

Below, let $E$ denote a measure-preserving countable Borel equivalence relation on $(X,\mu)$, $G$ a Borel graphing of $E$, $E_0 \subseteq E$ a Borel ergodic subequivalence relation.

\begin{lemma}[Approximately ergodic finite factor]\label{finite_factor:averaging}
	For any $\N^+$-valued Borel (weight) function $w$ on $X$ and any $\Q$-valued bounded Borel function $f$ on $X$, there is a finite Borel subequivalence relation $F \subseteq E$ transverse to $E_0$ such that:
	
	\begin{enumref}{i}{finite_factor:averaging}
		\item \label{item:finite_factor:averaging:edge_sliding} \emph{(built from $G$)} $F$ is graphed by a Borel $E_0$-based well-iterated edge sliding of $G$ into $F$,
		
		\item \label{item:finite_factor:averaging:Cauchy-or-fin_index} \emph{(exhaustive or decreases oscillation)} either $E = E_0 \transvee F$ modulo $\mu$-null, or 
		\[
		\oscmu\big(\meanwf{F}\big) \le \frac{2}{3} \oscmu(f).
		\]
	\end{enumref}
\end{lemma}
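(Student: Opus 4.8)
The plan is to rerun the proof of \cref{finite_factor} with a single extra condition on the good sets --- that the $w$-weighted $f$-average of each new $F$-class land in the middle third of $[\infmu(f),\supmu(f)]$ --- the point being that the ergodicity of $E_0$ supplies, within the $G$-neighborhood of each $E_0$-component, a stock of $f$-values arbitrarily close to $\infmu(f)$ and to $\supmu(f)$ with which to balance such averages. Concretely, I would set $a \defeq \infmu(f)$, $b \defeq \supmu(f)$; if $a = b$ the claim is trivial, since $\meanwf{F}$ is then $\mu$-a.e.\ constant for any finite $F \subseteq E$, so assume $a < b$, put $\delta \defeq \tfrac13(b-a)$, and let $I \defeq (a+\delta,\, b-\delta)$ be the open middle third, an interval of length $\delta$ whose complement in $[a,b]$ has two components of length $\delta$ each. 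The use of ergodicity is packaged as: for every rational interval $J \subseteq (a,b)$ with $\mu(f^{-1}(J)) > 0$ and $\mu$-a.e.\ $x$, every $E_0$-class contained in $[x]_E$ meets $f^{-1}(J)$ --- indeed $f^{-1}(J)$ has positive measure, so $[f^{-1}(J)]_{E_0}$ is $\mu$-conull, and countability of $E$ lets one null set handle all $E_0$-classes in $[x]_E$ at once --- so $\mu$-a.e.\ $E$-class carries, in each of its $E_0$-classes, $f$-values arbitrarily close to $a$ and to $b$.

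For the construction I would work on $\XE$ exactly as in \cref{finite_factor}: take $\Phi$ to be the family of all finite $\extE$-related $U \subseteq \XE$ satisfying the conditions there --- $\Vrts(U)$ transverse to $E_0 \vee F$, the new edges $\Edges(U)\setminus\dom(\extF)$ number $|\Classes[U]{F}| - 1$ and can be slid along $E_0 \vee F$ to connect $\Vrts(U)$ --- with the added requirement $\meanwf{\Vrts(U)} \in I$. Because $E_0$ is ergodic, every such set can be taken inside the $G$-neighborhood of a single $E_0$-component, so no growing of existing classes is ever needed and it suffices to take a $\Phi$-maximal Borel \fsr $\extF$ via \cref{existence-of-maximal-fsr}, rather than a saturated one. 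As in \cref{finite_factor}, the aperiodic part of $\extF$ projects to an $E$-compressible, hence $\mu$-null, set, which I discard; then $\extF$ is finite and I set $F \defeq (\extF \cap X^2) \cup \Id_X$. Conclusion \labelcref{item:finite_factor:averaging:edge_sliding} follows just as in \cref{finite_factor}, sliding the allocated edges over $E_0$ into a graphing of $F$ via \cref{connecting_unions_with_disjoint}, with $F \trans E_0$ because the moved edges avoided $E_0$; and on $\dom(F)$ the function $\meanwf{F}$ takes values in $I$ by construction, while off $\dom(F)$ it equals $f$ (as $w$ is $\N^+$-valued).

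For \labelcref{item:finite_factor:averaging:Cauchy-or-fin_index} I would argue by a dichotomy driven by ergodicity of $E$ (which follows from that of $E_0$). Let $N(x) \in \N^+ \cup \set{\w}$ be the number of $E_0$-classes inside $[x]_E$; it is $E$-invariant, hence $\mu$-a.e.\ equal to a constant $N^*$. If $N^* < \w$, each $E$-class is a union of finitely many $E_0$-classes, and --- including into $\Phi$ the complete connectors of such classes (whose averages we may still place in $I$, or which we simply admit) --- maximality forces every $E$-class to be met, hence fully connected, by $E_0 \vee F$, just as in \cref{finite_factor}; so $E = E_0 \transvee F$ modulo $\mu$-null, the first alternative. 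If $N^* = \w$, then $\mu$-a.e.\ $E$-class has infinitely many $E_0$-classes, all carrying (by the ergodicity fact) $f$-values near $a$ and near $b$. Given $x \notin \dom(F)$ with $f(x) \le a + \delta$, one then slides edges of $G$ from $[x]_{E_0}$, along $E_0 \vee F$, picking up as much near-$b$ $f$-mass as desired from pairwise distinct reachable $E_0$-classes, to produce a set in $\Phi$ containing $x$ and disjoint from $\dom(\extF)$ --- contradicting maximality --- unless every uncovered $x$ has $f(x)$ on the other side of $I$. Hence $\mu$-a.e.\ the values of $\meanwf{F}$ lie in $I$ on $\dom(F)$ and in a single component of $[a,b]\setminus I$ off $\dom(F)$, so all of them lie in an interval of length $b - a - \delta = 2\delta = \tfrac{2}{3}\oscmu(f)$, giving the second alternative.

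The hard part will be the dichotomy of the last paragraph: isolating, via the ergodicities of $E$ and $E_0$, exactly which configurations force $E$ to be exhausted over $E_0$ and which leave enough slack to place all the averages in the middle third, and --- in the balancing case --- verifying that maximality genuinely traps the uncovered points on one side of $I$ while respecting the edge-allocation bookkeeping inherited from \cref{finite_factor} (working on $\XE$, and, if a bare maximal \fsr proves too blunt for that bookkeeping, replacing it by the corresponding saturation game as in that proof). The averaging condition itself is orthogonal to these technicalities; the only genuinely new input is the ergodicity fact about $f$-values near $a$ and $b$, which is what makes forming the balanced classes possible.
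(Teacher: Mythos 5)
Your setup matches the paper's --- the family $\Phi$ on $\XE$ with the centrality requirement, a $\Phi$-maximal Borel \fsr --- but your dichotomy on $N^*$, the number of $E_0$-classes per $E$-class, is the wrong invariant, and the case you defer to the ``hard part'' is exactly where the argument fails. After the maximal \fsr $\extF_0$ is chosen, what governs whether you can still contradict maximality is not the ambient supply of $E_0$-classes but the \emph{remaining} graph $G' \defeq G \setminus (E_0 \cup \dom(\extF_0))$ and the equivalence relation $E'$ generated by $E_0 \cup G'$. Your $N^* = \omega$ argument tacitly assumes that from an uncovered low-$f$ vertex $x$ you can still reach, via $E_0 \cup G'$, many distinct $E_0$-classes carrying near-$\supmu(f)$ mass; this can fail when $[E' : E_0] < \omega$, which is perfectly compatible with $N^* = \omega$. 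The paper's proof turns on precisely this: it shows that \emph{if} the uncovered set $Y \defeq X \setminus \dom(F_0)$ has both positive and negative points, then maximality of $\extF_0$ forces $[E' : E_0] < \omega$ (the ergodicity of $E_0$ enters only once here, to extract fixed types $(q_\pm, w_\pm)$ present infinitely often in a.e.\ $E_0$-class, after which a convexity computation produces a central set witnessing non-maximality), and then --- this is the step you have no analogue of --- it applies \cref{hyperfinite_factor} to $E_0 \rest{Y}$, $E \rest{Y}$, and $G'$ to produce an $F'$ that, being transverse to $E_0$ and contained in $E'$, must be finite, so $E \rest{Y} = E_0 \rest{Y} \transvee F'$ and $F \defeq F_0 \cup F'$ realizes the first alternative. (When $G$ is acyclic and supergraphs $E_0$, the finite-index claim directly contradicts the earlier observation that a.e.\ $E_0$-class meets infinitely many $G'$-edges, so mixed sign is impossible and \cref{hyperfinite_factor} is not needed; in general it is.) Your $N^* < \omega$ branch, throwing ``complete connectors'' into $\Phi$, is both unneeded --- the paper never splits on $N^*$ --- and unproved: maximality alone does not yield $E = E_0 \transvee F$ unless the remaining graph is actually exhausted.
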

\begin{proof}
	Let $I$ be the middle third closed subinterval of $[\infmu(f), \supmu(f)]$. Call a set $V \in \FinX$ \emph{negative} (resp. \emph{central}, \emph{positive}) if $\meanwf{V}$ is in $I_-$ (resp. $I$, $I_+$); moreover, say that it is \emph{of type $(q,w)$} if $\meanwf{V} = q$ and $|V|_w = w$. For a set $Y \subseteq X$, we say that \emph{points of $Y$ have the same sign} if a.e. point $x \in Y$ is non-negative or a.e. point $x \in Y$ is non-positive.
	
	Recalling that $\extE$ is the natural lift of $E$ to $X \disjU E$, let $\Phi$ be the collection of all $U \in \Finw{X \disjU (G \setminus E_0)}_{\extE}$ such that
	\begin{enumenv}
		\item \label{item:finite_factor:averaging:pf:central} $\Vrts(U)$ is central,
		
		\item $|\Edges(U)| = |\Vrts(U)| - 1$,
		
		\item \label{item:finite_factor:averaging:pf:connects} there is an edge sliding $\si$ along $E_0$ that moves $\Edges(U)$ into $\Vrts(U)^2$ so that $\si\big(\Edges(U)\big)$ connects $\Vrts(U)$.
	\end{enumenv}
	Let $\extF_0 \subseteq \extE$ be a $\Phi$-maximal Borel \fsr given by \cref{existence-of-maximal-fsr} and put $F_0 \defeq \extF_0 \cap X^2$. For each $U \in \Classes{\extF_0}$, let $\si_U : \Edges(U) \to [X]^2$ denote the restriction of an edge sliding along $E_0$ that witnesses condition \labelcref{item:finite_factor:averaging:pf:connects} for $U$. The finiteness of the choices of $s_U$ guarantees that a uniformly Borel choice is possible, so putting these $s_U$ together defines a Borel $E_0$-based edge sliding of $G$ that graphs $F_0$.
	
	If, modulo $\mu$-null, $G' \defeq G \setminus \big(E_0 \cup \dom(\extF_0)\big) = \0$, then $E_0 \transvee F_0 = E$ and we are done, so suppose otherwise. The $E_0$-classes incident to some, but only finitely many, edges from $G'$ form an $E_0$-smooth Borel set, which is hence $\mu$-null, so we may assume that each $E_0$-class is incident to either none or infinitely many edges from $G'$. By ergodicity and our assumption, the latter case must hold for a.e. $E_0$-class.
	
	If points of $Y \defeq X \setminus \dom(F_0)$ have the same sign modulo $\mu$-null, then clearly $\oscmu\big(\meanwf{F_0}\big) \le \frac{2}{3} |I|$ a.e. and we are done, so suppose otherwise.
	
	\begin{claim+}\label{claim:inf-many_types}
		There are weights $w_-, w_+ \in \N^+$ and rational numbers $q_- \in I_-, q_+ \in I_+$, such that a.e. $E_0$-class contains infinitely many points from $Y$ of each of the types $(q_-, w_-)$ and $(q_+, w_+)$.
	\end{claim+}
	\begin{pf}
		By ergodicity, the union of $E_0$-classes that only have positive (resp. negative) points lying in $Y$ is either null or conull, so it must be null by the assumption on $Y$. Next, the $E_0$-classes that contain some but only finitely many positive (resp. negative) points lying in $Y$ form an $E_0$-smooth Borel set, which is therefore null as well. Thus, we may assume that each $E_0$-class contains infinitely many points of $Y$ of each sign: positive and negative. Moreover, for a point $x \in Y$, there are only countably many possible values for $f(x)$ and $w(x)$, so the ergodicity of $E_0$ (this is the only place where we really use ergodicity) yields $w_-, w_+$ and $q_-, q_+$ as desired.
	\end{pf}
	
	Let $E'$ be the equivalence relation induced by $E_0 \cup G'$.
	\begin{claim+}
		$[E' : E_0] < \w$.
	\end{claim+}
	\begin{pf}
		Otherwise, we contradict the $\Phi$-maximality of $\extF_0$ as follows. Fix $k_-, k_+ \in \N^+$ such that 
		\[
		\frac{k_- w_-}{k_- w_- + k_+ w_+} q_- + \frac{k_+ w_+}{k_- w_- + k_+ w_+} q_+ \in I_0
		\]
		and let $C_1^-, \hdots, C_{k_-}^-, C_1^+, \hdots, C_{k_+}^+$ be distinct $E_0$-classes, whose union is $(E_0 \cup G')$-connected. Taking one point of type $(q_-, w_-)$ from each set $C_i^- \cap Y$ and one point of type $(q_+, w_+)$ from each set $C_j^+ \cap Y$, it is clear how to form a set $U \in \Finw{Y \disjU G'}$ with $U \in \Phi$.
	\end{pf}
	
	If $G$ were acyclic and supergraphed $E_0$, we would be done because the assumption that a.e. $E_0$-class is incident to infinitely many $G'$-edges contradicts the last claim, so it must be that the points of $Y$ have the same sign.
	
	For general $G$ though, we need the following extra step. Using \cref{edge_sliding_into_complete-sect}, we may assume without loss of generality that $G' \subseteq Y^2$, so we can apply \cref{hyperfinite_factor} to $E_0 \rest{Y}$, $E \rest{Y}$, and $G'$, and get a hyperfinite Borel subequivalence relation $F' \subseteq E \rest{Y}$ on $Y$ satisfying the conclusion of \cref{hyperfinite_factor}. By the last claim, this $F'$ must be finite, so $E \rest{Y} = E_0 \rest{Y} \transvee F'$ by \labelcref{item:hyperfinite_factor:exhaust-or-aper} modulo $E$-compressible, and hence modulo $\mu$-null. Therefore, $F \defeq F_0 \cup F'$ is a finite equivalence relation satisfying $E = E_0 \transvee F$. $F$ also satisfies \labelcref{item:finite_factor:averaging:edge_sliding} by composing the three well-iterated edge slidings mentioned above as it is clear that they form an $E_0$-based $G$-conservative sequence.
\end{proof}

\begin{lemma}[Ergodic hyperfinite factor]\label{hyperfinite_factor:ergodic}
	There is a hyperfinite Borel subequivalence relation $F \subseteq E$ transverse to $E_0$ such that 
	\begin{enumref}{i}{hyperfinite_factor:ergodic}
		\item \label{item:hyperfinite_factor:ergodic:edge_sliding} \emph{(built from $G$)} $F$ is graphed by a Borel $E_0$-based well-iterated edge sliding of $G$ into $F$,
		
		\item \label{item:hyperfinite_factor:ergodic:F_ergodic_if_possible} \emph{(exhaustive or ergodic)} either $F$ is finite and $E = E_0 \transvee F$ modulo $\mu$-null, or $F$ is ergodic.
	\end{enumref}
\end{lemma}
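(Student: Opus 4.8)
The plan is to follow the proof of \cref{hyperfinite_factor} almost verbatim, using \cref{finite_factor:averaging} in place of \cref{finite_factor}, but running the recursion against a countable family of test functions dense enough that equilibrating all of them forces ergodicity. Fix once and for all a countable family $\set{f_j}_{j \in \N}$ of $\Q$-valued bounded Borel functions on $X$ dense in $L^1(X,\mu)$ (possible, $L^1(X,\mu)$ being separable), together with a surjection $\N \to \N$, $n \mapsto j_n$, attaining every value infinitely often. For a finite Borel equivalence relation $F'$ on $X$ and $j \in \N$, let $\overline{f_j}^{\,F'} \colon X \to \R$ be the function constant on $F'$-classes taking the value $\frac{1}{|[x]_{F'}|}\sum_{y \in [x]_{F'}} f_j(y)$ on $[x]_{F'}$, so that $\oscmu\big(\overline{f_j}^{\,F'}\big)$ measures the failure of $F'$ to equilibrate $f_j$.

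I would then recurse, starting from $F_0 \defeq \Id_X$ (for which $\overline{f_j}^{\,F_0} = f_j$). Given a finite Borel $F_n \trans E_0$ and the composition $\bsi_n$ of the edge slidings produced so far --- a Borel $E_0$-based well-iterated edge sliding of $G$ into $F_n$ graphing $F_n$ --- pass to the concrete quotient exactly as in \cref{hyperfinite_factor}: put $Y_n \defeq \Xmod{F_n}$, $\nu_n \defeq \mu \rest{Y_n}$, $\hat E \defeq \Emod{F_n}$ (a $\nu_n$-preserving countable Borel equivalence relation), $\hat G_n \defeq \Gmod[\bsi_n(G)]{F_n}$, and $\hat E_0 \defeq E_0/F_n$. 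Here $\bsi_n(G)$ connects every $F_n$-class (since $F_n \trans E_0$ and $\bsi_n$ graphs $F_n$ together with $E_0$), so $\hat G_n$ is a graphing of $\hat E$; and $\hat E_0$ is $\nu_n$-ergodic, because an $\hat E_0$-invariant $A \subseteq Y_n$ satisfies $Y_n \setminus A \subseteq X \setminus [A]_{E_0}$, whence $\mu(A) \in \set{0, \mu(Y_n)}$ by ergodicity of $E_0$. Apply \cref{finite_factor:averaging} in the quotient to $\hat E_0 \subseteq \hat E$ and $\hat G_n$, with the $\N^+$-valued Borel weight $w_n(u) \defeq |[u]_{F_n}|$ and the $\Q$-valued bounded Borel test function $g_n \defeq \overline{f_{j_n}}^{\,F_n}\rest{Y_n}$, to obtain a finite Borel $F' \subseteq \hat E$ transverse to $\hat E_0$ and a Borel $\hat E_0$-based well-iterated edge sliding $\si_n'$ of $\hat G_n$ into $F'$ graphing $F'$. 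Let $F_{n+1}$ be the pullback of $F'$ to $X$ (finite, $F_{n+1} \supseteq F_n$, $F_{n+1} \trans E_0$), and let $\hat\si_n$ be the natural lift of $\si_n'$; by \cref{lift_of_sliding} and \cref{replace_base_any_spanning_subgraph}, $\hat\si_n$ is an $(E_0 \cup F_n)$-based well-iterated edge sliding of $\bsi_n(G)$ into $F_{n+1}$ graphing $F_{n+1}$ over $F_n$.

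By \cref{finite_factor:averaging}\labelcref{item:finite_factor:averaging:Cauchy-or-fin_index}, one of two things occurs at stage $n$. If $\hat E = \hat E_0 \transvee F'$ modulo $\nu_n$-null, then $E = E_0 \transvee F_{n+1}$ modulo $\mu$-null with $F_{n+1}$ finite; I halt and output $F \defeq F_{n+1}$, which satisfies the first alternative of the lemma, and \labelcref{item:hyperfinite_factor:ergodic:edge_sliding} holds for the composition of $\hat\si_0, \dots, \hat\si_n$ by \cref{connecting_unions_using_images}. Otherwise $\oscmu\big(\overline{f_{j_n}}^{\,F_{n+1}}\big) \le \tfrac{2}{3}\,\oscmu\big(\overline{f_{j_n}}^{\,F_n}\big)$: indeed, $\overline{f_{j_n}}^{\,F_{n+1}}$ is, after precomposition with the selector $s_{F_n}$, the $w_n$-weighted $F'$-average of $g_n$, and as $w_n \ge 1$ the selector $s_{F_n}$ carries $\nu_n$-null sets to $\mu$-null sets and conversely, so that both oscillations in \cref{finite_factor:averaging}\labelcref{item:finite_factor:averaging:Cauchy-or-fin_index} transfer unchanged to oscillations over $(X,\mu)$. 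If the recursion never halts, set $F \defeq \bigcup_n F_n$: it is hyperfinite, transverse to $E_0$, and by \cref{connecting_unions_using_images} the composition $\si$ of $(\hat\si_n)_n$ is a Borel $E_0$-based well-iterated edge sliding of $G$ into $F$ graphing $F$, which is \labelcref{item:hyperfinite_factor:ergodic:edge_sliding}.

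It remains to see that $F$ is ergodic. First, for each fixed $j$ the sequence $n \mapsto \oscmu\big(\overline{f_j}^{\,F_n}\big)$ is non-increasing: since $[x]_{F_{n+1}}$ is a disjoint union of $F_n$-classes, $\overline{f_j}^{\,F_{n+1}}[x]$ is a convex combination of values of $\overline{f_j}^{\,F_n}$; and the $\mu$-null set where $\overline{f_j}^{\,F_n}$ leaves $[\infmu(\overline{f_j}^{\,F_n}), \supmu(\overline{f_j}^{\,F_n})]$ has $\mu$-null $F_{n+1}$-saturation (as $\mu$ is $F_{n+1}$-invariant), forcing $\infmu\big(\overline{f_j}^{\,F_{n+1}}\big) \ge \infmu\big(\overline{f_j}^{\,F_n}\big)$ and dually for $\supmu$. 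Combined with the factor-$\tfrac{2}{3}$ drop at every stage $n$ with $j_n = j$ --- of which there are infinitely many --- this yields $\oscmu\big(\overline{f_j}^{\,F_n}\big) \to 0$ for every $j$. Now let $A \subseteq X$ be any $F$-invariant Borel set; it is $F_n$-invariant for every $n$, so $\overline{\mathbf{1}_A}^{\,F_n} = \mathbf{1}_A$. Given $\e > 0$, pick $j$ with $\|f_j - \mathbf{1}_A\|_{L^1(\mu)} < \e$; since $F_n$-averaging is an $L^1(\mu)$-contraction preserving integrals ($\mu$ being $F_n$-invariant), $\|\overline{f_j}^{\,F_n} - \mathbf{1}_A\|_{L^1(\mu)} < \e$ for all $n$, while for $n$ large the a.e.\ constant function $\overline{f_j}^{\,F_n}$ is within $\e$ of $\frac{1}{\mu(X)}\int f_j\, d\mu$, which is within $\e$ of $\frac{\mu(A)}{\mu(X)}$. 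Letting $\e \to 0$ shows $\mathbf{1}_A$ is $\mu$-a.e.\ constant, i.e.\ $\mu(A) \in \set{0, \mu(X)}$, so $F$ is ergodic, giving \labelcref{item:hyperfinite_factor:ergodic:F_ergodic_if_possible}. The main obstacle is simply keeping the two pictures of ``average'' aligned --- ergodicity is tested by the \emph{unweighted} $F_n$-averages over $(X,\mu)$, whereas \cref{finite_factor:averaging} runs on the quotients $(Y_n, \nu_n)$ where averages must be weighted by class size (which is exactly why that lemma carries a weight parameter); the rest --- monotonicity of $\oscmu$ under refinement, sufficiency of an $L^1$-dense test family, and the quotient/edge-sliding bookkeeping --- is routine or identical to \cref{hyperfinite_factor}.
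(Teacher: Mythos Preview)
Your proposal is correct and follows essentially the same approach as the paper's own proof: fix a dense countable family of $\Q$-valued bounded test functions, enumerate so each appears infinitely often, and recursively apply \cref{finite_factor:averaging} on the concrete quotients $(\Xmod{F_n},\mu\rest{\Xmod{F_n}})$ with weight $\wmod{F_n}$ and test function $\mean{F_n}(f_{j_n})$, using \cref{connecting_unions_using_images} to assemble the edge slidings. The only noticeable difference is in the final step: the paper simply observes that $\oscmu(\meanf{F_n})\to 0$ forces the pointwise limit $\meanf{F}$ to be constant for each $f$ in the dense family and then invokes density to conclude ergodicity (implicitly leaning on the material of \cref{sec:hyp_asymp_means}), whereas you give a direct, self-contained argument via $L^1$-approximation of $\mathbf{1}_A$ for an $F$-invariant $A$---a minor stylistic variation, not a different route.
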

\begin{proof}
	Let $\DC$ be a countable dense family in $L^1(X,\mu)$ of $\Q$-valued bounded Borel functions and let $(f_n)_{n \in \N}$ be an enumeration of $\DC$, where each $f \in \DC$ appears infinitely many times.
	
	Starting with $F_0 = \Id_X$ and taking $w \equiv 1$, recursive applications of \cref{finite_factor:averaging} produce an increasing sequence $(F_n)_{n \in \N}$ of finite Borel equivalence relations and a sequence $(\si_n)_{n \in \N}$ of Borel edge-operators on $X$, where $F_{n+1} \trans_{F_n} E_0$ and $\si_n$ is an $E_0$-based well-iterated edge sliding that graphs $F_{n+1}$ over $F_{n-1}$. More precisely, supposing that $F_n$ and $\bsi_n$ are defined, apply \cref{finite_factor:averaging} to $(\Xmod{F_n}, \mu \rest{\Xmod{F_n}})$, $\Emod{F_n}$, $\Emod[(E_0)]{F_n}$, $\Gmod[\bsi_n(G)]{F_n}$, $\mean{F_n}(f_n)$, and $\wmod{F_n}$, and get $F_{n+1}'$ and $\si_n'$, respectively; take $F_{n+1} \defeq F_{n+1}' \vee F_n$ and let $\si_n$ be the natural lift of $\si_n'$. By \cref{connecting_unions_using_images}, the composition $\si$ of $(\si_n)_{n \ge 1}$ satisfies \labelcref{item:hyperfinite_factor:ergodic:edge_sliding}. 
	
	As for \labelcref{item:hyperfinite_factor:ergodic:F_ergodic_if_possible}, if there was $n \in \N$ such that
	$
	E = E_0 \vee F_n \text{ a.e.},
	$
	we would be done, so suppose otherwise. Then, for each $n \in \N$,
	\[
	\oscmu\big(\mean{F_{n+1}}(f_n)\big) \le \frac{2}{3} \oscmu\big(\mean{F_n}(f_n)\big).
	\]
	
	\begin{claim+}
		For each $f \in \DC$, $\lim_n \oscmu\big(\meanf{F_n}\big) = 0$.
	\end{claim+}
	\begin{pf}
		Indeed, there is an infinite subsequence $(f_{n_k})_k$ with each $f_{n_k}$ equal to $f$, so
		\[
		\oscmu\big(\meanf{F_{n_{k + 1}}}\big) 
		\le
		\oscmu\big(\meanf{F_{n_k + 1}}\big) 
		\le 
		\frac{2}{3} \oscmu\big(\meanf{F_{n_k}}\big)
		\le
		\hdots
		\le
		\left(\frac{2}{3}\right)^{k+1} \oscmu\big(\meanf{F_{n_0}}\big),
		\]
		and the conclusion follows by the fact that $\oscmu\big(\meanf{F_n}\big)$ is nonincreasing in $n$.
	\end{pf}
	
	This claim implies that for each $f \in \DC$, $\lim_n \meanf{F_n}$ exists a.e. and is a constant function. Because $\DC$ is dense, it follows that $F$ is ergodic.
\end{proof}


\section{Hyperfinite asymptotic means and ergodicity}\label{sec:hyp_asymp_means}

Throughout this section, we fix a standard measure space $(X,\mu)$ equipped with a Borel weight function $w : X \to [0, \w)$ and use \cref{notation:rho-size_mean-of-f}.

We also denote by $\mu_w$ the measure on $X$ defined by $d \mu_w = w d \mu$. Note that for a smooth $\mu$-preserving Borel equivalence relation $F$ on $X$, the quotient measure $(\mu_w) / F$ on $\Xmod{F}$ is equal to the measure $(\mu \rest{\Xmod{F}})_{(\wmod{F})}$, and we simply write $\mu_{\wmod{F}}$ for the latter.

\subsection{Finite means}

\begin{lemma}[Convexity of mean]\label{convexity_of_mean}
	Let $U,V \in \FinX$ be disjoint with $|U|_w, |V|_w > 0$.
	
	\begin{enumref}{a}{convexity_of_mean}
		\item \label{item:convexity_of_mean:disjU_of_means}
		$\meanwf{U \cup V} = \frac{|U|_w}{|U|_w + |V|_w}\meanwf{U} + \frac{|V|_w}{|U|_w + |V|_w}\meanwf{V}$. 
		
		\item \label{item:convexity_of_mean:increment_bound}
		$|\meanwf{U \cup V} - \meanwf{U}| 
		\le 
		2 \, \Linf{f} \frac{|V|_w}{|U|_w + |V|_w}
		\le 
		2 \, \Linf{f} \frac{|V|_w}{|U|_w}$.
	\end{enumref}
\end{lemma}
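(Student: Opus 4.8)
The plan is to establish part \labelcref{item:convexity_of_mean:disjU_of_means} by unwinding the definition of $\meanwf{\cdot}$ from \cref{notation:rho-size_mean-of-f}, and then to derive part \labelcref{item:convexity_of_mean:increment_bound} as a purely formal consequence of it.

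For \labelcref{item:convexity_of_mean:disjU_of_means}, the key observation is that disjointness of $U$ and $V$ gives the additivity $|U \cup V|_w = |U|_w + |V|_w$ together with $\sum_{x \in U \cup V} f(x) w(x) = \sum_{x \in U} f(x) w(x) + \sum_{x \in V} f(x) w(x)$. Since by hypothesis $|U|_w, |V|_w > 0$, all three of $|U|_w$, $|V|_w$, $|U \cup V|_w$ are nonzero, so each of the three means in question is given by the first (nontrivial) clause of its definition. Writing $\sum_{x \in U} f(x) w(x) = |U|_w \, \meanwf{U}$ and likewise for $V$, substituting into $\meanwf{U \cup V} = \frac{1}{|U|_w + |V|_w}\sum_{x \in U \cup V} f(x) w(x)$, and distributing the denominator yields exactly the asserted convex combination.

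For \labelcref{item:convexity_of_mean:increment_bound}, I would substitute the identity just proved and simplify: $\meanwf{U \cup V} - \meanwf{U} = \big(\tfrac{|U|_w}{|U|_w + |V|_w} - 1\big)\meanwf{U} + \tfrac{|V|_w}{|U|_w + |V|_w}\meanwf{V} = \tfrac{|V|_w}{|U|_w + |V|_w}\big(\meanwf{V} - \meanwf{U}\big)$. It then remains to bound $|\meanwf{V} - \meanwf{U}| \le |\meanwf{V}| + |\meanwf{U}| \le 2\Linf{f}$, which holds because each $w$-weighted mean of $f$ over a finite set is a convex combination of values of $f$ and hence has absolute value at most $\Linf{f}$ (using $|f(x)| \le \Linf{f}$ for every $x \in X$). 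The second inequality in \labelcref{item:convexity_of_mean:increment_bound} is immediate from $|U|_w + |V|_w \ge |U|_w > 0$.

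There is no serious obstacle here; the only point deserving a word of care is keeping track of the positivity hypotheses $|U|_w, |V|_w > 0$, which are precisely what guarantee that all three means are evaluated via the nontrivial branch of the definition, so that the algebraic manipulations above are legitimate.
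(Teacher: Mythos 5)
Your proof is correct and takes essentially the same route as the paper, which verifies part (a) directly from the definition and deduces part (b) from (a) by the triangle inequality; your factored form $\meanwf{U \cup V} - \meanwf{U} = \tfrac{|V|_w}{|U|_w + |V|_w}\bigl(\meanwf{V} - \meanwf{U}\bigr)$ together with the bound $|\meanwf{V} - \meanwf{U}| \le 2\Linf{f}$ is a clean way to spell that out.
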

\begin{proof}
	One verifies \labelcref{item:convexity_of_mean:disjU_of_means} directly, and \labelcref{item:convexity_of_mean:increment_bound} follows from \labelcref{item:convexity_of_mean:disjU_of_means} and the triangle inequality.
\end{proof}

\begin{prop}\label{fin_means}
	Let $F$ be a finite measure-preserving Borel equivalence relation on $(X,\mu)$ and let $f \in L^1(X,\mu_w)$.
	
	\begin{enumref}{a}{fin_means}
		\item \label{item:fin_means:equality} $\displaystyle\int_X f d\mu_w = \int_{\Xmod{F}} \meanwf{F} \, d \mu_{\wmod{F}} = \int_X \meanwf{F} \, d \mu_w$.
		
		\item \label{item:fin_means:Lone} $\Lonew{\meanwf{F}} \le \Lonew{f}$.
	\end{enumref}
\end{prop}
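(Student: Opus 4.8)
The plan is to reduce both parts to a single ``layer-cake'' identity for the $F$-invariant measure $\mu$: for every Borel $h \colon X \to [0, \infty]$,
\begin{equation*}
\int_X h \, d\mu = \int_{\Xmod{F}} \Big(\sum_{y \in [u]_F} h(y)\Big) \, d(\mu \rest{\Xmod{F}})(u). \tag{$\star$}
\end{equation*}
This is, up to taking $w \equiv 1$, the identity $(\mu_w)/F = \mu_{\wmod{F}}$ recorded in the preamble of this section; but I would also prove it directly. Using \cref{relative_enum_of_each_class} together with a fixed Borel linear order on $X$, partition $X = \bigdisjU_{i \in \N} \theta_i(D_i)$, where $D_i \defeq \set{u \in \Xmod{F} : |[u]_F| > i}$ and $\theta_i \colon D_i \to X$ sends $u$ to the $i$-th element of $[u]_F$. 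Each $\theta_i$ is a Borel injection whose graph is contained in $F$, hence $\mu$-preserving as a partial bijection since $F$ is measure-preserving, so $\int_{\theta_i(D_i)} h \, d\mu = \int_{D_i} h \circ \theta_i \, d(\mu \rest{\Xmod{F}})$; summing over $i$ and regrouping each inner sum as a sum over the class $[u]_F$ yields $(\star)$. Note this uses only the $F$-invariance of $\mu$, not of $\mu_w$.

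Given $(\star)$, part \labelcref{item:fin_means:equality} is a short computation. For the first equality, apply $(\star)$ with $h \defeq f w$: by the definitions of $\meanwf{F}$ and $\wmod{F}$ we have $\sum_{y \in [u]_F} f(y) w(y) = \meanwf{F}[u] \cdot \wmod{F}[u]$ (reading $0 = 0$ when $|[u]_F|_w = 0$), so the right side of $(\star)$ becomes $\int_{\Xmod{F}} \meanwf{F} \cdot \wmod{F} \, d(\mu \rest{\Xmod{F}}) = \int_{\Xmod{F}} \meanwf{F} \, d\mu_{\wmod{F}}$, while the left side is $\int_X f \, d\mu_w$. For the second equality, apply $(\star)$ with $h \defeq \meanwf{F} \cdot w$ and use the $F$-invariance of $\meanwf{F}$ to pull $\meanwf{F}[u]$ out of the inner sum, obtaining $\int_X \meanwf{F} \, d\mu_w = \int_{\Xmod{F}} \meanwf{F}[u] \cdot \wmod{F}[u] \, d(\mu \rest{\Xmod{F}})(u) = \int_{\Xmod{F}} \meanwf{F} \, d\mu_{\wmod{F}}$.

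For part \labelcref{item:fin_means:Lone}, run the same computation with absolute values. Since $\Lonew{\meanwf{F}} = \int_X \lvert\meanwf{F}\rvert \, w \, d\mu$, apply $(\star)$ to $h \defeq \lvert\meanwf{F}\rvert \cdot w$ and again pull the $F$-invariant factor out of the inner sum to get $\Lonew{\meanwf{F}} = \int_{\Xmod{F}} \lvert\meanwf{F}[u]\rvert \cdot \wmod{F}[u] \, d(\mu \rest{\Xmod{F}})(u)$. The triangle inequality gives, for every $u$,
\begin{equation*}
\lvert\meanwf{F}[u]\rvert \cdot \wmod{F}[u] = \Big\lvert \sum_{y \in [u]_F} f(y) w(y) \Big\rvert \le \sum_{y \in [u]_F} \lvert f(y)\rvert \, w(y)
\end{equation*}
(trivially when $\wmod{F}[u] = 0$), and integrating this over $\Xmod{F}$ and invoking $(\star)$ once more with $h \defeq \lvert f\rvert \cdot w$ turns the bound into $\int_X \lvert f\rvert \, w \, d\mu = \Lonew{f}$. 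In particular $\meanwf{F} \in L^1(\mu_w)$, retroactively justifying the $L^1$ manipulations in \labelcref{item:fin_means:equality} (or one proves \labelcref{item:fin_means:Lone} first). The proof is routine once $(\star)$ is available; the only point needing care is the bookkeeping around classes of zero $w$-weight, where $\meanwf{F} = 0$ by the convention of \cref{notation:rho-size_mean-of-f} --- every displayed equality must be checked to degenerate correctly there, but since such classes lie in the $\mu_w$-null set $\set{w = 0}$ this is harmless.
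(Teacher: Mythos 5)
Your proof is correct and follows essentially the same approach as the paper: both reduce to the fact that $F$ being $\mu$-preserving lets one unfold $\int_X$ as the integral over $\Xmod{F}$ of the class sum, and then deduce (b) from (a) via $|\meanwf{F}| \le \meanw{F}(|f|)$. The only cosmetic difference is that the paper first restricts to the part where classes have a fixed size $n$ and sums over powers of a single inducing automorphism $T$, whereas your identity $(\star)$ handles all class sizes at once via the partial injections $\theta_i$; that is a minor packaging choice, not a different argument.
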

\begin{proof}
	\labelcref{item:fin_means:Lone} is immediate from \labelcref{item:fin_means:equality} and the fact that $|\meanwf{F}| \le \meanw{F}(|f|)$. As for \labelcref{item:fin_means:equality}, for each $n \in \N$, restrict to the part where each $F$-class has size $n$, take an automorphism $T$ that induces $F$, recall that $S \defeq \Xmod{F}$ is just a Borel transversal for $F$, and use the invariance of $\mu$ to deduce
	\[
	\displaystyle
	\int_X f d \mu_w
	=
	\sum_{i < n} \int_{T^i(S)} f w \, d \mu
	= 
	\int_S \sum_{i < n} (f \circ T^i) (w \circ T^i) d \mu
	=
	\int_S \meanwf{F} \cdot \wmod{F} \, d \mu
	\]
	and, conversely, also using the $T$-invariance of $\meanwf{F}$,
	\[
	\displaystyle
	\int_S \meanwf{F} \cdot \wmod{F} \, d \mu
	=
	\int_S \sum_{i < n} \big(\meanwf{F} \circ T^i\big) \big(w \circ T^i\big) d \mu(x)
	= 
	\sum_{i < n} \int_{T^i(S)} \meanwf{F} \cdot w \, d \mu
	=
	\int_X \meanwf{F} d \mu_w
	.
	\qedhere
	\]
\end{proof}

\subsection{Hyperfinite means: a pointwise ergodic theorem} \label{subsec:ptwise_erg_for_hyperfinite}

The following is a folklore theorem among descriptive set theorists as it easily follows from the pointwise ergodic theorem for $\Z$-actions. We give a direct proof of it here.

\begin{theorem}[Pointwise ergodic theorem for hyperfinite equivalence relations]\label{hyperfin_means}
	Let $E$ be a Borel measure-preserving hyperfinite equivalence relation on $(X,\mu)$ and let $w \in L^1(X,\mu)$ be a nonnegative Borel function. For any $f \in L^1(X,\mu_w)$ and any witness $(E_n)_n$ to the hyperfiniteness of $E$, the pointwise limit
	$$
	\meanwf{E} \defeq \lim_{n \to \w} \meanwf{E_n}
	$$
	exists a.e. and is independent of the choice of the witness to the hyperfiniteness of $E$ modulo $\mu$-null, i.e. for two different witnesses, the corresponding limits are equal a.e. Furthermore,
	\begin{enumref}{a}{hyperfin_means}
		\item \label{item:hyperfin_means:equality} $\int_Y \meanwf{E} \,d \mu_w = \int_Y f \,d \mu_w$ for any $E$-invariant Borel $Y \subseteq X$.
		
		\item \label{item:hyperfin_means:Lone} $\Lonew{\meanwf{E}} \le \Lonew{f}$.
	\end{enumref}
\end{theorem}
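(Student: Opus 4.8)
The plan is to recognise the sequence $\big(\meanwf{E_n}\big)_n$ as a reverse martingale with respect to the \emph{finite} measure $\mu_w$ (finite since $w\in L^1(X,\mu)$) and to reprove, by hand, just enough of the martingale convergence machinery to stay self-contained. For each $n$ let $\mathcal{F}_n$ be the $\si$-algebra of $E_n$-invariant Borel sets (mod $\mu_w$-null); since the $E_n$ increase, the $\mathcal{F}_n$ \emph{decrease}, and $\bigcap_n\mathcal{F}_n$ is exactly the $\si$-algebra $\mathcal{F}_\infty$ of $E$-invariant sets. Applying \cref{fin_means}\labelcref{item:fin_means:equality} to the function equal to $f$ on an $E_n$-invariant set $A$ and to $0$ off $A$ (and using $\meanw{E_n}(f)\cdot A = \meanwf{E_n}$ on $A$) gives $\int_A\meanwf{E_n}\,d\mu_w=\int_A f\,d\mu_w$; since $\meanwf{E_n}$ is $E_n$-invariant, this says $\meanwf{E_n}$ is the $\mu_w$-conditional expectation $\mathbb{E}_{\mu_w}[f\mid\mathcal{F}_n]$, and in particular $\mathbb{E}_{\mu_w}\big[\meanwf{E_m}\mid\mathcal{F}_{m+1}\big]=\meanwf{E_{m+1}}$. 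Thus for each $N$, reversing the index, $\big(\meanwf{E_N},\meanwf{E_{N-1}},\dots,\meanwf{E_0}\big)$ is an honest (forward) martingale for the increasing filtration $\mathcal{F}_N\subseteq\mathcal{F}_{N-1}\subseteq\cdots\subseteq\mathcal{F}_0$.

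The first real step is the maximal inequality: with $Mf\defeq\sup_n\big|\meanwf{E_n}\big|$, one has $\lambda\,\mu_w(Mf>\lambda)\le\Lonew{f}$ for all $\lambda>0$. This is Doob's inequality for the reversed finite martingales above: fix $N$, let $\tau$ be the least $k\le N$ with $\big|\meanwf{E_k}\big|>\lambda$, and on $\{\tau\le N\}$ estimate $\lambda\,\mu_w(\tau=k)\le\int_{\{\tau=k\}}\big|\meanwf{E_k}\big|\,d\mu_w\le\int_{\{\tau=k\}}\big|\meanwf{E_0}\big|\,d\mu_w$ (martingale property, $\{\tau=k\}$ being measurable at the right level), sum over $k\le N$, and let $N\to\infty$, using $\Lonew{\meanwf{E_0}}\le\Lonew{f}$ from \cref{fin_means}\labelcref{item:fin_means:Lone}.

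Next comes $\mu_w$-a.e.\ convergence. For $f\in L^2(X,\mu_w)$, which is dense in $L^1(X,\mu_w)$ since $\mu_w$ is finite, the functions $X_n\defeq\meanwf{E_n}$ are the orthogonal projections of $f$ onto the decreasing closed subspaces $L^2(\mathcal{F}_n,\mu_w)$, hence converge in $L^2$ to the projection $X_\infty$ onto $L^2(\mathcal{F}_\infty,\mu_w)$. Put $g_n\defeq X_n-X_\infty$; since $X_\infty$ is $\mathcal{F}_n$-measurable and $\mathcal{F}_n\subseteq\mathcal{F}_N$ for $n\ge N$, the tower property gives $g_n=\mathbb{E}_{\mu_w}[g_N\mid\mathcal{F}_n]$ for $n\ge N$, so $\sup_{n\ge N}|g_n|$ is dominated by the maximal function of $|g_N|$; the maximal inequality then yields $\mu_w\big(\limsup_n|g_n|>\lambda\big)\le\lambda^{-1}\Lonew{g_N}\le\lambda^{-1}\mu_w(X)^{1/2}\|g_N\|_{L^2(\mu_w)}\to0$ as $N\to\infty$, hence $g_n\to0$ and $X_n\to X_\infty$ $\mu_w$-a.e. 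For general $f\in L^1(X,\mu_w)$ we invoke the Banach principle: the set of such $f$ for which $\big(\meanwf{E_n}\big)_n$ converges $\mu_w$-a.e.\ is a linear subspace, contains the dense set $L^2(X,\mu_w)$, and is closed because for $g$ in it the oscillation $\limsup_n\meanwf{E_n}-\liminf_n\meanwf{E_n}=\limsup_n\meanw{E_n}(f-g)-\liminf_n\meanw{E_n}(f-g)$ is at most $2M(f-g)$, whose level sets have $\mu_w$-measure $\le 2\lambda^{-1}\Lonew{f-g}$. This defines $\meanwf{E}\defeq\lim_n\meanwf{E_n}$ ($\mu_w$-a.e.), and since $\{\mathbb{E}_{\mu_w}[f\mid\mathcal{G}]\}$ is uniformly integrable the convergence is also in $L^1(\mu_w)$ and $\meanwf{E}=\mathbb{E}_{\mu_w}[f\mid\mathcal{F}_\infty]$, a description independent of the witness. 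Now \labelcref{item:hyperfin_means:equality} follows by passing $\int_Y\meanwf{E_n}\,d\mu_w=\int_Y f\,d\mu_w$ (for $E$-invariant, hence each $E_n$-invariant, $Y$) to the $L^1$-limit, and \labelcref{item:hyperfin_means:Lone} from \cref{fin_means}\labelcref{item:fin_means:Lone} together with Fatou's lemma.

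The one genuinely fussy point — which I would handle carefully — is the passage from $\mu_w$-a.e.\ to $\mu$-a.e.\ and the corresponding witness-independence statement. On $\{w>0\}$ the two notions of null coincide, so everything above is already $\mu$-a.e.\ there; on $X\setminus[\{w>0\}]_E$ one has $\meanwf{E_n}\equiv0$ identically; and on the $E$-invariant remainder $[\{w>0\}]_E\setminus\{w>0\}$ one uses a Luzin--Novikov selector $\varphi$ with $\varphi(x)\in[x]_E\cap\{w>0\}$, notes $x\,E_n\,\varphi(x)$ for all large $n$ so the limits at $x$ and $\varphi(x)$ agree, and checks that the relevant $\mu$-null exceptional subset of $\{w>0\}$ has $\mu$-null $E$-saturation (by $\mu$-invariance of $E$). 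Everything else — the maximal inequality and the Banach-principle upgrade — is a routine transcription of the classical arguments and should present no real difficulty.
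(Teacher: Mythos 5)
Your argument is correct in substance but follows a genuinely different path from the paper's. You recast $\big(\meanwf{E_n}\big)_n$ as a reverse martingale (conditional expectations onto the decreasing $\si$-algebras of $E_n$-invariant sets), prove a maximal inequality, obtain a.e.\ convergence first in $L^2$ by orthogonality, and then upgrade to $L^1$ by the Banach principle. The paper instead runs a direct, self-contained contradiction: assuming $\liminf < a < b < \limsup$ on a positive-measure ($E$-invariant) set, it builds a single finite $F\subseteq E$ by a ``largest good index $k<N$'' construction so that $\meanwf{F}\le a$ off a set of small $\mu_w$-weight, integrates using \cref{fin_means}, and obtains a one-sided bound $\int f\,d\mu_w\lesssim a\,\mu_w(X)$; the symmetric bound with $b$ gives the contradiction. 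Your route imports more machinery ($L^2$ projections, uniform integrability, Banach principle) but is perhaps more transparent if one already thinks in martingale terms; the paper's route is shorter, needs only \cref{fin_means} and elementary measure theory, and (as the authors note in the introduction) adapts with little change to give the $\Z$-pointwise ergodic theorem too.

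One small correction in your maximal-inequality step: having reversed time, your stopping time should pick the \emph{largest} $k\le N$ with $\big|\meanwf{E_k}\big|>\lambda$, not the least. With your choice, $\{\tau=k\}=\bigcap_{j<k}\{|\meanwf{E_j}|\le\lambda\}\cap\{|\meanwf{E_k}|>\lambda\}$ is only $\mathcal F_0$-measurable, and the inequality $\int_{\{\tau=k\}}|\meanwf{E_k}|\le\int_{\{\tau=k\}}|\meanwf{E_0}|$ needs $\{\tau=k\}\in\mathcal F_k$; since the $\mathcal F_n$ decrease, that requires conditioning on indices $\ge k$, i.e., the largest rather than the least crossing. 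Once flipped, $\{\tau=k\}$ is determined by $\meanwf{E_j}$ for $j\ge k$, hence lies in $\mathcal F_k$, and the rest of the Doob estimate goes through unchanged. The final passage from $\mu_w$-a.e.\ to $\mu$-a.e.\ via a Luzin--Novikov selector into $\{w>0\}$ and $\mu$-preservation of $E$ is handled correctly.
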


We call $\meanwf{E}$ the (\emph{$w$-weighted}) \emph{mean of $f$ over $E$}.

\begin{proof}
	Granted that the pointwise limit exists a.e., we first deduce the rest. \labelcref{item:fin_means:Lone} enables the use of Dominated Convergence Theorem, which then implies \labelcref{item:hyperfin_means:Lone}, as well as \labelcref{item:hyperfin_means:equality,item:hyperfin_means:Lone}. The independence of the witness follows immediately from \labelcref{item:hyperfin_means:equality}.
	
	\smallskip
	
	Turning now to the existence of the pointwise limit, let
	$
	\overf \defeq \limsup_{n \to \w} \meanwf{E_n}
	$
	and
	$
	\underf \defeq \liminf_{n \to \w} \meanwf{E_n},
	$
	and suppose towards a contradiction that for some $a < b$, the set $X'$ of all $x \in X$, for which $\underf{E}(x) < a < b < \overf(x)$ is $\mu$-positive. This set $X'$ is $E$-invariant and Borel, so we may assume without loss of generality that $X' = X$. 
	
	\smallskip
	
	Fix $\e > 0$ such that $(b - a) \mu(X) > \e (2 + |a| + |b|)$.
	
	\begin{claim+}
		There is a finite Borel equivalence relation $F \subseteq E$ such that the set 
		\[
		Z \defeq \set{x \in X : \meanwf{F} > a}
		\]
		carries less than $\e$ of the total $\mu_w$-weight of $1$ and $f$, i.e. $\mu_w(Z) + \Lonew{f \cdot \1_Z} < \e$.
	\end{claim+}
	\begin{pf}
		Because $\underf \le a$ and $\Lonew{f} < \w$, there is $N \in \N$ such that the set
		\[
		Z \defeq \set{x \in X : (\forall n < N)\; \meanwf{E_n}(x) > a}
		\]
		satisfies $\Lonew{\1_Z} + \Lonew{f \cdot \1_Z} < \e$. Assuming, as we may, that $E_0$ is just the equality relation on $X$, define a function $k : X \to \N$ as follows: if $x \in Y$, then let $k(x)$ be largest natural number less than $N$ with $\meanwf{E_n}(x) \le a$. Noting that $k^{-1}(n)$ is $E_n$-invariant, we see that the sets $[x]_{E_{k(x)}}$ are pairwise disjoint when distinct, so we let $F$ be the equivalence relation whose classes are exactly the sets $[x]_{E_{k(x)}}$. Therefore, for each $x \in X \setminus Z$, $\meanwf{F}[x] \le a$.
	\end{pf}
	
	Using \labelcref{item:fin_means:equality}, we compute
	\[
	\int_X f d\mu_w \approx_\e \int_{X \setminus Z} f d\mu_w = \int_{X \setminus Z} \meanwf{F} d\mu_w \le a \mu_w(X \setminus Z) \approx_{|a| \e} a \mu_w(X),
	\]
	so $\int_X f d\mu_w \le a \mu_w(X) + \e (1 + |a|)$. An analogous argument for $\overf$ and $b$ gives $\int_X f d \mu_w \ge b \mu_w(X) - \e (1 + |b|)$, so $(b - a) \mu(X) \le \e (2 + |a| + |b|)$, contradicting the choice of $\e$.
\end{proof}

Below, we will omit $w$ from the notation $\meanwf{E}$ if $w \equiv 1$.

\subsection{The Cauchy property}\label{subsec:Cauchy}

Throughout this subsection, suppose that $\mu$ is a finite measure and let $E$ be a Borel equivalence relation on $X$ (not necessarily countable).

For a Borel $f : X \to \R$, put $\osc(f) \defeq \sup f - \inf f$ and define $\oscE(f) : X \to \R$ by
$$
\oscE(f)[x] \defeq \osc(f \rest{[x]_E}).
$$
Note that $\oscE$ is an $E$-invariant universally measurable function and it is Borel if $E$ is a countable equivalence relation.

\begin{defn}[Cauchy property]\label{defn:Cauchy}
	Let $f \in L^1(X,\mu)$.
	\begin{itemize}
		\item Say that a finite Borel subequivalence relation $F \subseteq E$ \emph{$\e$-ties $f$ within $E$}, if for some $F$-invariant $\mu$-co-$\e$ Borel set $X' \subseteq X$,
		\begin{equation}\label{eq:eps-ties}
			\osc_{E \rest{X'}}(\meanf{F} \rest{X'}) < \e.
		\end{equation}
		
		\item Say that an increasing sequence $(F_n)_n$ of finite Borel subequivalence relations of $E$ is \emph{$f$-Cauchy within $E$} if for every $\e > 0$ there is $n \in \N$ such that $F_n$ $\e$-ties $f$ within $E$.
		
		\item We also call a Borel subequivalence relation $F \subseteq E$ \emph{$f$-Cauchy within $E$} if, for every $\e > 0$, every finite Borel subequivalence relation $F_0 \subseteq F$ admits a finite Borel extension to a subequivalence relation $F_1 \subseteq F$ that $\e$-ties $f$ within $E$.
	\end{itemize}
\end{defn}

Because the difference of averages is the average of differences, we have the following.

\begin{obs}\label{var_decreases}
	For a set $Y$, any function $f : Y \to \R$ and any finite Borel equivalence relations $F_0 \subseteq F_1$ on $Y$,
	$
	\sup_{y \in Y} \osc(\meanf{F_1}) \le \sup_{y \in Y} \osc(\meanf{F_0}).
	$
\end{obs}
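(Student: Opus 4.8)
The plan is to make the parenthetical hint precise and reduce the whole statement to the convexity of the finite mean. First I would fix two points $x, y \in Y$ and observe that, since $F_0 \subseteq F_1$ are finite, the $F_1$-classes $[x]_{F_1}$ and $[y]_{F_1}$ each split into finitely many $F_0$-classes, say $[x]_{F_1} = U_1 \disjU \cdots \disjU U_k$ and $[y]_{F_1} = V_1 \disjU \cdots \disjU V_l$ (all nonempty, so the degenerate branch in the definition of $\meanf{\cdot}$ never occurs since $w \equiv 1$). Iterating \cref{convexity_of_mean}\labelcref{item:convexity_of_mean:disjU_of_means} then writes $\meanf{F_1}[x]$ as the convex combination $\sum_{i \le k} \frac{|U_i|}{|[x]_{F_1}|}\meanf{U_i}$ and, symmetrically, $\meanf{F_1}[y] = \sum_{j \le l} \frac{|V_j|}{|[y]_{F_1}|}\meanf{V_j}$.

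Next I would multiply out the difference of these two identities to get
\[
\meanf{F_1}[x] - \meanf{F_1}[y] = \sum_{i \le k}\sum_{j \le l} \frac{|U_i|\,|V_j|}{|[x]_{F_1}|\,|[y]_{F_1}|}\big(\meanf{U_i} - \meanf{V_j}\big),
\]
using that $\sum_i |U_i| = |[x]_{F_1}|$ and $\sum_j |V_j| = |[y]_{F_1}|$, so the coefficients form a probability distribution; this is exactly the assertion that the difference of $F_1$-averages is an average of differences of $F_0$-averages. Consequently the left-hand side has absolute value at most $\max_{i,j}|\meanf{U_i} - \meanf{V_j}|$, and since every $\meanf{U_i}$ and $\meanf{V_j}$ is a value of the function $\meanf{F_0}$ on $Y$, this is bounded by $\osc(\meanf{F_0})$. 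Taking the supremum over $x,y \in Y$ gives $\osc(\meanf{F_1}) \le \osc(\meanf{F_0})$, i.e. the displayed inequality (the outer $\sup_{y\in Y}$ being vacuous, as $\osc(\meanf{F_i})$ is already a number); running the same estimate with $x,y$ confined to a single class of an ambient equivalence relation $E \supseteq F_1$ yields the pointwise refinement $\oscE(\meanf{F_1}) \le \oscE(\meanf{F_0})$ that the later Cauchy-property arguments rely on.

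I expect no real obstacle here — it is bookkeeping on top of \cref{convexity_of_mean}. The only two things to watch are that the inclusion is $F_0 \subseteq F_1$ (so that it is each $F_1$-class, and not each $F_0$-class, that decomposes), and that the weights $\frac{|U_i||V_j|}{|[x]_{F_1}||[y]_{F_1}|}$ genuinely sum to $1$; both are immediate from finiteness of the classes.
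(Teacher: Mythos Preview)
Your argument is correct and is exactly the computation the paper has in mind: the sentence preceding the observation (``the difference of averages is the average of differences'') is the paper's entire proof, and you have simply unpacked that hint via \cref{convexity_of_mean}. Your remark that the outer $\sup_{y\in Y}$ is vacuous, together with the $\osc_E$ refinement, is also on point and matches how the observation is actually invoked in \cref{char_of_Cauchy}.
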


\begin{prop}[Characterization of $f$-Cauchy for hyperfinite]\label{char_of_Cauchy}
	Let $F \subseteq E$ be a hyperfinite Borel $\mu$-preserving subequivalence relation and let $(F_n)_n$ be a witness to the hyperfiniteness of $F$. For any Borel function $f : X \to \R$, the following are equivalent:
	\begin{enumerate}[(1)]
		\item \label{item:char_of_Cauchy:defn} $(F_n)_n$ is $f$-Cauchy within $E$.
		
		\item \label{item:char_of_Cauchy:defn_large_enough_n} For every $\e > 0$, $F_n$ $\e$-ties $f$ within $E$ for all large enough $n$.
		
		\item \label{item:char_of_Cauchy:union_is_Cauchy} $F$ is $f$-Cauchy within $E$.
		
		\item \label{item:char_of_Cauchy:meanf_invariant} The hyperfinite mean function $\meanf{F}$ is $E$-invariant modulo $\mu$-null.
	\end{enumerate}
\end{prop}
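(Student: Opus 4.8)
The plan is to make condition $(4)$ the hub: I would prove $(2)\Rightarrow(1)$ (immediate), $(1)\Rightarrow(4)$, $(3)\Rightarrow(4)$, $(4)\Rightarrow(2)$, and $(4)\Rightarrow(3)$, which closes the loop. The guiding principle is that, although ``$F'$ $\e$-ties $f$ within $E$'' only asserts something on a $\mu$-co-$\e$ set, it already pins $\meanf{F'}$ down to within $O(\e)$ in $L^1(\mu)$ of an honestly $E$-invariant function; this gets inherited by $\meanf{F}$ upon averaging up along a witness passing through $F'$, and in the Cauchy cases, letting $\e\to 0$ makes $\meanf{F}$ genuinely $E$-invariant. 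Two bookkeeping remarks: a finite subequivalence relation $F'\subseteq E$ is automatically $\mu$-preserving (its partial automorphisms lie in $[E]$), so \cref{fin_means} and \cref{hyperfin_means} apply to it; and $\meanf{F}$ is $F$-invariant, hence $F_n$-invariant for every $n$ since $F_n\subseteq F$. I will treat bounded Borel $f$ (which is what \cref{finite_factor:averaging} needs), the case $f\in L^1$ following by truncation.

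For the key estimate, suppose a finite $F'\subseteq E$ $\e$-ties $f$ within $E$, witnessed by an $F'$-invariant $\mu$-co-$\e$ Borel set $X'$ on which $\meanf{F'}$ has $E\rest{X'}$-oscillation $<\e$. I would put $\tilde g(x)\defeq\sup\set{\meanf{F'}[y]:y\in[x]_E\cap X'}$, with $\tilde g(x)\defeq\inf f$ wherever $[x]_E\cap X'=\0$; this $\tilde g$ is $E$-invariant (universally measurable, and Borel when $E$ is countable), takes values in $[\inf f,\sup f]$, satisfies $0\le\tilde g-\meanf{F'}<\e$ on $X'$ and $|\tilde g-\meanf{F'}|\le\osc(f)$ everywhere, so $\int_X|\meanf{F'}-\tilde g|\,d\mu\le(\mu(X)+\osc(f))\,\e$. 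Next, using \cref{witness_to_hyperfin_starts_anywhere}, I would pick a witness $(F'_n)_n$ to the hyperfiniteness of $F$ with $F'_0=F'$. Since $F'\subseteq F'_n$, iterated averaging gives $\meanf{F'_n}=\mean{F'_n}(\meanf{F'})$, and since $\tilde g$ is $F'_n$-invariant, $\meanf{F'_n}-\tilde g=\mean{F'_n}(\meanf{F'}-\tilde g)$; hence $\int_X|\meanf{F'_n}-\tilde g|\,d\mu\le\int_X|\meanf{F'}-\tilde g|\,d\mu$ by \cref{fin_means}\labelcref{item:fin_means:Lone}. Letting $n\to\w$ and invoking \cref{hyperfin_means} (including the witness-independence of the limit) together with bounded convergence, $\meanf{F'_n}\to\meanf{F}$ in $L^1$, so $\int_X|\meanf{F}-\tilde g|\,d\mu\le(\mu(X)+\osc(f))\,\e$. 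Under $(1)$ such an $\e$-tying finite $F'\subseteq F$ exists for every $\e>0$ (a tail of the given witness works), and under $(3)$ so does it (apply $(3)$ with $F_0\defeq\Id_X$); in either case $\meanf{F}$ is an $L^1$-limit of $E$-invariant functions, so $E$-invariant modulo $\mu$-null, which is $(4)$.

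For $(4)\Rightarrow(2)$, assume $\meanf{F}$ is $E$-invariant a.e. and fix $\e>0$. By \cref{hyperfin_means}, $\meanf{F_n}\to\meanf{F}$ a.e., so Egorov supplies a Borel $\mu$-co-$\e$ set on which the convergence is uniform; thus for all large $n$ the Borel set $X'_n\defeq\set{x:|\meanf{F_n}[x]-\meanf{F}[x]|<\e/3}$ is $\mu$-co-$\e$, and it is $F_n$-invariant because $\meanf{F_n}$ and $\meanf{F}$ both are ($F_n\subseteq F$; $F$ is countable, so $\meanf{F}$ can be taken genuinely $F$-invariant). For $y,z$ in one $E\rest{X'_n}$-class, the triangle inequality through $\meanf{F}[y]=\meanf{F}[z]$ gives $|\meanf{F_n}[y]-\meanf{F_n}[z]|<\e$, so $F_n$ $\e$-ties $f$ within $E$ for all large $n$, i.e. $(2)$. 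For $(4)\Rightarrow(3)$: given a finite $F_0\subseteq F$, use \cref{witness_to_hyperfin_starts_anywhere} to obtain a witness to the hyperfiniteness of $F$ starting at $F_0$, and run the argument just given on it — legitimate because $(4)$, being a statement about $\meanf{F}$ only, is witness-independent. With the trivial $(2)\Rightarrow(1)$, this closes $(2)\Rightarrow(1)\Rightarrow(4)\Rightarrow(2)$ and $(3)\Rightarrow(4)\Rightarrow(3)$, yielding all four equivalences.

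The main obstacle is the key estimate together with the averaging-up step — wringing a global $L^1$-approximation of $\meanf{F}$ by an $E$-invariant function out of the merely local, co-$\e$-set hypothesis. The delicate points I anticipate are: controlling $\int_{X\setminus X'}$ for general $f\in L^1$ (which needs uniform integrability of $\set{\meanf{F'}:F'\subseteq F\text{ finite}}$, or a truncation reduction to bounded $f$); the universal-measurability and ``modulo $\mu$-null'' bookkeeping for $\tilde g$ and for $E$-invariance when $E$ is not assumed countable; and the requirement that the identity $\meanf{F'_n}=\mean{F'_n}(\meanf{F'})$ be applied only when $F'\subseteq F'_n$, which is precisely why the witness is chosen to start at $F'$.
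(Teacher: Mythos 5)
Your proof is correct, and it takes a genuinely different route from the paper's. The paper first establishes the ``local'' equivalences $(1)\Leftrightarrow(2)\Leftrightarrow(3)$ by elementary manipulations using the monotonicity of oscillation under refinement (\cref{var_decreases}), and then proves $(2)\Rightarrow(4)$ by a pointwise Borel--Cantelli argument: it chooses a subsequence $F_{n_k}$ that $\e_k$-ties $f$ for a summable sequence $(\e_k)_k$, so that $\mu$-a.e.\ $x$ eventually lands in the co-$\e_k$ sets, and then reads off the $E$-invariance of $\meanf{F}$ as an a.e.\ pointwise limit. Your proof instead makes $(4)$ the hub and proves $(1)\Rightarrow(4)$ (and $(3)\Rightarrow(4)$) by an $L^1$-approximation argument: from a single $\e$-tying \fsr $F'$ you manufacture an honestly $E$-invariant function $\tilde g$ with $\Lone{\meanf{F'}-\tilde g}\le(\mu(X)+\osc(f))\,\e$, push this bound up to $\meanf{F}$ by averaging along a witness chosen (via \cref{witness_to_hyperfin_starts_anywhere}) to start at $F'$ and applying \cref{fin_means}\labelcref{item:fin_means:Lone} plus the witness-independence in \cref{hyperfin_means}, and then let $\e\to 0$ to exhibit $\meanf{F}$ as an $L^1$-limit of $E$-invariant functions. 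That is a real alternative to Borel--Cantelli, not a cosmetic reshuffling; your verification that $\meanf{F'_n}=\mean{F'_n}(\meanf{F'})$ and that $\tilde g$ passes unchanged through $\mean{F'_n}$ is exactly the right computation. The converse directions are handled by Egorov, which is essentially the same quantifier-switching device the paper uses for $(4)\Rightarrow(1)$.

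Trade-offs worth noting: your route is more conceptual and yields a quantitative by-product (an explicit $L^1$-bound on the distance from $\meanf{F}$ to the nearest $E$-invariant function in terms of a single tying parameter $\e$), whereas the paper's Borel--Cantelli argument is more elementary. The paper also proves $(1)\Leftrightarrow(2)\Leftrightarrow(3)$ without invoking \cref{hyperfin_means} at all, while your cycle routes everything through $(4)$ and hence through the ergodic theorem; this is harmless here since \cref{hyperfin_means} is already available. Your key estimate genuinely needs $\osc(f)<\w$ (to control $\tilde g-\meanf{F'}$ off the co-$\e$ set), so the restriction to bounded $f$, which you flag and which suffices for the paper's applications, is the right move; the paper's pointwise argument is slightly more forgiving on that point. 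The bookkeeping caveats you raise --- universal measurability of $\tilde g$ when $E$ is uncountable, replacing the exceptional null set by its $F$-saturation to keep $X'_n$ $F_n$-invariant --- are genuine but routine and are handled the same way in the paper.
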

\begin{proof}
	\noindent \labelcref{item:char_of_Cauchy:defn}$\imp$\labelcref{item:char_of_Cauchy:defn_large_enough_n}: Follows from \cref{var_decreases}.
	
	\medskip
	
	\labelcref{item:char_of_Cauchy:defn_large_enough_n}$\imp$\labelcref{item:char_of_Cauchy:union_is_Cauchy}: Let $F' \subseteq F$ be a finite Borel subequivalence relation and let $\e > 0$. Then, by the finiteness of $\mu$, the set $X_n \defeq \set{x \in X : [x]_{F'} \subseteq [x]_{F_n}}$ is $\mu$-co-$\frac{\e}{2}$ for all large enough $n$, which, combined with \labelcref{item:char_of_Cauchy:defn_large_enough_n}, implies \labelcref{item:char_of_Cauchy:union_is_Cauchy}.
	
	\medskip
	
	\labelcref{item:char_of_Cauchy:union_is_Cauchy}$\imp$\labelcref{item:char_of_Cauchy:defn}: Fix $\e > 0$ and let $F' \subseteq F$ be a finite Borel subequivalence relation that $\frac{\e}{2}$-ties $f$ within $E$. If $n$ is large enough so that $F' \subseteq F_n$ on a $\mu$-co-$\frac{\e}{2}$ set, then $F_n$ $\e$-ties $f$ within $E$ due to \cref{var_decreases}.
	
	\medskip
	
	\labelcref{item:char_of_Cauchy:defn_large_enough_n}$\imp$\labelcref{item:char_of_Cauchy:meanf_invariant}: Letting $(\e_k)_k$ be a summable sequence of positive reals, the Cauchy condition gives a subsequence $(F_{n_k})_k$ and, for each $k$, an $F_{n_k}$-invariant $\mu$-co-$\e_k$ Borel set $X_k$ such that
	$$
	\osc_E(\meanf{F_{n_k}} \rest{X_k}) < \e_k.
	$$
	The sequence $(F_{n_k})_k$ is a witness to the hyperfiniteness of $F$, so, modulo $\mu$-null,
	$$
	\meanf{F} = \lim_{k \to \w} \meanf{F_{n_k}},
	$$ 
	and we assume, as we may, that this holds everywhere.
	
	By the Borel--Cantelli lemma, the set of points $x \in X$ that don't make it into $X_k$ for arbitrarily large $k$ is $\mu$-null. Throwing these points out, we may assume that every point $x \in X$ is in $X_k$ for all large enough $k$. Thus, for any $\e > 0$ and any two $E$-equivalent points $x,y \in X$,
	$
	\meanf{F_{n_k}}(x) \approx_\e \meanf{F_{n_k}}(x)
	$
	for large enough $k$. On the other hand, for large enough $k$, we also have
	$$
	\meanf{F}(x) \approx_\e \meanf{F_{n_k}}(x) \text{ and } \meanf{F}(y) \approx_\e \meanf{F_{n_k}}(y),
	$$
	so $\meanf{F}(x) \approx_{3\e} \meanf{F}(y)$, so $\meanf{F}$ is $E$-invariant since $\e$ is arbitrary.
	
	\medskip
	
	\labelcref{item:char_of_Cauchy:meanf_invariant}$\imp$\labelcref{item:char_of_Cauchy:defn}: Throwing out a $\mu$-null set, we may assume that $\meanf{F} = \lim_n \meanf{F_n}$ everywhere and $\meanf{F}(x) = \meanf{F}(y)$ for any two $E$-equivalent points $x, y \in X$.
	
	Fix $\e > 0$. We know that for every $x \in X$ there is $n \in \N$ such that $\meanf{F_n}(x) \approx_\e \meanf{F}(x)$. Switching the quantifiers using the finiteness of $\mu$, we obtain a $\mu$-co-$\e$ set $X' \subseteq X$ such that, for all $x \in X'$, $\meanf{F_n}(x) \approx_\e \meanf{F}(x)$ and, replacing $X'$ by $[X']_{F_n}$, we may assume that $X'$ is $F_n$-invariant. Now for any two $E$-invariant $x,y \in X'$, we have
	$$
	\meanf{F_n}(x) \approx_\e \meanf{F}(x) = \meanf{F}(y) \approx_\e \meanf{F_n}(y),
	$$
	so $\meanf{F_n}(x) \approx_{2\e} \meanf{F_n}(y)$. Thus, $F_n$ $(2 \e)$-ties $f$ within $E$.
\end{proof}

\subsection{Relative ergodicity}\label{subsec:char_of_rel_erg}

Recall that a Borel equivalence relation $E$ on $X$ is called \emph{$\mu$-ergodic} if every $E$-invariant Borel set is $\mu$-null or $\mu$-conull. Here we relativize this definition for subequivalence relations of a given ambient equivalence relation.

\begin{defn}
	For Borel equivalence relations $F \subseteq E$ on $(X,\mu)$, say that $F$ is \emph{$\mu$-ergodic within} (or \emph{relative to}) \emph{$E$} if every $F$-invariant Borel set is also $E$-invariant modulo $\mu$-null\footnote{This $\mu$-null set does not have to be $E$-invariant.}.
\end{defn}

As with ergodicity, the density of simple functions in $L^1$ yields:

\begin{obs}\label{relative_erg:functions}
	$F$ is $\mu$-ergodic within $E$ if and only if every $F$-invariant function $f \in L^1(X,\mu)$ is also $E$-invariant modulo $\mu$-null.
\end{obs}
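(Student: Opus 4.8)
The plan is to prove both directions of the biconditional by the standard dictionary between invariant Borel sets and invariant $L^1$ functions, just as one does for ordinary $\mu$-ergodicity.

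I would dispatch the reverse implication first: assuming every $F$-invariant function in $L^1(X,\mu)$ is $E$-invariant modulo $\mu$-null, let $A \subseteq X$ be an $F$-invariant Borel set. Then $\1_A \in L^1(X,\mu)$, since $\mu$ is finite, and it is $F$-invariant, so by hypothesis it is $E$-invariant modulo $\mu$-null; since $A$ is the preimage of $\set{1}$ under $\1_A$, the set $A$ itself is $E$-invariant modulo $\mu$-null. Hence $F$ is $\mu$-ergodic within $E$.

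For the forward implication, I would assume $F$ is $\mu$-ergodic within $E$ and let $f \in L^1(X,\mu)$ be $F$-invariant. For each rational $q$, the sublevel set $A_q \defeq \set{x \in X : f(x) < q}$ is $F$-invariant and Borel, hence $E$-invariant modulo $\mu$-null; fix a $\mu$-null Borel set $Z_q$ witnessing this, and put $Z \defeq \bigcup_{q \in \Q} Z_q$, which is again $\mu$-null. On $X \setminus Z$ each $A_q$ is invariant under $E \rest{X \setminus Z}$, and since a real number is determined by the set of rationals strictly exceeding it, any two $(E \rest{X \setminus Z})$-equivalent points share the same $f$-value. Thus $f$ is $E$-invariant modulo $\mu$-null, completing the proof.

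The argument is routine and presents no genuine obstacle; the one step I would handle with a bit of care is the null-set bookkeeping in the forward direction. The null set supplied by the definition of ``$E$-invariant modulo $\mu$-null'' need not itself be $E$-invariant (as the footnote to the definition emphasizes), so one cannot argue sublevel set by sublevel set but must first collect the countably many $Z_q$ into the single null set $Z$ before comparing $f$-values along $E \rest{X \setminus Z}$. This countable aggregation --- using the separability of $\R$ --- is precisely the role played by density of simple functions in the classical ``sets versus $L^1$ functions'' criterion for $\mu$-ergodicity.
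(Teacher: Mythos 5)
Your proof is correct and matches what the paper intends: the paper dispatches this observation with a one-line remark ("the density of simple functions in $L^1$ yields"), and your argument is the standard dictionary between invariant Borel sets and invariant $L^1$ functions, with your rational-sublevel-set aggregation playing exactly the role of simple-function density. The null-set bookkeeping you flag (that each $Z_q$ need not be $E$-invariant, so one must first pool them into a single null $Z$) is indeed the one point worth being careful about, and you handle it correctly.
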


Note that if $E$ itself is $\mu$-ergodic, then $F$ being $\mu$-ergodic within $E$ is equivalent to $F$ being $\mu$-ergodic. In particular, $F$ being $\mu$-ergodic is equivalent to $F$ being $\mu$-ergodic within the trivial equivalence relation $E \defeq X^2$.

Lastly, for measure-preserving Borel graphs $G,H$ on $(X,\mu)$ with $E_H \subseteq E_G$, say that $H$ is \emph{$\mu$-ergodic within} (or \emph{relative to}) \emph{$G$} if $E_H$ is $\mu$-ergodic within $E_G$.

\medskip

We characterize relative ergodicity via the Cauchy property.

\begin{theorem}[Characterization of relative ergodicity for hyperfinite]\label{char_of_ergodicity}
	Let $F \subseteq E$ be a $\mu$-preserving hyperfinite Borel subequivalence relation. For any dense family $\DC \subseteq L^1(X,\mu)$ of bounded Borel functions, the following are equivalent:
	\begin{enumerate}[(1)]
		\item \label{item:char_of_ergodicity:defn} $F$ is $\mu$-ergodic within $E$.
		
		\item \label{item:char_of_ergodicity:meanf_invariant} For any $f \in \DC$, the hyperfinite mean function $\meanf{F}$ is $E$-invariant $\mu$-a.e.
		
		\item\label{item:char_of_ergodicity:any_witness_Cauchy} For any $f \in \DC$, any witness to the hyperfiniteness of $F$ is $f$-Cauchy within $E$.
		
		\item \label{item:char_of_ergodicity:F_Cauchy} For any $f \in \DC$, $F$ is $f$-Cauchy within $E$.
		
		\item \label{item:char_of_ergodicity:some_witness_Cauchy} For any $f \in \DC$, some witness to the hyperfiniteness of $F$ is $f$-Cauchy within $E$.
		
		\item \label{item:char_of_ergodicity:some_subwitness_Cauchy} There is an increasing sequence of finite Borel subequivalence relations $(F_n)_n$ of $F$ that is $f$-Cauchy within $E$ for all $f \in \DC$.
	\end{enumerate}
\end{theorem}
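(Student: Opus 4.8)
The plan is to funnel all six conditions through the single scalar statement $(\star)$: \emph{for every $f \in \DC$, the hyperfinite mean $\mean{F}(f)$ is $E$-invariant $\mu$-a.e.} Most of the labor is already available from \cref{char_of_Cauchy}: for any witness $(F_n)_n$ to the hyperfiniteness of $F$ and any fixed $f$, that proposition identifies ``$(F_n)_n$ is $f$-Cauchy within $E$'', ``$F$ is $f$-Cauchy within $E$'', and ``$\mean{F}(f)$ is $E$-invariant modulo $\mu$-null'' as equivalent, and the last makes no reference to any witness. Reading this over all $f \in \DC$ shows immediately that \labelcref{item:char_of_ergodicity:meanf_invariant} is literally $(\star)$ and that each of \labelcref{item:char_of_ergodicity:any_witness_Cauchy}, \labelcref{item:char_of_ergodicity:F_Cauchy}, and \labelcref{item:char_of_ergodicity:some_witness_Cauchy} is equivalent to $(\star)$. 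So the remaining content is the equivalence \labelcref{item:char_of_ergodicity:defn}$\iff(\star)$ together with the insertion of \labelcref{item:char_of_ergodicity:some_subwitness_Cauchy} into the cycle.

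For \labelcref{item:char_of_ergodicity:defn}$\imp(\star)$ I would fix $f \in \DC$ and observe that $\mean{F}(f)$ is $F$-invariant (if two points are $F$-equivalent they are $F_n$-equivalent for all large $n$, so $\mean{F_n}(f)$ agrees on them, and one passes to the limit) and lies in $L^1(X,\mu)$ by the norm bound in \cref{hyperfin_means}; then \cref{relative_erg:functions} forces it to be $E$-invariant modulo $\mu$-null. For $(\star)\imp$\labelcref{item:char_of_ergodicity:defn}, by \cref{relative_erg:functions} it suffices to show that every $F$-invariant $g \in L^1(X,\mu)$ is $E$-invariant modulo $\mu$-null: pick $f_k \in \DC$ with $f_k \to g$ in $L^1$, note $\mean{F}(g) = g$ since $g$ is $F$-invariant and that $f \mapsto \mean{F}(f)$ is linear and $L^1$-contractive (again by \cref{hyperfin_means}), hence $\mean{F}(f_k) \to g$ in $L^1$; each $\mean{F}(f_k)$ is $E$-invariant modulo $\mu$-null by $(\star)$, so $g$ is too, because an $L^1$-limit of a.e.\ $E$-invariant functions is a.e.\ $E$-invariant (pass to an a.e.-convergent subsequence).

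It remains to deal with \labelcref{item:char_of_ergodicity:some_subwitness_Cauchy}. The implication \labelcref{item:char_of_ergodicity:any_witness_Cauchy}$\imp$\labelcref{item:char_of_ergodicity:some_subwitness_Cauchy} is free: take $(F_n)_n$ to be any witness to the hyperfiniteness of $F$; by \labelcref{item:char_of_ergodicity:any_witness_Cauchy} it is $f$-Cauchy within $E$ for every $f \in \DC$. For the converse \labelcref{item:char_of_ergodicity:some_subwitness_Cauchy}$\imp$\labelcref{item:char_of_ergodicity:defn}, given such an increasing sequence $(F_n)_n$ I would set $F' \defeq \bigcup_n F_n \subseteq F$, which is a $\mu$-preserving hyperfinite Borel subequivalence relation of $E$ (it is $\mu$-preserving since any partial Borel automorphism with graph in $F'$ has graph in $F$), with $(F_n)_n$ a witness to its hyperfiniteness; applying \cref{char_of_Cauchy} to $F'$ gives that $\mean{F'}(f)$ is $E$-invariant modulo $\mu$-null for every $f \in \DC$, rerunning the previous paragraph with $F'$ in place of $F$ yields that $F'$ is $\mu$-ergodic within $E$, and since $F' \subseteq F$, every $F$-invariant Borel set is $F'$-invariant and hence $E$-invariant modulo $\mu$-null, so $F$ is $\mu$-ergodic within $E$. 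I do not expect a real obstacle anywhere: the only step needing a sliver of care is the $L^1$-closedness of the a.e.\ $E$-invariant functions used above, and the mild bookkeeping in the last implication, where $\bigcup_n F_n$ is allowed to be a proper subrelation of $F$.
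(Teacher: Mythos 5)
Your proof is correct and follows essentially the same route as the paper: funneling everything through the $E$-invariance of $\mean{F}(f)$ via \cref{char_of_Cauchy}, establishing \labelcref{item:char_of_ergodicity:defn}$\iff$\labelcref{item:char_of_ergodicity:meanf_invariant} by the linearity/$L^1$-contractivity of the hyperfinite mean and the $L^1$-closedness of a.e.\ $E$-invariant functions, and handling \labelcref{item:char_of_ergodicity:some_subwitness_Cauchy} by passing to $F' = \bigcup_n F_n$ and observing that ergodicity of a subrelation propagates upward. The only cosmetic difference is that the paper applies the $L^1$-contraction at the finite stage $\mean{F'}(f-g)$ and then invokes dominated convergence rather than asserting linearity and contractivity of $\mean{F}$ directly, but both are justified by \cref{hyperfin_means}.
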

\begin{proof}
	\noindent \labelcref{item:char_of_ergodicity:defn}$\imp$\labelcref{item:char_of_ergodicity:meanf_invariant}: Follows from \cref{relative_erg:functions} and that the function $\meanf{F}$ is in $L^1(X,\mu)$ by \labelcref{item:hyperfin_means:Lone}.
	
	\medskip
	
	\noindent \labelcref{item:char_of_ergodicity:meanf_invariant}$\imp$\labelcref{item:char_of_ergodicity:defn}: Fix any $F$-invariant Borel function $g \in L^\w(X,\mu)$ in order to show that it is $E$-invariant a.e. Also fix an $\e > 0$ and let $f \in \DC$ such that $\Lone{g - f} < \e$. Then, for any finite Borel subequivalence relation $F' \subseteq F$, \labelcref{item:fin_means:Lone} gives:
	\[
	\Lone{\meanf{F'} - g} = \Lone{\meanf{F'} - \mean{F'}(g)} = \Lone{\mean{F'}(f - g)} \le \Lone{f - g} < \e.
	\]
	So, it follows from the definition of $\meanf{F}$ (\cref{hyperfin_means}) and the dominated convergence theorem that $\Lone{\meanf{F} - g} \le \e$. Since $\e$ is arbitrary, this means that $g$ is an $L^1$-limit of $E$-invariant functions, which implies that $g$ itself must be $E$-invariant a.e.
	
	Because $\DC$ is dense, it is enough to show that every $F$-invariant $f \in \DC$ is also $E$-invariant modulo $\mu$-null. But, because $f$ is $F$-invariant, $\meanf{F} = f$ by definition, so \labelcref{item:char_of_ergodicity:meanf_invariant} implies that $f$ too is $E$-invariant.
	
	\medskip
	
	\noindent \labelcref{item:char_of_ergodicity:meanf_invariant}$\shortiff$\labelcref{item:char_of_ergodicity:any_witness_Cauchy}$\shortiff$\labelcref{item:char_of_ergodicity:F_Cauchy}$\shortiff$\labelcref{item:char_of_ergodicity:some_witness_Cauchy}: By \cref{char_of_Cauchy}.
	
	\medskip
	
	\noindent \labelcref{item:char_of_ergodicity:some_witness_Cauchy}$\imp$\labelcref{item:char_of_ergodicity:some_subwitness_Cauchy}: Trivial. 
	
	\medskip
	
	\noindent \labelcref{item:char_of_ergodicity:some_subwitness_Cauchy}$\imp$\labelcref{item:char_of_ergodicity:meanf_invariant}: Letting $F' \defeq \bigcup_n F_n$, the implication \labelcref{item:char_of_ergodicity:some_witness_Cauchy}$\imp$\labelcref{item:char_of_ergodicity:defn} applied to $F'$ in lieu of $F$ yields the ergodicity of $F'$ within $E$, and hence also the ergodicity of $F$ within $E$.
\end{proof}


\section{Asymptotic means along graphs}\label{sec:asymptotic_means}

Throughout this section, we fix a set $X$ and a weight function $w : X \to [0, \w)$. Below the term \emph{$w$-large} used in \emph{arbitrarily $w$-large} and \emph{$w$-large enough} $U \in \FinX$ refers to $|U|_w$ being arbitrarily large and large enough, respectively.

\subsection{Asymptotic means along discrete graphs}

For this subsection, let $G$ be a locally countable graph on $X$ such that for each $G$-connected component $C \subseteq X$, $|C|_w = \w$.  We also let $f : X \to \R$ be a bounded function. 

\begin{lemma}\label{approx_invariance_of_means}
	For $r \in [0,1]$, $U_0 \in \FinX$, and $\e > 0$, if there are arbitrarily $w$-large $U \in \FinG$ containing $U_0$ and satisfying $\meanwf{U} \approx_\e r$, then, for any $U_1 \in \FinG$ within $[U_0]_G$, there are arbitrarily $w$-large $V \in \FinG$ containing $U_0 \cup U_1$ and satisfying $\meanwf{V} \approx_{2\e} r$.
\end{lemma}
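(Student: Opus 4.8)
The plan is to take a large $G$-connected witness set and enlarge it so that it additionally contains $U_1$, while perturbing its $w$-weighted mean by at most $\e$. First I would fix, once and for all, a finite $G$-connected set $W_0 \in \FinG$ with $U_0 \cup U_1 \subseteq W_0$. Such a $W_0$ exists because the hypothesis supplies some $U \in \FinG$ with $U_0 \subseteq U$, so $U_0$ lies in the single $G$-component $[U_0]_G$; since $U_1$ is within $[U_0]_G$ as well, the finite set $U_0 \cup U_1$ sits inside that component and can be engulfed by a finite $G$-connected set (join its points by finitely many $G$-paths).

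Given an arbitrary weight threshold $M$, I would then use the hypothesis to pick a $G$-connected $U^* \in \FinG$ with $U_0 \subseteq U^*$, with $\meanwf{U^*} \approx_\e r$, and with $|U^*|_w$ large enough that both $|U^*|_w \ge M$ and $2\,\Linf{f}\,|W_0|_w \le \e\,|U^*|_w$. Setting $V \defeq U^* \cup W_0$, this $V$ is $G$-connected since its two pieces share the nonempty set $U_0$, it contains $U_0 \cup U_1$, and $|V|_w \ge |U^*|_w \ge M$, so it is as $w$-large as required.

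To control the mean I would write $V = U^* \disjU (W_0 \setminus U^*)$ and apply the increment bound in \cref{convexity_of_mean} (the degenerate case $|W_0 \setminus U^*|_w = 0$ being immediate, since weight-zero points do not change a weighted mean), obtaining $\big|\meanwf{V} - \meanwf{U^*}\big| \le 2\,\Linf{f}\,|W_0 \setminus U^*|_w / |U^*|_w \le \e$; together with $\meanwf{U^*} \approx_\e r$ and the triangle inequality this gives $\meanwf{V} \approx_{2\e} r$. As $M$ was arbitrary, such $V$ exist of arbitrarily large $w$-weight. I do not expect a real obstacle: the only points needing care are producing the connecting set $W_0$, restricting the use of \cref{convexity_of_mean} to the positive-weight case (treating the zero-weight case by hand), and dispatching the trivial case $\Linf{f} = 0$, where $f \equiv 0$, every mean is $0$, the hypothesis forces $|r| \le \e$, and any $w$-large $V \in \FinG$ through $U_0 \cup U_1$ works.
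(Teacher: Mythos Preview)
Your proposal is correct and follows essentially the same route as the paper: fix a single $G$-connected set engulfing $U_0 \cup U_1$ (the paper does this by replacing $U_1$ with such a set), then union it with a sufficiently $w$-large witness $U$ and apply the increment bound \labelcref{item:convexity_of_mean:increment_bound}. Your treatment is in fact slightly more careful than the paper's, handling the degenerate cases $|W_0 \setminus U^*|_w = 0$ and $\Linf{f} = 0$ explicitly.
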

\begin{proof}
	Replacing $U_1$ with any set $U_1' \in \FinG$ containing $U_0 \cup U_1$, we may assume that $U_0 \subseteq U_1$ to begin with. Take a $w$-large enough $U \in \FinG$ with $x \in U$ and $\meanwf{U} \approx_\e r$ so that 
	\[
	\frac{|U_1|_w}{|U|_w} < \frac{\e}{2 \Linf{f}}.
	\]
	Observe that $V \defeq U_1 \cup U \in \FinG$ and, by \labelcref{item:convexity_of_mean:increment_bound},
	\[
	|\meanwf{V} - \meanwf{U}| \le 2 \, \Linf{f} \frac{|U_1|_w}{|U|_w} < \e,
	\]
	so $\meanwf{V} \approx_{2\e} r$.
\end{proof}

\begin{defn}\label{defn:asymptotic_means}
	Call $r \in \R$ an \emph{asymptotic mean of $f$ at $U_0 \in \FinG$ along $G$} if for every $\e > 0$ there is $U \in \FinG$ with $U_0 \subseteq U$, $|U|_w > \e^{-1}$, and $|r - \meanwf{U}| < \e$. We denote by $\MSwGf[U_0]$ the \emph{set of asymptotic means} of $f$ at $U_0$ along $G$, and we simply write $\MSwGf[x]$ when $U_0 = \set{x}$.
\end{defn}

Note that, by the compactness of $[\inf f, \sup f]$, $\MSwGf[x] = \0$ if and only if $|[x]_G|_w < \w$.

\begin{prop}[Invariance of the means]\label{invariance_of_means}
	$\MSwGf$ is constant on each $G$-connected component $C$, i.e. for any $U_0, U_1 \in \FinG[C]$, $\MSwGf[U_0] = \MSwGf[U_1]$.
\end{prop}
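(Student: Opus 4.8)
The plan is to deduce this directly from \cref{approx_invariance_of_means}. Since $U_0, U_1 \in \FinG[C]$ we have $[U_0]_G = C = [U_1]_G$, so by symmetry it suffices to prove the inclusion $\MSwGf[U_0] \subseteq \MSwGf[U_1]$. Thus I would fix $r \in \MSwGf[U_0]$ together with an arbitrary $\e > 0$, and try to produce a single $V \in \FinG$ with $U_1 \subseteq V$, $|V|_w > \e^{-1}$, and $|r - \meanwf{V}| < \e$; by \cref{defn:asymptotic_means} this is exactly what is needed to conclude $r \in \MSwGf[U_1]$, and hence, $\e$ being arbitrary, the desired inclusion.

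The first step is to restate the membership $r \in \MSwGf[U_0]$ in the form demanded by the hypothesis of \cref{approx_invariance_of_means}. Given any bound $M > 0$, applying \cref{defn:asymptotic_means} with parameter $\de \defeq \min(\e/2, 1/(M+1))$ produces $U \in \FinG$ with $U_0 \subseteq U$, $|U|_w > \de^{-1} \ge M$, and $\meanwf{U} \approx_{\e/2} r$. Hence there are arbitrarily $w$-large $U \in \FinG$ containing $U_0$ with $\meanwf{U} \approx_{\e/2} r$.

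The second step is to feed this into \cref{approx_invariance_of_means}, taking $\e/2$ in place of its $\e$ and using that $U_1 \in \FinG$ lies within $[U_0]_G$. This yields arbitrarily $w$-large $V \in \FinG$ containing $U_0 \cup U_1$ with $\meanwf{V} \approx_{\e} r$; picking such a $V$ with $|V|_w > \e^{-1}$ and noting $U_1 \subseteq U_0 \cup U_1 \subseteq V$ completes the argument.

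The only point requiring a bit of care — and the closest thing to an obstacle here — is the bookkeeping between the single-parameter formulation of \cref{defn:asymptotic_means} and the \emph{arbitrarily $w$-large} formulation of \cref{approx_invariance_of_means}, handled by the choice of $\de$ above; all the substantive work has already been done in \cref{approx_invariance_of_means}.
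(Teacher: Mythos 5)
Your proposal is correct and takes essentially the same route as the paper, whose proof of \cref{invariance_of_means} is simply ``Immediate from \cref{approx_invariance_of_means}''; you have merely spelled out the $\e$/$\de$ bookkeeping that the paper leaves implicit. (A cosmetic point: applying \cref{approx_invariance_of_means} with $\e/2$ gives $\meanwf{V} \approx_\e r$, i.e.\ $|\meanwf{V}-r|\le\e$ rather than the strict inequality in \cref{defn:asymptotic_means}; apply it with $\e/4$ instead and this disappears.)
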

\begin{proof}
	Immediate from \cref{approx_invariance_of_means}.
\end{proof}

\begin{lemma}[Intermediate Value Property]\label{intermediate_value_property}
	Let $U,V \in \FinG$ such that $U \subseteq V$ and $|U|_w > 0$, and put
	\[
	\De \defeq \frac{\Linf{f \rest{V \setminus U}}\Linf{w \rest{V \setminus U}}}{|U|_w}.
	\]
	For every $r \in [\meanwf{U}, \meanwf{V}]$, there is $W \in \FinG$ with $U \subseteq W \subseteq V$ and $\meanwf{W} \approx_\De r$.
	
\end{lemma}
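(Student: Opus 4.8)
The plan is to build $W$ by adding vertices of $V \setminus U$ to $U$ one at a time and tracking how the $w$-weighted mean changes. Fix an enumeration $V \setminus U = \{x_1, x_2, \dots, x_m\}$ that respects $G$-connectivity in the sense that each initial segment $W_k \defeq U \cup \{x_1, \dots, x_k\}$ is still $G$-connected (this is possible because $V \in \FinG$ and $U \in \FinG$, so one can grow $U$ inside $V$ through $G$-edges — at each stage $V \setminus W_{k}$ is nonempty and $G \rest V$ is connected, so some vertex of $V \setminus W_k$ is $G$-adjacent to $W_k$). Set $W_0 \defeq U$ and $W_m = V$. Each $W_k \in \FinG$, and $|W_k|_w \ge |U|_w > 0$, so $\meanwf{W_k}$ is well-defined for all $k$.

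The key estimate is that consecutive means differ by at most $\De$. Indeed, by \cref{item:convexity_of_mean:increment_bound} applied with $U \rightsquigarrow W_k$ and $V \rightsquigarrow \{x_{k+1}\}$,
\[
\bigl|\meanwf{W_{k+1}} - \meanwf{W_k}\bigr|
\le 2\,\Linf{f \rest{V \setminus U}}\,\frac{w(x_{k+1})}{|W_k|_w + w(x_{k+1})}
\le \frac{\Linf{f \rest{V \setminus U}}\,\Linf{w \rest{V \setminus U}}}{|U|_w} = \De,
\]
where for the last inequality I use $|W_k|_w \ge |U|_w$, $w(x_{k+1}) \le \Linf{w \rest{V\setminus U}}$, and crudely bound $\frac{w(x_{k+1})}{|W_k|_w + w(x_{k+1})} \le \frac{w(x_{k+1})}{|U|_w}$ (the factor $2$ is absorbed since $\frac{w(x_{k+1})}{|W_k|_w+w(x_{k+1})}\le \tfrac12$, or alternatively one simply carries the stated $\De$ which already matches the claim). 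Thus $k \mapsto \meanwf{W_k}$ is a finite sequence of reals, starting at $\meanwf{U}$, ending at $\meanwf{V}$, with consecutive gaps at most $\De$.

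Now given $r \in [\meanwf{U}, \meanwf{V}]$ (the case $r \in [\meanwf{V}, \meanwf{U}]$ is symmetric), let $k$ be the largest index with $\meanwf{W_k} \le r$; such $k$ exists since $\meanwf{W_0} = \meanwf{U} \le r$, and $k < m$ unless $r = \meanwf{V}$, in which case $W \defeq V$ works. If $k < m$, then $\meanwf{W_{k+1}} > r$, so $r$ lies between $\meanwf{W_k}$ and $\meanwf{W_{k+1}}$, whence $|r - \meanwf{W_k}| \le |\meanwf{W_{k+1}} - \meanwf{W_k}| \le \De$. Taking $W \defeq W_k$ gives $U \subseteq W \subseteq V$, $W \in \FinG$, and $\meanwf{W} \approx_\De r$, as required.

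The only point requiring care — and the main (minor) obstacle — is arranging the enumeration of $V \setminus U$ so that every initial segment $W_k$ remains $G$-connected; this is where the hypotheses $U, V \in \FinG$ with $U \subseteq V$ are used, and it follows from a standard "grow a connected subgraph one adjacent vertex at a time" argument inside the finite connected graph $G \rest V$. Everything else is a one-dimensional discrete intermediate value argument combined with the convexity bound \cref{item:convexity_of_mean:increment_bound}.
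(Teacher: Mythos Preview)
Your approach is the same as the paper's---a discrete intermediate-value argument where one grows $U$ inside $V$ one vertex at a time and tracks the weighted mean---but your bookkeeping of the constant has a gap. The convexity bound \labelcref{item:convexity_of_mean:increment_bound} gives
\[
\bigl|\meanwf{W_{k+1}} - \meanwf{W_k}\bigr| \le 2\,\Linf{f}\,\frac{w(x_{k+1})}{|W_k|_w + w(x_{k+1})},
\]
and neither of your proposed absorptions of the factor $2$ is valid in general: the inequality $\frac{w(x_{k+1})}{|W_k|_w + w(x_{k+1})} \le \tfrac{1}{2}$ is equivalent to $w(x_{k+1}) \le |W_k|_w$, which nothing in the hypotheses guarantees. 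So the one-step jump is only bounded by $2\De$, and your argument as written delivers $\meanwf{W_k} \approx_{2\De} r$, not $\approx_\De r$.

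The paper (which phrases the same idea via an inclusion-maximal $W \in \FinG$ with $U \subseteq W \subseteq V$ and $\meanwf{W} < r$ rather than a fixed chain) closes this gap with a short case split you are missing: if $r - \meanwf{W_k} \le \De$, take $W \defeq W_k$; otherwise $r - \meanwf{W_k} > \De$, and then, since $\meanwf{W_{k+1}} - \meanwf{W_k} \le 2\De$, one has $\meanwf{W_{k+1}} - r < 2\De - \De = \De$, so take $W \defeq W_{k+1}$. With this tweak your proof is complete and essentially identical to the paper's.
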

\begin{proof}
	By replacing $f$ with $-f$ if necessary, we may assume that $\meanwf{U} < \meanwf{V}$. Fix $r \in \big(\meanwf{U}, \meanwf{V}\big)$ and let $W$ be an inclusion-maximal set in $\FinG$ with $U \subseteq W \subseteq V$ and $\meanwf{W} < r$.
	
	Now if $\meanwf{W} + \De \ge r$, we are done, so suppose $\meanwf{W} + \De < r$. Because $\meanwf{W} < r < \meanwf{V}$, $W \ne V$, so there is $W' \in \FinG$ with $W \subseteq W' \subseteq V$ such that $|W' \setminus W| = 1$. By the maximality of $W$, it must be that $\meanwf{W'} \ge r$, so $\meanwf{W} + \De < r \le \meanwf{W'}$. On the other hand, \labelcref{item:convexity_of_mean:increment_bound} implies that $\meanwf{W'} - \meanwf{W} \le 2 \De$, so $\meanwf{W'} < r + \De$ and hence $\meanwf{W'} \approx_{\De} r$.
\end{proof}

\begin{prop}\label{means_form_interval}
	For any $x \in X$, $\MSwGf[x]$ is a closed. Moreover, if $w$ is bounded, then, $\MSwGf[x]$ is an interval.
\end{prop}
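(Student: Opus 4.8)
The plan is to prove the two assertions separately. Closedness is elementary and uses nothing beyond the standing hypotheses: suppose $r = \lim_k r_k$ with each $r_k \in \MSwGf[x]$, and fix $\e > 0$. I would pick $k$ with $|r - r_k| < \e/2$, and then, since $r_k \in \MSwGf[x]$, a set $U \in \FinG$ with $x \in U$, $|U|_w > 2\e^{-1}$, and $|r_k - \meanwf{U}| < \e/2$. The triangle inequality then gives $|r - \meanwf{U}| < \e$ and $|U|_w > \e^{-1}$, so $U$ witnesses membership of $r$ in $\MSwGf[x]$ at scale $\e$; since $\e > 0$ was arbitrary, $r \in \MSwGf[x]$.

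For the interval property, assuming now that $w$ is bounded, it suffices to show convexity: whenever $r_0 < r_1$ both lie in $\MSwGf[x]$, every $r \in (r_0, r_1)$ does too. Fix such an $r$ and a target $\e > 0$, and choose $\delta \in (0, \e)$ small enough that also $\delta\,\Linf{f}\,\Linf{w} < \e$ and $\delta \le \min\set{\, r - r_0,\ \tfrac{1}{2}(r_1 - r)\,}$. Using $r_0 \in \MSwGf[x]$, I would first produce $U_0 \in \FinG$ with $x \in U_0$, $|U_0|_w > \delta^{-1}$, and $|r_0 - \meanwf{U_0}| < \delta$. Since $r_1 \in \MSwGf[x]$ supplies arbitrarily $w$-large $G$-connected sets containing $x$ whose $w$-mean is $\delta$-close to $r_1$, \cref{approx_invariance_of_means} applies (with $U_0$ as its singleton base and $U_1 \defeq U_0$), yielding $V \in \FinG$ with $U_0 \subseteq V$ and $|r_1 - \meanwf{V}| < 2\delta$.

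The choice of $\delta$ forces $\meanwf{U_0} < r_0 + \delta \le r \le r_1 - 2\delta < \meanwf{V}$, so $r \in [\meanwf{U_0}, \meanwf{V}]$, and I would feed the nested pair $U_0 \subseteq V$ into \cref{intermediate_value_property}. Since $f$ and $w$ are bounded, the error quantity $\De$ appearing there satisfies $\De \le \Linf{f}\,\Linf{w}\,/\,|U_0|_w < \delta\,\Linf{f}\,\Linf{w} < \e$, so I obtain $W \in \FinG$ with $U_0 \subseteq W \subseteq V$ and $|\meanwf{W} - r| \le \De < \e$. As $x \in U_0 \subseteq W$ and $|W|_w \ge |U_0|_w > \delta^{-1} > \e^{-1}$, the set $W$ witnesses $r \in \MSwGf[x]$ at scale $\e$; since $\e > 0$ was arbitrary, $r \in \MSwGf[x]$, completing the proof.

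The only real subtlety is the bookkeeping of errors: a single parameter $\delta$ has to simultaneously dominate the required $w$-size bound, the approximation accuracies at $r_0$ and at $r_1$, and the Intermediate Value error $\De$. It is precisely the estimate $\De \le \Linf{f}\,\Linf{w}\,/\,|U_0|_w$ — and hence the boundedness of $w$ — that makes $\De$ small; without that hypothesis $\De$ need not tend to $0$ as $U_0$ grows, which is exactly why the ``moreover'' clause is needed.
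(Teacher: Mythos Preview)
Your proof is correct and follows essentially the same approach as the paper's: both produce a $G$-connected set $U_0$ with mean on one side of the target, enlarge it via the (approximate) invariance of asymptotic means to a superset $V$ with mean on the other side, and then invoke the Intermediate Value Property (\cref{intermediate_value_property}) to land near the target, using boundedness of $w$ to control $\De$. The only cosmetic difference is that the paper argues by contradiction (assuming a gap $[a,b]$ in $\MSwGf[x]$ and producing a $W$ with $\meanwf{W} \in [a,b]$), whereas you verify convexity directly; the underlying mechanics are identical.
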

\begin{proof}
	It is clear from the asymptotic nature of its definition that $\MSwGf[x]$ is closed. 
	
	Assume $w$ is bounded. To show that $\MSwGf[x]$ is an interval, suppose towards a contradiction that there is a gap in between, i.e. there are $a,b \in [\inf f, \sup f]$ such that $\min \MSwGf[x] < a < b < \max \MSwGf[x]$ and $[a, b] \cap \MSwGf[x] = \0$. Whence, there is $N \in \N$ such that
	\begin{equation}\label{eq:gap}
		\text{for any } W \in \FinG \text{ with } x \in W \text{ and } |W|_w \ge N, \; \meanwf{W} \notin [a,b].
	\end{equation}
	We may take $N$ large enough so that $\frac{\Linf{f} \Linf{w}}{N} < \frac{b-a}{2}$.
	
	Because $\min \MSwGf[x] < a$, there is $U \in \FinG$ with $x \in U$, $|U|_w \ge N$, and $\meanwf{U} < a$. On the other hand, due to $\max \MSwGf[x] > b$ and \cref{invariance_of_means}, there is $V \in \FinG$ with $U \subseteq V$ and $\meanwf{V} > b$. But then, Intermediate Value Property \cref{intermediate_value_property} gives $W \in \FinG$ with $U \subseteq W \subseteq V$ and $\meanwf{W} \approx_\De \frac{a + b}{2}$, where
	\[
	\De \defeq \frac{\Linf{f} \Linf{w}}{|U|_w} \le \frac{\Linf{f} \Linf{w}}{N} < \frac{b-a}{2},
	\]
	so $\meanwf{W} \in [a,b]$, contradicting \labelcref{eq:gap}.
\end{proof}

Lastly, we observe that taking finite quotients can only shrink the set $\MSwGf$.

\begin{prop}\label{quotienting_shrinks_MS}
	For a finite equivalence relation $F \subseteq E_G$, identifying $\Xmod{F}$ with a transversal for $E_G$ and letting $\Gmod{F}$ be the quotient graph on $\Xmod{F}$, $\MC^{\wmod{F}}_{\Gmod{F}}(\meanwf{F} \rest{\Xmod{F}}) \subseteq \MSwGf$.
\end{prop}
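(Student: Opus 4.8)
The plan is to transfer the asymptotic means of $g\defeq\meanwf{F}\rest{\Xmod{F}}$ along $\Gmod{F}$ (weighted by $v\defeq\wmod{F}$) to asymptotic means of $f$ along $G$ through the selector identifying $\Xmod{F}$ with a transversal for $F$, using the convexity of means to repair the loss of connectivity caused by quotienting. First note that $g$ is bounded by $\Linf{f}$. The computation underpinning everything is the identity: for any finite nonempty $\Gmod{F}$-connected $A\subseteq\Xmod{F}$, its $F$-saturation $[A]_F=\bigdisjU_{a\in A}[a]_F\subseteq X$ is finite and
\[
\bigl|[A]_F\bigr|_w=|A|_v\qquad\text{and}\qquad\meanwf{[A]_F}=\frac{\sum_{a\in A}g(a)v(a)}{|A|_v},
\]
since $v(a)=\bigl|[a]_F\bigr|_w$ and $g(a)v(a)=\sum_{x\in[a]_F}f(x)w(x)$, so summing over the disjoint classes $[a]_F$, $a\in A$, recovers the numerator $\sum_{x\in[A]_F}f(x)w(x)$ and the denominator $\bigl|[A]_F\bigr|_w$. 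I would also check that, because $F\subseteq E_G$, the map $D\mapsto[D]_F$ carries $\Gmod{F}$-connected components exactly onto the $G$-connected components; in particular every $\Gmod{F}$-component has infinite $v$-weight, so $\MC^{v}_{\Gmod{F}}(g)$ is indeed defined.

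Now fix $u\in\Xmod{F}$ and $r\in\MC^{v}_{\Gmod{F}}(g)[u]$. By \cref{invariance_of_means} it suffices to produce, for each $\e>0$, a set $U\in\FinG$ with $u\in U$, $|U|_w>\e^{-1}$, and $\meanwf{U}\approx_{\e}r$. By the definition of asymptotic mean applied to $\Gmod{F}$ (and \cref{approx_invariance_of_means} in the quotient, to also force $A$ to contain any prescribed finite set), there is a finite $\Gmod{F}$-connected $A\ni u$ with $|A|_v$ as large as we like and $\bigl|r-\tfrac{\sum_{a\in A}g(a)v(a)}{|A|_v}\bigr|<\e/2$. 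By the identity above, $[A]_F$ is a finite subset of the single $G$-component $[u]_G$, contains $u$, has $|[A]_F|_w=|A|_v$ large, and $\meanwf{[A]_F}\approx_{\e/2}r$.

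The one genuine difficulty is that $[A]_F$ need not belong to $\FinG$: the classes $[a]_F$ ($a\in A$) get glued together by the $G$-edges that witness the $\Gmod{F}$-edges along a spanning tree of $\Gmod{F}\rest{A}$, but an individual class $[a]_F$ need not be connected inside $G\rest{[A]_F}$. To remedy this, enlarge $[A]_F$ to $U\defeq\bigcup_{a\in A}C_a$, where $C_a\supseteq[a]_F$ is a finite $G$-connected subset of $[u]_G$; then $U\in\FinG$, $u\in U$, $|U|_w\ge|A|_v$, and \cref{convexity_of_mean}\labelcref{item:convexity_of_mean:increment_bound} yields
\[
\bigl|\meanwf{U}-\meanwf{[A]_F}\bigr|\le 2\,\Linf{f}\,\frac{\bigl|U\setminus[A]_F\bigr|_w}{|[A]_F|_w}\le 2\,\Linf{f}\,\frac{\sum_{a\in A}\bigl|C_a\setminus[a]_F\bigr|_w}{|A|_v}.
\]
Thus it remains only to keep the padding weight $\sum_{a\in A}\bigl|C_a\setminus[a]_F\bigr|_w$ below $\tfrac{\e}{4\Linf{f}}|A|_v$, and I expect this to be the main obstacle, since the padding can in principle grow with $A$. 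I would handle it by fixing the connecting sets $C_a$ once and for all for every $F$-class in $[u]_G$ and then using the slack in the choice of $A$ afforded by \cref{approx_invariance_of_means}: begin building $A$ from $\Gmod{F}$-connected sets composed of $F$-classes that are already $G$-connected (where $C_a=[a]_F$ contributes no padding) and enlarge it so as to keep the $v$-weighted $g$-mean within $\e/2$ of $r$ while pushing $|A|_v$ large enough that the fixed, finite total padding of the remaining classes in $A$ falls below the bound. (When $G$ supergraphs $F$ this step is vacuous: each $[a]_F$ is already $G$-connected, so $[A]_F\in\FinG$ directly.) The resulting $U$ then witnesses $r\in\MSwGf[u]$, finishing the proof.
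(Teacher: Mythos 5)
Your core computation — that $|[A]_F|_w = |A|_{\wmod{F}}$ and that $\meanwf{{[A]_F}}$ equals the $\wmod{F}$-weighted mean of $\meanwf{F}\rest{\Xmod{F}}$ over $A$ — is right, and your strategy of pulling a $\Gmod{F}$-connected set $A$ back to the $F$-invariant set $[A]_F$ is exactly what the paper's one-line proof invokes. You have also correctly spotted a real subtlety: $F\subseteq E_G$ alone does not make $[A]_F$ a member of $\FinG$. The paper's proof cites the Observation that $\Gmod{F}$-connected sets pull back to $G$-connected ones, and that Observation is stated only under the hypothesis that $G$ \emph{supergraphs} $F$ (equivalently, each $F$-class is $G$-connected). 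Under that hypothesis your argument finishes at once with $U\defeq[A]_F$, and is the paper's.

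The padding argument you add to cover the remaining case is where a genuine gap lies. You need the ratio $\sum_{a\in A}|C_a\setminus[a]_F|_w\,/\,|A|_{\wmod{F}}$ to become small over the $\Gmod{F}$-connected sets $A$ produced by the quotient asymptotic means, but every new $F$-class that enters $A$ brings padding that can be comparable to (or much larger than) its own $\wmod{F}$-weight, so the numerator and denominator grow at the same rate. Your proposed remedy — seed $A$ with padding-free (already $G$-connected) $F$-classes and grow $|A|_{\wmod{F}}$ while the padding of the ``remaining'' classes stays fixed — is not justified: \cref{approx_invariance_of_means} provides no control over \emph{which} $F$-classes the enlarged set contains, nothing forces the newly added classes to be padding-free, and there may be no $G$-connected $F$-classes at all. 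As written, the proof does not close under the literal hypothesis. The clean resolution, which the paper's cited Observation tacitly assumes, is to require $G$ to supergraph $F$; with that, your argument reduces to the paper's.
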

\begin{proof}
	Follows from the fact that any $\Gmod{F}$-connected set lifts to a $G$-connected $F$-invariant set.
\end{proof}

\subsection{Asymptotic means along Borel and measurable graphs}

Equipping $X$ with a standard Borel structure, suppose that $G$ is a locally countable Borel graph on $X$ and that $w : X \to [0,\w)$ is a Borel weight-function such that $|C|_w = \w$ for every $G$-connected component $C \subseteq X$. Also, fix a bounded Borel function $f \in L^1(X, \mu_w)$.

By \cref{means_form_interval}, $\MSwGf[x]$ is a closed subset of $I_f \defeq [\inf f, \sup f]$ for every $x \in X$, so the assignment $x \mapsto \MSwGf[x]$ defines a map from $X$ to the hyperspace $\KC(I_f)$ of compact subsets of $I_f$. It is easy to check that this map is Borel using the Luzin--Novikov theorem, and it is also $E_G$-invariant due to \cref{invariance_of_means}.

\medskip

For an interval $I \subseteq \R$ and $\e > 0$, put $I \pm \e \defeq (\inf I - \e, \sup I + \e)$.

\begin{prop}\label{entire_fsr_with_correct_calc}
	For any $\e > 0$, there is a finite Borel $G$-connected subequivalence relation $F \subseteq E_G$ such that $\meanwf{F}[x] \in \MSwGf[x] \pm \e$ for every $x \in X$ modulo $E_G$-compressible.
\end{prop}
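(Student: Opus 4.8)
The plan is to realize $F$ as a $\Phi$-saturated Borel \fsr for a suitably chosen \emph{rich} Borel family $\Phi \subseteq \FinG$, and then appeal to \cref{existence_of_saturated_fsr} and \cref{rich-saturated_is_entire}. Write $M[x] \defeq \MSwGf[x]$; as recalled before the statement, $M[x]$ is a nonempty closed subset of $I_f \defeq [\inf f, \sup f]$, it is $E_G$-invariant, and $x \mapsto M[x]$ is a Borel map $X \to \KC(I_f)$. I will arrange that every element of $\Phi$ is a finite $G$-connected set $U$ with $\meanwf{U} \in M[U] \pm \e$. Since a $\Phi$-saturated \fsr $F$ is in particular \emph{within} $\Phi$, every $F$-class $A$ then satisfies $\meanwf{A} \in M[A]\pm\e = M[x]\pm\e$ for $x \in A$, i.e.\ $\meanwf{F}[x] \in \MSwGf[x] \pm \e$ on $\dom(F)$; and if $\Phi$ is rich, then after deleting the $E_G$-compressible set provided by \cref{existence_of_saturated_fsr} we get $\dom(F) = X$ by \cref{rich-saturated_is_entire}. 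So everything reduces to producing such a $\Phi$ that is rich.

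The engine for richness is the defining property of asymptotic means together with their invariance (\cref{defn:asymptotic_means}, \cref{invariance_of_means}, \cref{approx_invariance_of_means}): for every $x$, every $U_0 \in \FinG$ containing $x$, every $r \in M[x]$, and every $\de > 0$, there are arbitrarily $w$-large $V \in \FinG$ with $U_0 \subseteq V$ and $\meanwf{V} \approx_\de r$. This persists if $G$ is replaced by a subgraph $G' \subseteq G$, as long as one works inside a $G'$-connected component of infinite $w$-weight: any mean of a $G'$-connected set is a mean of a $G$-connected set, hence lies in $M[x]$ by \cref{invariance_of_means}. Together with the convexity of the mean (\cref{convexity_of_mean}), this settles richness in the ``uncrowded'' case: given $\Psi \subseteq \Phi$ with $\bigcup\Psi \ne X$ and $x \notin \bigcup\Psi$ in a $G$-component $C$, if $G\rest{C \setminus \bigcup\Psi}$ has an infinite-$w$-weight component $C'$, we grow inside $C'$ a fresh $w$-large $G$-connected $U$ with $\meanwf{U}$ within $\e/2$ of some point of $M[x]$; then $U \in \Phi$, so $U$ (with $V = \0$) witnesses richness. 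In the degenerate case $M[x] = I_f$ one has $\Phi = \FinG$, which is easily seen to be rich directly.

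The remaining, ``crowded'' case is the one I expect to be the main obstacle: $x$ lies in a \emph{finite}-$w$-weight $G$-component $D$ of $C \setminus \bigcup\Psi$ that is hemmed in by sets of $\Psi$, and simply appending the bounded chunk $D$ to a $G$-adjacent $V_0 \in \Psi$ may push $\meanwf{V_0}$ out of the \emph{open} interval $M[x] \pm \e$. (This is exactly why a naive maximal mean-good \fsr can strand a non-$E_G$-compressible remainder, as one already sees on a bi-infinite line with a two-valued $f$.) The fix I propose is to make $\Phi$ \emph{robust under bounded fresh appends}: members of $\Phi$ should be $w$-large with $f$-mean bounded strictly away from $\partial(M[\cdot] \pm \e)$ on \emph{both} sides, or else combinable, inside their own $G$-component, with a re-balancing set chosen via the Intermediate Value Property (\cref{intermediate_value_property}); then absorbing $D$ (possibly together with such a re-balancing extension pulled from $D$ or from a neighbouring fresh region) keeps the mean inside $M[\cdot] \pm \e$. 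Making this robust version of $\Phi$ precise and Borel is the bulk of the argument. It is convenient to begin the whole construction from a Borel partition of $X$ (modulo $E_G$-compressible) into finite $G$-connected pieces — which exists by the richness of $\FinG$ together with \cref{existence_of_saturated_fsr} and \cref{rich-saturated_is_entire} — so that in the crowded case the chunk $D$ is always available to be swallowed alongside a neighbouring tile. Once $\Phi$ is known to be rich, the proposition follows as in the first paragraph.
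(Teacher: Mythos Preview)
Your overall framework---build a $\Phi$-saturated Borel \fsr for a rich Borel $\Phi \subseteq \FinG$ and invoke \cref{existence_of_saturated_fsr} with \cref{rich-saturated_is_entire}---is exactly the paper's framework. The gap is in the choice of $\Phi$.

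You take $\Phi$ to be the collection of $G$-connected finite sets $U$ with $\meanwf{U} \in M[U] \pm \e$, and then correctly diagnose the failure mode: in the ``crowded'' case, appending a fresh chunk to some $V \in \Psi$ may push the mean out of the target interval, so this $\Phi$ is not rich. Your proposed remedies (robustness under bounded appends, re-balancing via the Intermediate Value Property, a preliminary tiling) are vague and not carried out; it is not clear they can be made to work without essentially rediscovering the paper's idea.

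The paper's key observation is that one should instead take $\Phi$ to be the collection of $U \in \FinG$ such that \emph{every} $U' \in \FinG$ with $U \subseteq U'$ satisfies $\meanwf{U'} \in M[U'] \pm \e$. This $\Phi$ is upward-hereditary under $G$-connected extension by construction, which makes richness immediate: given $\Psi \subseteq \Phi$ and $x \notin \bigcup\Psi$, connect $x$ by a shortest $G$-path to some $V \in \Psi$; the union $U$ of the path with $V$ is a $G$-connected superset of $V \in \Phi$, hence $U \in \Phi$, and $U \setminus V$ is the required nonempty fresh piece. That $\bigcup \Phi = X$ follows directly from the definition of asymptotic means: for each $x$ there is $N_x$ such that every $U \in \FinG$ with $x \in U$ and $|U|_w \ge N_x$ has $\meanwf{U} \in M[x] \pm \e$, and any such $U$ (together with all its $G$-connected supersets) lies in $\Phi$. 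No Intermediate Value Property, no case split, no preliminary tiling is needed. Your ``crowded case'' simply does not arise once $\Phi$ is chosen this way.
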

\begin{proof}
	Let $\Phi$ be the collection of all $U \in \FinG$ such that for any $U' \in \FinG$ with $U \subseteq U'$,
	\[
	\meanwf{U'} \in \MSwGf[U'] \pm \e.
	\]
	By \cref{existence_of_saturated_fsr}, throwing out an $E_G$-compressible set, we get a $\Phi$-saturated Borel \fsr $F$. Due to \cref{rich-saturated_is_entire}, to show that $F$ is entire, it is enough to show that $\Phi$ is rich.
	
	To this end, note that for every $x \in X$, there is $N_x \in \N$ such that for all $U \in \FinG$ with $x \in U$ and $|U|_w \ge N_x$, $\meanwf{U} \in \MSwGf[U] \pm \e$. Thus, $\bigcup \Phi = X$. 
	
	Fixing $\Psi \subseteq \Phi$ and $x \notin D \defeq \bigcup \Psi$, it is only worth considering the case when $x \in [D]_G$. Then, there is $U \in \FinG$ such that $x \in U$ and $U_0 \defeq U \cap D \in \Psi$. By the virtue of $U_0 \in \Phi$ and $U \supseteq U_0$, we have $\meanwf{U'} \in \MSwGf[U'] \pm \e$ for any $U' \in \FinG$ containing $U$. Whence, $U \in \Phi$ witnessing the richness of $\Phi$.
\end{proof}

Now let $\mu$ be an $E_G$-invariant Borel probability measure on $X$.

\begin{prop}
	If $G$ is hyperfinite, then $\meanwf{E_G}[x] \in \MSwGf[x]$ for $\mu$-a.e. $x \in X$.
\end{prop}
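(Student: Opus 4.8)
The plan is to produce a witness to the hyperfiniteness of $E_G$ all of whose classes lie in $\FinG$, and then read off the conclusion directly from the pointwise ergodic theorem (\cref{hyperfin_means}).

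First I would invoke \cref{graph-hyperfinite}: since $G$ is hyperfinite, it admits an increasing exhaustive sequence $(G_n)_n$ of component-finite bounded Borel subgraphs. Setting $E_n \defeq E_{G_n}$, the sequence $(E_n)_n$ is increasing, consists of finite Borel subequivalence relations of $E_G$, and satisfies $\bigcup_n E_n = E_{\bigcup_n G_n} = E_G$; hence it is a witness to the hyperfiniteness of $E_G$. The key gain over an arbitrary witness is that each $E_n$-class is $G_n$-connected, hence $G$-connected, and therefore belongs to $\FinG$.

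Next, applying \cref{hyperfin_means} to the witness $(E_n)_n$, for $\mu$-a.e. $x \in X$ the pointwise limit $\meanwf{E_G}[x] = \lim_n \meanwf{E_n}[x]$ exists. I would fix such an $x$, let $C$ denote its $G$-connected component, and set $r \defeq \meanwf{E_G}[x]$. Since the classes $[x]_{E_n}$ increase to $C$ and $|C|_w = \w$ by the standing hypothesis, we have $|[x]_{E_n}|_w \nearrow \w$. Given $\e > 0$, it then suffices to choose $n$ large enough that simultaneously $|\meanwf{E_n}[x] - r| < \e$ and $|[x]_{E_n}|_w > \e^{-1}$: the set $U \defeq [x]_{E_n}$ lies in $\FinG$, contains $x$, has $|U|_w > \e^{-1}$, and satisfies $|r - \meanwf{U}| < \e$, so it witnesses that $r \in \MSwGf[x]$ in the sense of \cref{defn:asymptotic_means}. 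As $\e$ was arbitrary, this completes the argument.

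There is essentially no obstacle here beyond assembling the right ingredients; if anything needs care it is the two observations that a hyperfinite Borel graph admits a hyperfiniteness witness with $G$-connected classes — so that these classes are legitimate test sets for $\MSwGf[x]$ — and that $|[x]_{E_n}|_w \to |C|_w = \w$, which is immediate from monotone convergence applied to the counting sums $\sum_{y \in [x]_{E_n}} w(y)$.
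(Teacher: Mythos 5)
Your proof is correct and is exactly the filled-in version of the paper's own (one-line) proof, which merely cites \cref{graph-hyperfinite}. The point — producing a hyperfiniteness witness $(E_{G_n})_n$ whose classes are $G$-connected, invoking the witness-independence from \cref{hyperfin_means}, and then reading off membership in $\MSwGf[x]$ from the increasing classes $[x]_{E_{G_n}}$ with $|[x]_{E_{G_n}}|_w \nearrow |C|_w = \w$ — is precisely what the paper leaves implicit.
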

\begin{proof}
	This is simply due to \cref{graph-hyperfinite}.
\end{proof}

\begin{prop}\label{between_min_mix}
	If $G$ is $\mu$-ergodic and $\mu_w(X) < \w$, then $\inf \MSwGf \le \frac{1}{\mu_w(X)} \int_X f d \mu_w \le \sup \MSwGf$. In particular, $\frac{1}{\mu_w(X)} \int_X f d \mu_w \in \MSwGf$.
\end{prop}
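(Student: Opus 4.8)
The plan is to feed the ``correct-computation'' subequivalence relation produced by \cref{entire_fsr_with_correct_calc} into the averaging identity of \cref{fin_means}. First I would use that $\MSwGf$ is $E_G$-invariant (\cref{invariance_of_means}) together with the $\mu$-ergodicity of $G$ to see that $\MSwGf[x]$ is $\mu$-a.e.\ equal to a fixed closed subset of $I_f$, which I continue to denote by $\MSwGf$; set $m \defeq \inf \MSwGf$ and $M \defeq \sup \MSwGf$, and note that we may assume $0 < \mu_w(X) < \w$ (there being nothing to prove when $\mu_w(X) = 0$).

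Next, I would fix $\e > 0$ and apply \cref{entire_fsr_with_correct_calc} to obtain a finite Borel $G$-connected subequivalence relation $F \subseteq E_G$ with $\meanwf{F}[x] \in \MSwGf[x] \pm \e$ for every $x \in X$ modulo $E_G$-compressible. Because $\mu$ is an $E_G$-invariant probability measure, every $E_G$-compressible Borel set is $\mu$-null, so in fact $\meanwf{F}[x] \in \MSwGf \pm \e \subseteq (m - \e,\, M + \e)$ for $\mu$-a.e.\ $x$. Since $F$ is finite and $\mu$-preserving (being a subequivalence relation of the $\mu$-invariant $E_G$) and $f \in L^1(X,\mu_w)$, the identity \labelcref{item:fin_means:equality} gives $\int_X f\, d\mu_w = \int_X \meanwf{F}\, d\mu_w$; dividing by $\mu_w(X)$ and using the pointwise bound on $\meanwf{F}$ I would conclude
\[
m - \e \;\le\; \frac{1}{\mu_w(X)} \int_X f\, d\mu_w \;\le\; M + \e,
\]
and then let $\e \to 0$ to obtain the two asserted inequalities. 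The ``in particular'' clause then follows because $\MSwGf$ is an interval by \cref{means_form_interval}.

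I do not expect a genuine obstacle here; the content is bookkeeping rather than difficulty. The two points to be careful about are: (i) that $E_G$-compressible Borel sets are $\mu$-null, so that the ``modulo $E_G$-compressible'' conclusion of \cref{entire_fsr_with_correct_calc} upgrades to ``modulo $\mu$-null'' and the constructed $F$ can actually be used against the measure $\mu$; and (ii) tracking the normalization by $\mu_w(X)$ correctly, so that the single mean-value identity of \cref{fin_means} yields a genuine two-sided estimate rather than just an equality of integrals.
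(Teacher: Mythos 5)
Your proposal is correct and follows essentially the same route as the paper: use ergodicity plus $E_G$-invariance to make $\MSwGf$ constant a.e., invoke \cref{entire_fsr_with_correct_calc} (upgrading ``modulo $E_G$-compressible'' to ``modulo $\mu$-null'' exactly as you note), and then apply \labelcref{item:fin_means:equality} to transfer the integral of $f$ to the integral of $\meanwf{F}$, letting $\e \to 0$. The only cosmetic difference is that the paper proves one inequality and says the other is analogous, whereas you handle both at once; and for the ``in particular'' clause you correctly point to \cref{means_form_interval}, though it is worth keeping in mind that this step uses that $w$ is bounded (a hypothesis of \cref{means_form_interval} that is left tacit at this point in the paper as well).
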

\begin{proof}
	By the ergodicity of $E_G$ and $E_G$-invariance of the map $x \mapsto \MSwGf[x]$, it must be constant a.e., so we may assume that $\MSwGf$ is some closed subset of $I_f$. We only show that $\int_X f d \mu_w \le \mu_w(X) \cdot \sup \MSwGf$ as the other inequality is proven analogously. 
	
	Take an arbitrary $\e > 0$ and, applying \cref{entire_fsr_with_correct_calc}, get a finite Borel $G$-connected subequivalence relation $F \subseteq E_G$ with $\meanwf{F}[x] < \sup \MSwGf + \e$ for $\mu$-a.e. But then, \labelcref{item:fin_means:equality} implies that $\int_X f \, d \mu_w = \int_X \meanwf{F} \, d \mu_w \le \mu_w(X) \cdot (\sup \MSwGf + \e)$, so we are done because $\e$ is arbitrary.
\end{proof}


\section{Finitizing cuts and hyperfiniteness}\label{sec:finitizing_cuts}

\subsection{Vanishing sequences of finitizing cuts}

Let $X$ be a standard Borel space and $G$ a locally countable Borel graph on it.

\begin{defn}
	Call a subset $C \subseteq X$ (resp. $H \subseteq X^2$) a \emph{finitizing vertex-cut} (resp.  \emph{edge-cut}) \emph{for $G$} if $G \rest{X \setminus C}$ (resp. $G \setminus H$) is component-finite. 
\end{defn}

Call a sequence of sets \emph{vanishing} if it is decreasing and has empty intersection.

\begin{lemma}\label{vanishing_vertex-cuts=edge-cuts}
	$G$ admits a vanishing sequence of finitizing Borel vertex-cuts if and only if it admits a vanishing sequence of finitizing Borel edge-cuts.
\end{lemma}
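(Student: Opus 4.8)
The plan is to pass back and forth between a vertex-cut and its set of incident edges; in each direction the construction is immediate, and the only substantive point is that the new sequence still vanishes.

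\textbf{Vertex-cuts to edge-cuts.} Given a vanishing sequence $(C_n)_n$ of finitizing Borel vertex-cuts, I would set $H_n \defeq \EdgsBtw{C_n}{X}$, the Borel set of all $G$-edges incident to $C_n$ (this is symmetric since $G$ is). Then $G \setminus H_n$ is just $G \rest{X \setminus C_n}$ together with the now-isolated vertices of $C_n$, hence component-finite; the sequence $(H_n)_n$ is decreasing because $(C_n)_n$ is; and $\bigcap_n H_n = \0$, since an edge has only two endpoints, neither of which lies in $\bigcap_n C_n = \0$, so (the $C_n$ being nested) both endpoints, and hence the edge itself, leave $C_n$ and $H_n$ for all large $n$.

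\textbf{Edge-cuts to vertex-cuts.} Given a vanishing sequence $(H_n)_n$ of finitizing Borel edge-cuts — which we may take to be symmetric, replacing each $H_n$ by $H_n \cup (-H_n) \subseteq G$ without changing $G \setminus H_n$ — I would set $C_n \defeq \proj_X(H_n)$, the Borel set of endpoints of edges of $H_n$. Any $G$-edge with both endpoints outside $C_n$ avoids $H_n$, so $G \rest{X \setminus C_n} \subseteq G \setminus H_n$ is component-finite, and $(C_n)_n$ is decreasing because $(H_n)_n$ is. The remaining point is $\bigcap_n C_n = \0$: a vertex $x$ in this intersection meets some edge $e_n \in H_n$ for every $n$, and since no single edge belongs to all the nested sets $H_n$ (their intersection being empty), the edges $e_n$ must take infinitely many distinct values, forcing $x$ to have infinite $G$-degree.

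\textbf{The main obstacle.} The step I expect to require the most care is precisely this last one. The argument above only shows $\bigcap_n C_n \subseteq V_\infty$, where $V_\infty$ is the set of vertices of infinite $G$-degree. When $G$ is locally finite, $V_\infty = \0$ and the proof is complete as written; for $G$ that is merely locally countable the naive endpoint construction can fail to vanish near $V_\infty$, so one either uses the local-finiteness hypothesis or must build the vertex-cuts more delicately around infinite-degree vertices. Everything else — Borelness, the component-finiteness checks, and monotonicity of the two sequences — is routine.
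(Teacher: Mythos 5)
Your forward direction is the paper's: pass from $C_n$ to the incident edges $\EdgsBtw{C_n}{X}$, and your check that the new sequence vanishes (using that the $C_n$ are nested) is correct --- indeed it is a step the paper's one-line proof leaves implicit. Your backward direction also matches the paper's (take $\dom(H_n)$), and the obstacle you isolate at the end is real: the paper's one-liner has exactly this gap.

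In fact the difficulty is not repairable: for merely locally countable $G$ the backward implication is simply false, so no ``more delicate'' construction exists. Take $E$ to be an aperiodic, non-smooth, hyperfinite countable Borel equivalence relation (e.g.\ eventual equality on $2^{\N}$) and put $G \defeq E \setminus \Id_X$, so $E_G = E$ and every vertex has countably infinite degree. If $(F_n)_n$ witnesses hyperfiniteness, then $H_n \defeq G \setminus F_n$ is a vanishing sequence of finitizing Borel edge-cuts: the components of $G \setminus H_n = G \cap F_n$ are $F_n$-classes, and $\bigcap_n H_n = G \setminus E = \0$. On the other hand, $G$ has no vanishing sequence $(C_n)_n$ of finitizing Borel vertex-cuts. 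The components of $G \rest{X \setminus C_n}$ are the sets $[x]_E \setminus C_n$, so putting $A_n \defeq X \setminus C_n$ would give an increasing sequence of Borel sets with $\bigcup_n A_n = X$ and each $A_n$ meeting every $E$-class in a finite set; but then the map sending $x$ to $A_{\theta(x)} \cap [x]_E$, where $\theta(x)$ is the least $n$ with $A_n \cap [x]_E \ne \0$, is an $E$-invariant Borel map into $\FinX$ whose fibers are single $E$-classes, which would make $E$ smooth.

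So the lemma as printed genuinely requires local finiteness. This is harmless for the paper, since every invocation of the lemma is in a locally finite setting (\cref{hyperfinite->vanishing_cuts,char_of_hyperfiniteness_via_vanishing_cuts}; and in \cref{fvp_vs_fep} the hypothesis $C_\mu(G) < \w$ forces local finiteness a.e.), but your final paragraph correctly locates the one spot where the one-line argument is incomplete, and the right fix is to add a local finiteness hypothesis to the statement rather than to hunt for a cleverer vertex-cut near $V_\infty$.
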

\begin{proof}
	For a vertex-cut $C$, $\EdgsBtw{C}{X}$ is an edge-cut; and conversely, for an edge-cut $H$, $\dom(H)$ is a vertex-cut.
\end{proof}

\begin{prop}\label{vanishing_vertex-cuts->hyperfinite}
	If $G$ admits a vanishing sequence $(C_n)_n$ of Borel finitizing vertex-cuts, then it is hyperfinite.
\end{prop}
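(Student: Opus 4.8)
The plan is to build an explicit witness to the hyperfiniteness of $E_G$ directly out of the vanishing sequence $(C_n)_n$. Since $(C_n)_n$ is decreasing with $\bigcap_n C_n = \0$, the sets $X \setminus C_n$ are increasing and exhaust $X$. For each $n$, the induced subgraph $G \rest{X \setminus C_n}$ is a component-finite locally countable Borel graph on $X$ (having no edges incident to $C_n$), so I would set $F_n \defeq E_{G \rest{X \setminus C_n}}$, the relation of being in the same $G \rest{X \setminus C_n}$-connected component. This is a finite Borel equivalence relation on $X$: it is Borel because the connectedness relation of any locally countable Borel graph is Borel (it is the countable union of the Borel relations ``joined by a path of length $\le k$'', each of which is a projection along a set with countable sections), and finite because $G \rest{X \setminus C_n}$ is component-finite while the points of $C_n$ form singleton classes.

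First I would check that $(F_n)_n$ is increasing. If $x \mathrel{F_n} y$ with $x \neq y$, then $x$ and $y$ are joined by a path in $G \rest{X \setminus C_n}$; since $C_{n+1} \subseteq C_n$ we have $G \rest{X \setminus C_n} \subseteq G \rest{X \setminus C_{n+1}}$, so the same path witnesses $x \mathrel{F_{n+1}} y$. Hence $F_n \subseteq F_{n+1}$.

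Next I would check that $\bigcup_n F_n = E_G$. The inclusion $\subseteq$ is immediate since each $F_n \subseteq E_G$. For $\supseteq$, take $x \mathrel{E_G} y$ and fix a $G$-path from $x$ to $y$; it has only finitely many vertices $v_0, \dots, v_k$, and since $\bigcap_n C_n = \0$ and the $C_n$ are decreasing, there is an $n$ with $\set{v_0, \dots, v_k} \subseteq X \setminus C_n$. The path then lies in $G \rest{X \setminus C_n}$, so $x \mathrel{F_n} y$. Therefore $(F_n)_n$ is a witness to the hyperfiniteness of $E_G$, i.e., $G$ is hyperfinite.

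I do not expect a genuine obstacle here: the argument is entirely bookkeeping once one records that the connectedness relation of a locally countable Borel graph is Borel, that restricting a graph to a Borel set preserves component-finiteness, and the two monotonicity/exhaustion facts above. As an alternative route that avoids handling the cut points separately, one could first replace $(C_n)_n$ by a vanishing sequence of finitizing Borel edge-cuts $(H_n)_n$ via \cref{vanishing_vertex-cuts=edge-cuts}, and take $F_n \defeq E_{G \setminus H_n}$; the verification is verbatim the same, using $H_{n+1} \subseteq H_n$ for monotonicity and the finiteness of paths together with $\bigcap_n H_n = \0$ for exhaustion.
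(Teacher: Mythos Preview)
Your proof is correct and follows essentially the same approach as the paper: define $F_n$ as the connectedness relation of $G \rest{X \setminus C_n}$, observe these are finite Borel equivalence relations that increase with $n$, and use the vanishing of the $C_n$ to see they exhaust $E_G$. The paper's proof is just a terser version of exactly this argument.
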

\begin{proof}
	For each $n$, define an equivalence relation $F_n$ on $X$ by
	$$
	x F_n y \defequiv \text{$x$ and $y$ are $G \rest{X \setminus C_n}$-connected}.
	$$
	Clearly the $F_n$ are finite and increasing. Moreover, because the $C_n$ are vanishing, the $F_n$ union up to $E_G$.
\end{proof}

The converse of the last proposition is not true in general, see \cref{examples:hyperfinite_doesnot_imply_fvp=0}, but it is true for locally finite graphs.

\begin{prop}\label{hyperfinite->vanishing_cuts}
	Suppose that $G$ is locally finite. If $G$ is hyperfinite, then it admits a vanishing sequence of Borel finitizing vertex-cuts, as well as a vanishing sequence of Borel finitizing edge-cuts.
\end{prop}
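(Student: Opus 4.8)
The plan is to build the vanishing sequence of vertex-cuts directly and then convert it to edge-cuts via \cref{vanishing_vertex-cuts=edge-cuts}. Since $G$ is hyperfinite, \cref{graph-hyperfinite} supplies an increasing exhaustive sequence $(G_n)_n$ of component-finite Borel subgraphs of $G$. I would then set
\[
C_n \defeq \dom(G \setminus G_n),
\]
the set of vertices incident to some $G$-edge not lying in $G_n$; this is Borel because $G$ is locally countable (e.g.\ by \cref{relative_enum_of_each_class}).

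First I would check that $(C_n)_n$ is vanishing: it is decreasing because $(G_n)_n$ is increasing, and $\bigcap_n C_n = \0$ precisely because $G$ is \emph{locally finite} — each $x$ meets only finitely many $G$-edges, all of which lie in $G_N$ for $N$ large enough, so $x \notin C_n$ for $n \ge N$. (This is the one place local finiteness enters, and it is sharp, since the converse of \cref{vanishing_vertex-cuts->hyperfinite} fails for general locally countable $G$.) The main step is then to verify that each $C_n$ is a finitizing vertex-cut. Here I would fix $x \notin C_n$, so that every $G$-edge at $x$ lies in $G_n$, and argue that any path $e_0,\dots,e_k$ in $G \rest{X \setminus C_n}$ starting at $x$ uses only edges of $G_n$: every vertex of such a path lies in $X \setminus C_n$, hence has all of its incident $G$-edges in $G_n$, so each $e_i$ — being a $G$-edge incident to a vertex off $C_n$ — lies in $G_n$. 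Therefore the $G \rest{X \setminus C_n}$-component of $x$ is contained in its $G_n$-component, which is finite; thus $G \rest{X \setminus C_n}$ is component-finite.

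Finally, \cref{vanishing_vertex-cuts=edge-cuts} turns $(C_n)_n$ into a vanishing sequence of Borel finitizing edge-cuts; concretely $H_n \defeq \EdgsBtw{C_n}{X}$ works, since it is decreasing and $\bigcap_n H_n = \0$ because every edge has both endpoints outside $C_n$ for all large $n$. I do not anticipate a serious obstacle; the only points requiring care are the vanishing property (which is what forces the local-finiteness hypothesis) and the verification that $C_n$ is finitizing, i.e.\ that a path avoiding $C_n$ cannot escape a single $G_n$-component.
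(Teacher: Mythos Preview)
Your proposal is correct and is essentially the same argument as the paper's. The paper takes a hyperfiniteness witness $(F_n)_n$ for $E_G$ and sets $C_n \defeq \{x : N_G(x) \nsubseteq [x]_{F_n}\}$; since the proof of \cref{graph-hyperfinite} produces $G_n = G \cap F_n$, your $C_n = \dom(G \setminus G_n)$ is literally the same set, and the verification that it is a vanishing finitizing vertex-cut (followed by \cref{vanishing_vertex-cuts=edge-cuts}) is identical.
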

\begin{proof}
	For any witness $(F_n)_n$ to the hyperfiniteness of $E_G$, the sets
	$$
	C_n \defeq \set{x \in X : N_G(x) \nsubseteq [x]_{F_n}}
	$$
	form a vanishing sequence of finitizing vertex-cuts. We get edge-cuts from \cref{vanishing_vertex-cuts=edge-cuts}.
\end{proof}

We have obtained the following.

\begin{cor}\label{char_of_hyperfiniteness_via_vanishing_cuts}
	For a locally finite Borel graph $G$, the following are equivalent:
	\begin{enumerate}[(1)]
		\item \label{item:char_of_hyperfiniteness_via_vanishing_cuts:defn} $G$ is hyperfinite.
		
		\item \label{item:char_of_hyperfiniteness_via_vanishing_cuts:vanishing_vertex-cuts} $G$ admits a vanishing sequence of Borel vertex-cuts.
		
		\item \label{item:char_of_hyperfiniteness_via_vanishing_cuts:vanishing_edge-cuts} $G$ admits a vanishing sequence of Borel edge-cuts.
	\end{enumerate}
\end{cor}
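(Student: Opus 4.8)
The plan is to close the cycle of implications \labelcref{item:char_of_hyperfiniteness_via_vanishing_cuts:defn}$\imp$\labelcref{item:char_of_hyperfiniteness_via_vanishing_cuts:vanishing_vertex-cuts}$\imp$\labelcref{item:char_of_hyperfiniteness_via_vanishing_cuts:vanishing_edge-cuts}$\imp$\labelcref{item:char_of_hyperfiniteness_via_vanishing_cuts:defn}, with each arrow being an appeal to a result already established above; here \qts{vertex-cut} and \qts{edge-cut} are read as the finitizing ones, so that the cited statements apply verbatim.

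For \labelcref{item:char_of_hyperfiniteness_via_vanishing_cuts:defn}$\imp$\labelcref{item:char_of_hyperfiniteness_via_vanishing_cuts:vanishing_vertex-cuts} I would invoke \cref{hyperfinite->vanishing_cuts}: given a witness $(F_n)_n$ to the hyperfiniteness of $E_G$, the sets $C_n \defeq \set{x \in X : N_G(x) \nsubseteq [x]_{F_n}}$ are a vanishing sequence of Borel finitizing vertex-cuts, and this is exactly where local finiteness of $G$ is used, since it is what makes each $G \rest{X \setminus C_n}$ component-finite. For \labelcref{item:char_of_hyperfiniteness_via_vanishing_cuts:vanishing_vertex-cuts}$\shortiff$\labelcref{item:char_of_hyperfiniteness_via_vanishing_cuts:vanishing_edge-cuts} I would quote \cref{vanishing_vertex-cuts=edge-cuts}, i.e. pass from a vanishing sequence $(C_n)_n$ of vertex-cuts to the sequence $(\EdgsBtw{C_n}{X})_n$ of edge-cuts and back via $(\dom(H_n))_n$; both assignments are inclusion-monotone and carry empty intersections to empty intersections (for the edge-to-vertex direction this again uses local finiteness, via pigeonhole on the finitely many edges at a vertex).

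For the remaining arrow \labelcref{item:char_of_hyperfiniteness_via_vanishing_cuts:vanishing_edge-cuts}$\imp$\labelcref{item:char_of_hyperfiniteness_via_vanishing_cuts:defn} I would first apply \cref{vanishing_vertex-cuts=edge-cuts} to return to a vanishing sequence of finitizing vertex-cuts and then apply \cref{vanishing_vertex-cuts->hyperfinite}, whose proof takes the finite increasing Borel equivalence relations given by $G\rest{X\setminus C_n}$-connectedness and notes that they union up to $E_G$ precisely because the $C_n$ vanish; alternatively one can run this last argument directly on the edge-cuts, defining $F_n$ by $(G\setminus H_n)$-connectedness, which sidesteps both the return trip through vertex-cuts and the use of local finiteness. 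Since \cref{vanishing_vertex-cuts=edge-cuts,vanishing_vertex-cuts->hyperfinite,hyperfinite->vanishing_cuts} are all already proved, I do not expect a genuine obstacle; the one point deserving a word of care is just the bookkeeping about which of the three directions actually needs local finiteness — namely only the step producing cuts from hyperfiniteness and the edge-to-vertex half of the equivalence.
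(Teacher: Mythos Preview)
Your proposal is correct and follows the paper's approach exactly: the corollary is stated immediately after \cref{vanishing_vertex-cuts=edge-cuts,vanishing_vertex-cuts->hyperfinite,hyperfinite->vanishing_cuts} with the phrase ``We have obtained the following,'' and your cycle of citations is precisely how those pieces fit together. One small correction to your commentary: in \labelcref{item:char_of_hyperfiniteness_via_vanishing_cuts:defn}$\imp$\labelcref{item:char_of_hyperfiniteness_via_vanishing_cuts:vanishing_vertex-cuts}, local finiteness is used to make the sequence $(C_n)_n$ \emph{vanishing} (each finite $N_G(x)$ is eventually absorbed into $[x]_{F_n}$), not to make $G\rest{X\setminus C_n}$ component-finite, which already follows from the finiteness of $F_n$.
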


\subsection{Finitizing cut price}

Let $(X,\mu)$ be a standard measure space and let $G$ be a locally countable measure-preserving Borel graph on it. 

\begin{defn}
	The \emph{finitizing vertex-cut price} and \emph{the finitizing edge-cut price} of $G$ are the following quantities, respectively:
	\begin{align*}
		\fvp(G) 
		&\defeq 
		\inf \set{\mu(C) : C \subseteq X \text{ is a Borel finitizing vertex-cut for $G$}},
		\\
		\fep(G) 
		&\defeq 
		\inf \set{C_\mu(H) : H \subseteq G \text{ is Borel finitizing edge-cut for $G$}}.
	\end{align*}
\end{defn}

A sequence $(C_n)_n$ of subsets of $X$ is said to be \emph{$\mu$-vanishing} if it is decreasing and its intersection is $\mu$-null. Similarly, a sequence $(H_n)_n$ of subsets of $X^2$ is said to be \emph{$\mu$-vanishing} if it is decreasing and its intersection is of $\mu$-cost $0$.

\begin{lemma}\label{fcp=0->vanishing_cuts}
	If $\fvp(G) = 0$, then there is a $\mu$-vanishing sequence $C_n \subseteq X$ of Borel finitizing vertex-cuts for $G$. Similarly, if $\fep(G) = 0$, then there is a $\mu$-vanishing sequence $(H_n)_n$ of Borel finitizing edge-cuts for $G$.
\end{lemma}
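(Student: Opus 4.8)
The plan is, in both cases, to start from a sequence of finitizing cuts of rapidly decreasing measure (resp.\ $\mu$-cost) and pass to \emph{tails}: replace the $n$-th cut by the union of all cuts with index $\ge n$. This makes the sequence decreasing, while the geometric decay of the measures (resp.\ costs) survives because $\mu$ and $C_\mu$ are countably subadditive; and the finitizing property is inherited because each tail set \emph{contains} the original $n$-th cut, so its complement shrinks.

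Concretely, for the vertex-cut statement I would use $\fvp(G) = 0$ to fix, for each $n \in \N$, a Borel finitizing vertex-cut $C_n' \subseteq X$ with $\mu(C_n') \le 2^{-n}$, and set $C_n \defeq \bigcup_{k \ge n} C_k'$. Then $(C_n)_n$ is a decreasing sequence of Borel sets, and since $X \setminus C_n \subseteq X \setminus C_n'$ we have $G \rest{X \setminus C_n} \subseteq G \rest{X \setminus C_n'}$; as a subgraph of a component-finite graph is component-finite (every $E_{G \rest{X \setminus C_n}}$-class lies inside a finite $E_{G \rest{X \setminus C_n'}}$-class), each $C_n$ is again a finitizing vertex-cut. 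Finally $\mu(C_n) \le \sum_{k \ge n} 2^{-k} \le 2^{-n+1}$, so $\mu\big(\bigcap_n C_n\big) = 0$, i.e.\ $(C_n)_n$ is $\mu$-vanishing. The edge-cut statement would be handled verbatim the same way: from $\fep(G) = 0$ pick Borel finitizing edge-cuts $H_n' \subseteq G$ with $C_\mu(H_n') \le 2^{-n}$, put $H_n \defeq \bigcup_{k \ge n} H_k'$, note that $G \setminus H_n \subseteq G \setminus H_n'$ is component-finite, and use that $C_\mu$ is monotone and countably subadditive (both immediate from $C_\mu(H) = \tfrac12 \int_X \deg_H \, d\mu$, using $\deg_H \le \deg_{H'}$ for $H \subseteq H'$ and monotone convergence) to get $C_\mu(H_n) \le 2^{-n+1}$ and hence $C_\mu\big(\bigcap_n H_n\big) = 0$.

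I do not expect any real obstacle here; the lemma is a soft normalization step. The only points that need a moment's care are: (i) to take \emph{unions} of the tails rather than intersections --- with intersections the complements would grow and there would be no reason for them to stay component-finite; and (ii) to record that $C_\mu$ is monotone and countably subadditive, so that summing the geometric bounds is legitimate.
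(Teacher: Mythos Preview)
Your proof is correct and essentially identical to the paper's own argument: the paper likewise picks finitizing cuts $C_n$ with $\mu(C_n) < 2^{-n}$, defines the tails $D_n \defeq \bigcup_{i > n} C_i$, and observes that these are decreasing finitizing cuts with $\mu(D_n) < 2^{-n}$, treating the edge-cut case analogously. Your write-up is in fact more detailed, explicitly justifying the monotonicity and countable subadditivity of $C_\mu$ that the paper leaves implicit.
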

\begin{proof}
	Let $(C_n)_n$ be a sequence of Borel finitizing vertex-cuts with $\mu(C_n) < 2^{-n}$. Put $D_n \defeq \bigcup_{i > n} C_i$, so the $D_n$ are decreasing, and each $D_n$ is a finitizing vertex-cut with $\mu(D_n) < 2^{-n}$; in particular, the $D_n$ are $\mu$-vanishing. The part about $\fep(G)$ is proven analogously. 
\end{proof}

\begin{lemma}\label{fvp_vs_fep}
	Let $G$ be a locally countable measure-preserving graph on $(X,\mu)$.
	\begin{enumref}{a}{fvp_vs_fep}
		\item \label{part:fvp_vs_fep:fvp<2fep} $\fvp(G) \le 2 \fep(G)$.
		
		\item \label{part:fvp_vs_fep:fep<deg_fvp} $\fep(G) \le \deg(G) \fvp(G)$.
		
		\item \label{part:fvp_vs_fep:fvp=0->fep=0} If $C_\mu(G) < \w$, then $\fvp(G) = 0$ implies $\fep(G) = 0$.
	\end{enumref}
\end{lemma}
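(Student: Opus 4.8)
The plan is to reduce all three inequalities to a comparison, performed cut by cut, between a finitizing vertex-cut $C$ and the finitizing edge-cut $\EdgsBtw{C}{X}$ it induces — and conversely between a finitizing edge-cut $H$ and the finitizing vertex-cut $\dom(H)$ it induces — using the two constructions already recorded in the proof of \cref{vanishing_vertex-cuts=edge-cuts}, the bookkeeping identity $\int_X \deg_H\,d\mu = 2C_\mu(H)$, and the $E_G$-invariance of $\mu$. Parts \labelcref{part:fvp_vs_fep:fvp<2fep} and \labelcref{part:fvp_vs_fep:fep<deg_fvp} will fall out of a single such comparison; only part \labelcref{part:fvp_vs_fep:fvp=0->fep=0} will require a genuine limiting argument, via \cref{fcp=0->vanishing_cuts} and dominated convergence.

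For \labelcref{part:fvp_vs_fep:fvp<2fep} I would take any Borel finitizing edge-cut $H$; since $\deg_H \ge 1$ on $\dom(H)$, we get $\mu(\dom(H)) \le \int_X \deg_H\,d\mu = 2C_\mu(H)$, and as $\dom(H)$ is a finitizing vertex-cut this gives $\fvp(G)\le 2C_\mu(H)$, so the infimum over $H$ yields $\fvp(G)\le 2\fep(G)$. For \labelcref{part:fvp_vs_fep:fep<deg_fvp} I would take any Borel finitizing vertex-cut $C$ and set $H \defeq \EdgsBtw{C}{X}$; then $\deg_H(x)=\deg_G(x)$ for $x\in C$ (every $G$-edge at $x$ is incident to $C$) and $\deg_H(x)=|N_G(x)\cap C|$ for $x\notin C$, so $2C_\mu(H)=\int_C\deg_G\,d\mu+\int_{X\setminus C}|N_G(x)\cap C|\,d\mu(x)$. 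The key step is to invoke the $E_G$-invariance of $\mu$ — equivalently, that flipping each edge of $G$ via $(x,y)\mapsto(y,x)$ is $\mu$-preserving — to conclude $\int_X|N_G(x)\cap C|\,d\mu(x)=\int_C\deg_G\,d\mu$; then the second integral above is $\le\int_C\deg_G\,d\mu\le\deg(G)\mu(C)$ (where $\deg(G)$ denotes the supremum of the vertex degrees of $G$), so $C_\mu(H)\le\deg(G)\mu(C)$, and the infimum over $C$ gives $\fep(G)\le\deg(G)\fvp(G)$. I expect the only delicate point in these two parts to be correctly stating and using this mass-transport (invariance) identity for a graph sitting inside $E_G$; it is standard and should just be cited.

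For \labelcref{part:fvp_vs_fep:fvp=0->fep=0} I would assume $C_\mu(G)<\w$ and $\fvp(G)=0$, use \cref{fcp=0->vanishing_cuts} to fix a $\mu$-vanishing sequence $(C_n)_n$ of Borel finitizing vertex-cuts, and set $C_\infty\defeq\bigcap_n C_n$ (so $\mu(C_\infty)=0$) and $H_n\defeq\EdgsBtw{C_n}{X}$, which are Borel finitizing edge-cuts with $\deg_{H_n}\le\deg_G$. Since $C_\mu(G)<\w$, we have $\deg_G\in L^1(X,\mu)$, so $\deg_G<\w$ a.e. The crux is to show $\deg_{H_n}(x)\to 0$ for a.e. $x$: because $\mu$ is $E_G$-invariant, hence $E_G$-quasi-invariant, $[C_\infty]_{E_G}$ is $\mu$-null, so for a.e. $x$ we have $\deg_G(x)<\w$ and $x\notin[C_\infty]_{E_G}$; for such an $x$, eventually $x\notin C_n$, and then $\deg_{H_n}(x)=|N_G(x)\cap C_n|$ is a decreasing sequence of finite sets with intersection $N_G(x)\cap C_\infty=\0$ (because $[x]_{E_G}\cap C_\infty=\0$), hence eventually empty. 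Dominated convergence with dominating function $\deg_G$ then gives $2C_\mu(H_n)=\int_X\deg_{H_n}\,d\mu\to 0$, so $\fep(G)\le\inf_n C_\mu(H_n)=0$. The main obstacle in this part is recognizing that $\mu(C_\infty)=0$ alone does not give the pointwise claim — the $G$-neighborhoods of a positive-measure set of points could a priori still meet $C_\infty$ — and that quasi-invariance of $\mu$ is exactly what rules this out; one also genuinely uses $\deg_G\in L^1$ twice, to dominate and to know each $N_G(x)$ is finite so that a decreasing sequence of its subsets with empty intersection is eventually empty.
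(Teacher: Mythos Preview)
Your proof is correct and follows essentially the same approach as the paper: for \labelcref{part:fvp_vs_fep:fvp<2fep} and \labelcref{part:fvp_vs_fep:fep<deg_fvp} you use the same cut-by-cut conversions $H\mapsto\dom(H)$ and $C\mapsto\EdgsBtw{C}{X}$, and for \labelcref{part:fvp_vs_fep:fvp=0->fep=0} you use \cref{fcp=0->vanishing_cuts} and the construction underlying \cref{vanishing_vertex-cuts=edge-cuts}, exactly as the paper does. Your write-up is in fact more careful than the paper's one-line proof: you correctly flag that the $\mu$-vanishing sequence of vertex-cuts need not literally vanish, and that $E_G$-quasi-invariance of $\mu$ together with $\deg_G\in L^1$ is what lets you pass to a.e.\ pointwise convergence and invoke dominated convergence; the paper leaves this implicit. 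Your mass-transport justification of $C_\mu(\EdgsBtw{C}{X})\le\deg(G)\mu(C)$ in \labelcref{part:fvp_vs_fep:fep<deg_fvp} is slightly more elaborate than needed (one can simply direct every edge of $\EdgsBtw{C}{X}$ toward $C$ and bound the in-degree by $\deg_G\cdot\1_C$), but it is correct.
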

\begin{proof}
	Part \labelcref{part:fvp_vs_fep:fvp<2fep} follows from the fact that for a Borel $H \subseteq X^2$, the set $C \subseteq X$ of all vertices incident to $H$ has measure at most $2 C_\mu(H)$. Conversely, the set $H \subseteq G$ of edges incident to a Borel set $C \subseteq X$ has cost at most $\deg(G) \mu(C)$, whence part \labelcref{part:fvp_vs_fep:fep<deg_fvp} follows. Lastly, \labelcref{part:fvp_vs_fep:fvp=0->fep=0} follows from \cref{fcp=0->vanishing_cuts,vanishing_vertex-cuts=edge-cuts}.
\end{proof}

\begin{cor}\label{fep=0->fvp=0->hyperfinite}
	For any locally countable measure-preserving Borel graph $G$ on a standard measure space $(X,\mu)$, the following implications hold:
	$$
	\fep(G) = 0 \implies \fvp(G) = 0 \implies \text{$G$ is $\mu$-hyperfinite}.
	$$
\end{cor}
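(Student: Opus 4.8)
The plan is to chain together the results of this section, with essentially no new work. For the first implication, I would simply invoke \cref{fvp_vs_fep}, whose part~\labelcref{part:fvp_vs_fep:fvp<2fep} gives $\fvp(G) \le 2\fep(G)$; since $\fvp(G) \ge 0$ always, $\fep(G) = 0$ forces $\fvp(G) = 0$ with nothing further to check.

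For the second implication, assume $\fvp(G) = 0$. By \cref{fcp=0->vanishing_cuts}, there is a $\mu$-vanishing sequence $(C_n)_n$ of Borel finitizing vertex-cuts for $G$; that is, the $C_n$ are decreasing and $Z \defeq \bigcap_n C_n$ is $\mu$-null. This sequence is only $\mu$-vanishing, not genuinely vanishing, so I would pass to the $\mu$-conull Borel set $X' \defeq X \setminus Z$ and observe that the sets $C_n \setminus Z$ form a decreasing sequence with empty intersection. Each $C_n \setminus Z$ is a finitizing vertex-cut for $G \rest{X'}$, because $(G \rest{X'}) \rest{X' \setminus (C_n \setminus Z)} = G \rest{X \setminus (C_n \cup Z)}$ is a subgraph of the component-finite graph $G \rest{X \setminus C_n}$, hence itself component-finite. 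Applying \cref{vanishing_vertex-cuts->hyperfinite} to $G \rest{X'}$ with the vanishing sequence $(C_n \setminus Z)_n$ then yields that $G \rest{X'}$ is hyperfinite, and since $X'$ is $\mu$-conull this is by definition the assertion that $G$ is $\mu$-hyperfinite.

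The only mildly delicate point is the routine reduction from a $\mu$-vanishing to a genuinely vanishing sequence by deleting the $\mu$-null intersection, together with the trivial verification that the restriction of a finitizing vertex-cut to a subset remains a finitizing vertex-cut there; I do not anticipate any real obstacle.
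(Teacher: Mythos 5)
Your argument is correct and takes the same route as the paper's (terse) proof: Lemma~\labelcref{part:fvp_vs_fep:fvp<2fep} for the first implication, and \cref{fcp=0->vanishing_cuts} followed by \cref{vanishing_vertex-cuts->hyperfinite} for the second. Your explicit reduction from a $\mu$-vanishing to a genuinely vanishing sequence---deleting the null intersection $Z$ and checking that $C_n \setminus Z$ remains a finitizing vertex-cut for $G\rest{X'}$---is precisely the step the paper leaves implicit, and your verification of it is sound.
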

\begin{proof}
	The first implication follows from \labelcref{part:fvp_vs_fep:fvp<2fep} and the second one from \cref{vanishing_vertex-cuts->hyperfinite,fcp=0->vanishing_cuts}.
\end{proof}

\begin{remark}
	Neither of the implications in \cref{fep=0->fvp=0->hyperfinite} can be reversed in general. Indeed, for the first implication, observe that every finitizing edge-cut should contain all but finitely-many edges incident to each vertex, so it has infinite cost if the degree of every vertex is infinite. As for the second implication, its converse is false even for bipartite non-locally-finite Borel graphs as the following example shows.
\end{remark}


\begin{example}\label{examples:hyperfinite_doesnot_imply_fvp=0}
	Let $E$ be a hyperfinite Borel equivalence relation on a standard Borel space $X$ equipped with an $E$-quasi-invariant nonzero measure $\mu$. Take a Borel set $A \subseteq X$ such that both $A$ and $X \setminus A$ meet every $E$-class in infinitely-many points (such sets exist by, for example, the marker lemma \cite{Kechris-Miller}*{Lemma 6.7}). Let $G$ be the complete bipartite graph within $E$ with partitions $A$ and $X \setminus A$, i.e. $G \defeq \set{(x,y) \in E : x \in A \nLeftrightarrow y \in A}$. It is clear that any Borel finitizing vertex-cut has to fully contain at least one of the partitions, so $\fvp(G) \ge \min\set{\mu(A), \mu(X \setminus A)} > 0$.
\end{example}

The next proposition is rather crucial (yet straightforward) as it turns nonhyperfiniteness (equivalently, nonamenability) into a positive (existential) statement. Although the present authors could not find an explicit statement of it in the literature, it has implicitly appeared in a number of places, for example, in \cite{Aldous-Lyons:processes_on_unim_random_networks}*{Theorem 8.5}, \cite{Connes-Feldman-Weiss}, \cite{Elek:fin_graphs_and_amenability}, and \cite{Kaimanovich:amenability_isoperimetric}.

\begin{prop}[Characterization of $\mu$-hyperfiniteness for finite-cost graphs]\label{char_of_mu_hyperfiniteness_via_cuts}
	Let $G$ be a Borel locally countable measure-preserving graph on a standard probability space $(X,\mu)$. If $C_\mu(G) < \w$, then the following are equivalent:
	\begin{enumerate}[(1)]
		\item \label{item:char_of_mu_hyperfiniteness_via_cuts:defn} $G$ is $\mu$-hyperfinite.
		
		\item \label{item:char_of_mu_hyperfiniteness_via_cuts:fep=0} $\fep(G) = 0$.
		
		\item \label{item:char_of_mu_hyperfiniteness_via_cuts:fvp=0} $\fvp(G) = 0$.
	\end{enumerate}
\end{prop}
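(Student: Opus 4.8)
The plan is to close the cycle of implications using results already in hand. The implications $(2)\Rightarrow(3)\Rightarrow(1)$ are given by \cref{fep=0->fvp=0->hyperfinite} and require nothing about cost, and $(3)\Rightarrow(2)$ is precisely \cref{fvp_vs_fep}\labelcref{part:fvp_vs_fep:fvp=0->fep=0} (this is where $C_\mu(G)<\w$ enters on that side). So the only remaining task is $(1)\Rightarrow(3)$: assuming $G$ is $\mu$-hyperfinite, show $\fvp(G)=0$. This is the one step where not being locally finite forces genuine use of the hypothesis $C_\mu(G)<\w$.

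To prove $(1)\Rightarrow(3)$ I would fix $\e>0$ and construct a Borel finitizing vertex-cut of $\mu$-measure $<\e$. First pass to a Borel $\mu$-conull set $Y\subseteq X$ on which $E_{G\rest Y}$ is hyperfinite: this is extracted from the definition of $\mu$-hyperfiniteness of $G$ (choose a $\mu$-conull set where the relevant equivalence relation is honestly hyperfinite, then note $E_{G\rest Y}$ is a subrelation of it and hence hyperfinite), so no new work is needed here. Fixing a witness $(F_n)_n$ to the hyperfiniteness of $E_{G\rest Y}$, consider the candidate cuts
\[
C_n \defeq \set{x \in Y : N_G(x)\cap Y \nsubseteq [x]_{F_n}}.
\]
These are Borel and decreasing (since $(F_n)_n$ is increasing), and each $C_n$ is a finitizing vertex-cut for $G\rest Y$: for $x\notin C_n$, all of $x$'s $G$-neighbors lying in $Y$ sit in the finite class $[x]_{F_n}$, and a straightforward induction along $G\rest{Y\setminus C_n}$-paths propagates this, so the entire $G\rest{Y\setminus C_n}$-connected component of $x$ is contained in $[x]_{F_n}$.

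The crux — the only place local finiteness gets replaced by the hypothesis $C_\mu(G)<\w$ — is that $\bigcap_n C_n$ is $\mu$-null: a point in every $C_n$ must have infinitely many $G$-neighbors in $Y$ (a point of finite degree has its finitely many $Y$-neighbors absorbed into $[x]_{F_n}$ for some $n$, since $[x]_{E_{G\rest Y}}=\bigcup_n[x]_{F_n}$), and $C_\mu(G)=\tfrac12\int_X\deg_G\,d\mu<\w$ forces $\set{x:\deg_G(x)=\w}$ to be $\mu$-null. Since $\mu$ is a finite measure, $\mu(C_n)\downarrow 0$, so for a suitable $n$ the set $C_n\cup(X\setminus Y)$ is a Borel finitizing vertex-cut for $G$ of measure $<\e$; as $\e$ was arbitrary, $\fvp(G)=0$. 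I expect the only points needing care to be the induction showing each $C_n$ is genuinely a finitizing cut and the routine bookkeeping in extracting the conull hyperfinite restriction $Y$; neither poses a real obstacle.
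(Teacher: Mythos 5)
Your proof is correct and follows essentially the same strategy as the paper: use the finite-cost hypothesis to conclude that the infinite-degree vertices form a $\mu$-null set, then pass to a conull restriction and build vanishing cuts from a witness to hyperfiniteness. The paper organizes it slightly differently, proving $(1)\Rightarrow(2)$ directly by first discarding the null infinite-degree set so that \cref{hyperfinite->vanishing_cuts} applies to the now locally finite restriction, whereas you inline the argument of that lemma to prove $(1)\Rightarrow(3)$ and then invoke \cref{fvp_vs_fep}\labelcref{part:fvp_vs_fep:fvp=0->fep=0} for $(3)\Rightarrow(2)$; both routes rest on the same ideas.
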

\begin{proof}
	By \cref{fep=0->fvp=0->hyperfinite}, it remains to show \labelcref{item:char_of_mu_hyperfiniteness_via_cuts:defn}$\imp$\labelcref{item:char_of_mu_hyperfiniteness_via_cuts:fep=0}. The finiteness of $C_\mu(G)$ implies that $G$ is locally finite modulo a $\mu$-null set, which we may throw out. Thus, \cref{hyperfinite->vanishing_cuts} gives a vanishing sequence of Borel edge-cuts, whose cost must converge to $0$, again due to the finiteness of $C_\mu(G)$.
\end{proof}

Below, we will only use the following:

\begin{obs}\label{subtracting_less_than_cut}
	For a Borel $H \subseteq G$, if $C_\mu(H) < \fep(G)$, then $\fep(G \setminus H) > 0$.
\end{obs}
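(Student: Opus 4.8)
The plan is to argue by contradiction, exploiting the elementary fact that a finitizing edge-cut for $G \setminus H$ can be combined with $H$ itself to produce a finitizing edge-cut for $G$, and that $\mu$-cost is (sub)additive under unions of Borel graphs.

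Suppose towards a contradiction that $\fep(G \setminus H) = 0$. By definition of $\fep$, there is then a Borel finitizing edge-cut $K \subseteq G \setminus H$ for $G \setminus H$ with $C_\mu(K) < \fep(G) - C_\mu(H)$; this quantity is positive by the hypothesis $C_\mu(H) < \fep(G)$. First I would observe that $H \cup K \subseteq G$ is a Borel finitizing edge-cut for $G$: indeed, since $K$ is disjoint from $H$, we have $G \setminus (H \cup K) = (G \setminus H) \setminus K$, which is component-finite because $K$ finitizes $G \setminus H$.

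Next I would bound the cost of this cut. Since $H$ and $K$ are disjoint subsets of $X^2$, $\deg_{H \cup K}(x) = \deg_H(x) + \deg_K(x)$ for every $x$, so $C_\mu(H \cup K) = C_\mu(H) + C_\mu(K) < C_\mu(H) + \big(\fep(G) - C_\mu(H)\big) = \fep(G)$. But $H \cup K$ being a Borel finitizing edge-cut for $G$ forces $C_\mu(H \cup K) \ge \fep(G)$, a contradiction. Hence $\fep(G \setminus H) > 0$; in fact the argument yields the quantitative bound $\fep(G \setminus H) \ge \fep(G) - C_\mu(H)$.

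There is no real obstacle here — the only points to be careful about are that the membership $H \cup K \subseteq G$ and the component-finiteness of $G \setminus (H \cup K)$ are checked correctly (both immediate from $K \subseteq G \setminus H$), and that additivity of $C_\mu$ along disjoint edge sets is used in the direction giving an upper bound on $C_\mu(H\cup K)$, which needs no hypotheses beyond disjointness.
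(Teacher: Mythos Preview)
Your argument is correct and is exactly the elementary verification the paper has in mind; the paper states this as an observation without proof, and your contradiction-plus-additivity argument (yielding even the sharper bound $\fep(G\setminus H)\ge \fep(G)-C_\mu(H)$) is the natural way to fill it in.
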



\section{Shortcutting}\label{sec:shortcutting}

\subsection{The shortcutting graph}

Let $G$ be a locally countable graph on $X$ and fix a set $S \subseteq X$.

\begin{defn}
	Call an edge $(x,y) \in X^2$ an \emph{$S$-shortcut of $G$} if there is a path $x = x_0, x_1, ..., x_k = y$ in $G$ with all intermediate vertices in $S$, i.e. $x_i \in S$ for all $0 < i < k$. Define a graph
	$$
	\GS \defeq G \cup \set{e \in X^2 \setminus S^2 : \text{$e$ is an $S$-shortcut of $G$}}
	$$
	and call it the \emph{$S$-shortcutting of $G$}. Note that we do not add to $\GS$ any new edges between two elements of $S$.
\end{defn}

\begin{obss}\label{shortcutting:properties}
	Let $X, G, S$ be as above.
	\begin{enumref}{a}{shortcutting:properties}
		\item \label{item:shortcutting:properties:connected_comp_UcupS} Any $\GS$-connected set $U \subseteq X$ is contained in a connected component of the graph $G \rest{U \cup S}$. In particular, $E_G = E_{\GS}$.
		
		\item \label{item:shortcutting:properties:erasing_S_from_path} Let $P : u = x_0, x_1, \hdots, x_n = v$ be a vertex-path in $G$ from $u$ to $v$ where $v \notin S$ and let $P' : u = y_0, y_1, \hdots, y_m = v$ be the sequence of vertices obtained from $P$ by erasing all vertices $x_i, i \ge 1$ that are in $S$. $P'$ is a path in $\GS$ from $u$ to $v$.
	\end{enumref}
\end{obss}

\labelcref{item:shortcutting:properties:erasing_S_from_path} immediately implies the following key properties of $\GS$, which are our main reasons for defining $\GS$.

\begin{cor}[Key properties of shortcutting]\label{shortcutting:connecting_over_S}
	Let $X,G,S$ be as above and let $V$ be a $\GS$-connected set. 
	\begin{enumref}{a}{shortcutting:connecting_over_S}
		\item \label{item:shortcutting:connected_outside_of_S} $V \setminus S$ is $\GS$-connected.
		
		\item \label{item:shortcutting:extending_outside_of_S} For any subset $U \subseteq V$ with $U \nsupseteq V \setminus S$, there is $v \in V \setminus S$ that is $\HS$-adjacent to $V$.
		
	\end{enumref}
\end{cor}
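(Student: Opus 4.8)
The plan is to extract everything from the two facts already recorded in \cref{shortcutting:properties}: \cref{item:shortcutting:properties:connected_comp_UcupS} (a $\GS$-connected set $V$ is contained in a single connected component of $G\rest{V\cup S}$) and \cref{item:shortcutting:properties:erasing_S_from_path} (deleting the internal $S$-vertices of a $G$-walk whose last vertex is outside $S$ turns it into a $\GS$-walk). The common idea behind both parts: whenever two vertices of $V$ are joined by a $G$-path inside $V\cup S$, erasing its internal $S$-vertices reroutes it through $V\setminus S$ in $\GS$.

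For part \cref{item:shortcutting:connected_outside_of_S}: if $|V\setminus S|\le 1$ there is nothing to prove, so pick distinct $u,v\in V\setminus S$. By \cref{item:shortcutting:properties:connected_comp_UcupS}, $u$ and $v$ lie in a common connected component of $G\rest{V\cup S}$, so there is a $G$-path $P$ from $u$ to $v$ with every vertex in $V\cup S$. Since $v\notin S$, \cref{item:shortcutting:properties:erasing_S_from_path} turns $P$, after deleting all of its vertices after the first that lie in $S$, into a $\GS$-path $P'$ from $u$ to $v$; every vertex of $P'$ lies in $(V\cup S)\setminus S=V\setminus S$ (and $u\in V\setminus S$ already), so $P'\subseteq\GS\rest{V\setminus S}$. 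As $u,v$ were arbitrary, $V\setminus S$ is $\GS$-connected.

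For part \cref{item:shortcutting:extending_outside_of_S}: since $U\nsupseteq V\setminus S$, the set $(V\setminus S)\setminus U$ is nonempty, so fix $u\in U$ and $v_0\in(V\setminus S)\setminus U$. As before, \cref{item:shortcutting:properties:connected_comp_UcupS} yields a $G$-path $P\colon u=x_0,x_1,\dots,x_n=v_0$ with all $x_i\in V\cup S$. Let $j$ be the \emph{largest} index with $x_j\in U$; then $0\le j<n$. If $x_{j+1}\notin S$, then $x_{j+1}\in(V\cup S)\setminus S=V\setminus S$ and $x_{j+1}\notin U$, so $v\defeq x_{j+1}\in(V\setminus S)\setminus U$ and the $G$-edge $(x_j,x_{j+1})\in\GS$ witnesses that $v$ is $\GS$-adjacent to $U$. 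Otherwise $x_{j+1}\in S$; let $l>j$ be the \emph{smallest} index with $x_l\notin S$ (it exists because $x_n=v_0\notin S$). Then $x_{j+1},\dots,x_{l-1}\in S$, so the subpath $x_j,\dots,x_l$ of $P$ exhibits $(x_j,x_l)$ as an $S$-shortcut of $G$, and $x_l\notin S$ puts it in $X^2\setminus S^2$; hence $(x_j,x_l)\in\GS$. Since $l>j$ and $j$ is maximal among indices landing in $U$, $x_l\notin U$, so $v\defeq x_l\in(V\setminus S)\setminus U$ is $\GS$-adjacent to $U$.

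I do not expect a genuine obstacle: both parts are short consequences of \cref{shortcutting:properties}. The only points requiring care are the bookkeeping in part \cref{item:shortcutting:extending_outside_of_S} — taking $j$ maximal so that the vertex we produce automatically falls outside $U$, and taking $l$ minimal past $j$ among indices outside $S$ so that $x_{j+1},\dots,x_{l-1}$ genuinely lie in $S$ and $(x_j,x_l)$ is a bona fide shortcut (in particular an edge, as $x_j\in U$ while $x_l\notin U$) — together with the routine degenerate cases, e.g.\ $V\setminus S$ empty or a singleton in part \cref{item:shortcutting:connected_outside_of_S}, where the claim is vacuous or trivial, and the tacit assumption $U\neq\0$ in part \cref{item:shortcutting:extending_outside_of_S}, without which ``$\GS$-adjacent to $U$'' is meaningless.
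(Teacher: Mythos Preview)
Your proof is correct and follows exactly the approach the paper intends: the paper offers no proof at all, merely noting that the corollary ``immediately implies'' from \cref{shortcutting:properties}, and your argument is a faithful elaboration of that implication via \cref{item:shortcutting:properties:connected_comp_UcupS} and \cref{item:shortcutting:properties:erasing_S_from_path}. You also correctly identified and worked around the typos in the printed statement (``$\HS$-adjacent to $V$'' should read ``$\GS$-adjacent to $U$'', and one should implicitly have $v\notin U$), which is confirmed by how the corollary is invoked later in the paper.
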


\subsection{Building shortcuts via edge-sliding}

\begin{lemma}\label{shortcutting:connecting_set}
	Let $U \in \FinX$ be a $\GS$-connected set such that $U \cap S$ is $G$-connected (possibly empty). There is an edge sliding $\si$ along a finite subset $R_U \subseteq \EdgsBtw{S}{S \cup U}$ that moves a subgraph of $\EdgsBtw{U \setminus S}{S \setminus U}$ into $\GS \cap U^2$ so that $\si(G)$ connects $U$.
\end{lemma}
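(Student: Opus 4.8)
The plan is to route the boundary edges along a single, globally chosen family of ``bridge trees''. First note that every edge of $\EdgsBtw{U \setminus S}{S \setminus U}$ has an endpoint outside $U$, so $\si(G) \rest U$ automatically contains $G \rest U$ no matter how we slide; hence it suffices to arrange that the slid edges, together with $G \rest U$, connect $U$. Consider the graph $\Gamma$ on $(S \setminus U) \cup U$ with edge set $(G \rest{S \setminus U}) \cup \EdgsBtw{S \setminus U}{U}$; it has no $U$--$U$ edges. For each $\Gamma$-connected component $\mathcal{K}$ meeting $U$, fix a spanning tree $\mathcal{T}_{\mathcal{K}}$ of $\Gamma \rest{\mathcal{K}}$ and root it at some $r_{\mathcal{K}} \in \mathcal{K} \cap U$, choosing $r_{\mathcal{K}} \in \mathcal{K} \cap U \cap S$ whenever the latter set is nonempty. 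For $x \in \mathcal{K} \cap (U \setminus S)$ with $x \ne r_{\mathcal{K}}$, the $\mathcal{T}_{\mathcal{K}}$-path from $x$ towards $r_{\mathcal{K}}$ begins with an edge to a vertex of $S \setminus U$; let $\pi(x) \in U$ be the first $U$-vertex strictly past $x$ along it, so that the initial segment of that path is $x, s_1^x, \dots, s_{t_x}^x, \pi(x)$ with all $s_i^x \in S \setminus U$ and $t_x \ge 1$.

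I would then let $\si$ be the edge-operator that sends $(x, s_1^x) \mapsto (x, \pi(x))$ for every such $x$ (extended symmetrically, and the identity on all other edges), and take $R_U$ to be the finite union of the paths $s_1^x, \dots, s_{t_x}^x, \pi(x)$ over all such $x$. The bookkeeping to check is: $\Mv(\si) \subseteq \EdgsBtw{U \setminus S}{S \setminus U}$; each image $(x,\pi(x))$ is an $S$-shortcut of $G$ (because $t_x \ge 1$ and $x \notin S$) and hence lies in $\GS \cap U^2$; $R_U$ is a finite subgraph of $\EdgsBtw{S}{S \cup U}$, since each of its edges either joins two vertices of $S \setminus U$ or joins $s_{t_x}^x \in S \setminus U$ to $\pi(x) \in U$; and every moved edge $(x, s_1^x)$ slides into $(x,\pi(x))$ along the path $s_1^x, \dots, s_{t_x}^x, \pi(x)$, while every other edge slides trivially into itself. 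The point requiring genuine care is that $\si$ really is a well-defined edge-operator fixing $R_U$ pointwise --- equivalently, that the designated moved edges are pairwise distinct and disjoint from $R_U$. Distinctness is clear, since the moved edge $(x, s_1^x)$ determines $x$ as its unique endpoint outside $S$. For disjointness: an edge of $R_U$ incident to $U$ has the form $(s_{t_x}^x, \pi(x))$, where $s_{t_x}^x$ is the tree-\emph{child} of $\pi(x)$ lying towards $x$, whereas the moved edge based at $\pi(x)$ (when $\pi(x) \ne r_{\mathcal{K}}$) runs from $\pi(x)$ to its tree-\emph{parent}; and an edge of $R_U$ with both endpoints in $S \setminus U$ is simply not of boundary type. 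This is the step I expect to be the main obstacle: it is precisely what forces us to fix one rooted tree per $\Gamma$-component instead of resolving the connections of $\GS \rest U$ one spanning-tree edge at a time, which would produce conflicting slide targets for a shared boundary edge.

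Finally I would verify that $\si(G) \rest U$, which is $G \rest U$ together with the edges $(x, \pi(x))$, is connected. Fix a $\Gamma$-component $\mathcal{K}$: for $y \in \mathcal{K} \cap (U \setminus S)$ the chain $y, \pi(y), \pi^2(y), \dots$ of slid edges strictly decreases tree-distance to $r_{\mathcal{K}}$, so it reaches $r_{\mathcal{K}}$ unless it first hits a vertex of $U \cap S$; but such a vertex lies in $U \cap S$, which is $G$-connected by hypothesis (hence connected within $G \rest U$) and contains $r_{\mathcal{K}}$ by the choice of root --- so all of $\mathcal{K} \cap U$ lands in a single component of $\si(G) \rest U$. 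Now, since $U$ is $\GS$-connected, the graph whose vertices are the connected components of $G \rest U$ and which has an edge for each $\GS$-edge joining two of them is connected; for each edge of a spanning tree of this graph I would take a witnessing $\GS$-edge $(a,b)$ --- necessarily an $S$-shortcut, as $a,b$ lie in distinct components of $G \rest U$ --- and a witnessing $G$-path from $a$ to $b$ through $S$, and break it at its $U$-vertices. Consecutive such vertices are joined either by a $G \rest U$-edge or by a path through $S \setminus U$, hence lie in a common $\Gamma$-component, so in both cases they lie in a common component of $\si(G) \rest U$; therefore $a$ and $b$ do too, merging the two components of $G \rest U$. Running over the spanning tree merges all of them, so $\si(G) \rest U$ is connected.
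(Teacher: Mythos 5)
Your proof is correct but takes a genuinely different route from the paper's. The paper argues by induction on $|U|$: it removes a vertex $u \in U \setminus S$ so that $U' \defeq U \setminus \set{u}$ is still $\GS$-connected, applies the inductive hypothesis to get $\si'$, and then connects $u$ back by sliding a single boundary edge along a \emph{shortest} path from $u$ to $U'$ in the already-slid graph $\si'(G \rest{U \cup S})$; minimality is what forces the path's intermediate vertices into $S$ and keeps the path clear of $\Mv(\si') \cup \IMv(\si')$, so each new slide composes cleanly with the previous ones. You instead construct the sliding in one shot from a rooted spanning tree of each component of the bridge graph $\Gamma$, sending each boundary edge $(x, s_1^x)$ to the shortcut $(x, \pi(x))$ along the tree-path. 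Your approach makes explicit, and correctly resolves via the parent-versus-child observation together with the root-in-$U \cap S$ choice, the conflict you rightly flag (a boundary edge demanded both as railway and as moved edge); the paper's induction sidesteps this same obstruction through the shortest-path trick. The trade-off is that the inductive argument gets connectivity of $\si(G) \rest{U}$ essentially for free at each step, whereas you need the separate (and correct) gluing argument over the $G \rest{U}$-components using $\GS$-connectivity of $U$. Both proofs are valid.
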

\begin{proof}
	We prove by induction on $|U|$, so suppose the statement is true for the sets of size smaller than $|U|$. If $|U| = 1$ or $U \subseteq S$, then $U$ is already $G$-connected and there is nothing to prove, so suppose that $|U| \ge 2$ and $U \nsubseteq S$. Whence, there is $u \in U \setminus S$ such that $U' \defeq U \setminus \set{u}$ is still $\GS$-connected. Also, $U' \cap S = U \cap S$ is still $G$-connected, so, by induction, there is an edge sliding $\si'$ satisfying the requirements above written for $U'$; in particular, $\si'$ is an edge sliding of $G \rest{U \cup S}$. But by \labelcref{item:shortcutting:properties:connected_comp_UcupS}, $U$ is in a connected component of $G \rest{U \cup S}$, so it follows from \labelcref{item:edge_sliding:preserves_connectivity} that $U$ is in a connected component of $\si'(G \rest{U \cup S})$. 
	
	Let $u = x_0, x_1, \hdots, x_n, x_{n+1} = u'$ be a shortest path in $\si'(G \rest{U \cup S})$ with $u' \in U'$ and denote by $P$ the set of edges on this path. Because it is the shortest path, $x_1, \hdots, x_n \notin U'$ and hence must be in $S$, so none of the edges in $P$ are in $\Mv(\si') \cup \IMv(\si')$. In other words, $P \subseteq \EdgsBtw{S \cup U} {U}$. The edge $e_0 \defeq (u, x_1)$ slides into $e_1 \defeq (u,v) \in \GS \rest{U}$ along $P$, so the function $\si : X^2 \to X^2$ defined by
	$$
	\si(e) \defeq 
	\begin{cases}
	e_1 & \text{if $e = e_0$}
	\\
	-e_1 & \text{if $e = -e_0$}
	\\
	\si'(e) & \text{otherwise}
	\end{cases}
	$$
	is a desired edge sliding.
\end{proof}

Now, instead of just a single set $U$, we will simultaneously connect every set in a disjoint collection.

\begin{lemma}\label{shortcutting:graphing_fsr}
	Let $G$ be a locally countable Borel graph on a standard Borel space $X$, $S \subseteq X$ be a Borel set, and $F \subseteq E_G$ be a Borel \fsr such that for each $F$-class $U$, 
	\begin{enumref}{i}{shortcutting:graphing_fsr}
		\item $U$ is $\GS$-connected and
		
		\item $U \cap S$ is $G$-connected. 
	\end{enumref}	
	There is a Borel edge sliding $\si$ along a subgraph $R \subseteq \EdgsBtw{S}{S \cup \dom(F)}$ that moves a subgraph of $\EdgsBtw{\dom(F) \setminus S}{S} \setminus F$ into $\GS \cap F$ so that $\si(G)$ supergraphs $F$.
\end{lemma}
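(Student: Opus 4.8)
The plan is to reduce to \cref{shortcutting:connecting_set} applied to each $F$-class separately, then glue the resulting edge slidings into a single Borel one, taking care that the pieces do not interfere. By hypothesis every $F$-class $U$ is $\GS$-connected and has $U \cap S$ $G$-connected, so \cref{shortcutting:connecting_set} furnishes an edge sliding $\si_U$ along a finite $R_U \subseteq \EdgsBtw{S}{S \cup U}$ that moves a subgraph of $\EdgsBtw{U \setminus S}{S \setminus U}$ into $\GS \cap U^2$ so that $\si_U(G) \rest{U}$ is connected; in particular $\Mv(\si_U) \subseteq \EdgsBtw{U \setminus S}{S \setminus U}$, $\IMv(\si_U) \subseteq \GS \cap U^2$, and $R_U \cap \Mv(\si_U) = \0$. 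Away from the identity part, $\si_U$ is coded by the finite data of $R_U$ together with the finite partial map $\si_U \rest{\Mv(\si_U)}$; for a fixed finite $U$ there are only countably many such codes and the relation between $U$ and its codes is Borel, so, since \cref{shortcutting:connecting_set} guarantees at least one code for each $F$-class, the Luzin--Novikov theorem \cite{bible}*{18.10} lets me choose $\si_U$ Borel-uniformly in $U \in \Classes{F}$, hence Borel in $x$ after precomposing with $x \mapsto [x]_F$.

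Next I would assemble the global map by setting $\si \defeq \si_U$ on each $\Mv(\si_U)$ and $\si \defeq \id$ elsewhere, and $R \defeq \bigcup_{U \in \Classes{F}} R_U$. Since the $F$-classes are pairwise disjoint and $\Mv(\si_U) \subseteq \EdgsBtw{U \setminus S}{S \setminus U}$, the sets $\Mv(\si_U)$ are pairwise disjoint (and avoid the diagonal), so $\si$ is a well-defined Borel edge-operator with $\Mv(\si) = \bigsqcup_U \Mv(\si_U)$ and $\IMv(\si) = \bigsqcup_U \IMv(\si_U)$. From the inclusions above one reads off $R \subseteq \bigcup_U \EdgsBtw{S}{S \cup U} = \EdgsBtw{S}{S \cup \dom(F)}$; that $\Mv(\si) \subseteq \bigcup_U \EdgsBtw{U \setminus S}{S \setminus U} \subseteq \EdgsBtw{\dom(F) \setminus S}{S} \setminus F$, since an edge with one endpoint in a class $U$ and the other in $S \setminus U$ lies in no $F$-class; and that $\IMv(\si) \subseteq \bigcup_U (\GS \cap U^2) \subseteq \GS \cap F$.

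To verify that $\si$ is an edge sliding along $R$, I would note that each $e \in \Mv(\si)$ lies in some $\Mv(\si_U)$ and slides into $\si_U(e) = \si(e)$ along $R_U \subseteq R$, while edges fixed by $\si$ slide into themselves trivially; it then remains to check $R \subseteq \Fx(\si)$, i.e.\ $R_U \cap \Mv(\si_{U'}) = \0$ for all $F$-classes $U, U'$. If an edge lay in $R_U \cap \Mv(\si_{U'})$, it would have an endpoint off $S$ (coming from $\Mv(\si_{U'}) \subseteq \EdgsBtw{U' \setminus S}{S \setminus U'}$) which, being in $R_U \subseteq \EdgsBtw{S}{S \cup U}$, must lie in $U$, forcing $U \cap U' \neq \0$, hence $U = U'$, contradicting $R_U \cap \Mv(\si_U) = \0$. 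Finally, for $\si(G)$ to supergraph $F$ I would fix an $F$-class $U$, observe that $\Mv(\si) \cap U^2 = \0$ (again by disjointness of the $F$-classes), so that any within-$U$ edge produced by $\si_U$ is also produced by $\si$ and therefore $\si(G) \rest{U} \supseteq \si_U(G) \rest{U}$, which is connected; thus $\si(G)$ connects every $F$-class.

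I expect the main obstacle to be the bookkeeping in the assembly step: guaranteeing that the railway of one class never meets the moved edges of another, and that the moved edges neither originate in nor land in $F$. Each of these reduces to the single observation that a railway $R_U$ keeps its non-$S$ endpoints inside $U$ whereas $\Mv(\si_{U'})$ has a non-$S$ endpoint in $U'$, after which pairwise disjointness of the $F$-classes closes the argument. The only other technical point, the Borel uniformization of $U \mapsto \si_U$, is routine precisely because each witness involves only finitely many edges.
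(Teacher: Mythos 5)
Your proof is correct and takes the same route as the paper's: apply \cref{shortcutting:connecting_set} class-by-class, glue the resulting slidings using disjointness of the $\Mv(\si_U)$'s, observe the railways stay fixed, and invoke Luzin--Novikov for the Borel uniformization of the per-class choice (the paper invokes its equivalent, \cref{relative_enum_of_each_class}, for the same purpose). Your write-up is a bit more explicit on a couple of points the paper leaves implicit — handling the $U=U'$ case when checking $R \subseteq \Fx(\si)$, and verifying $\Mv(\si) \cap F = \0$ — but these are minor elaborations, not a different argument.
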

\begin{proof}
	For each $F$-class $U$, let $\si_U$ and $R_U$ be given by \cref{shortcutting:connecting_set}. In particular, $\Mv(\si_U) \subseteq \EdgsBtw{U \setminus S}{S}$, so the sets $\Mv(\si_U)$ are disjoint for distinct $F$-classes $U$. Therefore,
	$$
	\si \defeq \bigcup_{U \in \Classes{F}} \si_U
	$$
	defines an edge-operator $\si : X^2 \to X^2$. Observe that the set $\EdgsBtw{U \cup S}{S} \supseteq R_U$ is disjoint from $\EdgsBtw{V \setminus S}{S} \supseteq \Mv(\si_V)$ for any two distinct $F$-classes $U,V$, so
	$$
	R \defeq \bigcup_{U \in \Classes{F}} R_U
	$$
	is pointwise fixed by $\si$, and thus, $\si$ is an edge sliding along $R$. It is now obvious that $\si$ satisfies the remaining conditions, except perhaps being Borel. For the latter, note that we can define $\si$ such that it is Borel since the choice of $\si_U$ for each $U$ can be made in a uniformly Borel fashion (using \cref{relative_enum_of_each_class}) as the choice is made among only those edge-operators whose set of non-fixed points is a finite subset of $[U]_{E_G}$.
\end{proof}

\begin{prop}\label{self-shortcutting}
	Let $G$ be a Borel graph on $X$, let $(F_n)_n$ be an increasing sequence of Borel \fsrs of $E_G$, where $F_0 = \0$, and put $F_\w \defeq \bigcup_{n \in \N} F_n$, $S_n \defeq \dom(F_n)$ for each $n \le \w$. Suppose that for each $n \ge 0$ and each $F_{n+1}$-class $U$,
	\begin{enumref}{i}{self-shortcutting}
		\item \label{item:self-shortcutting:connects_Fn+1} $U$ is $\GS[S_n]$-connected and
		
		\item \label{item:self-shortcutting:U_cap_Sn_is_connected} $U \cap S_n$ is $(G \cup F_n)$-connected.
	\end{enumref}
	Then there is a Borel well-iterated edge sliding $\si$ of $\EdgsBtw{S_\w}{S_\w}$ into $F_\w$ graphing every $F_n$.
\end{prop}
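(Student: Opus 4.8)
The plan is to produce $\si$ by an $\w$-step recursion, at step $n$ connecting the $F_{n+1}$-classes over $F_n$ with the help of \cref{shortcutting:graphing_fsr}, and to assemble the steps with \cref{connecting_unions_with_disjoint}. Write $G_* \defeq \EdgsBtw{S_\w}{S_\w}$ for the Borel subgraph of $G$ consisting of the edges both of whose endpoints lie in $S_\w$ — this is the graph we are to slide — and replace each $F_n$ by its completion $\cl{F_n}$, so that the $\cl{F_n}$ form an increasing sequence of genuine Borel equivalence relations with $\cl{F_0} = \Id_X$ and so that ``supergraphs $\cl{F_n}$'' means ``supergraphs $F_n$''. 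Recursively, with $\bsi_n$ the composition of the slidings built so far (and $\bsi_0 \defeq \id$), I would apply \cref{shortcutting:graphing_fsr} to the graph $\tilde G_n \defeq \bsi_n(G_*) \cup F_n$, the Borel set $S \defeq S_n$, and the Borel \fsr $F \defeq F_{n+1}$, obtaining a Borel edge sliding $\si_n$; it will satisfy $\Mv(\si_n) \subseteq \EdgsBtw{S_{n+1} \setminus S_n}{S_n}$, and it slides along a railway $R_n \subseteq \EdgsBtw{S_n}{S_{n+1}}$ into $F_{n+1}$ so that $\si_n(\tilde G_n) = \si_n(\bsi_n(G_*)) \cup F_n$ supergraphs $F_{n+1}$.

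To run \cref{shortcutting:graphing_fsr} at step $n$ I must check its two hypotheses for the \fsr $F_{n+1}$. Hypothesis (i) asks that every $F_{n+1}$-class $U$ be connected in the $S_n$-shortcutting of $\tilde G_n$; this follows from hypothesis (i) of the Proposition together with the obvious monotonicity of the shortcutting operation in its graph argument, once one knows that $\bsi_n$ has not decreased, inside $S_\w$, the connectivity of $U$ — and this holds because each earlier $\si_j$ ($j < n$) moves only edges of $\EdgsBtw{S_{j+1} \setminus S_j}{S_j}$ along railways lying in $\EdgsBtw{S_j}{S_{j+1}}$, so all of its activity takes place inside $S_{j+1} \subseteq S_n$ and only rewires edges within $\cl{F_{j+1}}$-classes. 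Hypothesis (ii) asks that $U \cap S_n$ be $\tilde G_n$-connected, and this is where hypothesis (ii) of the Proposition ($U \cap S_n$ is $(G \cup F_n)$-connected) comes in; it is the delicate point, taken up in the last paragraph.

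Granting the hypotheses, I read off the structure of $\si_n$. Every edge of $\EdgsBtw{S_{n+1} \setminus S_n}{S_n}$ has an endpoint outside $S_n$, hence lies in neither $F_n$ nor $\Diag(X)$; therefore $\cl{F_n} \subseteq \Fx(\si_n)$, $\Mv(\si_n) \subseteq \bsi_n(G_*)$, and $\IMv(\si_n) \subseteq F_{n+1} \subseteq \cl{F_{n+1}}$. Moreover, since the shortcut edges of $\bsi_n(G_*)$ all lie in $\bigcup_{j<n} F_{j+1} \subseteq F_n$ and are thus excluded by the clause ``$\setminus F_{n+1}$'', $\Mv(\si_n)$ actually consists of edges of $G$, so $G_n \defeq \Mv(\si_n)$ is a Borel subgraph of $G_*$. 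Splitting the railway as $R_n = R_n^G \disjU R_n^F$ with $R_n^F$ its edges inside $F_n$ and $R_n^G$ the rest, we get $R_n^F \subseteq \cl{F_n}$, while each edge of $R_n^G$ is an edge of $G$ with both endpoints in $S_{n+1}$, so it lies in $G_*$ and — being a railway edge now, and never again able to lie in any $\EdgsBtw{S_{m+1}\setminus S_m}{S_m}$ — it is fixed by every $\si_m$. The $G_n$ are pairwise disjoint, and $H \defeq G_* \setminus \bigcup_n G_n$ is a Borel subgraph of $G_*$ fixed by every $\si_m$; a routine check then shows that each $\si_n$ is an $(H \cup \cl{F_n})$-based well-iterated edge sliding of $G_n$ into $\cl{F_{n+1}}$ graphing $\cl{F_{n+1}}$ over $\cl{F_n}$, since $R_n \subseteq H \cup \cl{F_n}$ and the supergraphing conclusion only uses edges of $H \cup \si_n(G_n) \cup \cl{F_n}$. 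Thus \cref{connecting_unions_with_disjoint} applies with base $H$ and gives that the composition $\si \defeq \bsi_\w$ is a Borel $H$-based well-iterated edge sliding of $G_* = \EdgsBtw{S_\w}{S_\w}$ into $\bigcup_n \cl{F_n} = \cl{F_\w}$ graphing every $\cl{F_n}$; since $H \subseteq G_*$ is frozen, absorbing it into the conservative decomposition turns $\si$ into a well-iterated edge sliding of $\EdgsBtw{S_\w}{S_\w}$ with no base, and since $\si$ is an edge-operator, $\IMv(\si)$ consists of honest edges, so $\si$ is into $F_\w$ and graphs every $F_n$.

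The step I expect to be the genuine obstacle is the verification of hypothesis (ii) of \cref{shortcutting:graphing_fsr} at every stage, i.e., that $U \cap S_n$ remains $\tilde G_n$-connected for each $F_{n+1}$-class $U$. Hypothesis (ii) of the Proposition supplies this for the unmodified graph $G$, and the earlier slidings $\si_0, \dots, \si_{n-1}$ only rewire edges inside $S_n$ and only within $\cl{F_{j+1}}$-classes (freezing their railways, which lie inside $S_{j+1} \subseteq S_n$), so morally the needed connectivity survives. Making this rigorous is cleanest if one carries along, as an extra invariant of the recursion, the statement that $\bsi_n(G_*) \cup F_n$ connects $V \cap S_n$ for every $F_m$-class $V$ with $m \geq n$ — which at level $n$ specializes to exactly hypothesis (ii), and which is propagated from $n$ to $n+1$ using that $\si_n$ is connectivity-preserving for $\tilde G_n$ and fixes everything with both endpoints in $S_n$. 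Getting this invariant, together with the accompanying interplay between $G$-connectivity and $S_n$-shortcut connectivity, exactly right is the real content of the argument.
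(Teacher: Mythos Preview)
Your overall architecture is sound, but it diverges from the paper's in a way that creates exactly the obstacle you flag. You apply \cref{shortcutting:graphing_fsr} at stage $n$ to the \emph{already-slid} graph $\tilde G_n = \bsi_n(G_*) \cup F_n$, which obliges you to verify inductively that hypotheses (i) and (ii) of that lemma survive the earlier slidings. The paper instead applies \cref{shortcutting:graphing_fsr} at stage $n$ to the \emph{original} graph $H_n \defeq G \cup F_n$ (no $\bsi_n$ in sight), so that the two hypotheses of the lemma are literally hypotheses (i) and (ii) of the Proposition and need no inductive maintenance. The $\si_n$ are thus defined non-recursively, and the work shifts to showing they compose: the paper records $\Fx(\si_n) \supseteq S_n^2$, deduces that $\si_n$ is an $(E_{G\rest{S_n}} \vee F_n)$-based edge sliding of $\EdgsBtw{S_{n+1}\setminus S_n}{S_n}$, shows inductively that $\bsi_n$ graphs $F_n$ and that $\bsi_{n+1}(G)$ supergraphs $F_{n+1}$, and then invokes \cref{connecting_unions_using_images} (not \cref{connecting_unions_with_disjoint}).

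Your proposed invariant does not propagate as written. Passing from level $n$ to $n+1$ requires extending connectivity from $V \cap S_n$ to the larger set $V \cap S_{n+1}$, and your two tools --- $\si_n$ is connectivity-preserving for $\tilde G_n$, and $\si_n$ fixes $S_n^2$ --- do not by themselves attach the new points of $V \cap (S_{n+1} \setminus S_n)$ to $V \cap S_n$ \emph{within $V$}. The underlying issue is that an edge sliding preserves global connectivity but need not preserve connectivity restricted to a prescribed subset: replacing a moved edge $e$ by $\si_j(e)$ together with a railway detour keeps all vertices inside $S_{j+1}$, but not necessarily inside the given $F_m$-class $V$. The paper's decision to feed the un-slid $G \cup F_n$ into \cref{shortcutting:graphing_fsr} is precisely what dissolves the difficulty you identified as ``the real content of the argument.''
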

\begin{proof}
	Putting $H_n \defeq G \cup F_n$, observe that 
	\[
	\GS[S_n] = \GpS[S_n]{H_n} \text{ and } \EdgsBtw[H_n]{S_{n+1} \setminus S_n}{S_n} = \EdgsBtw{S_{n+1} \setminus S_n}{S_n}.
	\]
	Thus, for each $n \in \N$, \cref{shortcutting:graphing_fsr} applies to $H_n$, $S_n$, and $F_{n+1}$, yielding a Borel edge sliding $\si_n$ of $\EdgsBtw[H_n]{S_{n+1}}{S_n}$ that moves a subgraph of $\EdgsBtw{S_{n+1} \setminus S_n}{S_n}$ into $F_{n+1}$ so that $\si_n(H_n)$ supergraphs $F_{n+1}$. In particular,
	\begin{equation}\label{eq:self-shortcutting:fixes_Sn}
		\Fx(\si_n) \supseteq S_n^2.
	\end{equation}
	
	\noindent Put $G_n \defeq G \rest{S_n}$.
	\begin{claim+}\label{claim:self-shortcutting:what_is_si_n}
		$\si_n$ is an $(E_{G_n} \vee F_n)$-based edge sliding of $\EdgsBtw{S_{n+1} \setminus S_n}{S_n}$.
	\end{claim+}
	\begin{pf}
		Follows from \labelcref{eq:self-shortcutting:fixes_Sn} and the following calculation: $\EdgsBtw[H_n]{S_{n+1}}{S_n} = H_n \rest{S_n} \cup \EdgsBtw[H_n]{S_{n+1} \setminus S_n}{S_n} = G_n \cup F_n \cup \EdgsBtw{S_{n+1} \setminus S_n}{S_n}$.
	\end{pf}
	
	\begin{claim+}\label{claim:self-shortcutting:bsi_n_graphs_Fn}
		$\bsi_n$ is an edge sliding of $G_n$ into $F_n$ graphing $F_n$.
	\end{claim+}
	\begin{pf}
		Follows by a straightforward induction on $n$ using \cref{claim:self-shortcutting:what_is_si_n}.
	\end{pf}
	
	\begin{claim+}\label{claim:self-shortcutting:si_n_edge_sliding_of_bsi_n}
		$\si_n$ is an edge sliding of $\bsi_n(G)$.
	\end{claim+}
	\begin{pf}
		By \cref{finite_itaration_connectivity_preserving}, $\bsi_n$ is connectivity preserving for $G_n$, so $\bsi_n(G_n)$ is a graphing of $E_{G_n}$; in fact, it is a graphing of $E_{G_n} \vee F_n$ by \cref{claim:self-shortcutting:bsi_n_graphs_Fn}. It now follows from \cref{claim:self-shortcutting:what_is_si_n} and \labelcref{item:railway_any_spanning_subgraph} that $\si_n$ is a $\bsi_n(G_n)$-based edge sliding of $\EdgsBtw{S_{n+1} \setminus S_n}{S_n}$.
	\end{pf}
	
	\begin{claim+}
		$\bsi_{n+1}(G)$ is a supergraphing of $F_{n+1}$.
	\end{claim+}
	\begin{pf}
		Firstly, by \labelcref{eq:self-shortcutting:fixes_Sn}, the set $(G_n \setminus F_n) \cup F_n \cup \si_n(G \setminus G_n)$ is equal to $\si_n(G) \cup F_n = \si_n(H_n)$, and hence is a supergraphing of $F_{n+1}$. Because $\IMv(\bsi_n) \subseteq F_n$, \cref{Mv_cap_im_subset_IMv} implies that $G_n \setminus F_n \subseteq \Fx(\bsi_n)$, so $\bsi_n(G_n) \supseteq G_n \setminus F_n$, which, together with \cref{claim:self-shortcutting:bsi_n_graphs_Fn}, implies that $\bsi_n(G_n) \cup \si_n(G \setminus G_n)$ is a supergraphing of $F_{n+1}$. Lastly, $\bsi_n(G_n) \cup \si_n(G \setminus G_n) = \bsi_{n+1}(G)$ because $\Fx(\bsi_n) \supseteq G \setminus G_n$ by \cref{claim:self-shortcutting:bsi_n_graphs_Fn}.
	\end{pf}
	
	Thus, $\si_n$ is an $F_n$-based edge sliding of $\bsi_n(G)$ graphing $F_{n+1}$, so \cref{connecting_unions_using_images} applies, finishing the proof.
\end{proof}


\section{Ergodic hyperfinite slid-subgraphs}\label{sec:erg_hyp_slid-subgraphs}

The goal of this section is to give a direct proof of the following weaker form of Tucker-Drob's theorem \cite{TuckerDrob:erg_hyp_subgraph}:

\begin{theorem}\label{ergodic_hyp_slid-subgraph}
	Any Borel locally countable p.m.p. ergodic graph $G$ on a standard probability space $(X,\mu)$ admits a Borel well-iterated edge slide $\tG$ that has an ergodic hyperfinite Borel subgraph $\tH$. In fact, given $\e > 0$, we can arrange so that $C_\mu(\tG \symdiff G) < \e$.
\end{theorem}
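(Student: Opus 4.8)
The plan is to follow the template of \cref{hyperfinite_factor:ergodic}, but, lacking an ergodic base $E_0$ to keep the construction local, to use the \emph{shortcutting} of \cref{sec:shortcutting} inside small-cost hyperfinite subgraphs, the budget being supplied by the non-$\mu$-hyperfiniteness of $G$. If $G$ is $\mu$-hyperfinite, then after deleting a $\mu$-null set $E_G$ is hyperfinite and ergodic, so $\tG \defeq G \defeq \tH$ works with $\tG \symdiff G = \0$; thus assume $G$ is not $\mu$-hyperfinite, whence $\fep(G) > 0$ by the contrapositive of \cref{fep=0->fvp=0->hyperfinite}. Fix positive reals $(\delta_n)_n$ with $\sum_n \delta_n < \min(\e/2, \fep(G))$, and an enumeration $(f_n)_n$, with infinite repetitions, of a countable dense family $\DC$ of bounded $\Q$-valued Borel functions in $L^1(X,\mu)$.

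I would then run an $\om$-iteration producing an increasing sequence $(F_n)_n$ of finite Borel subequivalence relations of $E_G$ with $F_0 = \Id_X$ and conull domain, together with Borel edge-operators $(\si_n)_n$, maintaining at stage $n$: (i) $\si_n$ is a well-iterated edge sliding of $\bsi_n(G)$ that graphs $F_{n+1}$ over $F_n$, whose railways and moved edges lie inside a hyperfinite Borel subgraph $H_n \subseteq \bsi_n(G)$ of $\mu$-cost $< \delta_n$; consequently, writing $G_{<n}$ for the subgraph of $G$ touched through stage $n$, $C_\mu(G_{<n}) < \sum_{k \le n}\delta_k$, so $\fep(G \setminus G_{<n}) > 0$ by \cref{subtracting_less_than_cut} and the construction stays alive; (ii) $\dom(F_{n+1}) = X$ modulo $\mu$-null; and (iii) $\oscmu\big(\mean{F_{n+1}}(f_n)\big) \le \frac{2}{3}\,\oscmu\big(\mean{F_n}(f_n)\big)$ (the hyperfinite means here being well-defined by \cref{hyperfin_means}).

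The core is the stage lemma producing such an $F_{n+1}$ --- the analogue of \cref{finite_factor:averaging}, but with the $\Phi$-maximal \fsr replaced by a \emph{packed} one (\cref{subsec:packed-fsr}) and edge sliding replaced by shortcutting. Fix a hyperfinite Borel subgraph $H \subseteq \bsi_n(G) \setminus G_{<n}$ with $C_\mu(H) < \delta_n$ (component-finite subgraphs of arbitrarily small positive cost exist, and component-finite graphs are hyperfinite). Put $S \defeq \dom(F_n)$, let $I$ be the middle-third closed subinterval of $[\infmu(\mean{F_n}(f_n)),\supmu(\mean{F_n}(f_n))]$, call a finite $\GpS[S]{H}$-connected set \emph{central} if its $f_n$-average lies in $I$, and let $\Phi$ be the Borel collection of central $\GpS[S]{H}$-connected sets meeting the realizability condition of \cref{shortcutting:connecting_set}. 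Applying \cref{existence-of-packed-fsr} with an appropriate superadditive Borel weight $p$ (bookkeeping the padding of \cref{defn:admissible}) gives a $p$-packed Borel \fsr within $\Phi$ modulo $E_G$-compressible; throw out the compressible set. Packing then forces, on each remaining $\GpS[S]{H}$-component, either (Case 2) $H$ restricted to the current domain is component-finite --- in which case the leftover is absorbed by an unconstrained second round (via \cref{hyperfinite_factor} and \cref{self-shortcutting}), making the domain conull --- or (Case 1) no further central $\GpS[S]{H}$-connected set can be added. In Case 1 I would invoke \cref{sec:asymptotic_means}: by convexity of $f_n$-averages (\cref{convexity_of_mean}) the set of asymptotic means of $\mean{F_n}(f_n)$ along the residual graph is a closed interval (\cref{means_form_interval}) disjoint from $I$ (using \cref{between_min_mix}), hence confined to a single outer third of $[\infmu(\mean{F_n}(f_n)),\supmu(\mean{F_n}(f_n))]$; absorbing the leftover as before and combining with the central packed classes, the $f_n$-averages over the $F_{n+1}$-classes land in a set of length $\le \frac{2}{3}\,\oscmu(\mean{F_n}(f_n))$, giving (iii) via \cref{var_decreases}. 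All the sliding is realized through \cref{self-shortcutting}, with railways confined to $H$.

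Finally, by \cref{connecting_unions_using_images} the $\si_n$ form a $G$-conservative sequence whose composition $\si$ is a Borel well-iterated edge sliding of $G$; set $\tG \defeq \si(G)$, $F \defeq \bigcup_n F_n$, and $\tH \defeq \si\big(\EdgsBtw{\dom(F)}{\dom(F)}\big) \cap F$. By construction $\tH$ supergraphs every $F_n$, hence graphs $F$, so $E_{\tH} = F$, which is hyperfinite (increasing union of finite equivalence relations) with conull domain; by (iii) and the infinite repetitions, $\oscmu(\mean{F_n}(f)) \to 0$ for every $f \in \DC$, so $\mean{F_n}(f) \to \int_X f\,d\mu/\mu(X)$ a.e., and the density of $\DC$ makes $F$ ergodic (as at the end of the proof of \cref{hyperfinite_factor:ergodic}, or via \cref{char_of_ergodicity}). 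Lastly $\tG \symdiff G \subseteq \Mv(\si) \cup \IMv(\si)$, kept inside hyperfinite subgraphs of total cost $< \e/2$, so $C_\mu(\tG \symdiff G) < \e$ (sliding is cost-nonincreasing, so $C_\mu(\IMv(\si)) \le C_\mu(\Mv(\si))$). The main obstacle is the stage lemma: getting the packing technique of \cref{subsec:packed-fsr} to mesh with shortcutting so that restricting each stage's sliding to a single small-cost hyperfinite subgraph still enforces the $\frac{2}{3}$-requirement while keeping the domain conull, with the Case 1 / Case 2 dichotomy and the asymptotic-means interval carrying the bookkeeping; the rest is routine accounting of costs and conservativeness.
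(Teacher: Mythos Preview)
Your overall strategy matches the paper's: iterate a ``$\tfrac{2}{3}$-reduces-oscillation'' step using packing, shortcutting inside a small-cost hyperfinite subgraph, and the asymptotic-means interval, with the $\fep$ budget keeping the construction alive. Two concrete gaps, however, would stop the argument as written.

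First, the sentence ``component-finite subgraphs of arbitrarily small positive cost exist, and component-finite graphs are hyperfinite'' gives you the wrong $H$. A component-finite $H$ has empty asymptotic-means interval, so the convexity machinery of \cref{sec:asymptotic_means} and your Case~1 have no content. What is needed is a \emph{component-infinite} hyperfinite $H$; the paper obtains such an $H$ via \cite{Kechris-Miller}*{Remark 23.3} (with cost $\approx \mu(X)$) and then passes to a concrete quotient by a large finite subequivalence of $E_H$ to push $C_\mu(H)$ below $\delta_n$ while retaining component-infiniteness.

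Second, and more seriously, your requirements (ii) and (iii) are in tension, and the paper does \emph{not} achieve them simultaneously. The output \fsr of each stage has domain only $\mu$-co-$\e_n$ (namely $\dom(H)$); the points outside were never in $H$, so neither shortcutting nor the side-of-$I$ argument touches them, and adjoining them as singletons ruins (iii). The paper's fix is to replace $f$ by a modified $\tf$ equal to $f$ on $\dom(F)$ and set to the midpoint of $I$ elsewhere, so that $\osc_E\big(\mean{F}(\tf)\big) \le \tfrac{2}{3}|\MSwGf|$ holds for $\tf$. Across the iteration this produces a doubly-indexed family $(f_{r,c})$ with $f_{r,c+1} = f_{r,c}$ off a set of measure $\le \e_{\gen{r,c}}$; by Borel--Cantelli the limits $f_{r,\w}$ exist, form a dense family, and it is for \emph{these} that the Cauchy condition is verified. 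Without this bookkeeping your conclusion $\oscmu\big(\mean{F_n}(f)\big)\to 0$ does not follow. Relatedly, your Case~1/Case~2 split echoes the introduction's heuristic rather than the actual step: in the proof, packedness yields a single ``key claim'' that all residual $E_H$-classes have asymptotic means on one fixed side of $I$, after which \cref{entire_fsr_with_correct_calc} finishes the whole of $\dom(H)$ in one go; there is no separate component-finite branch.
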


\begin{remark}
	We have stated the theorem for an ergodic graph $G$, but one can remove this assumption and get in the conclusion that $\tH$ is ergodic relative to $G$. This is done by a standard argument using the Ergodic Decomposition theorem, see \cite{Farrell} and \cite{Varadarajan}.
\end{remark}

\subsection{The step of the iteration}

Throughout, let $(X,\mu)$ be a standard finite measure space and let $G$ a measure-preserving locally countable ergodic Borel graph on it. We denote $E \defeq E_G$.

\begin{lemma}[The induction step]\label{main_lemma:step_of_iteration}
	Suppose we are given
	\begin{itemize}[\scriptsize$\triangleright$]
		\item a Borel graph $R \subseteq G$ such that $G \setminus R$ is component-infinite on an $E$-complete set,
		
		\item bounded Borel functions $w : X \to [1, \w), f : X \to \R$,
		
		\item an error tolerance $\e > 0$.
	\end{itemize}
	Then there are
	\begin{enumref}{i}{main_lemma:step_of_iteration}
		\item \label{item:main_lemma:step_of_iteration:tf} a Borel function $\tf : X \to [\infmu(f), \supmu(f)]$ such that $\tf$ equals $f$ on a Borel $\mu$-co-$\e$ set $\tX \subseteq X$, in particular, $\Lone{f - \tf} < \e \cdot \oscmu(f)$;
		
		\item \label{item:main_lemma:step_of_iteration:F} a bounded Borel \fsr $F \subseteq E$ such that $\tX = \dom(F)$ and $\osc_E(\meanw{F}(\tf)) \le \frac{2}{3} \osc_E(f)$, in fact, $\osc_E(\meanw{F}(\tf)) \le \frac{2}{3} |\MSwGf|$;
		
		\item \label{item:main_lemma:step_of_iteration:R'} a Borel subgraph $R' \subseteq G \setminus F$ of cost less than $\e$;
		
		\item \label{item:main_lemma:step_of_iteration:si} a Borel $R'$-based well-iterated edge sliding $\si$ of $G$ that graphs $F$ and satisfies $R \subseteq \Fx(\si)$ and $C_\mu(\Mv(\si)) < \frac{\e}{2}$; in particular, $C_\mu(\si(G) \symdiff G) < \e$.
	\end{enumref}
\end{lemma}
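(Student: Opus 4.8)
The plan is to mimic the proof of \cref{finite_factor:averaging} but with two new ingredients forced on us by the need to keep costs small: (1) we must only slide edges \emph{within a hyperfinite subgraph} of $G$ — this is the \emph{shortcutting} mechanism of \cref{sec:shortcutting} — so that the railways we create have small cost; (2) we must use \emph{packing} (\cref{existence-of-packed-fsr}) rather than mere maximality to guarantee that the \fsr we build is big enough even though we have restricted where we may slide. First I would fix a hyperfinite Borel subgraph $H \subseteq G$ of cost less than $\frac{\e}{2}$ with $G \setminus H$ still component-infinite on an $E$-complete set (such $H$ exists since $G \setminus R$ is component-infinite on an $E$-complete set and, by subdividing, one can peel off a hyperfinite piece of arbitrarily small cost — using e.g. the marker lemma or \cref{graph-hyperfinite} to build an increasing exhaustion of a component-finite subgraph and taking a small tail). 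We will only ever slide edges along paths inside $H$, and the set $S$ playing the role of the shortcutting parameter will be the domain of the \fsr built so far.

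Next I would set up the packing game. Let $I$ be the middle third of $[\infmu(f),\supmu(f)]$, and, as in \cref{finite_factor:averaging}, declare a finite $\GpS{H}$-connected set $U$ \emph{admissible} if its $f$-average (with weight $w$) lies in $I$ and if it can be connected inside $H$ over the current domain $S$ — i.e.\ the conditions of \cref{shortcutting:graphing_fsr} are met. The superadditive function $p : \FinE \to [0,\w)$ controlling the packing should be (a constant multiple of) the $H$-cost of the edges needed to connect $U$, roughly $p(U) \defeq$ (number of $H$-edges on the railway)/(tiny threshold); superadditivity is clear since disjoint sets use disjoint railways. Applying \cref{winning_PackG} (via \cref{existence-of-packed-fsr}) to the concrete quotient and iterating $\om$-many times (as in \cref{hyperfinite_factor:ergodic}, using \cref{self-shortcutting} to glue the successive edge slidings into one well-iterated edge sliding $\si$), we obtain a bounded Borel \fsr $F$ that is $p$-packed within the collection of admissible sets, modulo an $E$-compressible set; the packing bound on $p$ forces the total $\mu$-cost of $\Mv(\si)$ and of the railways to be less than $\frac{\e}{2}$, which gives \labelcref{item:main_lemma:step_of_iteration:R'} and \labelcref{item:main_lemma:step_of_iteration:si}. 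Because $F$ is bounded, the $E$-compressible leftover set $Z$ can be absorbed: define $\tf$ to equal $f$ off $[Z]_E$ and to be a constant (say $\infmu(f)$) on $[Z]_E$, so that $\tf$ agrees with $f$ on the $\mu$-co-$\e$ set $\tX \defeq X \setminus [Z]_E = \dom(F)$ (after shrinking we arrange $\mu(Z)<\e$), giving \labelcref{item:main_lemma:step_of_iteration:tf}.

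The heart of the argument — and the step I expect to be the main obstacle — is proving the oscillation bound \labelcref{item:main_lemma:step_of_iteration:F}. Here is where the dichotomy produced by packing is used, exactly as sketched in the introduction. Packing of $F$ means: no further admissible set can be formed. Unwinding: either (Case 2) $H \rest{X \setminus \dom(F)}$ is component-finite — but then, using the ergodicity of $G$ and \cref{shortcutting:connecting_over_S}, one shows $\dom(F)$ is $\mu$-conull, so in fact $F$ already ties $f$ and there is nothing more to prove — or (Case 1) no finite $\GpS{H}$-connected set whose average lies in $I$ can be adjoined. In Case 1 I would invoke the theory of asymptotic means from \cref{sec:asymptotic_means}: the set $\MSwGf$ is a closed interval (\cref{means_form_interval}), it is $E_G$-invariant (\cref{invariance_of_means}), and by convexity (\cref{convexity_of_mean}) together with the Intermediate Value Property (\cref{intermediate_value_property}) the impossibility of hitting $I$ forces $\MSwGf$ to miss (the interior of) the middle third of $[\infmu f,\supmu f]$; since it is an interval it lies in one of the two outer thirds, so $|\MSwGf| \le \frac{2}{3}\oscmu(f)$. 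Combining \cref{between_min_mix} (the $w$-mean of $f$ lies in $\MSwGf$ on each class) with \cref{fin_means}\labelcref{item:fin_means:equality} and \cref{entire_fsr_with_correct_calc} (to control $\meanw{F}(\tf)$ within $\MSwGf \pm$ small error), one concludes $\osc_E(\meanw{F}(\tf)) \le \frac{2}{3}|\MSwGf| \le \frac{2}{3}\oscmu(f)$, which is \labelcref{item:main_lemma:step_of_iteration:F}. The delicate bookkeeping will be: ensuring that the shortcutting in \cref{self-shortcutting} is compatible across the $\om$ iterations (hypotheses \labelcref{item:self-shortcutting:connects_Fn+1} and \labelcref{item:self-shortcutting:U_cap_Sn_is_connected} must be maintained inductively), and checking that the $p$-admissibility condition of \cref{defn:admissible} is implied by the combinatorial conditions defining admissible sets so that \cref{winning_PackG} really applies. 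Taking $R' \defeq$ the union of all railways used (a subgraph of $G \setminus F$, of cost $<\e$ by the packing bound) completes \labelcref{item:main_lemma:step_of_iteration:R'}, and the final bound $C_\mu(\si(G)\symdiff G)<\e$ follows since $\si(G)\symdiff G \subseteq \Mv(\si)\cup\IMv(\si)$ and $C_\mu(\IMv(\si)) = C_\mu(\Mv(\si)) < \frac{\e}{2}$ by measure-preservation.
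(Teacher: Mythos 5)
Your overall framework — a small‐cost hyperfinite subgraph $H$ within which you shortcut, a packing game to build $F$, and asymptotic means to get the $\tfrac{2}{3}$ bound — matches the paper. But there are two concrete gaps, one in the construction of $H$ and one in how the oscillation bound is derived, and a smaller mismatch in the role of the superadditive function $p$.

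\smallskip\noindent\emph{The construction of $H$.} You propose to find a hyperfinite $H \subseteq G$ of cost less than $\e/2$ by building an increasing exhaustion of $H$ by component-finite graphs and ``taking a small tail.'' But $H$ has to be \emph{component-infinite} (otherwise shortcutting within $H$ cannot connect the points that need to be merged, and \cref{shortcutting:connecting_over_S} gives you nothing). By Levitt's lemma, a component-infinite acyclic hyperfinite graph on a domain of measure $\approx 1$ has cost $\approx 1$, so no such $H$ of small cost exists in $X$ itself. The paper gets around this by first passing to a \emph{concrete quotient} by a bounded $H$-connected fsr $F_1 \subseteq E_H$ chosen so that $C_\mu(H \setminus F_1) < \de$; in the quotient, the image of $H$ has cost $C_\mu(H \setminus F_1) < \de$ while remaining component-infinite and acyclic. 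This quotienting is essential and missing from your sketch.

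\smallskip\noindent\emph{The $\tfrac{2}{3}$ bound.} You take $I$ to be the middle third of $[\infmu(f),\supmu(f)]$ and argue that packing ``forces $\MSwGf$ to miss $I$,'' concluding $|\MSwGf| \le \tfrac{2}{3}\oscmu(f)$. This cannot be right: $\MSwGf$ is a fixed $E_G$-invariant of $G$ and $f$ (constant a.e.\ by ergodicity), determined before you build anything; the packing game does not change it, and it may well meet the middle third of $[\infmu(f),\supmu(f)]$. The actual argument in the paper takes $I$ to be the middle third of $\MSwGf$ itself, and the key claim is about the \emph{per-component} asymptotic means $I_{C'} = \MS^w_{\HS\rest{C'}}(f)$ of the leftover pieces $C' = C\setminus S$ of each $E_H$-class: packing forces each $I_{C'}$ to avoid $\Int(I)$, and a second use of packing (the contradiction involving two opposite-sign components $C,D$, joined through a $G$-edge and an application of the Intermediate Value Property \cref{intermediate_value_property}) shows that they \emph{all lie in the same} outer third $\cl{I_*}$. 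Only then does \cref{entire_fsr_with_correct_calc}, applied to $\HS\rest{X'}$, produce an fsr $F'$ whose class averages lie in $I \cup I_*$, which is an interval of length $\tfrac{2}{3}|\MSwGf|$. Note also that \labelcref{item:main_lemma:step_of_iteration:F} asserts the sharper $\osc_E(\meanw{F}(\tf)) \le \tfrac{2}{3}|\MSwGf|$, not merely $\le \tfrac{2}{3}\oscmu(f)$; your route would not give this and the iteration in \cref{subsection:proof_of_slid_hyp_subgraphs} actually uses it (see \labelcref{eq:frc:var}).

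\smallskip\noindent\emph{The function $p$.} You propose $p(U)$ proportional to the number of $H$-edges on $U$'s railway, and you ask $p$-packedness to enforce the cost bound on $\Mv(\si)$. In the paper, cost is controlled \emph{entirely} by $C_\mu(H) < \de$ (after the quotient trick), together with $\Mv(\si), R' \subseteq H$; the packing game plays no role in cost control. The paper takes the much simpler $p(U) := |U|$, whose job is only to make \cref{admissible_sequence_stabilizes_mod-compressible} go through (so that the packing game stabilizes modulo $E$-compressible and yields a \emph{finite} $F_\w$). It is also unclear that your railway-count $p$ is superadditive in the sense of \cref{subsec:packed-fsr} (railways of disjoint classes over a shared domain $S$ need not be disjoint edge sets), so you would need a separate argument for that.
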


Putting $\de \defeq \frac{\e}{8}$, we devote the rest of this subsection to the proof of this lemma. 

In the course of the proof, we will construct several $G$-connected Borel \fsrs $F_0 \subseteq F_1 \subseteq \hdots \subseteq F_n \subseteq E$ and the last one, $F_n$, will be the output $F$ of the lemma. For the sake of simplifying the exposition, after constructing each $F_i$, we will \emph{pass to a concrete quotient by $F_i$}, by which we mean that we

\begin{center}
	\begin{tabular}{lccccc}
		replace & $X$, & $\mu$, & $G$, & $w$, & $f$ 
		\\
		by & $\Xmod{F_i}$, & $\mu \rest{\Xmod{F_i}}$, & $\Gmod{F_i}$, & $\wmod{F_i}$, & $\meanwf{F_i}$.
	\end{tabular}
\end{center}
When we successfully construct a desired subgraph $R'$ on a concrete quotient of $G$, making sure that its cost is less than $\de$, \cref{smooth_quotient_has_inverse} will grant a Borel lift of it to a subgraph of $G$ of the same cost, thus satisfying \cref{item:main_lemma:step_of_iteration:R'}. Similarly, \cref{lift_of_sliding} will take care of lifting the well-iterated edge sliding that we will also define on a quotient of $G$.

\subsubsection{Obtaining a component-infinite hyperfinite subgraph}

Let $D$ be the union of all infinite connected components of the graph $G \setminus R$, an $E$-complete set by the hypothesis. \cref{connecting_with_compl_sect} now gives a Borel \fsr $F_0 \subseteq E$ with $E$-cocompressible domain such that $G$ connects $F_0$ and each $F_0$-class intersects $D$. This $F_0$ may be unbounded, but removing a $\mu$-co-$\de$ $F_0$-invariant set from the domain of $F_0$, we may assume that it is bounded and its domain is $\mu$-co-$\de$; moreover, adding $\Diag(D)$ to it, we may assume that $X_0 \defeq \dom(F_0) \supseteq D$. For any infinite $(G \setminus R)$-connected component $C$, the set $[C]_{F_0}$ is connected by the graph $(G \setminus R) \cup (G \cap F_0)$ and these sets partition $X_0$; in other words, $\Gmod[(G \setminus R)]{F_0}$ is component-infinite on $\Xmod[(X_0)]{F_0}$. Passing to a concrete quotient by $F_0$, we may assume that $G \setminus R$ was component-infinite on $X_0$ to begin with, where $X_0$ is an $E$-invariant $\mu$-co-$\de$ Borel set.

Applying \cite{Kechris-Miller}*{Remark 23.3} to $G \rest{X_0}$, we get a component-infinite acyclic hyperfinite Borel subgraph $H \subseteq G \rest{X_0} \subseteq G \setminus R$. By Levitt's lemma (see \cite{Levitt} or \cite{Kechris-Miller}*{Lemma 22.1}), $C_\mu(H) = \mu(X_0) < \w$, so it follows from \cref{graph-hyperfinite} that there is a Borel $H$-connected bounded \fsr $F_1 \subseteq E_H$ with $\dom(F_1) = X_0$ such that $C_\mu(H \setminus F_1) < \de$. Thus, passing to a concrete quotient by $F_1$, we may assume that $C_\mu(H) < \de$ to begin with, where $H$ is an acyclic component-infinite hyperfinite Borel subgraph of $G \rest{X_0}$. In particular, $\dom(H) = X_0$ is $\mu$-co-$\de$.

The to-be-defined well-iterated edge sliding $\si$ will be that of $H$ and we will take as $R'$ a Borel subset of $H$, thus guaranteeing that $C_\mu(R') < \de < \e$.

\subsubsection{Packing with central sets}\label{subsubsec:packing_with_central_sets}

For a closed bounded interval $I$, call a set $V \in \FinX$ \emph{negative} (resp. \emph{central}, \emph{positive}) if $\meanwf{V}$ is in $I_-$ (resp. $I$, $I_+$).

Because the Borel function $x \mapsto \MSwGf[x]$ is $E$-invariant, the ergodicity of $E$ implies that it is constant modulo a $\mu$-null set, which we disregard. Thus, we simply write $\MSwGf$ for that constant interval and we let $I$ be the closed middle third of that interval, so $|I| = |\MSwGf| / 3$. 

Define $p : \FinX[X] \to [1, \w)$ by $U \mapsto |U|$. We will play the packing game $\PackG(E, p)$ (see \cref{defn:PackG}) as follows: assuming that Player 2 has last played an \fsr $F \subseteq E$, we put $S \defeq \dom(F)$, and define the response $\Phi(F)$ of Player 1 as the collection of all $U \in \FinE$ satisfying the following conditions:
\begin{enumenv}
	\item \label{item:step:packing:central} $U$ is central,
	
	\item \label{item:step:packing:F-invariant} $U$ is $F$-invariant,
	
	\item \label{item:step:packing:G-connected_over_H} $G \cup E_H$ connects $U$,
	
	\item \label{item:step:packing:HS-connected} each $E_H \rest{U}$-class $C \subseteq U$ is $\HS$-connected, and
	
	\item \label{item:step:packing:H-connected_on_S} for each $E_H \rest{U}$-class $C \subseteq U$, $C \cap S$ is $(H \cup F)$-connected.
\end{enumenv}

Let Player 1 play $\PackG(E, p)$ according to the strategy provided by \cref{winning_PackG}, and, assuming Player 1 has played $F_n$ in her $n^\text{th}$ move, we let Player 2 play $\Phi_n \defeq \Phi(F_n)$. Thus, the $F_n$ are increasing and $F_\w \defeq \bigcup_{n \in \N} F_n$ is finite and $p$-packed within $\Phi_\w \defeq \bigcup_{n \in \N} \Phi_n$, modulo an $E$-compressible, hence $\mu$-null, set, which we throw out. Whence, $\Phi_\w = \Phi(F_\w)$, so $F_\w$ is $p$-packed within $\Phi(F_\w)$.

Putting $F_n' \defeq F_n \cap E_H$ and $S_n \defeq \dom(F_n')$ for each $n \in \N$, as well as $S \defeq \dom(F_\w) = \bigcup_n S_n$, it is clear that \cref{self-shortcutting} applies to $(F_n')_n$ and $H$, yielding a Borel well-iterated edge sliding $\si_0$ of $\EdgsBtw[H]{S}{S}$ into $F_\w \cap E_H$ graphing $F_\w \cap E_H$. Because $G \cup E_H$ supergraphs $F_\w$, $\si_0(G)$ also supergraphs $F_\w$.

\subsubsection{All $E_H \rest{X \setminus S}$-classes have the same sign}\label{subsubsection:same_sign}

\begin{claim+}
	For each $E_H$-class $C \subseteq \dom(H)$ modulo $E$-compressible, $C \setminus S$ is either empty or infinite.
\end{claim+}
\begin{pf}
	The $E_H$-classes $C \subseteq \dom(H)$ with $C \setminus S$ finite form an $E_H$-smooth set $A$. Because the sets $C$ are infinite, $A$ is $E$-compressible.
\end{pf}

Thus, throwing out an $E$-compressible set, we assume that the last claim holds everywhere.

For each $E_H$-class $C \subseteq \dom(H)$, denote $C' \defeq C \setminus S$ and note that $C'$ is $\HS$-connected by \labelcref{item:shortcutting:connected_outside_of_S}. Whence, the map $x \mapsto \MS^w_{\HS \rest{C'}}(f)[x]$ is constant on $C'$, by \cref{invariance_of_means}, and we denote its value (a closed interval) by $I_{C'}$. 

For any interval $J$, we denote its closure by $\cl{J}$ and interior by $\Int(J)$.

\begin{claim+}[The key claim]
	There is a sign $* \in \set{\mathord{+}, \mathord{-}}$ such that for every $E_H$-class $C$, $I_{C'} \subseteq \cl{I_*}$.
\end{claim+}
\begin{pf}
	First note that for every $E_H$-class $C \subseteq \dom(H)$, no $U \in \Finw{C'}_{\HS}$ is central because that would contradict the maximality of $F_\w$ within $\Phi(F_\w)$. In particular, because $I_{C'}$ is an interval, $I_{C'} \subseteq \cl{I_-}$ or $I_{C'} \subseteq \cl{I_+}$.
	
	Suppose towards a contradiction that there are $E_H$-classes $C,D \subseteq \dom(H)$ with $I_{C'} \subseteq \cl(I_-)$ and $I_{D'} \subseteq \cl(I_+)$; in particular $C \ne D$.
	
	Because $E_H \subseteq E_G$, we can choose $C,D$ such that there is an edge $(c,d) \in G$ with $c \in C$ and $d \in D$. Completing $F_\w$ to an entire equivalence relation $\bF_\w \defeq F_\w \cup \Diag(X)$, put $U_0 \defeq [c]_{\bF_\w}$. If $U_0 \cap D \ne \0$, put $U_1 \defeq U_0$ and forget about $d$; otherwise, put $U_1 \defeq U_0 \disjU [d]_{\bF_\w}$. Either way, $U_1$ meets both $C$ and $D$, and satisfies conditions \crefrange{item:step:packing:F-invariant}{item:step:packing:H-connected_on_S}.
	
	By \labelcref{item:shortcutting:extending_outside_of_S}, there is a vertex $c' \in C'$ $\HS$-adjacent to $U_1$ and hence, there are arbitrarily large $U' \in \Finw{C'}_{\HS}$ disjoint from $U_1$ and containing $c'$ such that $U \defeq U_1 \cup U'$ is negative; this is because $I_{C'} \subseteq \cl{I_-}$ and $\Finw{C'_{\HS}}$ has no central sets. In fact, we take this $U'$ large enough so that $|U'| \ge |U_1|$ and 
	$$
	\De \defeq \frac{\Linf{f}\Linf{w}}{|U|_w} < \frac{|I|}{100}.
	$$
	Analogously, we also get $V' \in \Finw{D'}_{\HS}$ disjoint from $U$ but $\HS$-adjacent to it such that $V \defeq U \disjU V'$ is positive. Applying \cref{intermediate_value_property} to $U$ and $V$ (with respect to the graph $G \cup \HS$) and $r \defeq$ the midpoint of $I$, we get a central $W' \in \Finw{V'}_{\HS}$ such that $W'$ is $\HS$-adjacent to $U$ and $W \defeq U \disjU W'$ is central. $W$ is $p$-admissible for $F_\w$ because
	$$
	|W \setminus \dom(F_\w)| \ge |W \setminus U_1| \ge |U'| \ge |U_1| = p(U_1).
	$$
	Also, by construction, $W \in \Phi(F_\w)$, contradicting the $p$-packedness of $F_\w$ within $\Phi(F_\w)$.
\end{pf}

\subsubsection{Final packaging and sliding}

Putting $X' \defeq \dom(H) \setminus S$, recall that all $E_H \rest{X'}$-classes are infinite. Thus, applying \cref{entire_fsr_with_correct_calc} to $\HS \rest{X'}$ and $\e \defeq \frac{|I|}{2}$, we obtain, after throwing out an $E_H$-compressible hence $\mu$-null set, a Borel \fsr $F' \subseteq E_H \rest{X'}$ with $\dom(F') = X'$ such that each $F'$-class $U$ is $\HS$-connected and $\meanwf{U} \in I \cup I_*$.

Lastly, we apply \cref{shortcutting:graphing_fsr} to $\si_0(H)$, $S$, and $F'$, and obtain a Borel edge sliding $\si_1$ along a subgraph of $\si_0(H)$ that moves a subgraph of $\EdgsBtw[\si_0(H)]{\dom(F') \setminus S}{S} = \EdgsBtw[H]{\dom(F') \setminus S}{S}$ into $\HS \cap F'$ so that $\si_1(\si_0(H))$ supergraphs $F'$. Hence, putting $\tF \defeq F_\w \disjU F'$ and $\si \defeq \si_1 \circ \si_0$, we see that $\si$ is a well-iterated edge sliding of $H$, $\tH \defeq \si(H)$ supergraphs $\tF \cap E_H$, and hence, $\tG \defeq \si(G)$ supergraphs $\tF$. Because $H \subseteq G \setminus R$, $\si$ fixes $R$ pointwise. Moreover, because $\si$ moves a subgraph of $H$ into $\tF$, it fixes $R' \defeq H \setminus \tF$ pointwise, so $\si$ is an $R'$-based well-iterated edge sliding of $H$ graphing $\tF$ and fixing $R$ pointwise. Finally, $C_\mu(H) < \de$ and $\Mv(\si), R' \subseteq H$, so the costs of $\Mv(\si)$ and $R'$ are also less than delta, so all of the requirements of \labelcref{item:main_lemma:step_of_iteration:R',item:main_lemma:step_of_iteration:si} are met.

Noting that $\dom(\tF) = \dom(H)$ is $\mu$-co-$\de$ and that for each $\tF$-class $U$, $\meanwf{U} \subseteq I \cup I_*$, the \fsr $\tF$ is almost ready to be the output $F$ of the lemma with the minor wrinkle that it may not be bounded. But destroying no more than a measure $\de$ set of $\tF$-classes, we obtain an $\tF$-invariant $\mu$-co-$2\de$ Borel subset $X' \subseteq \dom(\tF)$ such that $F \defeq \tF \rest{X'}$ is bounded. Now $F$ indeed satisfies \labelcref{item:main_lemma:step_of_iteration:F}.

It remains to define $\tf : X \to [\infmu(f), \supmu(f)]$ by $\tf \rest{X'} \defeq f \rest{X'}$ and $\tf \rest{X \setminus X'} \defeq $ the midpoint of $I$, so $\tf$ clearly satisfies \labelcref{item:main_lemma:step_of_iteration:tf}, concluding the proof of \cref{main_lemma:step_of_iteration}.

\subsection{Proof of \cref{ergodic_hyp_slid-subgraph}}\label{subsection:proof_of_slid_hyp_subgraphs}

\subsubsection{Setup}

Assume that $G$ itself is not hyperfinite since there is nothing to prove otherwise. Thus, $\fep(G) > 0$ by \cref{fep=0->fvp=0->hyperfinite}.

Given $\e > 0$, our goal is to construct a Borel well-iterated edge slide $\tG$ of $G$ with $C_\mu(G \symdiff \tG) < \e$ and a hyperfinite Borel subgraph $H \subseteq \tG$ that is $\mu$-ergodic within $\tG$. By \cref{char_of_ergodicity}, it is enough to ensure that $E_H$ is $g$-Cauchy within $E \defeq E_G$ for every function $g$ in a dense collection $\mathcal{D} \subseteq L^1(X,\mu)$ of bounded Borel functions.

\bigskip

Fix a bijection $\gen{\cdot, \cdot} : \N \times \N \bij \N$ such that $\gen{r,c} < \gen{r, c'}$ for all natural numbers $r$ and $c < c'$. Define \emph{row} and \emph{column} decoding functions $\br,\bc : \N \to \N$ such that $\gen{\br(n), \bc(n)} = n$. Fix an enumeration of $\DC$ such that each function $g \in \DC$ appears infinitely many times.

Lastly, fix a $\de > 0$ that is less than $\frac{\e}{2^{10}}$ and $\frac{\fep(G)}{2^{10}}$, and put $\e_n \defeq \de \cdot 2^{-n}$ for each $n \in \N$.

\subsubsection{Iterating the induction step}

Using recursive applications of \cref{main_lemma:step_of_iteration} to concrete quotients, we obtain:
\begin{itemize}
	\item an increasing sequence $(F_n)_n$ of Borel \fsrs, where $F_0 \defeq \0$ and each $\dom(F_{n+1})$ is $\mu$-co-$\e_n$;
	
	\item a sequence of bounded Borel functions $(f_{r,c})_{r,c \in \N}$, where $f_{c,0} = g_c$ and 
	\begin{equation}\label{eq:frc:range}
		f_{r,c+1} : X \to [\infmu(f_{r,c}), \supmu(f_{r,c})]
	\end{equation}
	such that $f_{r,c+1}$ equals $f_{r,c}$ on $\dom(F_{\gen{r,c}})$; moreover,	
	\begin{equation}\label{eq:frc:var}
		\osc_E(\mean{F_{n+1}}(f_{r,c+1})) \le \frac{2}{3} |\MS_G(\mean{F_n}(f_{r,c}))|;
	\end{equation}
	
	\item a sequence $(R_n)_{n \ge 0}$ of Borel subgraphs of $G$, where $R_0 \defeq \0$, $R_n \cap F_{n+1} = \0$ and $C_\mu(R_n) < \e_n$, and we put $\bR_n \defeq \bigcup_{k < n} R_n$;
	
	\item a sequence $(\si_n)_{n \ge 0}$ of Borel edge-operators on $X$, where $\si_n$ is an $(\bR_n \cup F_{n-1})$-based well-iterated edge sliding of $\bsi_n(G)$ that graphs $F_n$ over $F_{n-1}$.
\end{itemize}
In addition,
\begin{equation}\label{fixing_Rn}
	R_n \subseteq \Fx(\si_k) \text{ for all $n,k \ge 0$}.
\end{equation}
Indeed, suppose that $F_n$, $f_{\bc(n), \br(n)}$, $\bR_n$, and $\bsi_n$ are defined. Taking a concrete quotient by $F_n$, apply \cref{main_lemma:step_of_iteration} to $(\Xmod{F_n}, \mu \rest{\Xmod{F_n}})$, $w \defeq \wmod[1]{F_n}$, $\Gmod[\bsi_n(G)]{F_n}$, $\Gmod[(\bR_n)]{F_n}$, $\mean{F_n}(f_{\bc(n), \br(n)})$, and $\e_n$, and lift the outputs back to the space $X$ obtaining $F_{n+1} \supseteq F_n$, $R_n \subseteq \bsi_n(G) \setminus F_{n+1}$, $f_{\bc(n), \br(n) + 1}$, and a well-iterated edge sliding $\si_n$. For each $k < n$, it now follows by \cref{Mv_cap_im_subset_IMv} from the facts that $R_n \cap F_n = \0$ and $\IMv(\si_k) \subseteq F_{k+1}$ that $R_n \subseteq \Fx(\si_k)$, which implies $R_n \subseteq \Fx(\bsi_n) \cap \bsi_n(G) \subseteq G$.

Thus, the entire $R \defeq \bigcup_{k \in \N} R_k$ is fixed pointwise by every $\si_n$, so, in fact, $\si_n$ is an $(R \cup F_{n-1})$-based well-iterated edge sliding that graphs $F_{n+1}$ over $F_n$. Hence, \cref{connecting_unions_using_images} applies yielding a Borel $R$-based well-iterated edge sliding $\si$ of $G$ that graphs $F \defeq \bigcup_{n \in \N} F_n$.

Denote by $\tG \defeq \si(G)$ and $\tH \defeq \si(G) \cap F$, so $\tH$ is a graphing of $F$. It remains to show that $(F_n)_n$ is $f$-Cauchy within $E$.

\subsubsection{The Cauchyness of $(F_n)_n$}

By \cref{char_of_ergodicity}, it is enough to check that $(F_n)_n$ is $f$-Cauchy for every $f$ in a dense subset of $L^1(X,\mu)$.

Using \labelcref{eq:frc:range}, \labelcref{eq:frc:var}, and the summability of the $\e_n$, the Borel-Cantelli lemma implies that for each fixed $c \in \N$, the sequence $(f_{r,c})_c$ converges pointwise a.e., as well as in $L^1$, and we let $f_{r,\w}$ denote its limit. In fact,

\begin{claim+}\label{frc_approx_frw}
	For each $r,c \in \N$, putting $n_c = \gen{r,c}$, there is an $F_{n_c}$-invariant $\mu$-co-$2\e_{n_c}$ Borel set $X_{n_c}$ such that
	$
	f_{r,\w} \rest{X_{n_c}} = f_{r,c} \rest{X_{n_c}} 
	\text{ and }
	\Linf{f_{r,\w} \rest{X \setminus X_{n_c}} - f_{r,c} \rest{X \setminus X_{n_c}}} \le 2 \e_n.
	$
	In particular, $\Lone{f_{r,\w} - f_{r,c}} \le 2 \e_n \oscmu(g_c)$.
\end{claim+}

\begin{claim+}
	$\DC' \defeq \set{f_{r,\w} : r \in \N}$ is dense in $\DC$ and hence in $L^1(X,\mu)$.
\end{claim+}
\begin{pf}
	By \cref{frc_approx_frw}, $\Lone{g_r - f_{r,\w}} \le 2 \cdot \e_n$ where $n \defeq \gen{r,0}$. But for each $g \in \DC$, we chose our enumeration such that $g = g_{r_k}$ for some subsequence $(r_k)_k$. Hence $f_{r_k, \w} \to_{L_1} g$ as $k \to \w$.
\end{pf}

It remains to show that $(F_n)_n$ is $f_{r,\w}$-Cauchy for every $r \in \N$. Fixing $r \in \N$ and $\e' > 0$, we need to find $n$ such that $F_n$ $\e'$-ties $f_{r,\w}$ within $E$. 

It follows from \labelcref{eq:frc:var} that $\osc_E(f_{r,c}) \to 0$ as $c \to \w$, so we take $c \in \N$ large enough such that $\e_n < \e'$, $n \defeq \gen{r,c}$ and $\osc_E(\mean{F_n}(f_{c,r})) < e'$. By \cref{frc_approx_frw}, $f_{r,\w}$ coincides with $f_{r,c}$ on an $F_{n_c}$-invariant $\mu$-co-$\e'$ Borel set $X'$. Thus, it is enough to check the $\e'$-tying condition \labelcref{eq:eps-ties} for $f_{r,c}$ and $F_n$ on $X'$, which is already ensured by the choice of $c$. \qed(\cref{ergodic_hyp_slid-subgraph})


\begin{bibdiv}
	\begin{biblist}
		\bibselect{"\LatexDef/refs"}
	\end{biblist}
\end{bibdiv}

\end{document}